\newtheorem{Thm}{Theorem}[section]
\newtheorem{Prop}[Thm]{Proposition}
\newtheorem{Def}[Thm]{Definition}
\theoremstyle{remark}
\newtheorem*{Notation}{Notation}
\newtheorem*{Ack}{Acknowledgement}
\numberwithin{equation}{section}
\newcommand{\Order}{\mathcal{O}}
\newcommand{\into}{\hookrightarrow}
\newcommand{\onto}{\twoheadrightarrow}
\newcommand{\isomto}{\overset{\sim}{\to}}
\newcommand{\isomfrom}{\overset{\sim}{\leftarrow}}
\newcommand{\compose}{\mathbin{\circ}}
\newcommand{\tensor}{\mathbin{\otimes}}
\newcommand{\N}{\mathbb{N}}
\newcommand{\Z}{\mathbb{Z}}
\newcommand{\et}{\mathrm{et}}
\newcommand{\fppf}{\mathrm{fppf}}
\newcommand{\id}{\mathrm{id}}
\newcommand{\Gm}{\mathbf{G}_{m}}
\newcommand{\Ga}{\mathbf{G}_{a}}
\newcommand{\dirlim}{\varinjlim}
\newcommand{\invlim}{\varprojlim}
\newcommand{\ur}{\mathrm{ur}}
\newcommand{\rat}{\mathrm{rat}}
\newcommand{\perar}{\mathrm{perar}}
\newcommand{\ind}{\mathrm{ind}}
\newcommand{\pro}{\mathrm{pro}}
\newcommand{\perf}{\mathrm{perf}}
\newcommand{\set}{\mathrm{set}}
\newcommand{\Alg}{\mathrm{Alg}}
\newcommand{\Ind}{\mathrm{I}}
\newcommand{\Pro}{\mathrm{P}}
\newcommand{\fc}{\mathrm{fc}}
\newcommand{\pre}{\mathrm{P}}
\newcommand{\alg}[1]{\mathbf{#1}}
\newcommand{\var}{\;\cdot\;}
\newcommand{\Ch}{\mathrm{Ch}}
\newcommand{\ideal}[1]{\mathfrak{#1}}
\newcommand{\ctensor}{\mathbin{\Hat{\otimes}}}
\newcommand{\assoc}{\mathsf{a}}
\newcommand{\artin}[1]{\Acute{#1}}
\newcommand{\converges}{\Longrightarrow}
\newcommand{\genby}[1]{\langle #1 \rangle}
\newcommand{\SDual}{\mathrm{SD}}
\newcommand{\proj}{\mathrm{proj}}
\DeclareMathOperator{\Hom}{Hom}
\DeclareMathOperator{\Ext}{Ext}
\DeclareMathOperator{\Spec}{Spec}
\DeclareMathOperator{\Ab}{Ab}
\DeclareMathOperator{\Set}{Set}
\let\Im\relax
\DeclareMathOperator{\Im}{Im}
\DeclareMathOperator{\sheafhom}{\alg{Hom}}
\DeclareMathOperator{\sheafext}{\alg{Ext}}
\title[An improvement of the duality formalism]
	{An improvement of the duality formalism of the rational \'etale site}
\author{Takashi Suzuki}
\address{
	Department of Mathematics, Chuo University,
	1-13-27 Kasuga, Bunkyo-ku, Tokyo 112-8551, JAPAN
}
\email{tsuzuki@gug.math.chuo-u.ac.jp}
\thanks{
	The author is a Research Fellow of Japan Society for the Promotion of Science
	and supported by JSPS KAKENHI Grant Number JP18J00415.
	This work was supported by the Research Institute for Mathematical Sciences,
	a Joint Usage/Research Center located in Kyoto University.
}
\date{October 7, 2019}
\subjclass[2010]{Primary: 14F20; Secondary: 11S25, 11G10}
\keywords{Duality; Grothendieck topologies; abelian varieties}
\begin{document}

\begin{abstract}
	We improve the arithmetic duality formalism of the rational \'etale site.
	This improvement allows us to avoid some exotic approximation arguments
	on local fields with ind-rational base,
	thus simplifying the proofs of the previously established duality theorems in the rational \'etale site
	and making the formalism more user-friendly.
	In a subsequent paper, this new formulation will be used in a crucial way
	to study duality for two-dimensional local rings.
\end{abstract}

\maketitle

\tableofcontents


\section{Introduction}
\label{sec: Introduction}

\subsection{Aim of the paper}
The arithmetic duality formalism of the rational \'etale site \cite{Suz13}
has been applied to several situations \cite{Suz14}, \cite{Suz18a}, \cite{Suz18b}, \cite{GS18}.
One of the difficulties in this formalism is that,
for a complete discrete valuation field $K$ with perfect residue field $k$ of characteristic $p > 0$,
we need to calculate the \'etale or fppf cohomology of
a certain complicated ring $\alg{K}(k')$,
where $k'$ is an arbitrary ``ind-rational $k$-algebra''.
A rational $k$-algebra is a finite product of perfections of finitely generated field extensions over $k$,
and an ind-rational $k$-algebra is a filtered direct limit of rational $k$-algebras.
The ring $\alg{K}(k')$ is the $p$-inverted ring of Witt vectors $W(k')[1 / p]$ (in the absolutely unramified case) or
the formal Laurent series ring $k'[[t]][1 / t]$.
A typical example of an ind-rational $k$-algebra is
the affine ring of a profinite set viewed as a profinite $k$-scheme.
If $k'$ has only finitely many direct factors,
then $\alg{K}(k')$ is a classical object,
since it is a finite product of complete discrete valuation fields with perfect residue fields.
Otherwise $\alg{K}(k')$ is a difficult infinite-dimensional non-noetherian ring.
We need general ind-rational algebras to describe
pro-algebraic and/or profinite group structures on cohomology of $K$,
since a profinite set tested by field-valued points is not distinguishable with a discrete set.
The \'etale cohomology $H^{n}(\alg{K}(k'), \Gm)$ and
more general $H^{n}(\alg{K}(k'), G)$ for smooth group schemes $G$
(in particular, abelian varieties) over $K$
are calculated based on some exotic approximation arguments
in \cite[Section 2.5]{Suz13} and \cite[Sections 3.1 and 3.2]{Suz14}, respectively.

In this paper, we give a simpler and more user-friendly formalism
that does not require exotic approximation arguments.
In this new formalism, we only need to calculate $H^{n}(\alg{K}(k'), G)$
for \emph{perfect field extensions} $k'$ over $k$,
in which case $\alg{K}(k')$ is a genuinely classical object as explained above.
The key observation is that for most of the groups of interest $G$,
the $\pi_{0}$ (component group) of the object representing the sheafification of the presheaf
$k' \mapsto H^{n}(\alg{K}(k'), G)$ turns out to be an \'etale $k$-group
(without a profinite part).
Pro-algebraic groups with finite (that is, not profinite) component groups
can be described by perfect-field-valued points alone,
as we will see in this paper.
Hence we may restrict $k'$ to be perfect fields.
We still need arbitrary perfect fields here and not only perfections of finitely generated fields
or rational $k$-algebras,
since the generic point of a connected pro-algebraic group is not
the spectrum of the perfection of a finitely generated field.
Once we uniquely pin down such pro-algebraic groups by perfect-field-valued points,
we can then pass to the pro-\'etale site of ind-rational $k$-algebras,
where we have full control of the derived categories of pro-algebraic groups and of profinite groups.

This new formalism will be useful and in fact necessary
for \emph{two-dimensional local rings} such as $W(k)[[t]]$,
since the sheafification of the presheaf
	\[
			k'
		\mapsto
			H^{n} \bigl(
				W(k')[[t]][1 / p], \Z / p \Z(r)
			\bigr)
	\]
on ind-rational $k$-algebras $k'$ does not commute with filtered direct limits
(since the representing object should be a pro-algebraic group)
and hence an analogue of the approximation arguments mentioned above are
not just difficult but in fact impossible (at least when interpreted naively).
In a subsequent paper, using the explicit computations of filtrations by symbols
in the proof of \cite[Claim (4.11)]{Sai86},
the above sheaf will be shown to be representable
by a pro-algebraic group over $k$ with finite $\pi_{0}$
if $k'$ runs over perfect field extensions of $k$.
The purpose of the proposed paper will be to use the formalism of this paper
to construct a duality theory for such pro-algebraic groups associated with
two-dimensional noetherian complete normal local rings
of mixed characteristic with perfect residue field,
extending Saito's duality theories \cite{Sai86}, \cite{Sai87} in the finite residue field case.

In this paper,
emphasis is put on providing a dictionary between the older and new formalisms,
so that the reader can freely translate the duality results
previously established in the older formalism into the new formalism
and use them in the new formalism.
We will also provide enough foundational results on the new formalism
so that it can be used on its own
(without translating back into the older formalism)
to explore new duality results in future work.


\subsection{Main theorems}
Now we formulate our results.
Let $k$ be a perfect field of characteristic $p > 0$.
Let $k^{\perar}$ be the category of finite products of perfect field extensions of $k$
with $k$-algebra homomorphisms
(where ``$\perar$'' stands for perfect artinian).
Define $\Spec k^{\perar}_{\et}$ to be the \'etale site on the category $k^{\perar}$,
which we call the \emph{perfect artinian \'etale site} of $k$.

Also let $k^{\perf'}$ be the category of quasi-compact quasi-separated perfect $k$-schemes.
This category can be equipped with the ``pro-fppf'' topology
(\cite[Remark 3.8.4]{Suz13}, \cite[Appendix A]{Suz14}).
Denote the resulting site by $\Spec k^{\perf'}_{\pro\fppf}$.
The inclusion functor $k^{\perar} \into k^{\perf'}$ induces a morphism of topologies
(or a ``premorphism of sites'' \cite[Section 2.4]{Suz18a})
	\[
			\artin{h}
		\colon
			\Spec k^{\perf'}_{\pro\fppf}
		\to
			\Spec k^{\perar}_{\et}.
	\]
Its pullback functor $\artin{h}^{\ast} \colon \Ab(k^{\perar}_{\et}) \to \Ab(k^{\perf'}_{\pro\fppf})$
on the category of sheaves of abelian groups on these sites
admits a left derived functor $L \artin{h}^{\ast}$
by \cite[Lemma 3.7.2 and Section 2.1]{Suz13}.
Let $\Alg / k$ be the category of perfections (inverse limit along Frobenius morphisms)
of commutative algebraic groups over $k$.
Let $\Pro'_{\fc} \Alg / k$ be the full subcategory of the pro-category of $\Alg / k$
of pro-objects with affine transition morphisms and finite \'etale $\pi_{0}$
(where ``$\fc$'' stands for ``finite component (group)'').
It is a full subcategory of $\Ab(k^{\perf'}_{\pro\fppf})$ via the Yoneda functor.

\begin{Thm}[$=$ Theorem \ref{thm: derived pull of proalg group with finite components},
	Proposition \ref{prop: EIP embeds into sheaves on perar site}]
	\label{thm: main, derived pull}
	The Yoneda functor $\Pro'_{\fc} \Alg / k \to \Ab(k^{\perar}_{\et})$ is fully faithful.
	For any $G \in \Pro'_{\fc} \Alg / k$,
	the natural morphism $\artin{h}^{\ast} G \to G$ in $\Ab(k^{\perf'}_{\pro\fppf})$ is an isomorphism
	and $L_{n} \artin{h}^{\ast} G = 0$ for $n \ge 1$.
\end{Thm}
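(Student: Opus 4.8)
The plan is to treat the three assertions in turn, extracting from each the same guiding principle: an object of $\Pro'_{\fc} \Alg / k$ is pinned down by its perfect-field-valued points precisely because its component group is genuinely finite. Throughout I would write $G = \invlim_{i} G_{i}$ for a pro-object with affine transition morphisms, and use the dévissage $0 \to G^{0} \to G \to \pi_{0}(G) \to 0$, where $\pi_{0}(G)$ is finite \'etale and $G^{0} = \invlim_{i} G_{i}^{0}$ is connected. This exact sequence, together with the reduction from pro-objects to the individual algebraic groups $G_{i}$, organizes the entire argument, since all three functors in play respect the relevant limits and exact sequences.

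For full faithfulness I would first reduce to single algebraic groups. Using the pro-category formula $\Hom(\invlim_{i} G_{i}, \invlim_{j} H_{j}) = \invlim_{j} \dirlim_{i} \Hom(G_{i}, H_{j})$ on the source side, and the facts that $\Hom$ into a sheaf commutes with the inverse limit $\invlim_{j} H_{j}$ and that a morphism from the limit scheme $\invlim_{i} G_{i}$ into a fixed $H_{j}$ factors through some stage (a standard limit argument, valid since $H_{j}$, as a perfection of an algebraic group, is a compact object and the transition maps are affine), both $\Hom$-sets are rewritten as $\invlim_{j} \dirlim_{i}$ of the corresponding $\Hom$-sets of algebraic groups. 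It then suffices to prove $\Hom_{\Alg / k}(G, H) \isomto \Hom_{\Ab(k^{\perar}_{\et})}(G, H)$ for $G, H \in \Alg / k$. Faithfulness is immediate: a perfection of an algebraic group is reduced, so two homomorphisms agreeing on all perfect-field points agree at every scheme-theoretic point (whose residue field is perfect), hence coincide. For fullness I would feed the generic point into a natural transformation $\phi$: when $G$ is connected its function field $\kappa$ is a perfect field and so lies in $k^{\perar}$, and $\phi_{\kappa}$ applied to the tautological point $\eta \in G(\kappa)$ produces an element of $H(\kappa)$, i.e.\ a rational map $G \dashrightarrow H$; naturality of $\phi$ with respect to the generic point of $G \times G$ forces this map to be generically a homomorphism, and the classical extension theorem for rational homomorphisms of commutative algebraic groups upgrades it to a morphism $\psi \colon G \to H$. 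One then verifies $\psi = \phi$ on all of $k^{\perar}$ by factoring an arbitrary point through its (perfect) residue field and invoking naturality. The finite \'etale piece $\pi_{0}(G)$ is a genuine object of $k^{\perar}_{\et}$, detected by a single separable extension, and the two pieces are matched by dévissage along the exact sequence, using left-exactness of $\Hom$ together with the connected and finite-\'etale cases.

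For the isomorphism $\artin{h}^{\ast} G \to G$ I would first identify the pushforward. Since $\artin{h}_{\ast}$ is restriction along the inclusion $k^{\perar} \into k^{\perf'}$, one has $(\artin{h}_{\ast} G)(k') = G(\Spec k') = G(k')$, so the $k^{\perar}_{\et}$-sheaf attached to $G$ is exactly $\artin{h}_{\ast}$ of the $k^{\perf'}_{\pro\fppf}$-sheaf attached to $G$, and the map in question is the adjunction counit $\artin{h}^{\ast} \artin{h}_{\ast} G \to G$. Because the premorphism sends a representable sheaf $h_{k'}$ with $k' \in k^{\perar}$ to the sheaf represented by $\Spec k'$ on $k^{\perf'}$, and because $\artin{h}^{\ast}$ commutes with colimits, it suffices to check that the counit is an isomorphism on the generators of $\Pro'_{\fc}\Alg / k$ furnished by the dévissage: finite \'etale groups, the perfections of $\Gm$ and $\Ga$ (or Witt groups), and abelian varieties. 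On each generator the verification reduces to the statement that the pro-fppf sheaf represented by $G$ over a quasi-compact quasi-separated perfect scheme is recovered from its perfect-field points, which is built into the representability set-up of the pro-fppf site.

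The derived vanishing $L_{n} \artin{h}^{\ast} G = 0$ for $n \ge 1$ is where I expect the real work, and I would again reduce by dévissage to the same generators, since $L \artin{h}^{\ast}$ carries the short exact sequence to a long exact sequence. For each generator the vanishing is a comparison of cohomology theories: the higher $L_{n} \artin{h}^{\ast}$ measure the failure of the comma-category colimits defining $\artin{h}^{\ast}$ to be exact, and these must be shown to vanish by comparing the \'etale cohomology of $G$ over the perfect fields in $k^{\perar}$ with its pro-fppf cohomology over the perfect schemes in $k^{\perf'}$. The hard part will be precisely this acyclicity: one has to show that no higher cohomology is introduced in passing from perfect-field test objects to arbitrary quasi-compact quasi-separated perfect schemes. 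It is here that the hypothesis of \emph{finite} (rather than profinite) $\pi_{0}$ is indispensable, since it guarantees that the covers trivializing $G$ can be refined to ones already visible on $k^{\perar}_{\et}$, so that the points of the pro-fppf topos relevant to the computation are perfect fields and no profinite component survives to carry higher cohomology.
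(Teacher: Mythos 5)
The fatal gap is in your treatment of the underived isomorphism and the derived vanishing, which are the main content. Your proposed d\'evissage to ``generators'' (finite \'etale groups, perfections of $\Gm$, $\Ga$/Witt groups, abelian varieties) cannot reach a general object of $\Pro'_{\fc} \Alg / k$: such an object is a filtered \emph{inverse} limit with affine transitions, e.g.\ $\alg{O}_{K}^{\times} = \invlim_{n} \alg{O}_{K}^{\times}/(1 + \alg{p}_{K}^{n})$, and $L \artin{h}^{\ast}$, being the left derived functor of a left adjoint, commutes with filtered direct limits but has no compatibility whatsoever with inverse limits. So vanishing on your generators says nothing about exactly the pro-objects the formalism exists for; your guiding claim that ``all three functors in play respect the relevant limits'' is what fails. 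The connected--\'etale sequence $0 \to G^{0} \to G \to \pi_{0}(G) \to 0$ is also shaky as a d\'evissage tool: its exactness in $\Ab(k^{\perar}_{\et})$ needs $G \to \pi_{0}(G)$ to be surjective for the \'etale topology on perfect fields, a nontrivial condition for pro-algebraic $G$ (a component is an inverse limit of torsors and need not acquire a point over a finite separable extension) --- the paper imposes this surjectivity explicitly in the definition of $\mathcal{E}_{k}$ rather than proving it, and for $G \in \Pro'_{\fc} \Alg / k$ it never decomposes $G$ at all. Finally, your closing assertion that the counit on each generator ``is built into the representability set-up of the pro-fppf site'' is not a given; even the $H^{0}$ statement is precisely the theorem.

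What is missing is a mechanism for computing $L \artin{h}^{\ast}$ on non-representable sheaves when $\artin{h}$ is only a premorphism of sites, and the paper's mechanism is Mac Lane's resolution $M(G)$, whose terms are built from group rings $\Z[G^{2^{n}}]$ (not iterated constructions): since $G$ restricted to $k^{\perar}$ is the filtered union of its finite sets of points, each term of $M(\artin{h}_{\ast} G)$ is a filtered colimit of representables, and $L_{j} \artin{h}^{\ast} \Z[X] = 0$ for $j \ge 1$ reduces everything to showing that $\artin{h}^{\ast} M(\artin{h}_{\ast} G)$ is a resolution of $G$ in $\Ab(k^{\perf'}_{\pro\fppf})$. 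That is the real work: one ``generifies'' sections pro-fppf-locally by splitting $\varphi = \varphi_{0} + \varphi_{1}$ over $X \times_{k} [L, G]$ via the tautological section, pulling back generic points along surjections of pro-algebraic groups (here finiteness of $\pi_{0}(G)$ enters, guaranteeing $\xi_{G} \in k^{\perar}$ and making $\xi_{G} \into G$ usable in pro-fppf coverings), and correcting with the splitting homotopy $T - \varphi = \partial V + V \partial$ for the non-additive functors $Q', Q, M$. Ironically, your generic-point idea appears only in your full-faithfulness argument, where it does give a genuinely different and essentially workable route (rational map from the tautological point, generic additivity, Weil's extension theorem) --- though note that agreement of $\psi$ with $\phi$ at the generic point does not propagate to arbitrary points by naturality alone; you need the translation-by-a-generic-point trick --- whereas the paper deduces full faithfulness in two lines from the derived theorem by adjunction, $\Hom_{k^{\perar}_{\et}}(G, H) \cong \Hom_{k^{\perf'}_{\pro\fppf}}(\artin{h}^{\ast} G, H) \cong \Hom_{k^{\perf'}_{\pro\fppf}}(G, H)$.
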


This means that treating $G$ as a functor on perfect field extensions of $k$
does not lose any information, higher derived or not.
This is a version of \cite[Proposition 3.7.3]{Suz13} for $\Spec k^{\perar}_{\et}$.
Similar to  \cite[Section 3]{Suz13},
the key points of the proof are that
the inclusion morphism $\xi_{G} \into G$
(which is not of finite presentation)
of the generic point $\xi_{G}$ of a group $G \in \Pro'_{\fc} \Alg / k$
may appear in a covering family for the site $\Spec k^{\perf'}_{\pro\fppf}$,
the restriction $\xi_{G} \times_{k} \xi_{G} \to G$ of the group operation map
(which is not pro-\'etale) is a covering for the site $\Spec k^{\perf'}_{\pro\fppf}$,
and that $\xi_{G} \in k^{\perar}$.
In Sections \ref{sec: Review of Mac Lane resolution}--\ref{sec: Acyclicity of the pullback of Mac Lane resolution},
the proof of the above theorem will be given by checking that
arguments in \cite[Sections 3.5 and 3.6]{Suz13} on Mac Lane's resolutions
may be carefully modified to work in the present setting.

Using this theorem, we can translate the duality results of
\cite{Suz13}, \cite{Suz14}, \cite{Suz18a} and \cite{Suz18b} in this setting.
We take \cite{Suz14} as an example to explain this translation.
Let $K$ be a complete discrete valuation field with ring of integer $\Order_{K}$
whose residue field is the above $k$.
For $k' \in k^{\perar}$, we define a $K$-algebra by
	\[
			\alg{K}(k')
		=
			(W(k') \Hat{\tensor}_{W(k)} \Order_{K}) \tensor_{\Order_{K}} K
	\]
(see Section \ref{sec: A duality formalism for local fields} for more detail),
which is a finite product of complete discrete valuation fields with perfect residue fields.
This functor $\alg{K}$ defines a premorphism of sites
$\artin{\pi}_{K} \colon \Spec K_{\fppf} \to \Spec k^{\perar}_{\et}$.
Let $\Spec k^{\ind\rat}_{\pro\et}$ be the ind-rational pro-\'etale site of $k$
(\cite[Section 2.1]{Suz14}).
Let $\Tilde{h} \colon \Spec k^{\perf'}_{\pro\fppf} \to \Spec k^{\ind\rat}_{\pro\et}$
be the premorphism of sites defined by the inclusion functor on the underlying categories.
For $G \in D(K_{\fppf})$, define
	\[
			R \artin{\alg{\Gamma}}(K, G)
		=
			R \Tilde{h}_{\ast} L \artin{h}^{\ast} R (\artin{\pi}_{K})_{\ast} G
		\in
			D(k^{\ind\rat}_{\pro\et}).
	\]
For most of the groups of interest $G$,
the object $R (\artin{\pi}_{K})_{\ast} G$ is ``$\artin{h}$-acyclic'',
which implies the existence of a spectral sequence
	\[
			E_{2}^{i j}
		=
			H^{i} \bigl(
				k'_{\pro\et}, \artin{\alg{H}}^{j}(K, G)
			\bigr)
		\converges
			H^{i + j}(\alg{K}(k')_{\fppf}, G)
	\]
for any $k' \in k^{\perar}$ (where $\artin{\alg{H}}^{n} = H^{n} R \artin{\alg{\Gamma}}$)
and an isomorphism
	\[
			\artin{\alg{H}}^{n}(K, G)(k')
		\cong
			H^{n}(\alg{K}(k')_{\fppf}, G)
	\]
for algebraically closed field extensions $k'$ of $k$ and any $n$.
Applying the above theorem for $G = R^{n} (\artin{\pi}_{K})_{\ast} \Gm$
and $G = R^{n} (\artin{\pi}_{K})_{\ast} A$ for an abelian variety $A / K$,
and comparing with the duality result \cite[Theorem 4.1.2]{Suz14} in the older formulation,
we obtain the following.

\begin{Thm}[$=$ Theorem \ref{thm: local duality}]
	Let $A$ and $B$ be abelian varieties over $K$ dual to each other.
	Then there exists a canonical isomorphism
		\[
				R \artin{\alg{\Gamma}}(K, A)^{\SDual \SDual}
			\isomto
				R \artin{\alg{\Gamma}}(K, B)^{\SDual}[1]
		\]
	in $D(k^{\ind\rat}_{\pro\et})$,
	where $\SDual$ denotes the derived sheaf-Hom $R \sheafhom_{k^{\ind\rat}_{\pro\et}}(\var, \Z)$
	for $\Spec k^{\ind\rat}_{\pro\et}$.
\end{Thm}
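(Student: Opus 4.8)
The plan is to deduce the statement from the duality theorem \cite[Theorem 4.1.2]{Suz14} of the older formalism, by identifying, for the abelian varieties $A$, $B$ and for $\Gm$, the new object $R\artin{\alg{\Gamma}}(K, G)$ with the corresponding object $R\Gamma(K, G)$ of \cite{Suz14}. Since the dualizing operator $\SDual = R\sheafhom_{k^{\ind\rat}_{\pro\et}}(\var, \Z)$ is intrinsic to $D(k^{\ind\rat}_{\pro\et})$ and does not remember how an object was produced, such an identification transports the older duality isomorphism, together with its degree shift $[1]$ and the placement of the double dual on the $A$-side, into the asserted isomorphism in the new formalism.

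The essential work is therefore the comparison of the two formalisms. First I would establish the $\artin{h}$-acyclicity of $R(\artin{\pi}_K)_{\ast} G$ for $G \in \{\Gm, A, B\}$, meaning that each cohomology sheaf $R^n(\artin{\pi}_K)_{\ast} G$ on $\Spec k^{\perar}_{\et}$ belongs to $\Pro'_{\fc}\Alg/k$. The pro-algebraic structure of these sheaves, and the identification of $H^n(\alg{K}(k')_{\fppf}, G)$ for $\Gm$ and for abelian varieties, is supplied by \cite[Sections 3.1 and 3.2]{Suz14}; the additional point that must be read off from that structure theory is that the component groups occurring are finite \'etale, which is exactly the condition ``$\fc$'' built into $\Pro'_{\fc}\Alg/k$.

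Granting acyclicity, Theorem \ref{thm: main, derived pull} applies to each cohomology sheaf: the natural map $\artin{h}^{\ast} R^n(\artin{\pi}_K)_{\ast} G \to R^n(\artin{\pi}_K)_{\ast} G$ is an isomorphism and $L_m \artin{h}^{\ast} R^n(\artin{\pi}_K)_{\ast} G = 0$ for $m \ge 1$. Hence the hyper-derived-functor spectral sequence computing $L\artin{h}^{\ast}$ degenerates, so $L\artin{h}^{\ast} R(\artin{\pi}_K)_{\ast} G$ is represented by the same complex of pro-algebraic groups, now viewed in $\Ab(k^{\perf'}_{\pro\fppf})$ through the Yoneda embedding. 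Applying $R\Tilde{h}_{\ast}$ gives an object of $D(k^{\ind\rat}_{\pro\et})$ whose cohomology sheaves are the groups $R^n(\artin{\pi}_K)_{\ast} G$ underlying the older object $R\Gamma(K, G)$, and, leaning on the dictionary between the two formalisms developed in the body of the paper, this yields the identification $R\artin{\alg{\Gamma}}(K, G) \cong R\Gamma(K, G)$. As a check, the cohomology sheaves $\artin{\alg{H}}^n(K, G) = H^n R\artin{\alg{\Gamma}}(K, G)$ satisfy the spectral sequence
\[
	E_2^{ij} = H^i\bigl(k'_{\pro\et}, \artin{\alg{H}}^j(K, G)\bigr) \converges H^{i+j}(\alg{K}(k')_{\fppf}, G)
\]
for $k' \in k^{\perar}$, together with the isomorphism $\artin{\alg{H}}^n(K, G)(k') \cong H^n(\alg{K}(k')_{\fppf}, G)$ for algebraically closed field extensions $k'$, confirming that $R\artin{\alg{\Gamma}}(K, G)$ computes the intended fppf cohomology of $\alg{K}(k')$.

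The step I expect to be the main obstacle is the finiteness of the component groups in the acyclicity claim: showing that $R^n(\artin{\pi}_K)_{\ast} A$, and likewise the sheaves for $B$ and $\Gm$, have \emph{finite} rather than merely profinite $\pi_0$. This finiteness is exactly what singles out the groups to which Theorem \ref{thm: main, derived pull} applies, and hence what licenses the reduction to perfect-field-valued points; confirming it demands a careful reading of the pro-algebraic structures constructed in \cite{Suz14}. Once it is in hand, the transfer of the duality isomorphism from \cite[Theorem 4.1.2]{Suz14} through the identification above is purely formal.
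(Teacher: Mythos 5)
Your overall route---identify $R \artin{\alg{\Gamma}}(K, G)$ with the older object $R \alg{\Gamma}(K, G)$ for $G = A, B, \Gm$ and transport \cite[Theorem (4.1.2)]{Suz14}---is exactly the paper's strategy, but your execution has two concrete problems. First, your acyclicity step fails as stated: you require each cohomology sheaf $R^{n} \artin{\pi}_{K, \ast} G$ to lie in $\Pro'_{\fc} \Alg / k$, and this is false in the two cases you must handle. For $G = \Gm$ one has $\artin{\pi}_{K, \ast} \Gm = \alg{K}^{\times} \cong \alg{O}_{K}^{\times} \times \Z$, whose component group is $\Z$, \'etale but not finite; and for an abelian variety the sheaf $R^{1} \artin{\pi}_{K, \ast} A$ is a torsion \emph{ind}-algebraic group, not pro-algebraic. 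This is precisely why the paper introduces the larger category $\mathcal{E}_{k}$ (extensions of \'etale group schemes by objects of $\Ind \Pro'_{\fc} \Alg / k$, Definition \ref{def: extended ind of groups with finite components}) and proves Theorem \ref{thm: derived pull of proalg group with finite components} for $\mathcal{E}_{k}$. Relatedly, you conflate ``cohomologies in the good category'' with $\artin{h}$-acyclicity, whose actual content is $F \isomto R \artin{h}_{\ast} L \artin{h}^{\ast} F$; the paper derives it from P-acyclicity (Proposition \ref{prop: P acyclic implies derived h acyclic}, with inputs \cite[Propositions (3.4.2), (3.4.3)]{Suz14} packaged in Proposition \ref{prop: groups of interest satisfy the required conditions}), and the comparison $R \artin{\alg{\Gamma}}(K, G) \cong R \alg{\Gamma}(K, G)$ (Proposition \ref{prop: compatibility of cohomology functors}) needs $R \pi_{K, \ast} G \in \genby{\mathcal{E}_{k}}_{D(k^{\ind\rat}_{\et})}$, not just the spectral-sequence degeneration you describe. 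The ``main obstacle'' you single out (finiteness of $\pi_{0}$) is, for abelian varieties, just finiteness of N\'eron component groups and is already in \cite{Suz14}; the real work lies elsewhere.

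The more serious gap is your claim that the transfer is ``purely formal'' once the three objects are identified. The canonical isomorphism of the theorem is the \emph{specific} morphism induced by the cup product of Proposition \ref{prop: cup product for local fields} and the trace morphism of Proposition \ref{prop: trace morphism}---constructions in the new formalism that your proposal never makes (the trace alone requires showing $R^{n} \artin{\pi}_{K, \ast} \Gm = 0$ for $n \ge 1$, via the classical vanishing for the henselian field $\alg{K}(k')^{\ur}$). Identifying the objects transports the old duality to \emph{some} isomorphism, but to conclude that the canonical morphism is an isomorphism you must check that, under the comparison isomorphisms, the new pairing agrees with the old one and the new trace with the old trace. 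The former is Proposition \ref{prop: comparison of cup product of local field}, which rests on the genuinely nontrivial zigzag compatibility of Proposition \ref{prop: compatibility the associated constructions}; the latter holds because both traces are the valuation morphism $\alg{K}^{\times} \onto \Z$, an observation the paper makes explicitly. Without these compatibilities your argument yields only the existence of an abstract isomorphism, which is strictly weaker than the theorem as the paper states and uses it (for instance in the compatibility with localization triangles in Theorem \ref{thm: local duality} (\ref{item: duality and localization triangles})).
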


Before stating this theorem, in Section \ref{sec: Trace morphisms and a finiteness property of cohomology},
we will see that some part of its proof
is much easier to prove in this new formulation.
The purpose of the mentioned section is to clearly present how to practically use the new formulation.
We will give a direct proof of the existence of the trace morphism
$R (\artin{\pi}_{K})_{\ast} \Gm \to \Z$
and the fact that $R^{n} (\artin{\pi}_{K})_{\ast} G$ as a functor $k^{\perar} \to \Ab$
commutes with filtered direct limits that exist in $k^{\perar}$
if $G$ is a smooth group scheme over $K$ and $n \ge 1$.
The proofs \cite[Section 2.5]{Suz13} and \cite[Sections 3.1 and 3.2]{Suz14} of
the corresponding statements in the older formulation
are some exotic approximation arguments.
The direct proofs we give here are based on much more standard facts
on complete discrete valuation fields.

A remark is that it seems possible to completely eliminate ind-rational $k$-algebras
from the formulation.
The target site of
$\Tilde{h} \colon \Spec k^{\perf'}_{\pro\fppf} \to \Spec k^{\ind\rat}_{\pro\et}$
may be likely replaced by the category of
filtered inverse limits of perfections of quasi-compact smooth $k$-schemes
with affine transition morphisms
endowed with the pro-\'etale topology.
But this change would require us to redo large part of \cite{Suz13} and \cite{Suz14} with this new site
and thus take many pages.
We will not try doing this here.

As above, the notation is necessarily complicated in order to ensure compatibility
and provide a dictionary between the older and new formalisms.
It is hoped to completely renew the notation, abandon everything old and write down proofs of the results
entirely in the new formalism some time in the future.
Meanwhile, we will explain the notation in this paper as much as possible
to remedy the notational difficulties.

\begin{Ack}
	The author is grateful to Kazuya Kato for his encouragement
	to improve the formulation
	towards duality for two-dimensional local rings,
	and to the referee for their very thorough comments
	to make the paper more readable.
\end{Ack}

\begin{Notation}
	(This part is partially taken from \cite[Section 1.3, Notation]{Suz18b}.)
	The categories of sets and abelian groups are denoted by
	$\Set$ and $\Ab$, respectively.
	We denote the ind-category of a category $\mathcal{C}$ by $\Ind \mathcal{C}$,
	the pro-category by $\Pro \mathcal{C}$,
	so that $\Ind \Pro \mathcal{C} := \Ind(\Pro \mathcal{C})$ is the ind-category of $\Pro \mathcal{C}$.
	All groups, group schemes and sheaves of groups are assumed commutative.
	For an abelian category $\mathcal{A}$,
	the category of complexes in $\mathcal{A}$ in cohomological grading is denoted by $\Ch(\mathcal{A})$.
	If $A \to B$ is a morphism in $\Ch(\mathcal{A})$,
	then its mapping cone is denoted by $[A \to B]$.
	The homotopy category of $\Ch(\mathcal{A})$ is denoted by $K(\mathcal{A})$
	with derived category $D(\mathcal{A})$.
	If we say $A \to B \to C$ is a distinguished triangle in a triangulated category,
	we implicitly assume that a morphism $C \to A[1]$ to the shift of $A$ is given,
	and the triangle $A \to B \to C \to A[1]$ is distinguished.
	For a triangulated category $\mathcal{D}$ and a collection of objects $\mathcal{I}$,
	we denote by $\genby{\mathcal{I}}_{\mathcal{D}}$
	the smallest triangulated full subcategory of $\mathcal{D}$
	closed under isomorphism.
	For a site $S$, the categories of sheaves of sets and abelian groups are denoted by
	$\Set(S)$ and $\Ab(S)$, respectively.
	We denote $\Ch(S) = \Ch(\Ab(S))$
	and use the notation $K(S)$, $D(S)$ similarly.
	The Hom and sheaf-Hom functors for $\Ab(S)$ are denoted by
	$\Hom_{S}$ and $\sheafhom_{S}$, respectively.
	Their right derived functors are denoted by
	$\Ext_{S}^{n}$, $R \Hom_{S}$ and $\sheafext_{S}^{n}$, $R \sheafhom_{S}$, respectively.
	The tensor product functor $\tensor$ is over the ring $\Z$
	(or, on some site, the sheaf of rings $\Z$).
	Its left derived functor is denoted by $\tensor^{L}$.
	
	Here is the list of sites and (pre)morphisms to be defined in this paper:
		\[
				\Spec k^{\perf'}_{\pro\fppf}
			\overset{\Tilde{h}}{\to}
				\Spec k^{\ind\rat}_{\pro\et}
			\overset{\varepsilon}{\to}
				\Spec k^{\ind\rat}_{\et}
			\overset{\alpha}{\to}
				\Spec k^{\perar}_{\et}
		\]
	whose composite is $\artin{h}$;
		\[
			\begin{CD}
					\Spec K_{\fppf}
				@>> j >
					\Spec \Order_{K, \fppf}
				@>> \pi_{\Order_{K}} >
					\Spec k^{\ind\rat}_{\et}
				\\
				@|
				@|
				@V \alpha VV
				\\
					\Spec K_{\fppf}
				@> j >>
					\Spec \Order_{K, \fppf}
				@> \artin{\pi}_{\Order_{K}} >>
					\Spec k^{\perar}_{\et},
			\end{CD}
		\]
	where the composite of the upper (resp.\ lower) horizontal two morphisms
	is $\pi_{K}$ (resp.\ $\artin{\pi}_{K}$);
	and $\assoc := \varepsilon^{\ast}$, $\artin{\assoc} := R \Tilde{h}_{\ast} L \artin{h}^{\ast}$.
\end{Notation}


\section{Generalities on Grothendieck sites}

We mostly follow the terminology of \cite{AGV72a} on Grothendieck sites.
See also \cite{Art62} and \cite{KS06}.
We do use the modified terminology given in \cite[Section 2.4]{Suz18a};
see there for more details.
We need three classes of maps between sites:
morphisms of sites, premorphisms of sites and continuous maps of sites.
This list is roughly in decreasing order of strength.
It is not exactly so since
the notion of premorphism of sites is meaningful only for sites defined by pretopologies (or covering families)
and it depends on the choice of the pretopologies.
It is this intermediate notion that we encounter most in practice in this paper.

First we recall the weakest notion, continuous maps of sites, and related notions.

\begin{Def} \mbox{}
	\begin{enumerate}
		\item
			For sites $S$ and $S'$,
			a \emph{continuous map of sites} $f \colon S' \to S$
			(called a continuous functor from $S$ to $S'$ in \cite[Expos\'e III, D\'efinition 1.1]{AGV72a})
			is a functor $f^{-1}$ from the underlying category of $S$ to that of $S'$
			such that the right composition with $f^{-1}$ (or the pushforward functor $f_{\ast}$)
			sends sheaves of sets on $S'$ to sheaves of sets on $S$.
		\item
			In this case, $f_{\ast} \colon \Set(S') \to \Set(S)$ and $f_{\ast} \colon \Ab(S') \to \Ab(S)$ have
			left adjoints (the \emph{pullback functors}),
			which we denote by $f^{\ast \set} \colon \Set(S) \to \Set(S')$ and
			$f^{\ast} \colon \Ab(S) \to \Ab(S')$, respectively.
		\item
			If we write $X \in S$, we mean that $X$ is an object of the underlying category of $S$.
		\item
			For $X \in S$,
			the \emph{localization} (\cite[Expos\'e II, 5.1]{AGV72a}) of $S$ at $X$ is denoted by $S / X$.
		\item
			The \emph{restriction} (\cite[Expos\'e II, 5.3, 2)]{AGV72a}) of $F \in \Set(S)$
			(or $\in \Ab(S)$ or $\in D(S)$) to $S / X$ is denoted by $F|_{X}$.
		\item
			We denote by $f_{X} \colon S' / f^{-1} X \to S / X$
			the continuous map of sites defined by the restriction of $f^{-1}$ on the localizations.
	\end{enumerate}
\end{Def}

Next we recall morphisms of sites.

\begin{Def}
	Let $f \colon S' \to S$ be a continuous map of sites.
	If $f^{\ast \set}$ is exact
	(i.e.\ commutes with finite inverse limits),
	we say that $f$ is a \emph{morphism of sites}.
\end{Def}

In this case, $f^{\ast}$ and $f^{\ast \set}$ are compatible with
forgetting group structures (\cite[III, Proposition 1.7,4]{AGV72a}),
so we do not have to distinguish them.

The exactness of $f^{\ast\set}$ is usually too much to ask
if the underlying category of $S$ does not have all finite inverse limits.
But it is inconvenient if we make no assumption on exactness of $f^{\ast\set}$.
Some exactness on at least representable presheaves helps much.
In this regard, the following notion, premorphisms of sites, is useful,
which we recall from \cite[Section 2.4]{Suz18a}.

\begin{Def}
	Let $S$ and $S'$ be sites defined by pretopologies.
	A \emph{premorphism of sites} $f \colon S' \to S$
	is a functor $f^{-1}$ from the underlying category of $S$ to the underlying category of $S'$
	that sends covering families to covering families
	such that $f^{-1}(Y \times_{X} Z) \isomto f^{-1} Y \times_{f^{-1} X} f^{-1} Z$
	whenever $Y \to X$ appears in a covering family.
\end{Def}

Such a functor $f^{-1}$ is called a morphism of topologies from $S$ to $S'$
in \cite[Definition 2.4.2]{Art62}.
In this case, $f$ defines a continuous map of sites $f \colon S' \to S$,
and by \cite[Lemma 3.7.2]{Suz13} and the first paragraph of \cite[Section 2.1]{Suz13},
the functor $f^{\ast} \colon \Ab(S) \to \Ab(S')$ admits a left derived functor
$L f^{\ast} \colon D(S) \to D(S')$, which is left adjoint to $R f_{\ast} \colon D(S') \to D(S)$.
Be careful that the coefficient ring for sheaves here is $\Z$,
and there is nothing analogous to the functors $L_{n} f^{\ast}$ for $n \ge 1$
if one considers only morphisms of sites.
They are not analogous to $\operatorname{Tor}_{n}^{R}(S, \var)$ for a ring homomorphism $R \to S$
or $L_{n} g^{\ast}$ for a scheme morphism $g$ and coherent sheaves.

Now let $f \colon S' \to S$ be a continuous map of sites
with underlying functor $f^{-1}$ on the underlying categories.
We need a cup product morphism relative to $f$
(assuming nothing about exactness of $f^{\ast}$).
The following was essentially observed in \cite[(2.5.2)]{Suz18a} in a special case.

\begin{Prop} \label{prop: sheafified functoriality and cup product}
	There exist canonical morphisms
		\begin{gather}
				\label{eq: derived push and RHom}
					R f_{\ast}
					R \sheafhom_{S'}(G', F')
				\to
					R \sheafhom_{S}(R f_{\ast} G', R f_{\ast} F'),
			\\
				\label{eq: cup product}
					R f_{\ast} G' \tensor^{L} R f_{\ast} F'
				\to
					R f_{\ast}(G' \tensor^{L} F')
		\end{gather}
	in $D(S)$ functorial in $G', F' \in D(S')$.
\end{Prop}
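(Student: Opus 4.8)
The plan is to construct \eqref{eq: cup product} first and then obtain \eqref{eq: derived push and RHom} from it by the tensor--Hom adjunction in $D(S)$ together with the evaluation morphism. The only inputs I will use about $f$ are that the underived pullback $f^{\ast}$ is left adjoint to $f_{\ast}$, that $f^{\ast}$ is strong monoidal (so $f^{\ast}(A \tensor B) \isomto f^{\ast} A \tensor f^{\ast} B$ levelwise on complexes) and preserves flatness, hence carries K-flat complexes on $S$ to K-flat complexes on $S'$. I will also use that $\Ab(S')$ admits both K-flat and K-injective resolutions, and the elementary but crucial observation that, for a K-injective complex $I$ representing $N \in D(S')$ and any complex $M^{\bullet}$ representing $M \in D(S)$, the underived adjunction gives $\Hom_{K(S)}(M^{\bullet}, f_{\ast} I) \cong \Hom_{K(S')}(f^{\ast} M^{\bullet}, I) = \Hom_{D(S')}(f^{\ast} M^{\bullet}, N)$. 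Thus a morphism $M \to R f_{\ast} N$ in $D(S)$ may be produced from a morphism $f^{\ast} M^{\bullet} \to N$ in $D(S')$ without ever invoking $L f^{\ast}$, which need not exist for a mere continuous map.

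To build \eqref{eq: cup product}, first choose K-injective resolutions $G' \to I_{G}$ and $F' \to I_{F}$, so that $R f_{\ast} G' = f_{\ast} I_{G}$ and $R f_{\ast} F' = f_{\ast} I_{F}$, and then a K-flat resolution $\mathcal{Q} \to f_{\ast} I_{G}$ on $S$. The complex $M^{\bullet} := \mathcal{Q} \tensor f_{\ast} I_{F}$ represents $R f_{\ast} G' \tensor^{L} R f_{\ast} F'$. Applying $f^{\ast}$ levelwise and using strong monoidality, $f^{\ast} M^{\bullet} \cong f^{\ast} \mathcal{Q} \tensor f^{\ast} f_{\ast} I_{F}$; since $f^{\ast} \mathcal{Q}$ is again K-flat, this underived tensor product also computes the derived one. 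Composing $f^{\ast} \mathcal{Q} \to f^{\ast} f_{\ast} I_{G}$ with the counit $f^{\ast} f_{\ast} \to \id$ yields a morphism $f^{\ast} \mathcal{Q} \to I_{G} \simeq G'$ in $D(S')$, and the counit directly gives $f^{\ast} f_{\ast} I_{F} \to I_{F} \simeq F'$; their derived tensor product is a morphism $f^{\ast} M^{\bullet} \to G' \tensor^{L} F'$ in $D(S')$. Feeding this into the observation of the previous paragraph produces the desired morphism $R f_{\ast} G' \tensor^{L} R f_{\ast} F' \to R f_{\ast}(G' \tensor^{L} F')$ in $D(S)$.

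With \eqref{eq: cup product} in hand, \eqref{eq: derived push and RHom} is formal: by the tensor--Hom adjunction in $D(S)$ it suffices to produce $R f_{\ast} R \sheafhom_{S'}(G', F') \tensor^{L} R f_{\ast} G' \to R f_{\ast} F'$, and this is the composite of \eqref{eq: cup product} (applied to $R \sheafhom_{S'}(G', F')$ and $G'$) with $R f_{\ast}$ of the derived evaluation morphism $R \sheafhom_{S'}(G', F') \tensor^{L} G' \to F'$ on $S'$. I expect the main obstacle to be not the existence of the morphisms but their canonicity: one must check that the construction of \eqref{eq: cup product} is independent of the chosen K-injective and K-flat resolutions and is functorial in $G'$ and $F'$, which amounts to a diagram chase comparing two systems of resolutions and using the naturality of the counit and of the K-flatness identifications. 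The one nonformal input to verify carefully is that $f^{\ast}$ preserves K-flat complexes (equivalently, sends flat abelian sheaves to flat abelian sheaves), since the passage from the underived to the derived tensor product in the second paragraph rests on it.
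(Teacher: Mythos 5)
Your reduction of \eqref{eq: derived push and RHom} to \eqref{eq: cup product} via the tensor--Hom adjunction and evaluation is formally fine, but the construction of \eqref{eq: cup product} itself rests on two inputs that are not available at the stated level of generality: that $f^{\ast}$ is strong monoidal, and that it preserves (K-)flatness. The proposition is asserted for an arbitrary \emph{continuous map} of sites, and both of these properties are theorems about \emph{morphisms} of sites: their standard proofs go through the exactness of $f^{\ast \set}$ (one has $f^{\ast} \Z[F] \cong \Z[f^{\ast \set} F]$ for a sheaf of sets $F$, and $\Z[F \times G] \cong \Z[F] \tensor \Z[G]$, so strong monoidality of $f^{\ast}$ on these generators essentially amounts to $f^{\ast \set}$ preserving finite products). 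Concretely, $f^{\ast}$ is computed by sheafifying a left Kan extension, i.e.\ a colimit over comma categories; a tensor product commutes with such a colimit taken diagonally only when the comma categories are filtered, which is (in substance) the morphism-of-sites condition. In the situation this proposition is designed for, that condition fails: Proposition \ref{prop: not a morphism of sites} exhibits exactly the failure of $\artin{h}^{\ast \set}$ to preserve a binary product, and the remark immediately after the statement warns that this type of result is usually proved \emph{using} the morphism-of-sites assumption. So the identification $f^{\ast} M^{\bullet} \cong f^{\ast} \mathcal{Q} \tensor f^{\ast} f_{\ast} I_{F}$ and the K-flatness of $f^{\ast} \mathcal{Q}$ are not merely delicate points to verify, as your last paragraph suggests; they are unavailable, and with them the whole second paragraph of your argument collapses.

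The repair is to reverse your order of construction, which is what the paper does: it never touches $f^{\ast}$ at all. One first constructs \eqref{eq: derived push and RHom} directly from the underived natural transformation $f_{\ast} \sheafhom_{S'}(G', F') \to \sheafhom_{S}(f_{\ast} G', f_{\ast} F')$ coming from the functoriality of $f_{\ast}$ on localizations, extended to complexes and homotopy categories and composed with the localization $\sheafhom_{S} \to R \sheafhom_{S}$. The key point making this compute the derived functors is that for K-injective $F'$ the complex $\sheafhom_{S'}(G', F')$ is K-limp, hence $f_{\ast}$-injective, so its underived pushforward already represents $R f_{\ast} R \sheafhom_{S'}(G', F')$; no resolution on the source side, flat or otherwise, is ever pulled back. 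Then \eqref{eq: cup product} follows formally: apply \eqref{eq: derived push and RHom} to the pair $(G', G' \tensor^{L} F')$, use the derived tensor--Hom adjunction, and compose with the evaluation morphism $F' \to R \sheafhom_{S'}(G', G' \tensor^{L} F')$. This route needs no monoidality or flatness statement about any pullback, which is precisely why the proposition can be stated for continuous maps rather than morphisms of sites.
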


Note that this type of statements is usually proved
under the assumption that $f$ is a morphism of sites
and making use of this assumption.

\begin{proof}
	We construct \eqref{eq: derived push and RHom}.
	First, let $G', F' \in \Ab(S')$.
	The functoriality of $f_{\ast}$ gives a canonical homomorphism
		\begin{equation} \label{eq: functoriality of push in abelian category setting}
				\Hom_{S' / f^{-1} X}(G'|_{f^{-1} X}, F'|_{f^{-1} X})
			\to
				\Hom_{S / X}((f_{\ast} G')|_{X}, (f_{\ast} F')|_{X})
		\end{equation}
	functorial in $X \in S$.
	Hence we have a morphism
		\[
				f_{\ast} \sheafhom_{S'}(G', F')
			\to
				\sheafhom_{S}(f_{\ast} G', f_{\ast} F')
		\]
	in $\Ab(S)$ functorial in $G', F' \in \Ab(S')$.
	This extends to a morphism in the category of complexes $\Ch(S)$
	functorial in $G', F' \in \Ch(S')$,
	where $\sheafhom$ is understood to be the total complex of the sheaf-Hom double complex.
	This further extends to a morphism in the homotopy category $K(S)$
	functorial in $G', F' \in K(S')$.
	Composing with the localization $\sheafhom_{S} \to R \sheafhom_{S}$ on the right-hand side,
	we have a morphism
		\[
				f_{\ast} \sheafhom_{S'}(G', F')
			\to
				R \sheafhom_{S}(f_{\ast} G', f_{\ast} F')
		\]
	in $D(S)$ functorial in $G', F' \in K(S')$.
	If $F'$ is K-injective (or homotopically injective \cite[Definition 14.1.4 (i)]{KS06}),
	then $\sheafhom_{S'}(G', F')$ is K-limp (\cite[Section 2.4, Proposition 2.4.1]{Suz18a})
	and hence $f_{\ast}$-injective by \cite[Proposition 2.4.2]{Suz18a}.
	Hence the left-hand side $f_{\ast} \sheafhom_{S'}(G', F')$ represents
	$R f_{\ast} R \sheafhom_{S'}(G', F')$.
	If moreover $G'$ is K-injective, then the right-hand side is isomorphic to
	$R \sheafhom_{S}(R f_{\ast} G', R f_{\ast} F')$.
	Hence we have a morphism
		\[
				R f_{\ast} R \sheafhom_{S'}(G', F')
			\to
				R \sheafhom_{S}(R f_{\ast} G', R f_{\ast} F')
		\]
	in $D(S)$ functorial in the objects $G', F'$
	of the homotopy category of K-injective complexes in $\Ab(S')$.
	Since the homotopy category of K-injective complexes in $\Ab(S')$ is equivalent to $D(S')$
	(\cite[Corollary 14.1.12 (i)]{KS06}),
	we have the morphism \eqref{eq: derived push and RHom}.
	
	We construct \eqref{eq: cup product}.
	The morphism \eqref{eq: derived push and RHom} gives a morphism
		\[
				R f_{\ast}
				R \sheafhom_{S'}(G', G' \tensor^{L} F')
			\to
				R \sheafhom_{S} \bigl(
					R f_{\ast} G',
					R f_{\ast}(G' \tensor^{L} F')
				\bigr).
		\]
	By the derived tensor-Hom adjunction (\cite[Theorem 18.6.4 (vii)]{KS06}),
	we have a morphism
		\[
				R f_{\ast} G' \tensor^{L} R f_{\ast} R \sheafhom_{S'}(G', G' \tensor^{L} F')
			\to
				R f_{\ast }(G' \tensor^{L} F').
		\]
	By composing it with the evaluation morphism
		\[
			F' \to R \sheafhom_{S'}(G', G' \tensor^{L} F'),
		\]
	we obtain the morphism \eqref{eq: cup product}.
\end{proof}

As one can see from the above proof,
the key point was the part that shows $f_{\ast} \sheafhom_{S'}(G', F')$ represents
$R f_{\ast} R \sheafhom_{S'}(G', F')$ if $F'$ is K-injective.

Next assume that $S$ and $S'$ are sites defined by pretopologies
and $f \colon S' \to S$ is a premorphism of sites.
The derived pullback $L f^{\ast}$ is difficult to handle in general.
There are two senses in which $L f^{\ast}$ is controllable:

\begin{Def} \label{def: f compatible, f acyclic} \mbox{}
	\begin{enumerate}
		\item
			We say that an object $F \in D(S)$ is \emph{$f$-compatible}
			if the natural morphism $L(f|_{X})^{\ast}(F|_{X}) \to (L f^{\ast} F)|_{f^{-1} X}$
			is an isomorphism for any $X \in S$.
		\item
			We say that $F$ is \emph{(weakly) $f$-acyclic}
			if the natural morphism $F \to R f_{\ast} L f^{\ast} F$ is an isomorphism.
	\end{enumerate}
\end{Def}

The $f$-compatibility is automatically true (for any $F$) if $f$ is a morphism of sites
(essentially stated in \cite[IV (5.10.1)]{AGV72a}).
It can fail in general; see \cite[Remark 3.5.2]{Suz13}.
Also see \cite[Proposition 3.1 (1)]{Suz18b} for a certain positive result.
On the other hand, the similar morphism
$(R f_{\ast} F')|_{X} \to R(f|_{X})_{\ast}(F'|_{f^{-1} X})$
is always an isomorphism for any $F' \in D(S')$.
What is weak in the definition of $f$-acyclicity is that
we do not require each cohomology object of $F$ to satisfy the same condition.
If $F \to F' \to F''$ is a distinguished triangle in $D(S)$
and if $F$ and $F'$ are $f$-compatible (resp.\ $f$-acyclic),
then so is $F''$.

\begin{Prop} \label{prop: composite of derived pullback}
	If $S''$ is another site defined by a pretopology
	and $g \colon S'' \to S'$ a premorphism of sites.
	Then $R (f \compose g)_{\ast} \isomto R f_{\ast} \compose R g_{\ast}$ as $D(S'') \to D(S)$
	and $L g^{\ast} \compose L f^{\ast} \isomto L (f \compose g)^{\ast}$ as $D(S) \to D(S'')$.
\end{Prop}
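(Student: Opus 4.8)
The plan is to reduce the whole statement to the assertion about derived pushforwards and then obtain the pullback identity formally, by uniqueness of adjoints. As a preliminary I would check that $f \compose g$ is again a premorphism of sites, so that $R(f \compose g)_{\ast}$ and $L(f \compose g)^{\ast}$ are defined in the first place: its underlying functor is $g^{-1} \compose f^{-1}$, which sends covering families to covering families (each of $f^{-1}$ and $g^{-1}$ does) and preserves the fibre products attached to covering morphisms (apply the fibre-product condition for $f$ first; since $f^{-1}$ carries a covering morphism to a covering morphism, the condition for $g$ then applies). At the level of abelian sheaves the pushforward is simply precomposition with the underlying functor, so $(f \compose g)_{\ast} = f_{\ast} \compose g_{\ast}$ holds on the nose on $\Ab$, and this furnishes the canonical comparison morphism $R(f \compose g)_{\ast} \to R f_{\ast} \compose R g_{\ast}$ in the standard way for the derived functor of a composite.

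The heart of the matter is to show this comparison is an isomorphism, and I would argue with the class of K-limp complexes from \cite[Section 2.4]{Suz18a}. Represent an object of $D(S'')$ by a K-injective complex $F''$. Since K-injective complexes compute both of the pushforwards directly, $R g_{\ast} F'' = g_{\ast} F''$ and $R(f \compose g)_{\ast} F'' = (f \compose g)_{\ast} F'' = f_{\ast} g_{\ast} F''$. Hence the comparison morphism is an isomorphism precisely when $R f_{\ast}(g_{\ast} F'') = f_{\ast}(g_{\ast} F'')$, that is, when $g_{\ast} F''$ is $f_{\ast}$-acyclic. To see this I would use that $F''$, being K-injective, is K-limp, that $g_{\ast}$ preserves K-limpness, and that K-limp complexes are $f_{\ast}$-acyclic for any continuous map of sites (\cite[Proposition 2.4.2]{Suz18a}). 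Chaining the three identifications gives $R(f \compose g)_{\ast} F'' \isomto R f_{\ast} R g_{\ast} F''$, compatibly with the canonical comparison, and naturality is automatic because every arrow involved is canonical.

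The main obstacle is exactly the middle input: that $g_{\ast}$ carries K-limp complexes to $f_{\ast}$-acyclic ones. This cannot be deduced from preservation of injectives, since $g^{\ast}$ is not exact for a mere premorphism and so $g_{\ast}$ need not preserve K-injective complexes; the K-limp class is introduced in \cite{Suz18a} precisely to repair this defect, and the point to verify (or to extract from the limp formalism there) is that K-limpness is stable under pushforward, which is the site-theoretic analogue of the statement that the direct image of a flasque sheaf is flasque. Granting this stability, the computation above is routine.

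Finally I would deduce the pullback identity purely formally. By \cite[Lemma 3.7.2]{Suz13} each of $L f^{\ast}$, $L g^{\ast}$ and $L (f \compose g)^{\ast}$ is left adjoint to the corresponding derived pushforward. Now $L g^{\ast} \compose L f^{\ast}$ is left adjoint to $R f_{\ast} \compose R g_{\ast}$ (the left adjoint of a composite is the composite of the left adjoints in reverse order), while $L (f \compose g)^{\ast}$ is left adjoint to $R(f \compose g)_{\ast}$. The isomorphism $R(f \compose g)_{\ast} \isomto R f_{\ast} \compose R g_{\ast}$ just established, together with uniqueness of adjoints, therefore yields a canonical isomorphism $L g^{\ast} \compose L f^{\ast} \isomto L (f \compose g)^{\ast}$, as required.
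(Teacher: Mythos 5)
Your proposal is correct and takes essentially the same route as the paper: the paper's proof consists of citing \cite[Propositions 2.4.2 and 2.4.3]{Suz18a} --- exactly the two facts you isolate, namely that K-limp complexes are $f_{\ast}$-injective and that pushforward preserves K-limpness --- and then deducing the pullback identity by adjunction, just as you do. The point you flag as the ``main obstacle'' (stability of K-limpness under $g_{\ast}$) is precisely \cite[Proposition 2.4.3]{Suz18a}, so your argument is a spelled-out version of the paper's citation-level proof rather than a different one.
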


\begin{proof}
	The statement about the pushforward follows from \cite[Propositions 2.4.2 and 2.4.3]{Suz18a}.
	This implies the other statement by adjunction.
\end{proof}

In the next two propositions,
we relate the morphism \eqref{eq: derived push and RHom} to $L f^{\ast}$.

\begin{Prop}
	The morphism \eqref{eq: derived push and RHom}
	after applying $R \Gamma(X, \var)$ for any $X \in S$
	can be canonically identified with the composite
		\begin{align*}
			&
					R \Hom_{S' / f^{-1} X}(G'|_{f^{-1} X}, F'|_{f^{-1} X})
			\\
			&	\to
					R \Hom_{S' / f^{-1} X} \bigl(
						L(f|_{X})^{\ast} R(f|_{X})_{\ast}(G'|_{f^{-1} X}), F'|_{f^{-1} X}
					\bigr)
			\\
			&	\cong
					R \Hom_{S / X} \bigl(
						R(f|_{X})_{\ast}(G'|_{f^{-1} X}), R(f|_{X})_{\ast}(F'|_{f^{-1} X})
					\bigr)
		\end{align*}
	in $D(\Ab)$,
	where the first morphism is induced by the counit of adjunction
	and the second isomorphism is the adjunction.
\end{Prop}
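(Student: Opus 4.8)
The plan is to apply $R\Gamma(X, \var)$ to the morphism \eqref{eq: derived push and RHom}, to rewrite both its source and target in terms of the localized continuous map $f|_X \colon S'/f^{-1}X \to S/X$, and thereby reduce the assertion to the statement that the resulting map is the derived pushforward $R(f|_X)_*$ on morphism complexes; the displayed composite is then recognized as the standard counit identity for the adjunction $L(f|_X)^* \dashv R(f|_X)_*$. For the source, I would use the tautological isomorphism $R\Gamma(X, Rf_* \var) \cong R\Gamma(f^{-1}X, \var)$ together with $R\Gamma(f^{-1}X, R\sheafhom_{S'}(G', F')) \cong R\Hom_{S'/f^{-1}X}(G'|_{f^{-1}X}, F'|_{f^{-1}X})$ to recover the first term of the composite. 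For the target, I would combine $R\Gamma(X, R\sheafhom_S(A, B)) \cong R\Hom_{S/X}(A|_X, B|_X)$ with the isomorphism $(Rf_* \var)|_X \cong R(f|_X)_*(\var|_{f^{-1}X})$, which always holds (as recalled after Definition \ref{def: f compatible, f acyclic}), to recover the last term.

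Next I would retrace the construction of \eqref{eq: derived push and RHom} through these identifications. Choosing K-injective representatives for $G'$ and $F'$ as in the proof of Proposition \ref{prop: sheafified functoriality and cup product}, the source is modeled by $f_* \sheafhom_{S'}(G', F')$ and the morphism \eqref{eq: derived push and RHom} by the chain-level functoriality map \eqref{eq: functoriality of push in abelian category setting}, namely $\varphi \mapsto f_* \varphi$. Since restriction to $X$ commutes with $f_*$ via $(f_* \var)|_X \cong (f|_X)_*(\var|_{f^{-1}X})$ and preserves K-injectivity, applying $R\Gamma(X, \var)$ carries this chain map to the derived pushforward on morphism complexes
\[
		R(f|_X)_*
	\colon
		R\Hom_{S'/f^{-1}X}(G'|_{f^{-1}X}, F'|_{f^{-1}X})
	\to
		R\Hom_{S/X}\bigl(
			R(f|_X)_*(G'|_{f^{-1}X}), R(f|_X)_*(F'|_{f^{-1}X})
		\bigr),
\]
sending $\varphi$ to $R(f|_X)_* \varphi$. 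This reduces the proposition to identifying this derived functoriality map with the displayed composite.

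Finally I would prove that identification by the counit identity. Writing $L = L(f|_X)^*$, $R = R(f|_X)_*$ and $\epsilon \colon L R \to \id$ for the derived counit, the derived adjunction isomorphism carries $R\varphi$ to $\epsilon \compose L(R\varphi)$, which by naturality of $\epsilon$ equals $\varphi \compose \epsilon$; this is exactly precomposition with the counit followed by the adjunction isomorphism, i.e.\ the displayed composite. The hard part will be to establish this as an equality of morphisms in $D(\Ab)$ and not merely on cohomology: one must realize the derived adjunction isomorphism and the derived counit simultaneously at the chain level --- a K-injective resolution on the $R(f|_X)_*$ side and a resolution computing $L(f|_X)^*$ on the other --- and check that the triangle-identity bookkeeping is compatible with the chain-level map $\varphi \mapsto (f|_X)_* \varphi$ of the previous paragraph. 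Granting this homotopy-coherent form of the counit identity, composing the identifications of the first two paragraphs yields the proposition.
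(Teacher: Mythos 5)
Your first two paragraphs reproduce the paper's reduction: choose K-injective representatives $G', F'$, use that $\sheafhom_{S'}(G', F')$ is then K-limp (so $f_{\ast}$ of it computes $R f_{\ast}$), that restriction to a localization preserves K-injectivity, and that sections over $X$ turn everything into $\Hom$ complexes, so that \eqref{eq: derived push and RHom} after $R \Gamma(X, \var)$ is modeled by the chain-level functoriality map \eqref{eq: functoriality of push in abelian category setting}, $\varphi \mapsto (f|_{X})_{\ast} \varphi$. The genuine gap is your third paragraph: the ``homotopy-coherent counit identity'' you grant is exactly the content of the proposition, and the argument you sketch for it (naturality of the derived counit plus the triangle identity, applied to a morphism $\varphi$) only proves that the two maps agree on Hom-\emph{sets} in the derived categories, i.e.\ after passing to morphisms between objects --- as you yourself note, this falls short of the asserted identification of the two morphisms between the $R \Hom$ complexes in $D(\Ab)$. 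Moreover, your proposed repair --- realizing the derived adjunction and derived counit simultaneously with a second resolution computing $L(f|_{X})^{\ast}$ --- is both delicate and unnecessary.

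What you are missing is the observation that lets the paper collapse everything to ordinary category theory at the chain level: since $G'$ is K-injective, $(f|_{X})_{\ast}(G'|_{f^{-1} X})$ already represents $R(f|_{X})_{\ast}(G'|_{f^{-1} X})$, and the derived-counit precomposition factors through the \emph{underived} counit via the canonical natural transformation $L(f|_{X})^{\ast} \to (f|_{X})^{\ast}$. Concretely, the first arrow of the displayed composite is computed by the chain map $\Hom_{S'/f^{-1}X}(G'|_{f^{-1}X}, F'|_{f^{-1}X}) \to \Hom_{S'/f^{-1}X}\bigl((f|_{X})^{\ast}(f|_{X})_{\ast}(G'|_{f^{-1}X}), F'|_{f^{-1}X}\bigr)$ followed by the localization into $R\Hom$ with $L(f|_{X})^{\ast}$; and the derived adjunction, restricted along this factorization, coincides with the underived adjunction followed by the localization $\Hom \to R\Hom$ (this is how the derived adjunction is built from the underived one). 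Hence the whole displayed composite is computed by: underived counit precomposition, then underived adjunction, then localization --- and the identification of the first two steps with $\varphi \mapsto (f|_{X})_{\ast} \varphi$ is the strict $1$-categorical triangle identity, with no homotopy coherence left to verify. You never need to resolve anything on the pullback side, because $L(f|_{X})^{\ast}$ is only ever mapped \emph{out of} via $L(f|_{X})^{\ast} \to (f|_{X})^{\ast}$, never computed.
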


\begin{proof}
	For K-injective $G', F' \in K(S')$,
	the composite morphism in the statement can be written as
		\begin{align*}
			&
					\Hom_{S' / f^{-1} X}(G'|_{f^{-1} X}, F'|_{f^{-1} X})
			\\
			&	\to
					\Hom_{S' / f^{-1} X} \bigl(
						(f|_{X})^{\ast} (f|_{X})_{\ast}(G'|_{f^{-1} X}), F'|_{f^{-1} X}
					\bigr)
			\\
			&	\to
					R \Hom_{S' / f^{-1} X} \bigl(
						L(f|_{X})^{\ast} (f|_{X})_{\ast}(G'|_{f^{-1} X}), F'|_{f^{-1} X}
					\bigr)
			\\
			&	\cong
					R \Hom_{S / X} \bigl(
						(f|_{X})_{\ast}(G'|_{f^{-1} X}), (f|_{X})_{\ast}(F'|_{f^{-1} X})
					\bigr)
		\end{align*}
	in $D(\Ab)$,
	where $\Hom$ is understood to be the total complex of the $\Hom$ double complex.
	Hence it can also be written as
		\begin{align*}
			&
					\Hom_{S' / f^{-1} X}(G'|_{f^{-1} X}, F'|_{f^{-1} X})
			\\
			&	\to
					\Hom_{S' / f^{-1} X} \bigl(
						(f|_{X})^{\ast} (f|_{X})_{\ast}(G'|_{f^{-1} X}), F'|_{f^{-1} X}
					\bigr)
			\\
			&	\cong
					\Hom_{S / X} \bigl(
						(f|_{X})_{\ast}(G'|_{f^{-1} X}), (f|_{X})_{\ast}(F'|_{f^{-1} X})
					\bigr)
			\\
			&	\to
					R \Hom_{S / X} \bigl(
						(f|_{X})_{\ast}(G'|_{f^{-1} X}), (f|_{X})_{\ast}(F'|_{f^{-1} X})
					\bigr).
		\end{align*}
	The composite morphism from the first term to the third term can be identified with
	the morphism \eqref{eq: functoriality of push in abelian category setting}.
	This implies the result.
\end{proof}

\begin{Prop}
	For $G \in D(S)$ and $F' \in D(S')$, consider the composite
		\[
				R f_{\ast}
				R \sheafhom_{S'}(L f^{\ast} G, F')
			\to
				R \sheafhom_{S}(R f_{\ast} L f^{\ast} G, R f_{\ast} F')
			\to
				R \sheafhom_{S}(G, R f_{\ast} F')
		\]
	of the morphism \eqref{eq: derived push and RHom}
	and the unit of adjunction.
	This is an isomorphism if $G$ is $f$-compatible.
\end{Prop}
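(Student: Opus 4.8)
The plan is to reduce the statement to a local, section-wise check by applying $R\Gamma(X,\var)$ for an arbitrary $X \in S$ and invoking the previous proposition, which identifies the first arrow of our composite after applying $R\Gamma(X,\var)$. Concretely, I would write $G_{X} = G|_{X}$, $F'_{X} = F'|_{f^{-1}X}$, and abbreviate the localized premorphism as $f_X \colon S'/f^{-1}X \to S/X$. Since $R\sheafhom$ and $Rf_{\ast}$ both commute with restriction to localizations (the restriction $(Rf_{\ast}F')|_X \to R(f_X)_{\ast}(F'|_{f^{-1}X})$ is always an isomorphism, as noted after Definition \ref{def: f compatible, f acyclic}, and $L f^{\ast}$ restricts compatibly precisely when $G$ is $f$-compatible), it suffices to show that the induced map
\[
		R\Hom_{S'/f^{-1}X}\bigl(L f_X^{\ast} G_X, F'_X\bigr)
	\to
		R\Hom_{S/X}\bigl(G_X, R(f_X)_{\ast} F'_X\bigr)
\]
is an isomorphism for every $X$. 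A sheaf is zero iff all its sections over every $X \in S$ vanish, so checking the cone of our morphism section-wise suffices to conclude it is an isomorphism of sheaves.

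Next I would identify this section-wise map with the derived adjunction isomorphism between $L f_X^{\ast}$ and $R(f_X)_{\ast}$. The previous proposition tells us that applying $R\Gamma(X,\var)$ to \eqref{eq: derived push and RHom} yields the composite passing through $R\Hom_{S'/f^{-1}X}(L(f|_X)^{\ast} R(f|_X)_{\ast}(G'|_{f^{-1}X}), F'|_{f^{-1}X})$ followed by adjunction; here one takes $G' = L f^{\ast} G$. The point is that, after using $f$-compatibility to replace $(L f^{\ast} G)|_{f^{-1}X}$ by $L f_X^{\ast} G_X$, the counit $L f_X^{\ast} R(f_X)_{\ast} L f_X^{\ast} G_X \to L f_X^{\ast} G_X$ appearing in that composite and the unit $G_X \to R(f_X)_{\ast} L f_X^{\ast} G_X$ coming from the second arrow of our statement's composite are related by one of the triangle identities for the adjoint pair $(L f_X^{\ast}, R(f_X)_{\ast})$ established in Proposition \ref{prop: composite of derived pullback}. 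Consequently the whole composite reduces to the adjunction isomorphism
\[
		R\Hom_{S'/f^{-1}X}\bigl(L f_X^{\ast} G_X, F'_X\bigr)
	\cong
		R\Hom_{S/X}\bigl(G_X, R(f_X)_{\ast} F'_X\bigr),
\]
which is an isomorphism by the very adjunction between $L f_X^{\ast}$ and $R(f_X)_{\ast}$, with no further hypothesis on $F'$.

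I expect the main obstacle to be the bookkeeping in the second step, namely verifying that the two adjunction-derived morphisms in play genuinely assemble into the triangle identity rather than merely resembling it. The previous proposition hands us the counit-based description of \eqref{eq: derived push and RHom}, but our composite also feeds in the unit $G \to R f_{\ast} L f^{\ast} G$ through the second arrow, and one must confirm that precomposing the counit-route map with this unit reproduces the identity-on-$L f^{\ast} G$ used to trivialize the factor. This is exactly a triangle (zig-zag) identity for the $(L f^{\ast}, R f_{\ast})$ adjunction, localized at $X$; the care required is to ensure all identifications are the canonical ones so that the diagram commutes strictly in $D(\Ab)$. The role of $f$-compatibility is isolated and clean: it is used only to license the identification $(L f^{\ast} G)|_{f^{-1}X} \cong L f_X^{\ast}(G|_X)$, without which the section-wise map would not be expressible purely in terms of the localized adjunction, and this is precisely where the hypothesis enters.
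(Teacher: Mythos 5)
Your proposal is correct and follows essentially the same route as the paper's own proof: reduce to checking after $R \Gamma(X, \var)$ for each $X \in S$, use the previous proposition to express the first arrow as counit-then-adjunction, collapse the resulting counit--unit zig-zag by the triangle identity for the adjunction $(L f^{\ast}, R f_{\ast})$, and observe that $f$-compatibility enters only through the identification $(L f^{\ast} G)|_{f^{-1} X} \cong L (f|_{X})^{\ast}(G|_{X})$, leaving the localized adjunction isomorphism. One minor slip worth fixing: the triangle identity is a formal property of the adjunction itself (coming from \cite[Lemma 3.7.2]{Suz13}), not something established in Proposition \ref{prop: composite of derived pullback}, which concerns composites of premorphisms.
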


\begin{proof}
	It is enough to show that the stated morphism becomes an isomorphism in $D(\Ab)$
	when $R \Gamma(X, \var)$ is applied for any $X \in S$.
	Let $\varphi \colon L(f|_{X})^{\ast}(G|_{X}) \to (L f^{\ast} G)|_{f^{-1} X}$ be
	the natural morphism, which is an isomorphism under the assumption.
	By the previous proposition, the morphism after $R \Gamma(X, \var)$ is given by the composite
		\begin{align*}
			&
					R \Hom_{S' / f^{-1} X} \bigl(
						(L f^{\ast} G)|_{f^{-1} X},
						F'|_{f^{-1} X}
					\bigr)
			\\
			&	\to
					R \Hom_{S' / f^{-1} X} \Bigl(
						L (f|_{X})^{\ast}
						R (f|_{X})_{\ast}
						\bigl(
							(L f^{\ast} G)|_{f^{-1} X}
						\bigr),
						F'|_{f^{-1} X}
					\Bigr)
			\\
			&	\cong
					R \Hom_{S / X} \bigl(
						(R f_{\ast} L f^{\ast} G)|_{X},
						(R f_{\ast} F')|_{X}
					\bigr)
			\\
			&	\to
					R \Hom_{S / X} \bigl(
						G|_{X},
						(R f_{\ast} F')|_{X}
					\bigr)
		\end{align*}
	of the counit of adjunction, the adjunction isomorphism
	and the unit of adjunction.
	The morphism $G|_{X} \to (R f_{\ast} L f^{\ast} G)|_{X}$ used in the third morphism
	can be written as the composite
		\[
				G|_{X}
			\to
				R (f|_{X})_{\ast} L(f|_{X})^{\ast}(G|_{X})
			\to
				R (f|_{X})_{\ast} \bigl(
					(L f^{\ast} G)|_{f^{-1} X}
				\bigr)
		\]
	of the unit of adjunction and the morphism $\varphi$.
	Hence the morphism after $R \Gamma(X, \var)$ can also be written as the composite
		\begin{align*}
			&		R \Hom_{S' / f^{-1} X} \bigl(
						(L f^{\ast} G)|_{f^{-1} X},
						F'|_{f^{-1} X}
					\bigr)
			\\
			&	\to
					R \Hom_{S' / f^{-1} X} \bigl(
						L(f|_{X})^{\ast}(G|_{X}),
						F'|_{f^{-1} X}
					\bigr)
			\\
			&	\cong
					R \Hom_{S / X} \bigl(
						G|_{X},
						(R f_{\ast} F')|_{X}
					\bigr)
		\end{align*}
	of $\varphi$ and the adjunction isomorphism
	since $\varphi$ and the adjunction isomorphism commute with the unit and the counit
	and the composite of the counit and the unit is the identity.
	This composite is an isomorphism if $\varphi$ is an isomorphism.
	Hence the result follows.
\end{proof}

Using the above, we obtain a compatibility between $L f^{\ast}$ and $\tensor^{L}$
under an $f$-compatibility assumption:

\begin{Prop} \label{prop: cocup product}
	For any $G, F \in D(S)$,
	consider the morphism
		\[
				L f^{\ast}(G \tensor^{L} F)
			\to
				L f^{\ast} G \tensor^{L} L f^{\ast} F
		\]
	corresponding to the composite
		\[
				G \tensor^{L} F
			\to
				R f_{\ast} L f^{\ast} G \tensor^{L} R f_{\ast} L f^{\ast} F
			\to
				R f_{\ast}(L f^{\ast} G \tensor^{L} L f^{\ast} F)
		\]
	of the unit of adjunction and
	the morphism \eqref{eq: cup product}.
	This morphism is an isomorphism if $G$ or $F$ is $f$-compatible.
\end{Prop}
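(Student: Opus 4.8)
The plan is to prove the statement by testing the morphism against all objects of $D(S')$ and reducing to the isomorphism established in the preceding proposition. Write $\theta \colon L f^{\ast}(G \tensor^{L} F) \to L f^{\ast} G \tensor^{L} L f^{\ast} F$ for the morphism under consideration, and suppose first that $G$ is $f$-compatible. By the Yoneda lemma, $\theta$ is an isomorphism as soon as the induced map
\[
		\theta^{\ast} \colon
			\Hom_{D(S')}(L f^{\ast} G \tensor^{L} L f^{\ast} F, H')
		\to
			\Hom_{D(S')}(L f^{\ast}(G \tensor^{L} F), H')
\]
is bijective for every $H' \in D(S')$. So the plan is to compute both sides and match $\theta^{\ast}$ with a known isomorphism.

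First I would rewrite the target group using the adjunction $(L f^{\ast}, R f_{\ast})$ followed by the derived tensor-Hom adjunction and the commutativity of $\tensor^{L}$:
\begin{align*}
		\Hom_{D(S')}(L f^{\ast}(G \tensor^{L} F), H')
	&\cong
		\Hom_{D(S)}(G \tensor^{L} F, R f_{\ast} H')
	\\
	&\cong
		\Hom_{D(S)} \bigl( F, R \sheafhom_{S}(G, R f_{\ast} H') \bigr).
\end{align*}
Symmetrically, I would rewrite the source group by first applying tensor-Hom and then the adjunction $(L f^{\ast}, R f_{\ast})$:
\begin{align*}
		\Hom_{D(S')}(L f^{\ast} G \tensor^{L} L f^{\ast} F, H')
	&\cong
		\Hom_{D(S')} \bigl( L f^{\ast} F, R \sheafhom_{S'}(L f^{\ast} G, H') \bigr)
	\\
	&\cong
		\Hom_{D(S)} \bigl( F, R f_{\ast} R \sheafhom_{S'}(L f^{\ast} G, H') \bigr).
\end{align*}

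Under these identifications I expect $\theta^{\ast}$ to become $\Hom_{D(S)}(F, \var)$ applied to the composite
\[
		R f_{\ast} R \sheafhom_{S'}(L f^{\ast} G, H')
	\to
		R \sheafhom_{S}(G, R f_{\ast} H')
\]
of the preceding proposition. Granting this, the preceding proposition shows that this composite is an isomorphism because $G$ is $f$-compatible; hence $\theta^{\ast}$ is bijective for all $H'$ and $\theta$ is an isomorphism. When instead $F$ is $f$-compatible, I would interchange the two tensor factors using the commutativity constraint of $\tensor^{L}$ — which both $\theta$ and the cup product \eqref{eq: cup product} respect — thereby reducing to the case already treated with the roles of $G$ and $F$ exchanged.

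The hard part will be the identification in the previous paragraph, namely that $\theta^{\ast}$ genuinely agrees with the map induced by the morphism of the preceding proposition. Since $\theta$ is constructed from the unit of $(L f^{\ast}, R f_{\ast})$ and the cup product \eqref{eq: cup product}, and the cup product is in turn built out of \eqref{eq: derived push and RHom} and an evaluation morphism in the proof of Proposition \ref{prop: sheafified functoriality and cup product}, this verification amounts to a diagram chase that propagates the units and counits of the $(L f^{\ast}, R f_{\ast})$ and tensor-Hom adjunctions through these constructions. I would organize it by rewriting every arrow in terms of \eqref{eq: derived push and RHom} and then repeatedly invoking the triangle identities, in the same spirit as the two preceding propositions; this bookkeeping, rather than any conceptual difficulty, is where the real work lies.
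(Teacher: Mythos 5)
Your proposal is correct and takes essentially the same approach as the paper: the paper's proof likewise reduces to one $f$-compatible factor (it assumes $F$ is, the mirror image of your choice via commutativity of $\tensor^{L}$), rewrites $R \Hom_{S'}(L f^{\ast} G \tensor^{L} L f^{\ast} F, \var)$ and $R \Hom_{S'}(L f^{\ast}(G \tensor^{L} F), \var)$ through the $(L f^{\ast}, R f_{\ast})$ and derived tensor-Hom adjunctions, and applies the preceding proposition to the inner composite before concluding by Yoneda. The verification that $\theta^{\ast}$ agrees with the map induced by that composite, which you rightly flag as the remaining bookkeeping, is exactly the step the paper leaves implicit in its closing ``this implies the result.''
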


\begin{proof}
	We may assume that $F$ is $f$-compatible.
	For any $F' \in D(S')$, we have isomorphisms
		\begin{align*}
			&		R \Hom_{S'}(
						L f^{\ast} G \tensor^{L} L f^{\ast} F,
						F'
					)
			\\
			&	\cong
					R \Hom_{S} \bigl(
						G,
						R f_{\ast}
						R \sheafhom_{S'}(
							L f^{\ast} F, F'
						)
					\bigr)
			\\
			&	\cong
					R \Hom_{S} \bigl(
						G,
						R \sheafhom_{S}(
							F, R f_{\ast} F'
						)
					\bigr)
			\\
			&	\cong
					R \Hom_{S'} \bigl(
						L f^{\ast}(G \tensor^{L} F),
						F'
					\bigr)
		\end{align*}
	in $D(\Ab)$ functorial in $F'$,
	where the second isomorphism is given by the previous proposition.
	This implies the result.
\end{proof}


\section{Derived pullback to the perfect artinian \'etale site}

In the rest of the paper,
we let $k$ be a perfect field of characteristic $p > 0$.
We fix our basic terminology:

\begin{Def} \mbox{}
	\begin{enumerate}
		\item
			A \emph{perfect field extension} of $k$ is
			a field extension of $k$ that is a perfect field.
		\item
			A $k$-scheme is said to be \emph{perfect}
			if its (relative or absolute) Frobenius morphism is an isomorphism.
		\item
			For a $k$-algebra (resp.\ a $k$-scheme),
			its \emph{perfection} is the direct (resp.\ inverse) limit along Frobenius morphisms on it.
	\end{enumerate}
\end{Def}

See \cite{BGA18} for a general reference on perfect schemes.
In \cite[Section 5]{BGA18}, the perfection of a $k$-scheme is called the inverse perfection.
A perfect field extension does not have to be
the perfection of a finitely generated extension of $k$.

\begin{Def} \mbox{}
	\begin{enumerate}
		\item
			Define $k^{\perar}$ to be the category of finite products of perfect field extensions of $k$
			with $k$-algebra homomorphisms.
		\item
			For any $k' = \prod k'_{i} \in k^{\perar}$ with fields $k'_{i}$,
			define $k'^{\perar}$ to be the category of $k'$-algebras $k'' = \prod k''_{i}$
			with each factor $k''_{i} \in k_{i}'^{\perar}$ with $k'$-algebra homomorphisms.
	\end{enumerate}
\end{Def}

\begin{Prop} \label{prop: slice of gen rational algebras}
	For any $k' \in k^{\perar}$,
	the category $k'^{\perar}$ is canonically equivalent to
	the category of morphisms $k' \to k''$ from $k'$ in $k^{\perar}$.
\end{Prop}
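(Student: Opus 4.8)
The plan is to exhibit mutually inverse functors between $k'^{\perar}$ and the under category $(k' \downarrow k^{\perar})$, whose objects are $k$-algebra homomorphisms $\phi \colon k' \to k''$ with $k'' \in k^{\perar}$ and whose morphisms are the triangles under $k'$ (that is, the $k'$-algebra homomorphisms). Write $k' = \prod_{i} k'_{i}$ for the decomposition into perfect field factors, and let $e_{i} \in k'$ be the corresponding orthogonal primitive idempotents, summing to $1$. In one direction I would send $(\phi \colon k' \to k'')$ to $k''$ regarded as a $k'$-algebra via $\phi$; in the other I would send $k'' \in k'^{\perar}$ to its underlying $k$-algebra together with its structure map $k' \to k''$. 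On morphisms both categories have exactly the $k'$-algebra homomorphisms as arrows, so the correspondence is automatically fully faithful and functorial; the content is entirely in checking that the two object assignments are well defined and mutually inverse.

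For the first assignment, the images $\phi(e_{i})$ are orthogonal idempotents of $k''$ summing to $1$, giving $k'' = \prod_{i} \phi(e_{i}) k''$ with each factor $k''_{i} := \phi(e_{i}) k''$ a $k'_{i}$-algebra. To see $k''_{i} \in k_{i}'^{\perar}$, write $k'' = \prod_{j} k''_{(j)}$ into perfect field extensions of $k$, with primitive idempotents $f_{j}$. Since the idempotents of a finite product of fields are precisely the subsums of the $f_{j}$, each $\phi(e_{i})$ equals $\sum_{j \in S_{i}} f_{j}$ for a subset $S_{i}$, and orthogonality and completeness of the $e_{i}$ force the $S_{i}$ to partition the index set; hence $k''_{i} = \prod_{j \in S_{i}} k''_{(j)}$. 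For each such $j$ the composite $k'_{i} \to k'' \to k''_{(j)}$ is a $k$-algebra map out of a field, hence injective, so $k''_{(j)}$ is a perfect field extension of $k'_{i}$, and therefore $k''_{i} \in k_{i}'^{\perar}$, as required.

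For the second assignment I would use only that a perfect field extension of $k'_{i}$ is a fortiori a perfect field extension of $k$, so that unpacking $k'' = \prod_{i} k''_{i}$ with $k''_{i} = \prod_{j} k''_{i,j}$ displays $k''$ as a finite product of perfect field extensions of $k$, that is $k'' \in k^{\perar}$. The two assignments are inverse on objects because the decomposition of $k''$ produced by the idempotents $\phi(e_{i})$ is literally the product decomposition $k'' = \prod_{i} k''_{i}$ used to define membership in $k'^{\perar}$, namely $k''_{i} = k'' \tensor_{k'} k'_{i}$. The one step I would watch most carefully is the idempotent bookkeeping of the second paragraph: one must verify that the decomposition of $k''$ forced by the image idempotents genuinely refines its field decomposition and that each resulting factor is a field extension of the correct base $k'_{i}$. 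This rests on the two elementary facts that the idempotents of a finite product of fields are exactly the subsums of the primitive ones and that a $k$-algebra homomorphism between fields is injective; granting these, the equivalence is a formal unwinding of definitions.
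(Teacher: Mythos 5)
Your proposal is correct and is essentially the paper's own argument written out in full: the paper's entire proof is the one-line observation that a perfect field extension of a perfect field extension is again a perfect field extension, which is exactly the key step in your second and third paragraphs, the idempotent bookkeeping being a routine (and correctly executed) unwinding. The only point worth flagging is the degenerate case $\phi(e_{i}) = 0$, where $S_{i} = \emptyset$ and $k''_{i}$ is the zero ring; this is harmless under the standing convention that ``finite products'' in the definition of $k^{\perar}$ include the empty product, a convention the paper's one-line proof also tacitly uses.
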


\begin{proof}
	A perfect field extension of a perfect field extension is a perfect field extension.
	This implies the result.
\end{proof}

A similar statement does not hold for the category of ind-rational $k$-algebras $k^{\ind\rat}$
(see the second paragraph after \cite[Definition 2.1.3]{Suz13}).
An \'etale algebra over an object of $k^{\perar}$ is again in $k^{\perar}$.
The tensor product $k_{2} \tensor_{k_{1}} k_{3}$ of morphisms in $k^{\perar}$
does not belong to $k^{\perar}$ in general,
but it does if either $k_{2}$ or $k_{3}$ is \'etale over $k_{1}$.
Now we define the site $\Spec k^{\perar}_{\et}$.

\begin{Def} \mbox{}
	\begin{enumerate}
		\item
			For any $k' \in k^{\perar}$,
			we put the \'etale topology on (the opposite category of) the category $k'^{\perar}$
			and denote the resulting site by $\Spec k'^{\perar}_{\et}$.
			That is, a covering of $k'' \in k'^{\perar}$ is a finite family of \'etale $k''$-algebras $\{k''_{i}\}$
			such that $\prod k''_{i}$ is faithfully flat over $k''$.
		\item
			We denote the cohomology functor for $\Spec k^{\perar}_{\et}$ at the object $k'$
			by $H^{n}(k'_{\et}, \var)$, with derived categorical version $R \Gamma(k'_{\et}, \var)$.
		\item
			We denote the sheaf-Hom functor
			$\sheafhom_{\Spec k^{\perar}_{\et}}$ for $\Spec k^{\perar}_{\et}$
			by $\sheafhom_{k^{\perar}_{\et}}$.
	\end{enumerate}
\end{Def}

As above, we are not always strictly rigorous about the distinction
between the algebra $k'$ and the corresponding affine scheme $\Spec k'$ in this paper.
The context should make it clear.

The general rule to denote a site in this paper is that
the upper script (such as ``$\perar$'') denotes the type of objects of the underlying category
and the lower script (such as ``$\et$'') denotes the topology.

\begin{Prop} \label{prop: grational site compatible with localization}
	For any $k' \in k^{\perar}$,
	the site $\Spec k'^{\perar}$ is canonically equivalent to
	the localization $\Spec k^{\perar} / k'$ of the site $\Spec k^{\perar}$ at the object $k'$.
\end{Prop}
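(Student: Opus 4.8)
The plan is to unwind the two definitions and check that they agree on both the underlying category and the covering families, the identification of underlying categories being supplied by Proposition \ref{prop: slice of gen rational algebras}. First I would match the underlying categories. By the definition of localization (\cite[Expos\'e II, 5.1]{AGV72a}), the underlying category of $\Spec k^{\perar}_{\et} / k'$ is the slice category whose objects are the morphisms $\Spec k'' \to \Spec k'$ in the underlying category of $\Spec k^{\perar}_{\et}$ (which is the opposite of $k^{\perar}$), that is, the $k$-algebra homomorphisms $k' \to k''$ with $k'' \in k^{\perar}$. By Proposition \ref{prop: slice of gen rational algebras}, the category of such morphisms $k' \to k''$ is canonically equivalent to $k'^{\perar}$, and this equivalence is functorial in the structure morphism. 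This identifies the underlying categories of the two sites.

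Next I would compare the topologies under this identification. Again by \cite[Expos\'e II, 5.1]{AGV72a}, a family $\{(\Spec k''_i \to \Spec k') \to (\Spec k'' \to \Spec k')\}$ is a covering in $\Spec k^{\perar}_{\et} / k'$ exactly when the underlying family $\{\Spec k''_i \to \Spec k''\}$ is a covering in $\Spec k^{\perar}_{\et}$, i.e.\ a finite family of \'etale $k''$-algebras $\{k''_i\}$ with $\prod k''_i$ faithfully flat over $k''$. On the other hand, under the equivalence of the first paragraph, the covering families of $k''$ in $\Spec k'^{\perar}_{\et}$ are by definition the finite families of \'etale $k''$-algebras $\{k''_i\}$ with $\prod k''_i$ faithfully flat over $k''$. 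Since \'etaleness and faithful flatness of the morphisms $k'' \to k''_i$ depend only on these morphisms themselves and not on whether $k''$ is regarded as a $k$-algebra or a $k'$-algebra, the two notions of covering coincide, and the two sites are canonically equivalent.

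I do not expect any genuine obstacle here; the proof is essentially a matter of unwinding the definitions, and the coincidence of topologies is a direct consequence of the absoluteness of the \'etale-and-faithfully-flat condition. The only point requiring slight care is that the objects $k''_i$ appearing in the covering families indeed lie in $k'^{\perar}$ (equivalently in $k^{\perar}$), so that the covering families genuinely live in the stated categories; this is guaranteed by the observation, recorded in the text preceding the definition of $\Spec k^{\perar}_{\et}$, that an \'etale algebra over an object of $k^{\perar}$ is again in $k^{\perar}$.
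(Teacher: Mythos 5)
Your proof is correct and follows essentially the same route as the paper, which simply cites Proposition \ref{prop: slice of gen rational algebras} (the identification of $k'^{\perar}$ with the slice category of morphisms $k' \to k''$) and leaves the comparison of coverings implicit. Your explicit verification that the covering families coincide --- including the point that \'etale algebras over objects of $k^{\perar}$ stay in $k^{\perar}$ --- is exactly the routine check the paper suppresses.
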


\begin{proof}
	This follows from Proposition \ref{prop: slice of gen rational algebras}.
\end{proof}

For any perfect $k$-scheme (resp.\ perfect $k$-group scheme) $X$,
we denote by the same symbol $X$ to also mean
the sheaf of sets (resp.\ groups) on $\Spec k^{\perar}_{\et}$ represented by $X$,
which is described as follows.

\begin{Prop} \label{prop: scheme as sheaf on rational etale site}
	Any perfect $k$-scheme $X$ as a sheaf of sets on $\Spec k^{\perar}_{\et}$
	is given by the disjoint union of its points (identified with the spectra of the residue fields).
	As a presheaf of sets,
	this disjoint union sheaf in the \'etale (or Zariski) topology may be described as
	the filtered union of finite sets of points of $X$.
\end{Prop}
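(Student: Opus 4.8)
The plan is to compute the representable sheaf attached to $X$ directly as a functor on $k^{\perar}$ and then to recognize it as the asserted sheaf coproduct and filtered union.

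First I would unwind a section of $X$ over an object $k' = \prod_{i} k'_{i} \in k^{\perar}$, where the $k'_{i}$ are perfect field extensions of $k$. Since $\Spec k' = \coprod_{i} \Spec k'_{i}$ is a finite disjoint union of spectra of fields, a $k$-morphism $\Spec k' \to X$ is the same datum as a finite family of $k$-morphisms $\Spec k'_{i} \to X$. Each morphism out of the spectrum of a field factors uniquely through a single point $x \in X$ and is then given by a $k$-embedding of its residue field $\kappa(x) \into k'_{i}$; here I use that $X$ is perfect, so that every $\kappa(x)$ is a perfect field extension of $k$ and $\Spec \kappa(x)$ is an object of the site representing a sheaf. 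This yields a natural identification
\[
		X(k')
	=
		\prod_{i} \coprod_{x \in X} \Hom_{k}(\kappa(x), k'_{i}).
\]
For a finite set $S$ of points of $X$ the algebra $\prod_{x \in S} \kappa(x)$ again lies in $k^{\perar}$; I write $X_{S}$ for the sheaf it represents, and the same unwinding gives $X_{S}(k') = \prod_{i} \coprod_{x \in S} \Hom_{k}(\kappa(x), k'_{i})$.

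Next I would establish the filtered-union description at the level of presheaves. The finite sets $S$ of points of $X$, ordered by inclusion, form a filtered poset, with transition maps $X_{S} \to X_{S'}$ (for $S \subseteq S'$) the inclusions of disjoint-union factors. Because $k'$ has only finitely many factors, the finite product $\prod_{i}$ commutes with the filtered colimit over $S$; combined with $\dirlim_{S} \coprod_{x \in S} = \coprod_{x \in X}$ and the formula for $X(k')$ above, this gives
\[
		\bigl( \dirlim_{S} X_{S} \bigr)(k')
	=
		X(k')
\]
naturally in $k'$. Hence $X = \dirlim_{S} X_{S}$ \emph{as presheaves}, which is the second assertion; in particular the presheaf colimit $\dirlim_{S} X_{S}$ is already a sheaf, being equal to $X$.

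Finally I would identify each $X_{S}$ with the finite sheaf coproduct $\coprod_{x \in S} \Spec \kappa(x)$. The factor projections $\prod_{x \in S} \kappa(x) \onto \kappa(x_{0})$ induce maps $\Spec \kappa(x_{0}) \to X_{S}$ and hence a canonical morphism $\coprod_{x \in S} \Spec \kappa(x) \to X_{S}$ out of the sheaf coproduct; that this is an isomorphism is the standard fact that the decomposition of $\Spec \prod_{x \in S} \kappa(x)$ into its clopen factors is a covering for $\Spec k^{\perar}_{\et}$ (each factor is a localization, hence \'etale, and they are jointly faithfully flat), so that finite products of objects of $k^{\perar}$ represent finite coproducts in the topos. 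Passing to the filtered colimit of sheaves over $S$ and using the previous paragraph then gives the first assertion, $X \cong \dirlim_{S} X_{S} \cong \coprod_{x \in X} \Spec \kappa(x)$. The one genuinely substantive point --- and the reason the topology (\'etale or Zariski) enters at all --- is that the naive presheaf coproduct $k' \mapsto \coprod_{x \in X} \Hom_{k}(\kappa(x), k')$ does \emph{not} agree with $X$ on a product $k' = \prod_{i} k'_{i}$: a section of $X$ may send distinct factors $\Spec k'_{i}$ to distinct points of $X$, whereas the presheaf coproduct is forced onto a single point. This interchange of $\prod_{i}$ and $\coprod_{x}$ is exactly what is repaired by sheafifying along the clopen-factor covering, equivalently by the commutation of the filtered colimit with the finite product used above, and I expect it to be the only step requiring genuine (if mild) care.
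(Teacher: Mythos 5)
Your proposal is correct and is essentially the paper's own argument written out in full: the paper's proof consists precisely of the two observations that a morphism from the spectrum of a perfect field factors uniquely through a single point of $X$ and that a morphism from an object of $k^{\perar}$ factors uniquely through a finite set of points, which are exactly your identification $X(k') = \prod_{i} \coprod_{x \in X} \Hom_{k}(\kappa(x), k'_{i})$ and the resulting filtered-union description. Your extra verification that $\prod_{x \in S} \kappa(x)$ represents the finite sheaf coproduct via the clopen-factor covering is the ``these show the proposition'' step made explicit, and it is carried out correctly.
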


\begin{proof}
	Any morphism $\Spec k' \to X$ from the spectrum of a perfect field extension $k'$ of $k$
	factors uniquely through a point of $X$.
	If $k' \in k^{\perar}$, then any morphism $\Spec k' \to X$
	factors uniquely through a finite set of points of $X$.
	These show the proposition.
\end{proof}

We recall the site $\Spec k^{\perf'}_{\pro\fppf}$ defined in \cite[Remark 3.8.4]{Suz13}
(which is a variant of the site $\Spec k^{\perf}_{\pro\fppf}$ defined in \cite[Section 3.1]{Suz13}).

\begin{Def} \mbox{}
	\begin{enumerate}
		\item
			Define $k^{\perf'}$ to be the category of quasi-compact quasi-separated perfect $k$-schemes
			with $k$-scheme morphisms.
		\item
			A morphism $Y \to X$ in $k^{\perf'}$ is said to be \emph{flat of finite presentation}
			(in the perfect sense) if it is the perfection
			of a $k$-morphism $Y_{0} \to X$ flat of finite presentation in the usual sense.
		\item
			A morphism $Y \to X$ in $k^{\perf'}$ is said to be \emph{flat of profinite presentation}
			if it can be written as the inverse limit $\invlim Y_{\lambda} \to X$
			of a filtered inverse system of morphisms $Y_{\lambda} \to X$ in $k^{\perf'}$
			flat of finite presentation (in the above perfect sense)
			with affine transition morphisms $Y_{\mu} \to Y_{\lambda}$.
		\item
			A \emph{faithfully flat morphism of (pro)finite presentation}
			is, by definition, a flat morphism of (pro)finite presentation that is surjective.
		\item
			We define the site $\Spec k^{\perf'}_{\pro\fppf}$ to be the category $k^{\perf'}$
			where a covering $\{X_{i} \to X\}$ is a finite jointly surjective family of
			morphisms $X_{i} \to X$ flat of profinite presentation.
		\item
			For $X \in k^{\perf'}$, we denote the localization of $\Spec k^{\perf'}_{\pro\fppf}$ at $X$
			by $X^{\perf'}_{\pro\fppf}$.
			
	\end{enumerate}
\end{Def}

See \cite[Remark 3.8.4]{Suz13} and \cite[Section 3.1]{Suz13} for the details
about $\Spec k^{\perf'}_{\pro\fppf}$
(see also \cite[Appendix A]{Suz14}).
Restricting the objects of the underlying category to affine schemes,
we have the corresponding pro-fppf site $\Spec k^{\perf}_{\pro\fppf}$ of perfect affine $k$-schemes.
The morphism of sites $\Spec k^{\perf'}_{\pro\fppf} \to \Spec k^{\perf}_{\pro\fppf}$
defined by the inclusion functor on the underlying categories
induces an equivalence on the topoi
by the same proof as \cite[Proposition (A.4)]{Suz14}.
We only use $\Spec k^{\perf'}_{\pro\fppf}$ in this paper,
though \cite{Suz13} uses $\Spec k^{\perf}_{\pro\fppf}$ and we use results from \cite{Suz13}.

We relate $\Spec k^{\perar}_{\et}$ to $\Spec k^{\perf'}_{\pro\fppf}$.

\begin{Def}
	We denote by
		\[
				\artin{h}
			\colon
				\Spec k^{\perf'}_{\pro\fppf}
			\to
				\Spec k^{\perar}_{\et}
		\]
	the premorphism of sites defined by the inclusion functor on the underlying categories.
\end{Def}

In \cite[Section 3.5]{Suz13}, a similar closely related premorphism
$\Spec k^{\perf}_{\pro\fppf} \to \Spec k^{\ind\rat}_{\et}$
to the ind-rational \'etale site was denoted by $h$,
though we do not technically need $h$ in this paper.
We generally put the accent symbol $\acute{\;}$ to distinguish
objects for the ind-rational \'etale site and objects for the perfect artinian \'etale site.
We need to clearly distinguish and compare these two sites when we cite \cite{Suz13}.
A perfect $k$-scheme $X$ viewed as a sheaf on $\Spec k^{\perar}_{\et}$ is
nothing but $\artin{h}_{\ast} X$.

Just as $h$ is not a morphism of sites (\cite[Proposition 3.2.3]{Suz13}),
neither is $\artin{h}$ by the same reason:

\begin{Prop} \label{prop: not a morphism of sites}
	The pullback $\artin{h}^{\ast \set}$ for sheaves of sets is \emph{not exact}.
	More explicitly, the natural morphism
	\[
			\artin{h}^{\ast \set}(\mathbb{A}_{k}^{2})
		\to
			(\artin{h}^{\ast \set} \mathbb{A}_{k}^{1})^{2}
	\]
	in $\Set(k^{\perf'}_{\pro\fppf})$ is not an isomorphism,
	where $\mathbb{A}_{k}^{n}$ is the perfection of affine $n$-space over $k$.
\end{Prop}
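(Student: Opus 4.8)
The plan is to establish the explicit assertion, since non-exactness then follows formally. The product of sheaves $\mathbb{A}_{k}^{1} \times \mathbb{A}_{k}^{1}$ on $\Spec k^{\perar}_{\et}$ is canonically represented by $\mathbb{A}_{k}^{2}$, because the Yoneda embedding preserves products and the product of the two schemes over $k$ is $\mathbb{A}_{k}^{2}$. The morphism in the statement is exactly the comparison map for this product, and an exact $\artin{h}^{\ast \set}$ (preserving finite inverse limits, hence products) would force it to be an isomorphism. So it suffices to exhibit a single object of $k^{\perf'}$ over which this natural map fails to be bijective on sections.

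First I would pin down $\artin{h}^{\ast \set}$ on the sheaves $\mathbb{A}_{k}^{n}$. The presheaf pullback along the inclusion functor $\artin{h}^{-1} \colon k^{\perar} \into k^{\perf'}$ carries a representable presheaf $h_{k'}$ to $h_{k'}$ (left Kan extension along Yoneda sends representables to representables), and it commutes with all colimits. Combined with Proposition \ref{prop: scheme as sheaf on rational etale site}, which presents $\mathbb{A}_{k}^{n}$ as the filtered union $\varinjlim_{S} \coprod_{x \in S} \Spec \kappa(x)$ over finite sets $S$ of its points, this identifies $\artin{h}^{\ast \set} \mathbb{A}_{k}^{n}$ as the sheafification on $\Spec k^{\perf'}_{\pro\fppf}$ of the same filtered union, now formed over $k^{\perf'}$. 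Unwinding the plus-construction, a section of $\artin{h}^{\ast \set} \mathbb{A}_{k}^{n}$ over $T \in k^{\perf'}$ is then a $k$-morphism $T \to \mathbb{A}_{k}^{n}$ which, pro-fppf-locally on $T$, factors through finitely many points of $\mathbb{A}_{k}^{n}$; under the natural map a section of $\artin{h}^{\ast \set} \mathbb{A}_{k}^{2}$ is sent to the pair of its coordinate projections, each factoring locally through finitely many points of $\mathbb{A}_{k}^{1}$.

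The separating object I would use is $T = \xi \times_{k} \xi$, where $\xi = \Spec k(t)^{\perf} \in k^{\perar}$ is the generic point of $\mathbb{A}_{k}^{1}$. The two projections $p_{1}, p_{2} \colon T \to \xi \into \mathbb{A}_{k}^{1}$ each factor through the single point $\xi$, so $(p_{1}, p_{2})$ is a genuine section of $(\artin{h}^{\ast \set} \mathbb{A}_{k}^{1})^{2}$ over $T$. I claim it is not in the image of $\artin{h}^{\ast \set} \mathbb{A}_{k}^{2}(T)$. The combined map $(p_{1}, p_{2}) \colon T \to \mathbb{A}_{k}^{2}$ is (the perfection of) the canonical map $\Spec(k(s) \otimes_{k} k(t)) \to \mathbb{A}_{k}^{2}$, whose set-theoretic image is infinite: for each $n \ge 1$ the height-one prime $(t - s^{n})$ meets neither $k[s] \setminus \{0\}$ nor $k[t] \setminus \{0\}$ and hence survives in $k(s) \otimes_{k} k(t)$, and these primes are pairwise distinct. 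Since any pro-fppf cover $\{T_{i} \to T\}$ is surjective, the image of each $T_{i}$ in $\mathbb{A}_{k}^{2}$ equals that of $T$ and is therefore infinite, whereas a map factoring through finitely many points of $\mathbb{A}_{k}^{2}$ has finite image. Thus no local factorization exists, $(p_{1}, p_{2})$ is not in the image, the natural map is not surjective on $T$-sections, and non-exactness follows.

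The main obstacle I expect is the second step: making the identification of $\artin{h}^{\ast \set} \mathbb{A}_{k}^{n}$ with the sheafified filtered union precise, that is, correctly tracking the interaction of presheaf pullback, filtered colimits and sheafification on $\Spec k^{\perf'}_{\pro\fppf}$, and confirming that the plus-construction really produces the ``pro-fppf-locally factors through finitely many points'' description of sections. The remaining input — that the image of $\xi \times_{k} \xi$ in $\mathbb{A}_{k}^{2}$ is infinite, and that this persists after perfection (a universal homeomorphism, hence harmless on underlying spaces) — is elementary commutative algebra.
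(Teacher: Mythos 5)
Your proposal is correct and takes essentially the same route as the paper's own proof: both reduce the question via Proposition \ref{prop: scheme as sheaf on rational etale site} to the description of $\artin{h}^{\ast \set} \mathbb{A}_{k}^{n}$ as a union of (finite sets of) points, and both locate the failure at the component $\xi \times_{k} \xi$ attached to the generic point $\xi$ of $\mathbb{A}_{k}^{1}$, which is not a point of $\mathbb{A}_{k}^{2}$ --- your version merely makes explicit the local-factorization argument the paper leaves implicit. One wording repair: members of a pro-fppf covering $\{T_{i} \to T\}$ are only \emph{jointly} surjective, so rather than claiming the image of each $T_{i}$ in $\mathbb{A}_{k}^{2}$ equals that of $T$, you should note that the covering family is finite, so the finitely many images cover the infinite image of $T$ and hence some single $T_{i}$ already has infinite image, contradicting a factorization through finitely many points.
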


\begin{proof}
	By Proposition \ref{prop: scheme as sheaf on rational etale site},
	the sheaf $\artin{h}^{\ast \set}(\mathbb{A}_{k}^{2})$ is
	the disjoint union of points of the $k$-scheme $\mathbb{A}_{k}^{2}$.
	Also, the sheaf $(\artin{h}^{\ast \set} \mathbb{A}_{k}^{1})^{2}$ is the disjoint union of
	the $k$-scheme fiber products $x \times_{k} y$, where $x, y \in \mathbb{A}_{k}^{1}$.
	If $x = y$ is the generic point of $\mathbb{A}_{k}^{1}$,
	then $x \times_{k} y$ is not a point and hence not contained in the image of the morphism in question.
\end{proof}

There is a certain functoriality available for $\artin{h}$:

\begin{Prop} \label{prop: morphism h compatible with localization}
	Let $k'$ be a perfect field extension of $k$.
	Consider the premorphism $\artin{h}$ with $k$ replaced by $k'$,
	and denote it by $\artin{h}_{k'} \colon \Spec k'^{\perf'}_{\pro\fppf} \to \Spec k'^{\perar}_{\et}$.
	This agrees with the restriction
	$\artin{h}|_{k'} \colon \Spec k'^{\perf'}_{\pro\fppf} \to \Spec k^{\perar}_{\et} / k'$ of $\artin{h}$
	under the identification given in
	Proposition \ref{prop: grational site compatible with localization}.
\end{Prop}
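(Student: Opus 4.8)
\emph{Plan.} The strategy is to reduce the statement to an identity of underlying functors. Both $\artin{h}_{k'}$ and the localization $\artin{h}|_{k'}$ are premorphisms of sites with the same source and target, and a premorphism is completely determined by the functor it induces on underlying categories (its two defining conditions being properties of that functor); thus it suffices to identify the two underlying functors. By definition $\artin{h}_{k'}$ is induced by the inclusion $k'^{\perar} \into k'^{\perf'}$, sending a perfect artinian $k'$-algebra $k''$ to $\Spec k''$ regarded as a quasi-compact quasi-separated perfect $k'$-scheme. On the other hand $\artin{h}|_{k'} = \artin{h}_{\Spec k'}$ is induced by the restriction of $\artin{h}^{-1}$ to the localizations, sending an object $(\Spec k'' \to \Spec k')$ of $\Spec k^{\perar}_{\et} / k'$ to $\artin{h}^{-1}(\Spec k'') = \Spec k''$ viewed over $\Spec k'$ in $\Spec k^{\perf'}_{\pro\fppf} / \Spec k'$.

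First I would record the two identifications that translate between the localized sites and the sites over the new base $k'$. For the target, Proposition~\ref{prop: grational site compatible with localization} (via Proposition~\ref{prop: slice of gen rational algebras}) supplies $\Spec k'^{\perar}_{\et} \cong \Spec k^{\perar}_{\et} / k'$, under which $k'' \in k'^{\perar}$ corresponds to $(\Spec k'' \to \Spec k')$. For the source, I would spell out the analogous standard identification $\Spec k^{\perf'}_{\pro\fppf} / \Spec k' \cong \Spec k'^{\perf'}_{\pro\fppf}$: an object of the localization is a quasi-compact quasi-separated perfect $k$-scheme $Y$ together with a morphism $Y \to \Spec k'$, and since $\Spec k' \to \Spec k$ is affine the quasi-compactness and quasi-separatedness over $k$ and over $k'$ agree, while perfectness is absolute; hence this datum is precisely a quasi-compact quasi-separated perfect $k'$-scheme, with morphisms corresponding to $k'$-morphisms.

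With both identifications in hand the comparison is immediate: each functor sends $k'' \in k'^{\perar}$, equivalently $(\Spec k'' \to \Spec k')$, to $\Spec k''$ regarded as a perfect scheme over $\Spec k'$, and likewise on morphisms. Thus the two underlying functors coincide, and the two premorphisms agree.

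The only point requiring care---and the closest thing to an obstacle---is to verify the source identification at the level of \emph{sites} rather than mere categories, i.e.\ that fiber products and covering families in $\Spec k^{\perf'}_{\pro\fppf} / \Spec k'$ are computed exactly as in $\Spec k'^{\perf'}_{\pro\fppf}$. This holds because fiber products in a localization $S / X$ are inherited from $S$, and because being flat of profinite presentation and (jointly) surjective is a property of a morphism of schemes that does not refer to the structure map to $\Spec k$ as opposed to $\Spec k'$. The same observation shows that $\artin{h}|_{k'}$ is again a premorphism (not merely a continuous map), so that the asserted agreement is an equality of premorphisms.
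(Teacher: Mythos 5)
Your proposal is correct and is simply a careful spelling-out of the argument the paper dismisses with the one-word proof ``Obvious'': both premorphisms are determined by their underlying functors, which visibly coincide under the two identifications of localized sites. Your attention to the source-side identification $\Spec k^{\perf'}_{\pro\fppf} / \Spec k' \cong \Spec k'^{\perf'}_{\pro\fppf}$ at the level of pretopologies (fiber products and coverings being inherited, and flatness of profinite presentation being independent of the base structure map) is exactly the content left implicit in the paper.
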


\begin{proof}
	Obvious.
\end{proof}

We study the derived pullback functor $L \artin{h}^{\ast}$.
First, it does nothing on \'etale group schemes.
More precisely:

\begin{Prop} \label{prop: derived pull of etale groups}
	Let $G$ be a commutative \'etale group scheme over $k$.
	Consider the natural morphism $L \artin{h}^{\ast} G \to \artin{h}^{\ast} G$
	and the counit of adjunction $\artin{h}^{\ast} G = \artin{h}^{\ast} \artin{h}_{\ast} G \to G$.
	Their composite $L \artin{h}^{\ast} G \to G$ is an isomorphism
	(or equivalently, $L_{n} \artin{h}^{\ast} G = 0$ for $n \ge 1$
	and $\artin{h}^{\ast} G \isomto G$).
\end{Prop}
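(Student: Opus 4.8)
The plan is to reduce the assertion to a single building block, namely the free abelian sheaf $\Z[\Spec k_1]$ generated by the representable sheaf of a finite separable extension $k_1$ of $k$, for which $L\artin{h}^{\ast}$ can be read off from its construction. Two formal facts set this up. First, since $\artin{h}^{-1}$ is the inclusion functor, the pullback of a representable sheaf is representable by the same object; hence $\artin{h}^{\ast}\Z[\Spec k_1]\isomto\Z[\Spec k_1]$ on $\Spec k^{\perf'}_{\pro\fppf}$, and because $\Spec k_1\in k^{\perar}$ we have $\artin{h}_{\ast}\Z[\Spec k_1]=\Z[\Spec k_1]$ on $\Spec k^{\perar}_{\et}$, so the counit of adjunction is precisely this isomorphism. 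Second, $L\artin{h}^{\ast}$ is a left adjoint and therefore commutes with arbitrary direct sums and filtered colimits, while the formation of the representable sheaf of an \'etale group scheme on $\Spec k^{\perf'}_{\pro\fppf}$ commutes with filtered colimits because the test objects are quasi-compact.

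Writing $G$ as the filtered colimit of its finitely generated sub-group-schemes, these facts reduce us to $G$ finitely generated. Then I would choose a finite Galois extension $k_1/k$ splitting $G$ and set $\Gamma=\Gal(k_1/k)$, so that $G$ corresponds to a $\Z[\Gamma]$-module $M$ finitely generated over $\Z$; the stalk of $\Z[\Spec k_1]$ at a separably closed point is $\Z[\Hom_{k}(\var,\Spec k_1)]=\Z[\Gamma]$, the free rank-one $\Z[\Gamma]$-module. A resolution $P_{\bullet}\to M$ by (possibly infinite) direct sums of copies of $\Z[\Gamma]$ thus realizes a resolution $P_{\bullet}\to G$ on $\Spec k^{\perar}_{\et}$ whose terms are direct sums of copies of $\Z[\Spec k_1]$ (exactness is checked on the separably closed points of $\Spec k^{\perar}_{\et}$, where it recovers the chosen resolution of $\Z[\Gamma]$-modules). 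Applying $\artin{h}^{\ast}$ term by term gives the analogous complex on $\Spec k^{\perf'}_{\pro\fppf}$ with the same terms; since its stalks at perfect-field-valued points again recover the $\Z[\Gamma]$-resolution of $M$, this complex is again a resolution of $G$. Granting that the terms are $L\artin{h}^{\ast}$-acyclic, one obtains $L\artin{h}^{\ast}G\cong[\artin{h}^{\ast}P_{\bullet}]\cong G$, with the counit inducing the identification in degree zero; the higher vanishing follows since the acyclicity passes to direct sums and to the complex.

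The main obstacle is therefore the acyclicity $L_{n}\artin{h}^{\ast}\Z[\Spec k_1]=0$ for $n\ge 1$ of these building blocks, that is, the statement that a free sheaf on a representable is $L\artin{h}^{\ast}$-acyclic. This is exactly where the premorphism structure of $\artin{h}$ is used: because $\artin{h}^{-1}$ preserves the fibre products appearing in covering families, $\artin{h}^{\ast}$ carries the \v Cech resolution attached to a covering of $\Spec k_1$ to the \v Cech resolution attached to its (still covering) image, preserving exactness, which is the mechanism by which the derived pullback of a premorphism is built in \cite{Suz13}. I expect the real work to lie in pinning down this acyclicity cleanly for $\Spec k^{\perf'}_{\pro\fppf}$ and in confirming that the term-by-term pullback of $P_{\bullet}$ remains a resolution; the colimit and resolution reductions, together with the representability of $\artin{h}^{\ast}$ on representables, are formal.
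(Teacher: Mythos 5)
Your strategy can be completed, but as written it contains one genuinely unjustified step: the exactness checks ``on field-valued points''. On $\Spec k^{\perf'}_{\pro\fppf}$, sections over a perfect field $k'$ do \emph{not} form an exact functor and are not a valid test for exactness of a complex of sheaves: a pro-fppf covering of $\Spec k'$ need not admit a section (unlike an \'etale covering of an algebraically closed field), so an epimorphism of sheaves need not be surjective on $k'$-sections (e.g.\ $\ell \colon \Gm \to \Gm$ with $\ell \neq p$), and no conservativity statement for field sections on this site is available to you --- indeed, the subtlety of pinning down sheaves on these sites by field-valued points is precisely what the Mac Lane machinery of Sections 4--6 of the paper exists to handle. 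A milder version of the same worry applies on $\Spec k^{\perar}_{\et}$: there, sections over algebraically closed fields \emph{are} exact (coverings of such fields split), but the joint conservativity of this family of functors is asserted, not proved. For your specific complexes the step is repairable: your complex of sheaves is the image of an exact complex of discrete $\Gal(k_{1}/k)$-modules under the pullback of the morphism of sites $g \colon \Spec k^{\perar}_{\et} \to \Spec k_{\et}$ (resp.\ $f = g \compose \artin{h} \colon \Spec k^{\perf'}_{\pro\fppf} \to \Spec k_{\et}$) induced by the inclusion of the small \'etale site, and $g^{\ast}$, $f^{\ast}$ are exact and carry an \'etale scheme, and $\Z[\Spec k_{1}]$, to itself; exactness therefore transports.

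Once you make that repair, however, the paper's actual proof falls out and renders your whole resolution apparatus unnecessary: the paper factors $f = g \compose \artin{h}$ through $\Spec k_{\et}$, observes that $f$ and $g$ are \emph{morphisms} of sites (exact pullbacks), invokes Proposition \ref{prop: composite of derived pullback} to get $L \artin{h}^{\ast} \compose g^{\ast} \cong f^{\ast}$, and concludes from $g^{\ast} G \isomto G$ and $f^{\ast} G \isomto G$ for \'etale $G$ that $L \artin{h}^{\ast} G \isomto G$ --- three lines, no resolutions, no stalks. Your remaining ingredients are all sound: $L_{n} \artin{h}^{\ast} \Z[X] = 0$ for $n \ge 1$ and representable $X$ is exactly \cite[Lemma 3.7.2]{Suz13} (it is a black box of the formalism, not something recovered by the \v{C}ech heuristic you sketch), commutation of $L_{n} \artin{h}^{\ast}$ with filtered colimits and direct sums is as in the proof of Proposition \ref{prop: Mac Lane calculates derived pull}, and free $\Z[\Gal(k_{1}/k)]$-module resolutions do yield bounded-above complexes of acyclics computing the derived pullback via the usual hypercohomology spectral sequence. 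In effect you are redoing by hand, for \'etale coefficients, the Mac Lane-resolution mechanism that the paper reserves for the genuinely hard case $G \in \Pro'_{\fc} \Alg / k$ of Theorem \ref{thm: derived pull of proalg group with finite components}, where no factorization through a small site is available; for \'etale $G$ the site-theoretic factorization does everything formally.
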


\begin{proof}
	Let $f \colon \Spec k^{\perf'}_{\pro\fppf} \to \Spec k_{\et}$ and
	$g \colon \Spec k^{\perar}_{\et} \to \Spec k_{\et}$ be
	the morphisms of sites defined by the inclusion functors on the underlying categories.
	Then $f = g \compose \artin{h}$.
	Hence $f^{\ast} = L \artin{h}^{\ast} g^{\ast}$
	by Proposition \ref{prop: composite of derived pullback}.
	Since $G$ is \'etale over $k$, we have $g^{\ast} G = g^{\ast} g_{\ast} G \isomto G$
	and $f^{\ast} G = f^{\ast} f_{\ast} G \isomto G$.
	Applying $f^{\ast} = L \artin{h}^{\ast} g^{\ast}$ to $G$, we get the result.
\end{proof}

Now we study $L \artin{h}^{\ast}$ applied to pro-algebraic groups.
Recall from the Notation part of Section \ref{sec: Introduction}
that we use the symbols $\Ind$ and $\Pro$ to denote
the ind-category and the pro-category constructions, respectively.

\begin{Def} \mbox{}
	\begin{enumerate}
		\item
			Define $\Alg / k$ be the category of perfections
			of commutative algebraic groups over $k$
			with group scheme morphisms over $k$.
		\item
			Define $\Pro' \Alg / k \subset \Pro \Alg / k$ be the full subcategory
			consisting of extensions of perfections of abelian varieties by perfect affine group schemes.
		\item
			For any $G \in \Pro' \Alg / k$ (or $\Pro \Alg / k$),
			its group of geometric connected components is denoted by $\pi_{0}(G)$,
			which is a pro-finite-\'etale group scheme over $k$
			(see the paragraph after \cite[Equation (2.1.1)]{Suz14}).
	\end{enumerate}
\end{Def}

The category $\Pro' \Alg / k$ was previously denoted by $\Pro' \Alg' / k$ in \cite[Remark 3.8.4]{Suz13}.
The category $\Pro \Alg / k$ is the (abelian) category of pro-algebraic groups
in the sense of Serre \cite{Ser60};
see the paragraph after \cite[Equation (2.1.1)]{Suz14} for more details on Serre's category.
Any object of $\Pro' \Alg / k$ is representable in $k^{\perf'}$.
We view $\Pro' \Alg / k \subset \Ab(k^{\perf'}_{\pro\fppf})$,
which is an exact embedding by \cite[Proposition (2.1.2) (e)]{Suz14}.

\begin{Def} \mbox{} \label{def: group with finite components and generic point}
	\begin{enumerate}
		\item
			Define $\Pro'_{\fc} \Alg / k$ to be the full subcategory of $\Pro' \Alg / k$
			consisting of objects $G$ with finite $\pi_{0}(G)$.
		\item
			The disjoint union of the generic points
			of the irreducible components of $G \in \Pro'_{\fc} \Alg / k $ is
			denoted by $\xi_{G} \in k^{\perar}$.
	\end{enumerate}
\end{Def}

The category $\Pro'_{\fc} \Alg / k$ is closed under cokernel,
but not under kernel and hence not abelian.
For instance, the kernel of multiplication by $l \ne p$
on the perfection of the connected affine group $\Gm^{\N}$ is not finite.

Recall from \cite[Proposition (2.3.4)]{Suz14} that
the Yoneda functor induces a fully faithful embedding
from the ind-category $\Ind \Pro \Alg / k = \Ind(\Pro \Alg / k)$
to $\Ab(k^{\perf'}_{\pro\fppf})$,
which itself induces a fully faithful embedding
$D^{b}(\Ind \Pro \Alg / k) \into D^{b}(k^{\perf'}_{\pro\fppf})$.
We define a slightly larger category than $\Pro'_{\fc} \Alg / k$
so that we can simultaneously treat non-finite \'etale group schemes
such as the discrete group scheme $\Z$.

\begin{Def} \label{def: extended ind of groups with finite components}
	Define $\mathcal{E}_{k}$ to be
	the full subcategory of $\Ab(k^{\perf'}_{\pro\fppf})$
	consisting of objects $G$ that can be written as an extension
	$0 \to G' \to G \to G'' \to 0$,
	where $G' \in \Ind \Pro'_{\fc} \Alg / k$
	and $G''$ an \'etale group scheme over $k$
	and the morphism $G \to G''$ is surjective
	(not only in the pro-fppf topology but also) in the \'etale topology.
\end{Def}

The category $\mathcal{E}_{k}$ contains perfections of smooth group schemes over $k$.
As above, an object $G \in \Pro' \Alg / k$
viewed as an object of $\Ab(k^{\perar}_{\et})$ (or equivalently, $\artin{h}_{\ast} G$) is
denoted by the same symbol $G$.
We extend this convention to $G \in \mathcal{E}_{k}$,
writing $\artin{h}_{\ast} G$ simply as $G$.

\begin{Thm} \label{thm: derived pull of proalg group with finite components}
	Let $G \in \mathcal{E}_{k}$.
	Then the morphism $L \artin{h}^{\ast} G \to G$ in $D(k^{\perf'}_{\pro\fppf})$
	defined as in Proposition \ref{prop: derived pull of etale groups}
	is an isomorphism.
\end{Thm}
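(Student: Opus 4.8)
The plan is to run a dévissage along the defining filtration of $\mathcal{E}_{k}$ and thereby reduce to a single connected-by-finite pro-algebraic group, where an explicit Mac Lane resolution does the work. Write $G \in \mathcal{E}_{k}$ as an extension $0 \to G' \to G \to G'' \to 0$ with $G' \in \Ind \Pro'_{\fc} \Alg / k$ and $G''$ \'etale, a distinguished triangle $G' \to G \to G''$ in $D(k^{\perf'}_{\pro\fppf})$. I would prove simultaneously the two statements that $R^{n} \artin{h}_{\ast} G = 0$ for $n \ge 1$ and that the derived counit $L \artin{h}^{\ast} R \artin{h}_{\ast} G \to G$ is an isomorphism; together these identify $R \artin{h}_{\ast} G$ with $\artin{h}_{\ast} G$ and turn the counit into the morphism $L \artin{h}^{\ast} \artin{h}_{\ast} G \to G$ of the theorem. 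Both statements propagate through distinguished triangles — the first by the long exact sequence of $R^{\bullet} \artin{h}_{\ast}$, the second because the cone of the counit is a triangulated functor of $G$ — so by the same two-out-of-three principle recorded after Definition \ref{def: f compatible, f acyclic} it suffices to treat $G''$ and $G'$ separately. For the \'etale quotient $G''$, Proposition \ref{prop: derived pull of etale groups} gives the counit isomorphism, and the higher direct images vanish since the relevant cohomology of an \'etale group is killed \'etale-locally.

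To deal with $G' \in \Ind \Pro'_{\fc} \Alg / k$ I would pass to the filtered colimit. The pullback $\artin{h}^{\ast}$ is a left adjoint, hence commutes with all colimits; the pushforward $\artin{h}_{\ast}$ is restriction along $k^{\perar} \into k^{\perf'}$, and because every object of $k^{\perar}$ (a finite product of fields) and of $k^{\perf'}$ is quasi-compact, filtered colimits of sheaves are computed sectionwise, so $\artin{h}_{\ast}$, its higher direct images $R^{n} \artin{h}_{\ast}$, and $L \artin{h}^{\ast}$ all commute with filtered colimits. Since every object of $\Ind \Pro'_{\fc} \Alg / k$ is, as a sheaf, the filtered colimit of its members in $\Pro'_{\fc} \Alg / k$, this reduces the whole problem to $G \in \Pro'_{\fc} \Alg / k$.

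For $G \in \Pro'_{\fc} \Alg / k$ the point is the generic-point geometry. Because $\pi_{0}(G)$ is finite, $G$ has only finitely many irreducible components, so the generic point $\xi_{G}$ of Definition \ref{def: group with finite components and generic point} is a finite disjoint union of spectra of perfect fields, whence $\xi_{G} \in k^{\perar}$; by Proposition \ref{prop: scheme as sheaf on rational etale site} the behaviour of $\artin{h}$ on $\xi_{G}$ and its self-products is transparent. The geometric input is that the restriction of the group law $\xi_{G} \times_{k} \xi_{G} \to G$ is faithfully flat of profinite presentation — it is surjective since for any point $g$ and any generic $x$ the translate $x^{-1} g$ is again generic — and hence is a covering for $\Spec k^{\perf'}_{\pro\fppf}$ even though it is not pro-\'etale, while $\xi_{G} \into G$ may itself enter a covering family. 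Thus $G$ admits a resolution by representable sheaves whose terms lie in $k^{\perar}$, on which $\artin{h}^{\ast}$ is well behaved.

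Concretely I would realize such a resolution as Mac Lane's functorial free resolution of the sheaf $G$, following \cite[Sections 3.5 and 3.6]{Suz13}, whose terms are assembled from the self-products of $\xi_{G}$ just described and therefore lie in $k^{\perar}$; there $\artin{h}^{\ast}$ is exact enough that $R \artin{h}_{\ast} G$ and $L \artin{h}^{\ast} \artin{h}_{\ast} G$ are both computed by this complex, and both the vanishing of $R^{n} \artin{h}_{\ast} G$ and the desired isomorphism become the acyclicity of its termwise pullback. This acyclicity is the main obstacle, and it is precisely the content of Sections \ref{sec: Review of Mac Lane resolution}--\ref{sec: Acyclicity of the pullback of Mac Lane resolution}: one must verify that the arguments of \cite[Sections 3.5 and 3.6]{Suz13}, written there for $\Spec k^{\ind\rat}_{\et}$, survive the replacement of the ind-rational site by $\Spec k^{\perar}_{\et}$. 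The finiteness of $\pi_{0}(G)$ is exactly what makes the passage possible, since it forces $\xi_{G}$ to be a finite — rather than profinite — product of fields and thus keeps the covering $\xi_{G} \times_{k} \xi_{G} \to G$ inside the perfect artinian site.
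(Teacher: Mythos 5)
There is a genuine gap, and it sits exactly where you anchored the d\'evissage. You propose to prove, for all $G \in \mathcal{E}_{k}$, that $R^{n} \artin{h}_{\ast} G = 0$ for $n \ge 1$ and that the derived counit $L \artin{h}^{\ast} R \artin{h}_{\ast} G \to G$ is an isomorphism. The theorem needs neither, and the vanishing claim is false in general: it would say that $\artin{h}_{\ast} G$ is $\artin{h}$-acyclic for \emph{every} $G \in \mathcal{E}_{k}$, whereas the paper deliberately treats $\artin{h}$-acyclicity as a special property with sufficient criteria (Proposition \ref{prop: criteison of h acyclicity} requires filtered colimits of acyclics or inverse limits with \emph{connected unipotent} kernels; Proposition \ref{prop: derived section of R bold Gamma} takes acyclicity as a hypothesis). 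Concretely, take $G$ the perfection of $\invlim (\cdots \to \Gm \overset{\ell}{\to} \Gm)$ with $\ell \ne p$, a connected object of $\Pro'_{\fc} \Alg / k$: from the pro-fppf-exact sequence $0 \to \invlim_{n} \mu_{\ell^{n}} \to G \to \Gm \to 0$ one finds $H^{1}(k'_{\pro\fppf}, G)$ contains classes of the shape $(\invlim_{n} k'^{\times}/\ell^{n})/k'^{\times}$; for $k' = \F_{p}(t)^{\perf}$ an irrational $\ell$-adic power of $t$ gives such a class that survives every finite \'etale extension (the valuation only rescales by the ramification index), so $R^{1} \artin{h}_{\ast} G \ne 0$. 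This $\invlim^{1}$ phenomenon is precisely why the unipotence hypothesis appears in Proposition \ref{prop: criteison of h acyclicity}. The correct mechanism for the d\'evissage, which your proposal never invokes, is the clause in Definition \ref{def: extended ind of groups with finite components} that $G \to G''$ is surjective \emph{already in the \'etale topology}: this makes $0 \to \artin{h}_{\ast} G' \to \artin{h}_{\ast} G \to \artin{h}_{\ast} G'' \to 0$ exact in $\Ab(k^{\perar}_{\et})$, and one then applies $L \artin{h}^{\ast}$ to this underived triangle and uses two-out-of-three together with Proposition \ref{prop: derived pull of etale groups} --- no higher direct images anywhere. (Your subsidiary claim that for \'etale $G''$ the higher direct images vanish because cohomology is ``killed \'etale-locally'' also tacitly identifies pro-fppf cohomology of a field with Galois cohomology, which would need proof; but it becomes moot once the vanishing is dropped.)

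Apart from this, your outline is the paper's proof: the reduction of $\Ind \Pro'_{\fc} \Alg / k$ to $\Pro'_{\fc} \Alg / k$ via commutation of $L_{n} \artin{h}^{\ast}$ with filtered colimits (established in the proof of Proposition \ref{prop: Mac Lane calculates derived pull}), then the core case via Mac Lane's resolution, Proposition \ref{prop: derived pull and pull Mac Lane resolution}, and the acyclicity of $\artin{h}^{\ast} M(\artin{h}_{\ast} G)$, which you rightly identify as the main content and defer to the adaptation of \cite[Sections 3.5 and 3.6]{Suz13} carried out in Sections \ref{sec: Review of Mac Lane resolution}--\ref{sec: Acyclicity of the pullback of Mac Lane resolution}. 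One correction of emphasis there: the terms of $M(\artin{h}_{\ast} G)$ are \emph{not} ``assembled from the self-products of $\xi_{G}$'' --- by Propositions \ref{prop: scheme as sheaf on rational etale site} and \ref{prop: perar pull is union of finite sets of points} they are filtered colimits of $\Z[x]$ over finite sets of points $x$ of the powers of $G$, and $\xi_{G} \times_{k} \xi_{G}$ is emphatically not an object of $k^{\perar}$ (Proposition \ref{prop: not a morphism of sites}; this failure is why $\artin{h}$ is only a premorphism). The generic points and the pro-fppf cover $\xi_{G} \times_{k} \xi_{G} \to G$ enter only in the acyclicity proof, via generification of sections along covers produced by Proposition \ref{prop: making profppf covers} and the splitting homotopy $T - \varphi = \partial V + V \partial$. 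So: repair the first step by replacing the $R^{n}$-vanishing with the \'etale-surjectivity argument, and the remainder of your plan goes through as in the paper.
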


The proof of this theorem will finish at the end of
Section \ref{sec: Acyclicity of the pullback of Mac Lane resolution}.


\section{Review of Mac Lane's resolution}
\label{sec: Review of Mac Lane resolution}

To compute the derived pullback,
we need Mac Lane's canonical resolution of abelian groups \cite{ML57}.
We merely provide notation for Mac Lane's resolution
and list its properties that we will use later.
For the definition itself, see \cite{ML57}.
Also see \cite[Chapter 13]{Lod98} for a more accessible account.
What we need are summarized in \cite[Section 3.4]{Suz13}.

As one can see from the proof of
Proposition \ref{prop: Mac Lane calculates derived pull} in the next section,
the key point of Mac Lane's resolution of an abelian group $G$ is that
each term is essentially built up of terms of the form $\Z[G]$ (the group ring of $G$),
and \emph{not} of the form $\Z[\Z[G]], \Z[\Z[\Z[G]]]$ etc.\
that one typically needs for simplicial resolutions,
so that the higher derived pullback of each term by a premorphism of sites
vanishes under some representability condition.
Phrased differently,
Ext groups $\Ext^{n}(G, \var)$ for a sheaf of groups $G$ on a site can be essentially described by
cohomology groups $H^{n}(G, \var) = \Ext^{n}(\Z[G], \var)$ of $G$
if $G$ satisfies some representability condition.
This is important when we have an exactness property of a relevant pullback functor
only for representable presheaves
(i.e.\ when we have a premorphism of sites that is not a morphism of sites).

We need symbols for Mac Lane's resolution and the related construction, the cubical construction.

\begin{Def} \mbox{}
	\begin{enumerate}
		\item
			We denote the free abelian group generated by a set $X$ by $\Z[X]$.
		\item
			Let $G \mapsto Q'(G), Q(G), M(G)$ be the (non-additive) functors
			that assign (homologically non-negatively graded) chain complexes to abelian groups $G$
			defined in \cite[\S 4, \S 7]{ML57}
			(where the base ring $\Lambda$ is taken to be $\Z$).
	\end{enumerate}
\end{Def}

See also \cite[\S 13.2 and E.13.2.1]{Lod98} for $Q'$ and $Q$.
In the notation of \cite[Lemma 13.2.12]{Lod98},
$M(G)$ is given by the two-sided bar construction $B(\Z, Q(\Z), Q(G))$
(where we and \cite{ML57} do not assume $G$ to be finitely generated free).
We will use the following properties.

\begin{Prop} \label{prop: cubical and Mac Lane} \mbox{}
	\begin{enumerate}
		\item \label{item: structure of each term of cubical}
			The $n$-th term $Q'_{n}(G)$ of $Q'(G)$ for any $n \ge 0$ is given by $\Z[G^{2^{n}}]$.
		\item \label{item: cubical is a quotient}
			The complex $Q(G)$ is a functorial quotient of $Q'(G)$ by a subcomplex.
		\item \label{item: quotient to cubical admits a section}
			For each $n \ge 0$, the quotient map $Q_{n}'(G) \onto Q_{n}(G)$ admits
			a functorial splitting $s_{n} \colon Q_{n}(G) \into Q_{n}'(G)$
			(\cite[Lemma 13.2.6]{Lod98} for example).
		\item \label{item: Mac Lane is a resolution}
			We have a functorial homomorphism $M_{0}(G) \to G$
			(which is given by $\Z[G] / \Z (0_{G}) \onto G$, $(g) \mapsto g$),
			and the complex
			$M(G) = (\cdots \overset{\partial}{\to} M_{1}(G) \overset{\partial}{\to} M_{0}(G))$
			is a functorial resolution of $G$
			(\cite[Th\'eor\`eme 6]{ML57}).
		\item \label{item: structure of Mac Lane without differentials}
			As a graded abelian group forgetting the differentials,
			$M(G)$ can be functorially written as $Q(G) \tensor_{\Z} \mathcal{B}$
			for some graded abelian group $\mathcal{B}$ that does not depend on $G$
			and whose $n$-th term is free for any $n$.
	\end{enumerate}
\end{Prop}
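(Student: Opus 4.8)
The plan is to read off each of the five assertions from the explicit definitions of the cubical construction $Q'$, its normalization $Q$, and the two-sided bar construction $M$, as recorded in \cite[\S 4, \S 7]{ML57} and \cite[Chapter 13]{Lod98}; four of the five items are essentially immediate from the references, and only (e) requires genuine bookkeeping.

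First I would fix the combinatorial picture. An element of $Q'_{n}(G)$ is a $\Z$-linear combination of $n$-cubes, an $n$-cube being a function from the $2^{n}$ vertices of $\{0, 1\}^{n}$ to $G$; identifying such a function with the $2^{n}$-tuple of its values gives the functorial identification $Q'_{n}(G) = \Z[G^{2^{n}}]$, which is (a). For (b) I would recall that $Q(G)$ is the normalized cubical construction, the quotient of $Q'(G)$ by the subcomplex generated by the degenerate (slab) cubes; that this graded subgroup is stable under the differential is part of the cubical formalism, giving the functorial quotient $Q'(G) \onto Q(G)$. Since $Q'_{n}(G)$ is free on the set of all $n$-cubes and $Q_{n}(G)$ is free on the subset of non-degenerate ones, inclusion of the non-degenerate cubes furnishes the functorial section $s_{n} \colon Q_{n}(G) \into Q'_{n}(G)$, which is (c); here I would simply cite \cite[Lemma 13.2.6]{Lod98} for the explicit splitting. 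Item (d) is the assertion that $M(G)$ resolves $G$, for which I would invoke \cite[Th\'eor\`eme 6]{ML57} together with the augmentation $M_{0}(G) = \Z[G] / \Z(0_{G}) \onto G$, after identifying $M(G)$ with the two-sided bar construction $B(\Z, Q(\Z), Q(G))$ as in \cite[Lemma 13.2.12]{Lod98}.

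The one point needing real work is (e), and this is where I expect the main obstacle. The idea is to exploit the bar-construction description: forgetting differentials, the underlying graded group of $B(\Z, Q(\Z), Q(G))$ is $\bigoplus_{p \ge 0} Q(\Z)^{\tensor p} \tensor_{\Z} Q(G)$, so that setting $\mathcal{B} := \bigoplus_{p \ge 0} Q(\Z)^{\tensor p}$, with the appropriate total grading combining the bar degree with the internal degrees of the factors, yields a graded group that does not depend on $G$ and admits a functorial isomorphism $M(G) \cong Q(G) \tensor_{\Z} \mathcal{B}$. Freeness of each term of $\mathcal{B}$ then follows because $Q(\Z)$ is termwise free by (c), and tensor products and (possibly infinite) direct sums of free abelian groups are free. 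The care required is entirely bookkeeping: I would check that the grading conventions of \cite{ML57} and \cite{Lod98} agree, so that the bar filtration genuinely splits off a $G$-independent tensor factor, and that the resulting decomposition is natural in $G$ — which it is, since every construction in sight is functorial.
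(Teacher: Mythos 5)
Your proposal is correct and takes essentially the same route as the paper, which offers no proof of its own: it records these five properties as facts recalled from the references with precisely your citations, and identifies $\mathcal{B}$ as the two-sided bar construction $B(\Z, Q(\Z), \Z)$ (Mac Lane's $\Bar{B}(0, Q(\Z), \eta_{Q})$) exactly as in your treatment of (e). One caution: your parenthetical gloss in (c) that the section is ``inclusion of the non-degenerate cubes'' is inaccurate --- such a basis inclusion is not functorial (a homomorphism $G \to H$ can carry a non-degenerate cube to a degenerate one), and Mac Lane's normalization quotients by diagonal cubes as well as slabs --- but since you also invoke \cite[Lemma 13.2.6]{Lod98}, which supplies the genuinely functorial splitting, your argument rests on the citation just as the paper's does.
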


The group $\mathcal{B}$ is $\Bar{B}(0, Q(\Z), \eta_{Q})$
in Mac Lane's notation \cite[\S 7, Remarque 1]{ML57}
and $B(\Z, Q(\Z), \Z)$ as a two-sided bar construction.

We recall the splitting homotopy from the last two paragraphs of \cite[Section 3.4]{Suz13}.
See also \cite[\S 5, \S 8]{ML57} (resp.\ the proof of \cite[Theorem 11.2]{EM51}) for
the splitting homotopy with respect to additive projections (resp.\ arbitrary homomorphisms).

\begin{Def} \label{def: splitting homotopy}\mbox{}
	\begin{enumerate}
		\item
			Let $\varphi_{0}, \varphi_{1} \colon G \to H$ be any homomorphisms of abelian groups
			with sum $\varphi = \varphi_{0} + \varphi_{1}$ and let $n \ge 0$.
			We view $2 = \{0, 1\}$.
			Define a map $V \colon G^{2^{n}} \to H^{2^{n + 1}}$ by sending
			$(a_{i(1), \dots, i(n)})_{0 \le i(1), \dots, i(n) \le 1}$
			to
				\[
					\Bigl(
						\varphi_{0}(a_{i(2), \dots, i(n + 1)}) \text{ for } i(1) = 0,\,
						\varphi_{1}(a_{i(2), \dots, i(n + 1)}) \text{ for } i(1) = 1
					\Bigr)_{0 \le i(1), i(2), \dots, i(n + 1) \le 1}.
				\]
			In other words, $V = (\varphi_{0}, \varphi_{1}) \colon G^{2^{n}} \to H^{2^{n}} \times H^{2^{n}}$
			with respect to the decomposition $2^{n + 1} = 2 \times 2^{n}$.
		\item
			This extends to a homomorphism $V \colon Q'_{n}(G) \to Q'_{n + 1}(H)$,
			which factors through the quotient $V \colon Q_{n}(G) \to Q_{n + 1}(H)$.
			Define a homomorphism $V \colon M_{n}(G) \to M_{n + 1}(H)$ by
			$V \tensor \id$ on $Q(G) \tensor_{\Z} \mathcal{B}$.
		\item
			The homomorphisms $\varphi_{0}, \varphi_{1}, \varphi$ induce homomorphisms
			$\varphi_{0}, \varphi_{1}, \varphi \colon Q'(G) \to Q'(H)$ of complexes by functoriality.
			We have similar homomorphisms of complexes for $Q$ and $M$.
			Define a homomorphism of complexes $T \colon Q'(G) \to Q'(H)$ by $\varphi_{0} + \varphi_{1}$,
			and similarly $T \colon Q(G) \to Q(H)$ and $T \colon M(G) \to M(H)$.
	\end{enumerate}
\end{Def}

Note that $\varphi \ne \varphi_{0} + \varphi_{1}$ for $Q'$, $Q$ and $M$
since they are non-additive functors.
Here is the key property,
which says that $Q'$, $Q$ and $M$ are additive up to the homotopy $V$
(see the cited references above for the proof):

\begin{Prop}
	We have
		\[
			T - \varphi = \partial V + V \partial
		\]
	as $Q'(G) \to Q'(H)$, $Q(G) \to Q(H)$ and $M(G) \to M(H)$,
	where $\partial$ denotes the differentials of these complexes.
\end{Prop}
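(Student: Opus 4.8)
The plan is to push everything down to the cubical complex $Q'$, where the formula becomes a bare computation with the cubical differential, and then transport it first to $Q$ and finally to $M$. The first thing I would record is the shape of the differential on $Q'(G)$. Writing a generator of $Q'_{n}(G) = \Z[G^{2^{n}}]$ (Proposition \ref{prop: cubical and Mac Lane}\ref{item: structure of each term of cubical}) as a labelling $a = (a_{\epsilon})_{\epsilon \in \{0,1\}^{n}}$ of the vertices of the $n$-cube, the differential is an alternating sum $\partial = \sum_{j=1}^{n} (-1)^{j+1} \partial_{j}$ over the coordinate directions, where $\partial_{j} = \lambda_{j} - \mu_{j} + \rho_{j}$ is built from the front face $\lambda_{j}$ (restrict $\epsilon_{j} = 0$), the back face $\rho_{j}$ (restrict $\epsilon_{j} = 1$), and the fold $\mu_{j}$ that replaces the two values in direction $j$ by their sum in $G$. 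In degree $1$ this reads $\partial[a_{0},a_{1}] = [a_{0}] + [a_{1}] - [a_{0}+a_{1}]$, the additivity relation. The map $V$ of Definition \ref{def: splitting homotopy} inserts a new direction, placed first, whose two faces are $\varphi_{0}(a)$ and $\varphi_{1}(a)$.

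The heart of the argument is then a direct evaluation of $\partial V + V\partial$ on $Q'$. For the inserted direction $j=1$ one has $\lambda_{1} V(a) = \varphi_{0}(a)$, $\rho_{1} V(a) = \varphi_{1}(a)$, so $\lambda_{1}V(a) + \rho_{1}V(a) = T(a)$; and, crucially, since $\varphi = \varphi_{0} + \varphi_{1}$, the fold produces the single generator $\mu_{1} V(a) = \varphi(a)$. Hence
\[
	\partial_{1} V(a) = \varphi_{0}(a) - \varphi(a) + \varphi_{1}(a) = T(a) - \varphi(a).
\]
For every other direction $j \ge 2$ the operators $\lambda_{j}, \rho_{j}, \mu_{j}$ touch only the old coordinates, and because $\varphi_{0}, \varphi_{1}$ are homomorphisms they commute with the fold; thus $\partial_{j} V(a) = V(\partial_{j-1} a)$. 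Tracking the signs $(-1)^{j+1}$ through the reindexing $i = j-1$, the total contribution of the directions $j \ge 2$ to $\partial V(a)$ is exactly $-V(\partial a)$, and the computation collapses to $\partial V(a) + V(\partial a) = T(a) - \varphi(a)$. The one place where care is needed is the bookkeeping of the internal signs of $\lambda_{j}, \mu_{j}, \rho_{j}$; the conventions of \cite{ML57} are arranged precisely so that the identity comes out with the stated sign.

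Passing to $Q$ is immediate: by Proposition \ref{prop: cubical and Mac Lane}\ref{item: cubical is a quotient} the complex $Q(G)$ is the functorial quotient of $Q'(G)$ by a subcomplex, and $\partial$, $T$, $\varphi$ and $V$ are all induced from $Q'$ (on $Q$ the map $V$ factors through this quotient by Definition \ref{def: splitting homotopy}), so the identity descends verbatim. The extension to $M$ is where I expect the real difficulty. Here $M(G) = Q(G) \otimes_{\Z} \mathcal{B}$ holds only as graded groups (Proposition \ref{prop: cubical and Mac Lane}\ref{item: structure of Mac Lane without differentials}), with $\mathcal{B}$ the $G$-independent bar object, and the differential of $M(G) = B(\Z, Q(\Z), Q(G))$ is \emph{not} the tensor-product differential: besides the internal differential $\partial_{Q}$ of the last slot it carries the simplicial bar faces, and the top one uses the $Q(\Z)$-module structure on $Q(G)$. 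Since $T$, $\varphi$ and $V = V \otimes \id$ all act through the $Q(G)$-slot, the formula for $M$ reduces to the $Q$-case on the $\partial_{Q}$-part, and $V$ plainly commutes with the bar faces that do not involve $G$; the substantive point is to check compatibility of the homotopy $V$ with the action face, i.e.\ that $V$ is suitably linear over $Q(\Z)$. This is exactly the content of the splitting-homotopy constructions of \cite[\S 5, \S 8]{ML57} and \cite[Theorem 11.2]{EM51}, and matching their homotopy with the $V \otimes \id$ of Definition \ref{def: splitting homotopy} is the main obstacle I would have to settle.
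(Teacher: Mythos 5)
Your proposal is correct and follows essentially the same route as the paper, which gives no argument of its own beyond citing \cite[\S 5, \S 8]{ML57} and \cite[Theorem 11.2]{EM51}: the direct cubical computation on $Q'$ (the fold giving $\mu_{1}V = \varphi$ via $\varphi = \varphi_{0} + \varphi_{1}$, the two new faces giving $T$, and $\partial_{j}V = V\partial_{j-1}$ for $j \ge 2$), the descent to the quotient $Q$, and the bar-construction analysis for $M$ are exactly the content of those references. You also correctly isolate the only genuinely delicate point --- the compatibility of $V \tensor \id$ with the $Q(\Z)$-action faces of $B(\Z, Q(\Z), Q(G))$, whose differential is not the tensor-product one --- and this is settled in \cite[\S 8]{ML57}, the same place the paper defers to.
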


The above constructions generalize to (pre)sheaves of groups $G$ by functoriality.

\begin{Def}
	Let $S$ be a site.
	\begin{enumerate}
		\item
			For a presheaf of abelian groups $G$, define
				\begin{gather*}
					\Z_{\pre}[G](X) = \Z[G(X)], \quad
					Q_{\pre}'(G)(X) = Q'(G(X)),
					\\
					Q_{\pre}(G)(X) = Q(G(X)), \quad
					M_{\pre}(G)(X) = M(G(X)),
				\end{gather*}
			for $X \in S$.
		\item
			For $G \in \Ab(S)$ (that is, a sheaf of abelian groups),
			sheafification produces corresponding sheaves $\Z[G]$, $Q(G)$, $Q'(G)$ and $M(G)$.
	\end{enumerate}
\end{Def}

The subscript $\pre$ for the presheaf constructions means ``pre''.
Of course $T$ and $V$ extend to the (pre)sheaf setting,
but we do not need this extension in this paper.


\section{Mac Lane's resolution and derived pullback}
\label{sec: Mac Lane resolution and derived pullback}

We relate Mac Lane's resolution to derived pullback functors.
Let $f \colon S' \to S$ be a continuous map of sites
and $f^{-1}$ the underlying functor on the underlying categories.
Let $G' \in \Ab(S')$.
For any $X \in S$, the $X$-valued points of the complexes $M_{\pre}(f_{\ast} G')$ and $f_{\ast} M_{\pre}(G')$
(where $f_{\ast}$ is applied term-wise)
both give $M(G'(f^{-1} X))$.
Hence $M_{\pre}(f_{\ast} G') \cong f_{\ast} M_{\pre}(G')$ as complexes of presheaves on $S$.
With sheafification, we obtain a canonical morphism
$M(f_{\ast} G') \to f_{\ast} M(G')$ of complexes of sheaves on $S$.
By adjunction, we obtain a canonical morphism
$f^{\ast} M(f_{\ast} G') \to M(G')$ of complexes of sheaves on $S'$.
Composing it with the morphism $M(G') \to G'$,
we obtain a canonical morphism $f^{\ast} M(f_{\ast} G') \to G'$.
In other words, we have a complex
	\[
			\dots
		\to
			f^{\ast} M_{1}(f_{\ast} G')
		\to
			f^{\ast} M_{0}(f_{\ast} G')
		\to
			G'
		\to
			0
	\]
in $\Ab(S')$.

We can ask whether or not
the complex $f^{\ast} M(f_{\ast} G')$ gives a resolution of $G'$ in this manner.
When $f$ is a premorphism of sites,
this question is closely related to whether
the morphism $L f^{\ast} f_{\ast} G' \to G'$ in $D(S')$ is an isomorphism or not.
To see this relation, we need the fact that Mac Lane's resolution calculates $L f^{\ast}$
under a certain representability condition:

\begin{Prop} \label{prop: Mac Lane calculates derived pull}
	Let $f \colon S' \to S$ be a premorphism of sites defined by pretopologies
	and $G \in \Ab(S)$.
	Assume that $G$ as a sheaf of sets is
	the sheafification of a filtered direct limit of representable presheaves.
	Then the natural morphism
		\[
			L f^{\ast} M(G) \to f^{\ast} M(G)
		\]
	in $D(S')$ is an isomorphism.
\end{Prop}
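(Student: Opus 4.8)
The plan is to show that the canonical comparison $L f^{\ast}(\var) \to f^{\ast}(\var)$ is an isomorphism on each term of $M(G)$ and then transport this to the whole complex. The complex $M(G)$ is concentrated in nonnegative homological degrees, hence bounded above in cohomological grading, so once each term $M_{n}(G)$ is $f^{\ast}$-acyclic (meaning $L_{i} f^{\ast} M_{n}(G) = 0$ for $i \ge 1$ and $L_{0} f^{\ast} M_{n}(G) = f^{\ast} M_{n}(G)$), the term-wise pullback $f^{\ast} M(G)$ already represents $L f^{\ast} M(G)$; this is the standard fact that a bounded-above complex of $f^{\ast}$-acyclic objects computes the left derived functor. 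Thus the whole statement reduces to the $f^{\ast}$-acyclicity of each $M_{n}(G)$.

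First I would reduce from $M_{n}(G)$ to the building blocks $\Z[G^{2^{a}}]$. By parts~(\ref{item: structure of Mac Lane without differentials}), (\ref{item: quotient to cubical admits a section}) and (\ref{item: structure of each term of cubical}) of Proposition~\ref{prop: cubical and Mac Lane}, as a sheaf of abelian groups $M_{n}(G)$ is a direct summand of a direct sum of copies of the sheaves $Q'_{a}(G) = \Z[G^{2^{a}}]$: one has $M(G) = Q(G) \tensor_{\Z} \mathcal{B}$ with $\mathcal{B}$ having free terms, $Q(G)$ is a functorial split subquotient of $Q'(G)$, and $Q'_{a}(G) = \Z[G^{2^{a}}]$. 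Since $L f^{\ast}$ is triangulated and commutes with direct sums, it preserves both direct sums and direct summands, so $M_{n}(G)$ is $f^{\ast}$-acyclic as soon as every $\Z[G^{2^{a}}]$ is. Hence it suffices to prove that $\Z[G^{2^{a}}]$ is $f^{\ast}$-acyclic for all $a \ge 0$.

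For $a = 0$ I would use the hypothesis directly: writing $G$ as the sheafification of $\dirlim_{\lambda} P_{\lambda}$ with $P_{\lambda}$ representable, and using that both $\Z[\var]$ and $L f^{\ast}$ commute with sheafification and with filtered direct limits, the $f^{\ast}$-acyclicity of $\Z[G]$ reduces to that of $\Z[P_{\lambda}]$ for representable $P_{\lambda}$, which is the basic input underlying the existence of $L f^{\ast}$ for a premorphism (free abelian sheaves on representables are the acyclic objects computing $L f^{\ast}$). The same reduction, applied to the localizations and using Definition~\ref{def: f compatible, f acyclic}, shows that $\Z[G]$ is moreover $f$-compatible, and it is flat since its stalks are free abelian groups. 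For general $a$ I would pass to tensor powers: the canonical identification $\Z[G^{2^{a}}] \cong \Z[G]^{\tensor 2^{a}}$ (the free abelian group on a product being the tensor product of the free abelian groups) together with Proposition~\ref{prop: cocup product} gives, since $\Z[G]$ is $f$-compatible, an isomorphism $L f^{\ast}(\Z[G]^{\tensor 2^{a}}) \cong (L f^{\ast} \Z[G])^{\tensor 2^{a}}$; because $L f^{\ast} \Z[G] = f^{\ast} \Z[G]$ is flat and $f^{\ast}$ is monoidal, this equals $f^{\ast} \Z[G^{2^{a}}]$, which is exactly the desired acyclicity.

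The hard part will be the product terms $\Z[G^{2^{a}}]$ with $a \ge 1$. This is precisely where the non-exactness of $f^{\ast \set}$ recorded in Proposition~\ref{prop: not a morphism of sites} intervenes: as a sheaf of sets $G^{2^{a}}$ is the sheafification of a filtered direct limit of finite products of representable presheaves, and such finite products are in general not representable in the source site, so the naive reduction to the representable case used for $a = 0$ is unavailable. The entire weight of the argument therefore rests on the compatibility feeding Proposition~\ref{prop: cocup product}, that is, on the $f$-compatibility of $\Z[G]$ and the resulting monoidal behavior of $L f^{\ast}$ on these flat free sheaves; verifying this carefully is exactly the content that the modification of the Mac Lane resolution arguments, carried out in the subsequent acyclicity section, is designed to supply.
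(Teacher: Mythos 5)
Your reductions down to the blocks $\Z[G^{2^{a}}]$ coincide with the paper's proof: the paper uses the hyper-derived spectral sequence $E_{1}^{ij} = L_{-j}f^{\ast}M_{-i}(G)$ where you use the bounded-above-complex-of-acyclics formulation, and then the same summand-of-direct-sums-of-$Q'$ structure from Proposition \ref{prop: cubical and Mac Lane}; your $a = 0$ case is exactly the paper's argument ($L_{j}f^{\ast}$ commutes with filtered direct limits and kills $\Z[X]$ for $X \in S$ by \cite[Lemma 3.7.2]{Suz13}). The genuine gap is your treatment of $a \ge 1$. Your route through Proposition \ref{prop: cocup product} requires $\Z[G]$ to be $f$-compatible, and your justification --- ``the same reduction, applied to the localizations'' --- does not work: the restriction of a representable presheaf $h_{Y}$ to a localization $S/X$ is the presheaf $(Z \to X) \mapsto \Hom_{S}(Z, Y)$, which is representable in $S/X$ only if the product $Y \times X$ exists in the underlying category; this is the very failure of finite products that you yourself invoke to rule out the naive argument for $G^{2^{a}}$, so it resurfaces unresolved at the crucial step of your own argument. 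The paper explicitly warns, right after Definition \ref{def: f compatible, f acyclic}, that $f$-compatibility can fail for premorphisms (citing \cite[Remark 3.5.2]{Suz13}), and within this paper the compatibility statements that do hold (Proposition \ref{prop: EIP objects are h compatible}) are deduced \emph{from} Theorem \ref{thm: derived pull of proalg group with finite components}, whose proof runs through the present proposition, so obtaining compatibility along those lines would be circular. Your closing sentence also misreads the architecture: Section \ref{sec: Acyclicity of the pullback of Mac Lane resolution} does not supply $f$-compatibility of $\Z[G]$; it proves that $\artin{h}^{\ast}M(G)$ is a resolution of $G$, which is the other half of the theorem via Proposition \ref{prop: derived pull and pull Mac Lane resolution} and is independent of the proof of the present proposition.

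What the paper actually does for $a \ge 1$ is much simpler: it applies the same filtered-colimit argument to every block (the proof phrases the reduction as ``$L_{j}f^{\ast}\Z[G] = 0$'', treating the powers uniformly). The point is that in every situation where the proposition is invoked, the underlying sheaf of sets of $G^{2^{a}}$ again satisfies the hypothesis --- not because products of representables are representable (they are not), but because Proposition \ref{prop: scheme as sheaf on rational etale site} applies to the product \emph{scheme} $G^{2^{a}}$ directly: $\artin{h}_{\ast}(G^{2^{a}}) = (\artin{h}_{\ast}G)^{2^{a}}$ is the filtered union of finite sets of points of $G^{2^{a}}$, each of which lies in $k^{\perar}$. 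You correctly spotted that stability of the hypothesis under powers is not a formal consequence of the hypothesis on $G$ alone (a subtlety the paper's terse wording glosses over), but the fix is to verify the hypothesis for the power sheaves themselves, not to import monoidality of $Lf^{\ast}$, which for a mere premorphism is unavailable without exactly the compatibility you could not prove; your subsidiary claim that the underived $f^{\ast}$ is monoidal is unjustified for premorphisms for the same reason.
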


\begin{proof}
	The spectral sequence
		\[
				E_{1}^{i j}
			=
				L_{- j} f^{\ast} M_{- i}(G)
			\converges
				H^{i + j} L f^{\ast} M(G),
		\]
	shows that it is enough to show $L_{j} f^{\ast} M_{i}(G) = 0$ for any $j \ge 1$ and $i \ge 0$.
	Furthermore, it is enough to show that $L_{j} f^{\ast} \Z[G] = 0$
	by the structure of $M_{i}(G)$
	(Proposition \ref{prop: cubical and Mac Lane}
	(\ref{item: structure of each term of cubical}),
	(\ref{item: cubical is a quotient}),
	(\ref{item: quotient to cubical admits a section}),
	(\ref{item: structure of Mac Lane without differentials})).
	Being a left adjoint, $f^{\ast}$ commutes with direct limits.
	Hence by \cite[Lemma 3.7.2]{Suz13} and \cite[Corollary 14.4.6 (ii)]{KS06},
	we know that $L_{j} f^{\ast}$ commutes with filtered direct limits.
	Hence, by the assumption on $G$,
	the statement reduces to the fact \cite[Lemma 3.7.2]{Suz13}
	that $L_{j} f^{\ast} \Z[X] = 0$ for $X \in S$.
\end{proof}

Using this, we obtain the desired relation:

\begin{Prop} \label{prop: derived pull and pull Mac Lane resolution}
	Let $f \colon S' \to S$ be a premorphism of sites defined by pretopologies
	and $G' \in \Ab(S')$.
	Assume that $f_{\ast} G'$ as a sheaf of sets is
	the sheafification of a filtered direct limit of representable presheaves.
	Then the morphism $L f^{\ast} f_{\ast} G' \to G'$ is an isomorphism in $D(S')$
	if and only if $f^{\ast} M(f_{\ast} G')$ is a resolution of $G'$.
\end{Prop}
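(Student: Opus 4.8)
The plan is to exploit the fact, established in Proposition \ref{prop: Mac Lane calculates derived pull}, that Mac Lane's resolution computes the derived pullback under exactly the representability hypothesis we are assuming here. The key observation is that the object $M(f_{\ast} G')$ satisfies the hypothesis of that proposition: since $\Z[G]$ as a sheaf of sets is built functorially from powers of $f_{\ast} G'$ (via $Q'_{n}(G) = \Z[G^{2^{n}}]$ by Proposition \ref{prop: cubical and Mac Lane} (\ref{item: structure of each term of cubical})), and $f_{\ast} G'$ is by assumption a sheafified filtered direct limit of representable presheaves, each term $M_{i}(f_{\ast} G')$ is again such a limit. Hence $L f^{\ast} M(f_{\ast} G') \to f^{\ast} M(f_{\ast} G')$ is an isomorphism in $D(S')$.

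With that isomorphism in hand, I would compute $L f^{\ast} f_{\ast} G'$ by applying $L f^{\ast}$ to the Mac Lane resolution $M(f_{\ast} G') \to f_{\ast} G'$, which is a quasi-isomorphism in $\Ab(S)$ by Proposition \ref{prop: cubical and Mac Lane} (\ref{item: Mac Lane is a resolution}). Thus $M(f_{\ast} G')$ represents $f_{\ast} G'$ in $D(S)$, and so $L f^{\ast} f_{\ast} G'$ is represented by $L f^{\ast} M(f_{\ast} G')$, which by the previous paragraph is represented by the honest complex $f^{\ast} M(f_{\ast} G')$ of sheaves on $S'$. The natural morphism $L f^{\ast} f_{\ast} G' \to G'$ is then identified, through these representations, with the augmented complex
	\[
			f^{\ast} M(f_{\ast} G')
		\to
			G'
	\]
described just before the statement.

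The conclusion is now a matter of unwinding definitions. The morphism $L f^{\ast} f_{\ast} G' \to G'$ in $D(S')$ is an isomorphism if and only if the map from the complex $f^{\ast} M(f_{\ast} G')$ (which represents the source) to $G'$ (placed in degree $0$) is a quasi-isomorphism. But saying that this augmented complex is a quasi-isomorphism onto $G'$ is, by definition, precisely the assertion that
	\[
			\dots
		\to
			f^{\ast} M_{1}(f_{\ast} G')
		\to
			f^{\ast} M_{0}(f_{\ast} G')
		\to
			G'
		\to
			0
	\]
is exact, i.e.\ that $f^{\ast} M(f_{\ast} G')$ is a resolution of $G'$. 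I would take care to verify that the map in $D(S')$ built from the adjunction $f^{\ast} M(f_{\ast} G') \to M(G') \to G'$ is genuinely the same as the canonical morphism $L f^{\ast} f_{\ast} G' \to G'$ appearing in the statement; this is a compatibility-of-units check rather than a substantive difficulty.

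The main obstacle I anticipate is the bookkeeping in the previous paragraph: ensuring that the identification of $L f^{\ast} M(f_{\ast} G')$ with $f^{\ast} M(f_{\ast} G')$ is compatible with the two augmentation maps to $G'$, so that ``isomorphism in $D(S')$'' and ``resolution'' really do correspond term for term. Everything else is a direct application of Proposition \ref{prop: Mac Lane calculates derived pull} together with the defining property of Mac Lane's resolution as a functorial resolution.
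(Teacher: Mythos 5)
Your proposal is correct and follows essentially the same route as the paper, which packages the same three identifications (the quasi-isomorphism $M(f_{\ast} G') \to f_{\ast} G'$, the isomorphism $L f^{\ast} M(f_{\ast} G') \isomto f^{\ast} M(f_{\ast} G')$ from Proposition \ref{prop: Mac Lane calculates derived pull}, and the resolution $M(G') \isomto G'$) into a single commutative diagram in $D(S')$, which at the same time settles the compatibility-of-augmentations bookkeeping you flag at the end. One small simplification: you need not re-argue that the terms of $M(f_{\ast} G')$ are built from sheafified filtered colimits of representables, since Proposition \ref{prop: Mac Lane calculates derived pull} applies directly with $G = f_{\ast} G'$ --- its hypothesis is on $G$ itself, which is exactly the standing assumption here.
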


\begin{proof}
	We have a commutative diagram
		\[
			\begin{CD}
					L f^{\ast} M(f_{\ast} G')
				@>>>
					f^{\ast} M(f_{\ast} G')
				@>>>
					M(G')
				\\
				@VV \wr V
				@VVV
				@V \wr VV
				\\
					L f^{\ast} f_{\ast} G'
				@>>>
					f^{\ast} f_{\ast} G'
				@>>>
					G'
			\end{CD}
		\]
	in $D(S')$.
	Hence the result follows from the previous proposition.
\end{proof}

Note that $G' = \Z[X']$ for $X' \in S'$ does not always satisfy the assumption of the proposition.
The description of $\Z[X']$ as a sheaf of sets involves quotients of powers of $X'$ by some equivalence relations,
which are not \emph{filtered} direct limits.

To prove Theorem \ref{thm: derived pull of proalg group with finite components},
the key step will be to show that $\artin{h}^{\ast} M(G)$ is a resolution of $G \in \Pro_{\fc}' \Alg / k$
in $\Ab(k^{\perf'}_{\pro\fppf})$,
which we will do in the next section.


\section{Acyclicity of the pullback of Mac Lane's resolution}
\label{sec: Acyclicity of the pullback of Mac Lane resolution}

Let $G \in \Pro_{\fc}' \Alg / k$.
Let $L$ be a finitely generated free abelian group.
Denote the sheaf-Hom, $\sheafhom_{k^{\perf'}_{\pro\fppf}}(L, G)$, by $[L, G] \in \Pro_{\fc}' \Alg / k$.
Let $X \in k^{\perf'}$ and set $Y = X \times_{k} [L, G] \in k^{\perf'}$.
Let $\varphi \colon L \to G(X)$ be a homomorphism.
Its composite with the homomorphism $G(X) \to G(Y)$ induced by the first projection $Y \to X$
is still denoted by $\varphi$.
The natural evaluation homomorphism $L \to G([L, G])$ is denoted by $\varphi_{0}$.
Therefore $\varphi(a)$ for any $a \in L$ is a morphism $X \to G$ in $k^{\perf'}$
and $\varphi_{0}(a)$ is a morphism $[L, G] \to G$ in $\Pro_{\fc}' \Alg / k$.
The composite of $\varphi_{0}$ with the homomorphism $G([L, G]) \to G(Y)$
induced by the second projection $Y \to [L, G]$
is still denoted by $\varphi_{0}$.
Set $\varphi_{1} = \varphi - \varphi_{0} \colon L \to G(Y)$.
Since $G \in \Pro_{\fc}' \Alg / k$ is faithfully flat of profinite presentation over $k$,
the scheme $Y$ is faithfully flat of profinite presentation over $X$.
We recall the following fact from \cite[Lemma 3.6.2]{Suz13}
to create sufficiently many pro-fppf covers of $X$:

\begin{Prop} \label{prop: making profppf covers}
	Let $Z_{i} \to Y$ be morphisms in $k^{\perf'}$,
	$i = 1, \dots, n$, and
	let $Z = Z_{1} \times_{Y} \dots \times_{Y} Z_{n}$.
	Assume the following conditions for each $i$:
		\begin{itemize}
			\item
				$Z_{i} \to Y$ is flat of profinite presentation,
			\item
				the morphism $(Z_{i})_{x} \to Y_{x}$ on the fiber over any point $x \in X$ is dominant.
		\end{itemize}
	Then $Z$ also satisfies these two conditions.
	In particular, $Z / X$ is faithfully flat of profinite presentation.
\end{Prop}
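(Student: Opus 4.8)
The plan is to prove the two stated properties separately, since they are independent, and then observe that the final clause ("In particular, $Z / X$ is faithfully flat of profinite presentation") follows formally. The two properties are: (i) being flat of profinite presentation, and (ii) having dominant fibers over every point $x \in X$. Both are phrased for a finite fiber product $Z = Z_1 \times_Y \dots \times_Y Z_n$, and I would reduce immediately to the binary case $Z_1 \times_Y Z_2$ by induction on $n$, checking that the inductive hypothesis lets me compose.

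For property (i), the key point is that flatness of profinite presentation is stable under base change and composition in $k^{\perf'}$. First I would recall from the definition that a morphism flat of profinite presentation is an inverse limit $\invlim Z_\lambda \to Y$ of morphisms flat of finite presentation with affine transition maps. Forming the fiber product $Z_1 \times_Y Z_2$ then amounts to taking the inverse limit of the fiber products $(Z_1)_\lambda \times_Y (Z_2)_\mu$; since flatness of finite presentation in the usual (pre-perfection) sense is stable under base change and since perfection commutes with fiber products, each such level is flat of finite presentation, and the transition morphisms remain affine. Thus $Z_1 \times_Y Z_2 \to Y$ is again flat of profinite presentation. Here one must be a little careful that the perfection functor behaves well with respect to these limits, but this is exactly the content of the definitions recalled in the section on $k^{\perf'}$ and $\Spec k^{\perf'}_{\pro\fppf}$, so it is routine.

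For property (ii), I would argue fiber by fiber. Fix a point $x \in X$ with residue field $\kappa(x)$. The fibers $(Z_i)_x \to Y_x$ are dominant by hypothesis, and I want to conclude that $(Z_1 \times_Y Z_2)_x = (Z_1)_x \times_{Y_x} (Z_2)_x \to Y_x$ is dominant. The natural approach is to pass to generic points: dominance means the image contains the generic points of (the irreducible components of) $Y_x$, or equivalently that the generic fiber over $Y_x$ is nonempty after the appropriate base change. Over a generic point $\eta$ of $Y_x$, the fibers $(Z_1)_\eta$ and $(Z_2)_\eta$ are nonempty, and because we are working with \emph{flat} morphisms their fiber product $(Z_1)_\eta \times_\eta (Z_2)_\eta$ is again nonempty (a fiber product of two nonempty schemes over a field is nonempty). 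This nonemptiness at every generic point of $Y_x$ is precisely dominance of $(Z_1 \times_Y Z_2)_x \to Y_x$. The hard part here is bookkeeping with perfections and possibly non-reduced or infinite-dimensional fibers, ensuring that "dominant" is correctly interpreted on these non-noetherian perfect schemes and that flatness really does guarantee the generic fibers are nonempty; I expect this to be the main obstacle, though it is ultimately a standard consequence of flatness and the going-down behavior it provides.

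Finally, to deduce the "in particular" clause, I would combine the two properties: $Z \to X$ factors as $Z \to Y \to X$, where $Z \to Y$ is flat of profinite presentation by (i) and $Y \to X$ is faithfully flat of profinite presentation since $Y = X \times_k [L,G]$ and $[L,G] \in \Pro_{\fc}' \Alg / k$ is faithfully flat of profinite presentation over $k$ (as stated in the setup of the section). Composition of flat-of-profinite-presentation morphisms is again of that type, so $Z \to X$ is flat of profinite presentation. Surjectivity of $Z \to X$ follows from property (ii): the dominant fiber condition, combined with flatness and the fact that $Y \to X$ is already surjective, forces $Z \to X$ to hit every point of $X$, giving faithful flatness. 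This matches the cited statement \cite[Lemma 3.6.2]{Suz13}, so the proof is essentially a transcription of that argument into the present $k^{\perf'}$ setting.
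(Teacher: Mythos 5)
Your proposal is correct, but the dominance step takes a genuinely different route from the paper's. The paper's entire proof is a one-line citation: a flat base change of a dominant morphism is dominant. Applied fiberwise over $x \in X$, this makes $(Z_1)_x \times_{Y_x} (Z_2)_x \to (Z_2)_x$ dominant as the flat base change of the dominant $(Z_1)_x \to Y_x$, and composing with the dominant $(Z_2)_x \to Y_x$ (a composite of dominant morphisms is dominant, since a continuous map with dense image sends dense sets to dense sets) finishes the fiber condition; the profinite-presentation and faithful-flatness clauses are then handled exactly as you do. You instead unwind the base-change fact into a pointwise argument via generic points, and this is where you should be more careful than your write-up is: your opening gloss that ``dominance means the image contains the generic points'' is false in general --- $\coprod_p \Spec \F_p \to \Spec \Z$ is dominant but misses the generic point --- so dominance of $(Z_i)_x \to Y_x$ alone does not give $(Z_i)_\eta \ne \emptyset$. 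Your argument is nevertheless salvaged by the flatness you invoke: flat morphisms satisfy going-down, so the image of $(Z_i)_x$ is stable under generalization, and a dense, generalization-stable subset of $Y_x$ contains every generic point \emph{provided} $Y_x$ has finitely many irreducible components (so that missing one component would trap the image in the closed union of the others). That finiteness holds here because $Y_x$ is the base change of $[L, G] \in \Pro'_{\fc} \Alg / k$, which has finite $\pi_0$ and whose irreducible components have generic points (this is exactly what makes $\xi_{[L,G]}$ of Definition \ref{def: group with finite components and generic point} an object of $k^{\perar}$). Once the generic fibers $(Z_i)_\eta$ are nonempty, your conclusion via ``a fiber product of nonempty schemes over a field is nonempty'' (which, as you could note, needs no flatness at all) does give dominance of $Z_x \to Y_x$. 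In summary: the paper's citation is shorter and insensitive to the component structure of $Y_x$, while your argument is self-contained but quietly depends on going-down plus the finite-component structure of the fibers, two points you flagged as ``the main obstacle'' and which deserve to be made explicit.
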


\begin{proof}
	This follows from the fact that
	a flat base change of a dominant morphism is dominant.
\end{proof}

Recall that $\artin{h}$ is not a morphism of sites
(Proposition \ref{prop: not a morphism of sites}).
We note the structures of $\artin{h}_{\pre}^{\ast \set} G$
and $\artin{h}_{\pre}^{\ast} \Z_{\pre}[G]$.

\begin{Prop} \label{prop: perar pull is union of finite sets of points}
	Let $\artin{h}_{\pre}^{\ast}$ (resp.\ $\artin{h}_{\pre}^{\ast \set}$) be the pullback functors
	for presheaves of abelian groups (resp.\ sets) by $\artin{h}$.
	\begin{enumerate}
		\item
			The presheaf $\artin{h}_{\pre}^{\ast \set} G$ on $\Spec k^{\perf'}_{\pro\fppf}$
			is the filtered union of finite sets of points of $G$,
			which is a subpresheaf of $G$.
		\item
			The presheaf $\artin{h}_{\pre}^{\ast} \Z_{\pre}[G]$ on $\Spec k^{\perf'}_{\pro\fppf}$
			is the filtered union of $\Z_{\pre}[x]$ over the finite sets of points $x$ of $G$,
			which is a subpresheaf of $\Z_{\pre}[G]$.
	\end{enumerate}
\end{Prop}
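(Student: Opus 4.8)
The statement is about understanding the presheaf-level pullback $\artin{h}_{\pre}^{\ast}$ applied to $G$ and to $\Z_{\pre}[G]$ for $G \in \Pro_{\fc}' \Alg / k$. Let me sketch how I would prove each part.

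Let me think about what $\artin{h}_{\pre}^{\ast \set}$ does. The functor $\artin{h}$ comes from the inclusion $k^{\perar} \hookrightarrow k^{\perf'}$, so $\artin{h}^{-1}$ is this inclusion on underlying categories. The presheaf pullback of a presheaf $F$ along a functor $u = \artin{h}^{-1}$ is typically computed as a colimit (left Kan extension): $(\artin{h}_{\pre}^{\ast \set} F)(X) = \varinjlim F(k')$ where the colimit runs over the comma category of objects $k' \in k^{\perar}$ equipped with a morphism $X \to k'$ in $k^{\perf'}$ (i.e. $\Spec k' $ receiving a map from $X$, equivalently $k' \to \Order(X)$-type data), or dually over morphisms $X \to \Spec k'$.

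Now I recall what $G$ as a sheaf on $\Spec k^{\perar}_{\et}$ looks like: by Proposition \ref{prop: scheme as sheaf on rational etale site}, $G$ is the disjoint union of its points, and as a presheaf it is the filtered union of finite sets of points of $G$. The key observation driving the whole proof is the special structure of the comma category indexing the Kan extension colimit. A morphism $\Spec k' \to G$ with $k' \in k^{\perar}$ factors uniquely through a finite set of points of $G$, as recorded in the proof of Proposition \ref{prop: scheme as sheaf on rational etale site}. This is precisely the statement that $G$, restricted to $k^{\perar}$, is the filtered union of its finite-point-set subfunctors. Since left Kan extension along the inclusion commutes with the filtered colimit presenting $G$, it suffices to understand $\artin{h}_{\pre}^{\ast \set}$ applied to a single point $x = \Spec \kappa$ of $G$ (with $\kappa \in k^{\perar}$). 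For a point $x$, the colimit defining $\artin{h}_{\pre}^{\ast \set} x$ over the comma category evaluated at $X$ collapses: a morphism $X \to x$ in $k^{\perf'}$ is just a morphism $X \to x$, and because $x \in k^{\perar}$ already sits in the source site, the comma category has the terminal-type object $x$ itself. So the presheaf pullback of the point sheaf $x$ is the representable presheaf $x$ on $k^{\perf'}$. Taking the filtered union over finite point sets then yields part (a): $\artin{h}_{\pre}^{\ast \set} G$ is the filtered union of finite sets of points of $G$, and since each is a subpresheaf of $G$ and the transition maps are inclusions, the whole thing is a subpresheaf of $G$.

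For part (b), I would apply the same mechanism to $\Z_{\pre}[G]$. The free-abelian-group functor $\Z[\var]$ commutes with the colimit computing the left Kan extension (it is a left adjoint on presheaves), so $\artin{h}_{\pre}^{\ast} \Z_{\pre}[G] = \Z_{\pre}[\artin{h}_{\pre}^{\ast \set} G]$. Combined with part (a), writing $\artin{h}_{\pre}^{\ast \set} G$ as the filtered union of finite point sets $x$, and using that $\Z_{\pre}[\var]$ of a filtered union of subpresheaves is the filtered union of the $\Z_{\pre}[\var]$'s, gives $\artin{h}_{\pre}^{\ast} \Z_{\pre}[G] = \varinjlim_{x} \Z_{\pre}[x]$, the filtered union of $\Z_{\pre}[x]$ over finite sets of points $x$ of $G$. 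Each $\Z_{\pre}[x]$ is a subpresheaf of $\Z_{\pre}[G]$ because $x \hookrightarrow G$ is a monomorphism of point sets and $\Z_{\pre}[\var]$ preserves monomorphisms of sets, yielding the final claim.

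The main obstacle, as I see it, is pinning down precisely that the comma category indexing the Kan extension really does collapse so cleanly on points, i.e. that a map $X \to x$ with $x \in k^{\perar}$ contributes nothing beyond the representable presheaf $x$. The subtlety is that the inclusion $k^{\perar} \hookrightarrow k^{\perf'}$ is fully faithful and $x$ lies in both categories, so the comma category of $k^{\perar}$-objects under $X$ factoring the map to $x$ has a final object, namely $x$ mapping identically to itself; I would want to verify this finality carefully rather than merely asserting it. Everything else reduces to Proposition \ref{prop: scheme as sheaf on rational etale site} together with the fact that left Kan extension and $\Z_{\pre}[\var]$ both commute with the relevant filtered colimits and preserve monomorphisms.
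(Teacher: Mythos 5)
Your proposal is correct and matches the paper's route: the paper's own proof is a one-line reduction to Proposition \ref{prop: scheme as sheaf on rational etale site}, and your Kan-extension bookkeeping (presheaf pullback commutes with colimits, $\Z[\var]$ is a left adjoint) is exactly the standard machinery that reduction tacitly invokes. The finality point you flag is handled by the standard fact that the left Kan extension along $\artin{h}^{-1}$ sends a representable presheaf $h_{x}$ to $h_{x}$, which applies here because each finite set of points $x$ of $G$ is itself an object of $k^{\perar}$ lying in the full subcategory.
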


\begin{proof}
	This follows from Proposition \ref{prop: scheme as sheaf on rational etale site}.
\end{proof}

The following proposition and its proof are a variant of \cite[Lemma 3.6.3]{Suz13},
with the ind-rational \'etale site replaced by the perfect artinian \'etale site.
The proposition allows us to pro-fppf locally ``generify'' sections of $G$ (and $G^{2}$).

\begin{Prop}
	Let $a \in L$.
	\begin{enumerate}
		\item \label{item: generify a section}
			There exist a scheme $Z \in k^{\perf'}$ and
			a $k$-morphism $Z \to Y$ satisfying the two conditions of
			Proposition \ref{prop: making profppf covers}
			such that the natural images $\varphi_{0}(a), \varphi_{1}(a) \in G(Z)$ are
			contained in the subset $(\artin{h}_{\pre}^{\ast \set} G)(Z)$.
		\item \label{item: further generify a generic section}
			If $\varphi(a) \in G(X)$ is contained in $(\artin{h}_{\pre}^{\ast \set} G)(X)$,
			then $Z$ can be taken so that
			the natural image $(\varphi_{0}(a), \varphi_{1}(a)) \in G^{2}(Z)$ is
			contained in $(\artin{h}_{\pre}^{\ast \set}(G^{2}))(Z)$.
	\end{enumerate}
\end{Prop}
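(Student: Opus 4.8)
The plan is to use the group law to reduce both sections to generic points of a single \emph{fixed} subgroup of $G$, namely the scheme-theoretic image $H := \operatorname{im}(e_{a}) \subseteq G$ of the evaluation-at-$a$ homomorphism $e_{a} \colon [L, G] \to G$; this $H$ is a closed subgroup of $G$, again the perfection of an algebraic group. The observation driving everything is that all of $\varphi_{0}(a)$, $\varphi(a)$ and $\varphi_{1}(a)$, viewed in $G(Y)$, factor through $H$. Indeed $\varphi_{0}(a) = e_{a} \circ \mathrm{pr}_{[L,G]}$ by definition; the homomorphism $\varphi \colon L \to G(X)$ corresponds by the defining adjunction of $[L, G]$ to a morphism $\tilde{\varphi} \colon X \to [L, G]$ with $\varphi(a) = e_{a} \circ \tilde{\varphi}$, so $\varphi(a)$ factors through $H$ too; and hence so does the difference $\varphi_{1}(a)$. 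Working with $H$ rather than with all of $G$ is essential: by construction $e_{a}$ is dominant onto $H$, so generic points of $[L, G]$ are carried to generic points of $H$, and the generic points $\xi_{H} \in k^{\perar}$ then furnish a \emph{fixed} finite set of points of $G$, independent of the base point of $X$, through which the generified sections will factor.

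For part (\ref{item: generify a section}) I would form $\Phi := (\varphi_{0}(a), \varphi_{1}(a)) \colon Y \to H \times_{k} H$ and set $Z := \Phi^{-1}(\xi_{H} \times_{k} \xi_{H})$, the pullback along $\Phi$ of the inclusion $\xi_{H} \times_{k} \xi_{H} \hookrightarrow H \times_{k} H$, which is flat of profinite presentation. Then $Z \to Y$ is flat of profinite presentation, and on $Z$ each coordinate of $\Phi$ factors through $\xi_{H}$, hence through finitely many points of $G$, so that $\varphi_{0}(a)|_{Z}, \varphi_{1}(a)|_{Z} \in (\artin{h}_{\pre}^{\ast \set} G)(Z)$ by Proposition \ref{prop: perar pull is union of finite sets of points}. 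It remains to verify the dominance condition of Proposition \ref{prop: making profppf covers}. Over a point $x \in X$ the map $\Phi$ sends $\ell \mapsto (e_{a}(\ell), \varphi(a)|_{x} - e_{a}(\ell))$; for $\ell$ the generic point of $[L, G]_{\kappa(x)}$ the first coordinate $e_{a}(\ell)$ is the generic point of $H$ (since $e_{a}$ is dominant onto $H$), while the second coordinate $\varphi(a)|_{x} - e_{a}(\ell)$ is obtained from a generic point of $H$ by translation by $\varphi(a)|_{x} \in H$ (this is exactly where $\varphi(a) \in H$ enters), hence is again a generic point of $H$. Thus $Z$ meets $Y_{x}$ in a dense subset and $(Z)_{x} \to Y_{x}$ is dominant.

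For part (\ref{item: further generify a generic section}) the additional hypothesis is that $\varphi(a) \colon X \to G$ factors through finitely many points. Decomposing $X$ into the open-closed preimages of these points, I may assume $\varphi(a)$ factors through a single point $c \colon \Spec F \to H$. Then $\Phi$ takes values in the translated antidiagonal $A := \{(u, c - u) : u \in H\}$, the scheme-theoretic image of the morphism $H_{F} \to H \times_{k} H$, $u \mapsto (u, c - u)$, whose generic points $\xi_{A}$ (a finite set) lie in $k^{\perar}$. Setting $Z := \Phi^{-1}(\xi_{A})$, again $Z \to Y$ is flat of profinite presentation and $\Phi|_{Z}$ factors through the finitely many points $\xi_{A}$ of $G^{2}$, giving $(\varphi_{0}(a), \varphi_{1}(a))|_{Z} \in (\artin{h}_{\pre}^{\ast \set}(G^{2}))(Z)$. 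Dominance over $X$ follows as in part (\ref{item: generify a section}), since $\Phi|_{Y_{x}}$ is dominant onto $A$ (its first coordinate is $e_{a}$, dominant onto $H$), so the preimage of $\xi_{A}$ is dense in each fibre.

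The step demanding the most care is the fibrewise dominance in both parts, namely that the generic point of $Y_{x} = [L, G]_{\kappa(x)}$ lands in $\xi_{H} \times_{k} \xi_{H}$ (resp.\ in $\xi_{A}$). This rests on the two structural facts isolated above: that $e_{a}$ is dominant onto its image $H$, and that $\varphi(a)$ is valued in $H$, so that the second coordinate of $\Phi$ remains inside the \emph{fixed} subgroup $H$ instead of wandering through a coset varying with $x$ (which would destroy the global finiteness of the image). Once these are granted, the remaining points — that $\xi_{H} \in k^{\perar}$ and that $\xi_{H} \hookrightarrow H$ is flat of profinite presentation, together with the compatibility of generic points with the flat base change $\kappa(x)/k$ — are routine.
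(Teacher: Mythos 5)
Your argument is correct, but it takes a genuinely different route from the paper's. The paper never uses your key observation that $\varphi(a)$ itself factors through $H = \Im(\varphi_{0}(a))$ (via $\varphi(a) = e_{a} \compose \Tilde{\varphi}$). Instead, for (\ref{item: generify a section}) it generifies $\varphi_{0}(a)$ alone by pulling back $\xi_{H}$, transports the result to $\varphi_{1}(a)$ by the $X$-automorphism $(x, \psi) \leftrightarrow (x, \Tilde{\varphi}(x) - \psi)$ of $Y$, and then intersects the two covers using Proposition \ref{prop: making profppf covers}; for (\ref{item: further generify a generic section}) it applies the automorphism $(b, c) \leftrightarrow (b + c, b)$ of $G^{2}$ to replace $(\varphi_{0}(a), \varphi_{1}(a))$ by $(\varphi(a), \varphi_{0}(a))$, which maps \emph{faithfully flatly} onto the product $W = \Im(\varphi(a)) \times_{k} \Im(\varphi_{0}(a))$, and pulls back $\xi_{W}$. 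Your single pullback of $\xi_{H} \times_{k} \xi_{H}$ in (\ref{item: generify a section}) is a real simplification (one cover instead of an intersection of two, no automorphism of $Y$), and your antidiagonal $A$ is the image of the paper's construction under the inverse automorphism of $G^{2}$, so the two proofs of (\ref{item: further generify a generic section}) are essentially equivalent; what the paper's product target $W$ buys is that faithful flatness onto a product makes the fibrewise dominance automatic, whereas your $A$ is not a product and forces the finer analysis below. (One slip of terminology: $H$ is a pro-algebraic group, an object of $\Pro'_{\fc} \Alg / k$, not in general the perfection of an algebraic group; nothing you use is affected.)

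The one step you state too quickly is the fibrewise dominance in (\ref{item: further generify a generic section}): ``$\Phi|_{Y_{x}}$ is dominant onto $A$, so the preimage of $\xi_{A}$ is dense in each fibre'' is not enough, because when $A$ has several irreducible components a dominant morphism may send a maximal point of the source to a non-maximal point of the target, and, unlike in (\ref{item: generify a section}), $\xi_{A}$ is not cut out coordinatewise, so ``as in part (\ref{item: generify a section})'' does not apply verbatim. (Your part (\ref{item: generify a section}) is fine as written: there $\xi_{H} \times_{k} \xi_{H}$ is the locus where both projections land in $\xi_{H}$, so it suffices that each coordinate of the image of a maximal point be maximal in $H_{\kappa(x)}$, which your translation argument gives.) What must be shown in (\ref{item: further generify a generic section}) is that every maximal point of $Y_{x}$ lands in $\xi_{A}$, and this is true for the following reason. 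Writing $s \colon H \times_{k} H \to H$ for the addition map (a surjection of pro-algebraic groups, hence faithfully flat of profinite presentation), your morphism $H_{F} \to H \times_{k} H$, $u \mapsto (u, c - u)$, is precisely the fibre of $s$ over the point $c$, hence a homeomorphism onto the set-theoretic fibre $s^{-1}(c)$ with its subspace topology; consequently the maximal points of $A = \closure{s^{-1}(c)}$ are exactly the images of the maximal points of $H_{F}$, with no nesting of component closures possible. Now $\Phi_{x}$ factors as $Y_{x} \overset{e_{a}}{\to} H_{\kappa(x)} \to H_{F} \to H \times_{k} H$, where the first arrow is faithfully flat, the second is the flat base-change projection (note $\kappa(x) \supseteq F$ via $q$), and the third is the map above; so maximal points of $Y_{x}$ map to maximal points of $H_{F}$ and hence into $\xi_{A}$, which combined with the flatness of $Z_{x} \to Y_{x}$ gives the dominance. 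With this patch, and the routine facts you list (in particular that $\xi_{A} \into A$ is dominant flat of profinite presentation because $A$ is reduced, hence perfect, separated and quasi-compact with finitely many irreducible components), your proof is complete.
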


\begin{proof}
	(\ref{item: generify a section})
	The element $\varphi_{0}(a)$ gives a morphism $[L, G] \to G$ in $\Pro_{\fc}' \Alg / k$,
	whose image $\Im(\varphi_{0}(a))$ is again in $\Pro_{\fc}' \Alg / k$.
	Hence its generic point $\xi_{\Im(\varphi_{0}(a))}$ is an object of $k^{\perar}$
	(Definition \ref{def: group with finite components and generic point}).
	Consider the following commutative diagram in $k^{\perf'}$ with a cartesian square:
		\[
			\begin{CD}
					\varphi_{0}(a)^{-1}(\xi_{\Im(\varphi_{0}(a))})
				@>> \varphi_{0}(a) >
					\xi_{\Im(\varphi_{0}(a))}
				\\
				@VV \text{incl} V
				@V \text{incl} VV
				\\
					[L, G]
				@> \varphi_{0}(a) >>
					\Im(\varphi_{0}(a))
				@> \text{incl} >>
					G,
			\end{CD}
		\]
	The bottom arrow in the square is faithfully flat of profinite presentation
	since it is a surjection of pro-algebraic groups.
	The right arrow is dominant flat of profinite presentation.
	Hence the left arrow is dominant flat of profinite presentation.
	We define $Z_{1} = X \times_{k} \varphi_{0}(a)^{-1}(\xi_{\Im(\varphi_{0}(a))})$.
	Then the natural morphism $Z_{1} \to Y$ satisfies
	the two conditions in Proposition \ref{prop: making profppf covers}
	by the same reasoning as the proof of Proposition \ref{prop: making profppf covers}.
	The natural image $\varphi_{0}(a) \in G(Z_{1})$ is a morphism
	$Z_{1} \to G$ that factors through
		$
				\xi_{\Im(\varphi_{0}(a))}
			\subset
				\artin{h}_{\pre}^{\ast\set} G
		$
	(Proposition \ref{prop: perar pull is union of finite sets of points}).
	Hence $\varphi_{0}(a) \in (\artin{h}_{\pre}^{\ast\set} G)(Z_{1})$.
	
	The morphism $\varphi \colon L \to G(X)$ defines a morphism
	$\Tilde{\varphi} \colon X \to [L, G]$.
	We have an automorphism of the $X$-scheme $Y = X \times_{k} [L, G]$ given by
	$(x, \psi) \leftrightarrow (x, \Tilde{\varphi}(x) - \psi)$.
	The composite of this with the morphism $\varphi_{0}(a) \colon Y \to G$
	is $\varphi(a) - \varphi_{0}(a) = \varphi_{1}(a)$.
	We define $Z_{2} \to Y$ to be the inverse image of
	the morphism $Z_{1} \to Y$ by this $X$-automorphism of $Y$.
	Then we have $\varphi_{1}(a) \in (\artin{h}_{\pre}^{\ast\set} G)(Z_{2})$
	by the previous paragraph
	and $Z_{2}$ satisfies the two conditions of Proposition \ref{prop: making profppf covers}.
	We define $Z = Z_{1} \times_{Y} Z_{2}$.
	Then we have $\varphi_{0}(a), \varphi_{1}(a) \in (\artin{h}_{\pre}^{\ast\set} G)(Z)$
	and $Z$ satisfies the two conditions of Proposition \ref{prop: making profppf covers}.
	
	(\ref{item: further generify a generic section})
	Assume that $\varphi(a) \in (\artin{h}_{\pre}^{\ast\set} G)(X)$.
	Consider the automorphism $(b, c) \leftrightarrow (b + c, b)$ of the group $G^{2}$,
	which maps $(\varphi_{0}(a), \varphi_{1}(a))$ to $(\varphi(a), \varphi_{0}(a))$.
	Hence it is enough to show that we can take $Z$ so that
	$(\varphi(a), \varphi_{0}(a)) \in (\artin{h}_{\pre}^{\ast\set}(G^{2}))(Z)$.
	The image $\Im(\varphi(a))$ of $\varphi(a) \colon X \to \artin{h}_{\pre}^{\ast\set} G \subset G$
	is an object of $k^{\perar}$
	(Proposition \ref{prop: perar pull is union of finite sets of points}).
	We have a faithfully flat morphism $\varphi(a) \colon X \onto \Im(\varphi(a))$
	of profinite presentation.
	Define $W = \Im(\varphi(a)) \times_{k} \Im(\varphi_{0}(a))$,
	which is the finite disjoint union of the fibers $W_{k'} \in \Pro'_{\fc} \Alg / k'$
	over the points $\Spec k'$ of $\Im(\varphi(a))$.
	In particular, its generic point $\xi_{W}$ is an object of $k^{\perar}$.
	We have a faithfully flat morphism
	$(\varphi(a), \varphi_{0}(a)) \colon X \times_{k} [L, G] \onto W$
	of profinite presentation.
	Consider the following commutative diagram with a cartesian square:
		\[
			\begin{CD}
					(\varphi(a), \varphi_{0}(a))^{-1}(\xi_{W})
				@>> (\varphi(a), \varphi_{0}(a)) >
					\xi_{W}
				\\
				@VV \text{incl} V
				@V \text{incl} VV
				\\
					X \times_{k} [L, G]
				@> (\varphi(a), \varphi_{0}(a)) >>
					W
				@> \text{incl} >>
					G^{2}
			\end{CD}
		\]
	We define $Z = (\varphi(a), \varphi_{0}(a))^{-1}(\xi_{W})$.
	Then $(\varphi(a), \varphi_{0}(a)) \in (\artin{h}_{\pre}^{\ast\set}(G^{2}))(Z)$
	by the same argument as above.
	The square in the above diagram can be split into the following two cartesian squares:
		\[
			\begin{CD}
					Z
				@>>>
					X \times_{\Im(\varphi(a))} \xi_{W}
				@>> \proj_{2} >
					\xi_{W}
				\\
				@VV \text{incl} V
				@VV \text{incl} V
				@V \text{incl} VV
				\\
					Y
				@> (\id, \varphi_{0}(a)) >>
					X \times_{k} \Im(\varphi_{0}(a))
				@> (\varphi(a), \id) >>
					W
			\end{CD}
		\]
	The bottom two arrows are faithfully flat of profinite presentation.
	The third vertical arrow is dominant flat of profinite presentation.
	By pulling back the left square by a point of $X$,
	we see that the morphism $Z \to Y$ satisfies the two conditions of
	Proposition \ref{prop: making profppf covers}.
\end{proof}

The above proposition extends to $\Z[G]$ and $\Z[G^{2}]$ in the following manner.

\begin{Prop}
	Let $t \in \Z[L]$.
	\begin{enumerate}
		\item
			There exist a scheme $Z \in k^{\perf'}$ and
			a $k$-morphism $Z \to Y$ satisfying the two conditions of
			Proposition \ref{prop: making profppf covers}
			such that the natural images $\varphi_{0}(t), \varphi_{1}(t) \in \Z[G(Z)] = \Z_{\pre}[G](Z)$ are
			contained in the subgroup $(\artin{h}_{\pre}^{\ast} \Z_{\pre}[G])(Z)$.
		\item
			If $\varphi(t) \in \Z[G(X)]$ is contained in $(\artin{h}_{\pre}^{\ast} \Z_{\pre}[G])(X)$,
			then $Z$ can be taken so that
			the natural image $(\varphi_{0}, \varphi_{1})(t) \in \Z[G^{2}(Z)]$ is
			contained in $(\artin{h}_{\pre}^{\ast} \Z_{\pre}[G^{2}])(Z)$.
	\end{enumerate}
\end{Prop}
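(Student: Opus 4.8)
The plan is to reduce the statement about group-ring elements $t \in \Z[L]$ to the previous proposition, which handles individual elements $a \in L$. The key observation is that any $t \in \Z[L]$ is a finite integer combination $t = \sum_{j} n_{j}(a_{j})$ of basis elements $(a_{j})$ indexed by finitely many $a_{j} \in L$, where $(a)$ denotes the generator of $\Z[L]$ corresponding to $a$. Since the two conditions of Proposition \ref{prop: making profppf covers} are stable under taking fiber products (that is precisely what that proposition asserts), I would apply the previous proposition to each $a_{j}$ separately to produce schemes $Z_{j} \to Y$, then set $Z = \prod_{j} Z_{j}$ (fiber product over $Y$). By Proposition \ref{prop: making profppf covers}, this $Z \to Y$ again satisfies both conditions.

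For part (a), on this common refinement $Z$ each $\varphi_{0}(a_{j})$ and $\varphi_{1}(a_{j})$ lands in $(\artin{h}_{\pre}^{\ast\set} G)(Z)$, i.e.\ factors through the filtered union of finite sets of points of $G$ (Proposition \ref{prop: perar pull is union of finite sets of points}(a)). I would then note that $\varphi_{0}(t) = \sum_{j} n_{j}\,(\varphi_{0}(a_{j}))$ as an element of $\Z_{\pre}[G](Z) = \Z[G(Z)]$, where $(\varphi_{0}(a_{j}))$ is the generator of the group ring attached to the point $\varphi_{0}(a_{j}) \in G(Z)$. By Proposition \ref{prop: perar pull is union of finite sets of points}(b), the subgroup $(\artin{h}_{\pre}^{\ast}\Z_{\pre}[G])(Z)$ is exactly the span of the generators $(x)$ coming from points $x$ of $G$; since each $\varphi_{0}(a_{j})$ is such a point, the combination $\varphi_{0}(t)$ lies in that subgroup. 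The same argument applies to $\varphi_{1}(t)$.

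Part (b) proceeds identically once I pass to $G^{2}$. Here I first use the hypothesis that $\varphi(t) \in (\artin{h}_{\pre}^{\ast}\Z_{\pre}[G])(X)$. This subgroup is spanned by generators $(x)$ for points $x$ of $G$ over $X$, so writing $\varphi(t) = \sum_{j} n_{j}\,(\varphi(a_{j}))$ forces each relevant $\varphi(a_{j})$ (after possibly regrouping terms with equal images) to be a point of $G$ over $X$, i.e.\ $\varphi(a_{j}) \in (\artin{h}_{\pre}^{\ast\set} G)(X)$. This is exactly the hypothesis needed to invoke part (\ref{item: further generify a generic section}) of the previous proposition for each $a_{j}$. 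Taking the common refinement $Z = \prod_{j} Z_{j}$ over $Y$ as before, each $(\varphi_{0}(a_{j}), \varphi_{1}(a_{j}))$ lands in $(\artin{h}_{\pre}^{\ast\set}(G^{2}))(Z)$, and then $(\varphi_{0}, \varphi_{1})(t) = \sum_{j} n_{j}\,\bigl((\varphi_{0}(a_{j}), \varphi_{1}(a_{j}))\bigr)$ lies in $(\artin{h}_{\pre}^{\ast}\Z_{\pre}[G^{2}])(Z)$ by Proposition \ref{prop: perar pull is union of finite sets of points}(b) applied to $G^{2}$.

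The one point requiring care — the main obstacle, though a mild one — is the handling of generators with coinciding images. A priori the $a_{j}$ are distinct in $L$, but their images $\varphi(a_{j})$, $\varphi_{0}(a_{j})$ could collide as points of $G$; conversely cancellation in the integer combination could, in principle, hide the fact that individual generators lie in the span. I would resolve this by observing that $(\artin{h}_{\pre}^{\ast}\Z_{\pre}[G])(Z)$ is a \emph{subgroup} consisting of finite $\Z$-combinations of generators indexed by \emph{points} of $G$, and that whether a given $\Z$-combination lies in this subgroup is tested generator-by-generator after collecting like terms; since each $\varphi_{0}(a_{j})$ (resp.\ $\varphi_{1}(a_{j})$) is individually a point on the refinement $Z$, no cancellation issue arises and membership is immediate. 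The remainder is entirely formal, so I do not expect any genuine difficulty beyond assembling the common refinement correctly.
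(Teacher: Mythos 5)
Your construction is the same as the paper's: write $t = \sum_{i} m_{i}(a_{i})$ with $a_{i} \in L$, apply the previous proposition to each $a_{i}$ separately, and pass to the common refinement $Z = Z_{1} \times_{Y} \dots \times_{Y} Z_{n}$, which satisfies the two conditions of Proposition \ref{prop: making profppf covers}. For part (a) this is complete and agrees with the paper verbatim: each $\varphi_{0}(a_{i})$ and $\varphi_{1}(a_{i})$ individually lies in $(\artin{h}_{\pre}^{\ast\set} G)(Z)$, so every integral combination of the corresponding basis elements lies in $(\artin{h}_{\pre}^{\ast} \Z_{\pre}[G])(Z)$, and cancellation can only make membership easier.

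Where you deviate is the closing paragraph on cancellation in part (b), and there your justification does not establish what it needs to. Two problems. First, the hypothesis $\varphi(t) \in (\artin{h}_{\pre}^{\ast} \Z_{\pre}[G])(X)$ constrains only those groups of indices $j$ with $\varphi(a_{j})$ equal \emph{and aggregated coefficient nonzero}; for a group of terms that cancels completely in $\Z[G(X)]$ it says nothing about $\varphi(a_{j})$, so ``forces each relevant $\varphi(a_{j})$ to be a point'' is not obtained by regrouping alone --- and note that the collision pattern of $(\varphi_{0}, \varphi_{1})$ is strictly finer than that of $\varphi$ (equality $(\varphi_{0}, \varphi_{1})(a_{i}) = (\varphi_{0}, \varphi_{1})(a_{j})$ implies $\varphi(a_{i}) = \varphi(a_{j})$ but not conversely), so terms cancelling in $\Z[G(X)]$ need not cancel in $\Z[G^{2}(Z)]$. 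Second, and more seriously, your resolution ``since each $\varphi_{0}(a_{j})$ (resp.\ $\varphi_{1}(a_{j})$) is individually a point on the refinement $Z$, no cancellation issue arises'' proves the wrong statement for (b): the subgroup $(\artin{h}_{\pre}^{\ast} \Z_{\pre}[G^{2}])(Z)$ is spanned by generators indexed by points of $G^{2}$, and a pair whose two coordinates separately factor through points $x, y$ of $G$ only factors through $x \times_{k} y$, which is in general \emph{not} a point of $G^{2}$ (this is exactly the failure exhibited in Proposition \ref{prop: not a morphism of sites}). Making the pair $(\varphi_{0}(a_{j}), \varphi_{1}(a_{j}))$ factor through a point of $G^{2}$ is precisely the content of part (\ref{item: further generify a generic section}) of the previous proposition, whose hypothesis is that $\varphi(a_{j})$ itself lies in $(\artin{h}_{\pre}^{\ast\set} G)(X)$; it cannot be replaced by coordinatewise pointness. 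The paper's own proof of (b) simply applies that part termwise to each $a_{j}$ and takes the fiber product over $Y$ --- your construction of $Z$ is exactly this, so the fix is to invoke the previous proposition's part (b) directly for each $a_{j}$ and delete the paragraph purporting to dispose of cancellation, which as written is incorrect at the only point where part (b) is delicate.
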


\begin{proof}
	Write $t = \sum_{i = 1}^{n} m_{i} (a_{i})$,
	$m_{i} \in \Z$, $a_{i} \in L$,
	where $(a_{i})$ is the image of $a_{i}$ in $\Z[L]$.
	For any $i$, take $Z_{i}$ corresponding to $a_{i} \in L$
	given in the previous proposition (\ref{item: generify a section}).
	Then
		$
				\varphi_{0}(a_{i}), \varphi_{1}(a_{i})
			\in
				\Z_{\pre}[\artin{h}_{\pre}^{\ast\set} G](Z_{i})
			=
				(\artin{h}_{\pre}^{\ast} \Z_{\pre}[G])(Z_{i})
		$.
	Let $Z = Z_{1} \times_{Y} \dots \times_{Y} Z_{n}$.
	Then $Z / X$ is faithfully flat of profinite presentation
	by Proposition \ref{prop: making profppf covers},
	and we have $\varphi_{0}(t) = \sum m_{i} \varphi_{0}(a_{i}) \in (\artin{h}_{\pre}^{\ast} \Z_{\pre}[G])(Z)$
	and similarly $\varphi_{1}(t) \in (\artin{h}_{\pre}^{\ast} \Z_{\pre}[G])(Z)$.
	The statement for $(\varphi_{0}, \varphi_{1})(t) = \sum m_{i} (\varphi_{0}(a_{i}), \varphi_{1}(a_{i}))$
	is similar, using the previous proposition (\ref{item: further generify a generic section}).
\end{proof}

Now we consider the homomorphisms $V \colon Q'_{n}(L) \to Q'_{n + 1}(G(Y))$ and $T \colon Q'_{n}(L) \to Q'_{n}(G(Y))$
corresponding to $\varphi_{0}, \varphi_{1} \colon L \to G(Y)$
as defined in Definition \ref{def: splitting homotopy},
and similar morphisms for $Q$ and $M$.
The above proposition on $\Z[\var]$ extends to $Q', Q, M$
using the homomorphisms $T$ and $V$:

\begin{Prop}
	Let $F$ be one of the functors $Q'$, $Q$ or $M$.
	Let $n \ge 0$ and $t \in F_{n}(L)$.
	\begin{enumerate}
		\item \label{item: T generifies}
			There exist a scheme $Z \in k^{\perf'}$ and
			a $k$-morphism $Z \to Y$ satisfying the two conditions of
			Proposition \ref{prop: making profppf covers}
			such that the natural image $T(t) \in F_{n}(G(Z)) = F_{\pre, n}(G)(Z)$ is
			contained in the subgroup $(\artin{h}_{\pre}^{\ast} F_{\pre, n}(G))(Z)$.
		\item \label{item: V generifies}
			If $\varphi(t) \in F_{n}(G(X))$ is contained in $(\artin{h}_{\pre}^{\ast} F_{\pre, n}(G))(X)$,
			then $Z$ can be taken so that
			the natural image $V(t) \in F_{n + 1}(G(Z))$ is
			contained in $(\artin{h}_{\pre}^{\ast} F_{\pre, n + 1}(G))(Z)$.
	\end{enumerate}
\end{Prop}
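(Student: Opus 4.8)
The plan is to reduce the statement to the preceding proposition on $\Z_{\pre}[G]$ by exploiting the explicit structure of Mac Lane's complex: first I would settle the case $F = Q'$ directly, and then propagate the result through the splitting of $Q' \onto Q$ and through the free tensor factor in $M = Q \tensor_{\Z} \mathcal{B}$.

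For $F = Q'$ the key is Proposition \ref{prop: cubical and Mac Lane} (\ref{item: structure of each term of cubical}), which gives $Q'_n(L) = \Z[L^{2^n}]$ and $Q'_{\pre,n}(G) = \Z_{\pre}[G^{2^n}]$. By Definition \ref{def: splitting homotopy} the maps $T$ and $V$ are induced $\Z$-linearly by the set maps $\varphi_0^{2^n} + \varphi_1^{2^n}$ and $(\varphi_0^{2^n},\varphi_1^{2^n})$, where $\varphi_\varepsilon^{2^n} \colon L^{2^n} \to G^{2^n}(Y)$ is the $2^n$-fold product of $\varphi_\varepsilon$ ($\varepsilon = 0,1$). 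Thus $T(t) = \varphi_0^{2^n}(t) + \varphi_1^{2^n}(t)$ and $V(t) = (\varphi_0^{2^n},\varphi_1^{2^n})(t)$, so part (\ref{item: T generifies}) follows from the first part of the preceding proposition on $\Z_{\pre}[G]$ (making $\varphi_0^{2^n}(t)$ and $\varphi_1^{2^n}(t)$, hence their sum, generic), and part (\ref{item: V generifies}) is its second part (making $V(t)$ generic), both applied to the group $G^{2^n}$, the free group $L^{2^n}$ and the homomorphisms $\varphi_0^{2^n}, \varphi_1^{2^n}$; the genericity hypothesis $\varphi^{2^n}(t) \in (\artin{h}_{\pre}^{\ast}\Z_{\pre}[G^{2^n}])(X)$ required there is our hypothesis on $\varphi(t)$. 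These are not literally the evaluation maps of the earlier setup, but since the proofs of that proposition and of the section proposition it rests on use only that $\varphi_0^{2^n}(a) \colon [L,G] \to G^{2^n}$ is a morphism in $\Pro'_{\fc} \Alg / k$ — hence has image in $\Pro'_{\fc} \Alg / k$ with generic point in $k^{\perar}$ (Definition \ref{def: group with finite components and generic point}) — and that the $X$-automorphism $(x,\psi) \mapsto (x, \Tilde{\varphi}(x) - \psi)$ of $Y$ interchanges $\varphi_0^{2^n}$ and $\varphi_1^{2^n}$, those proofs apply verbatim with $G$ replaced by $G^{2^n}$.

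Next I would pass from $Q'$ to $Q$. By Proposition \ref{prop: cubical and Mac Lane} (\ref{item: cubical is a quotient}), (\ref{item: quotient to cubical admits a section}) the quotient $\pi \colon Q'_n \onto Q_n$ has a functorial splitting $s_n$, and by Definition \ref{def: splitting homotopy} both $T$ and $V$ are compatible with $\pi$; given $t \in Q_n(L)$ I would apply the $Q'$-case to $s_n(t)$ and then push forward along $\pi$. Here one uses that the comparison maps $\artin{h}_{\pre}^{\ast}(\var) \to (\var)$ of Proposition \ref{prop: perar pull is union of finite sets of points} are natural in the presheaf, so that $s_n$ transports the hypothesis (via $s_n \compose \varphi = \varphi \compose s_n$) from $Q_{\pre,n}(G)$ to $Q'_{\pre,n}(G)$ and $\pi$ transports the conclusion back. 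Finally, to reach $F = M$, I would use $M(G) = Q(G) \tensor_{\Z} \mathcal{B}$ with each $\mathcal{B}_q$ free (Proposition \ref{prop: cubical and Mac Lane} (\ref{item: structure of Mac Lane without differentials})), under which $T_M = T_Q \tensor \id$, $V_M = V_Q \tensor \id$ and $\varphi_M = \varphi_Q \tensor \id$. Since $\artin{h}_{\pre}^{\ast}$ is a left adjoint it commutes with the free factor $\tensor_{\Z}\mathcal{B}_q$; fixing $\Z$-bases of the $\mathcal{B}_q$ and writing $t = \sum t_{p,\alpha} \tensor e_{q,\alpha}$ with $t_{p,\alpha} \in Q_p(L)$, I would apply the $Q$-case to each of the finitely many $t_{p,\alpha}$ and take the fibre product over $Y$ of the resulting covers, which again satisfies the two conditions by Proposition \ref{prop: making profppf covers}.

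The main obstacle is part (\ref{item: V generifies}) for $Q'$, namely making the \emph{pair} $(\varphi_0^{2^n}(a), \varphi_1^{2^n}(a))$ factor through a single point of $G^{2^{n+1}} = G^{2^n} \times G^{2^n}$: generifying the two coordinates separately is not enough, since a pair of generic points need not be a generic point of the product (this is exactly the failure recorded in Proposition \ref{prop: not a morphism of sites}). The genericity hypothesis on $\varphi^{2^n}(a)$ is what forces $\Im(\varphi^{2^n}(a))$ to be a single point, so that the product $W = \Im(\varphi^{2^n}(a)) \times_k \Im(\varphi_0^{2^n}(a))$ and its generic point $\xi_W \in k^{\perar}$ behave correctly and the pulled-back cover $Z \to Y$ still meets the two fibrewise conditions of Proposition \ref{prop: making profppf covers}. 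This delicate step is handled exactly as in part (\ref{item: further generify a generic section}) of the section proposition, now for $G^{2^n}$; everything else is the purely formal bookkeeping of the two reductions above.
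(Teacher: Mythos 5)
Your proposal is correct and follows essentially the same route as the paper's own proof: settle $F = Q'$ by applying the preceding $\Z_{\pre}[G]$-proposition with $L^{2^n}$, $G^{2^n}$ and the maps $\varphi_0^{2^n}, \varphi_1^{2^n}$, then transport to $Q$ via the functorial splitting $s_n$ and to $M$ via the decomposition $M = Q \tensor_{\Z} \mathcal{B}$ with termwise $T$ and $V$ and a fibre product of the finitely many covers over $Y$. Your explicit verification that the earlier proofs apply verbatim to the non-universal maps $\varphi_0^{2^n}, \varphi_1^{2^n}$ over the same base $Y = X \times_k [L, G]$ (rather than over $X \times_k [L^{2^n}, G^{2^n}]$) correctly fills in a point the paper leaves implicit.
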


\begin{proof}
	First, let $F = Q'$, so $F_{n} = \Z[(\var)^{2^{n}}]$
	(Proposition \ref{prop: cubical and Mac Lane}
	(\ref{item: structure of each term of cubical})).
	Then $t \in \Z[L^{2^{n}}]$.
	Applying the previous proposition to $L^{2^{n}}$ and $G^{2^{n}}$ instead of $L$ and $G$,
	we know that $\varphi_{0}(t), \varphi_{1}(t) \in \Z[G^{2^{n}}(Z)] = \Z_{\pre}[G^{2^{n}}](Z)$
	are contained in $(\artin{h}_{\pre}^{\ast} \Z_{\pre}[G^{2^{n}}])(Z)$.
	So is their sum $T(t)$.
	If $\varphi(t) \in \Z[G^{2^{n}}(X)]$ is contained in $(\artin{h}_{\pre}^{\ast} \Z_{\pre}[G^{2^{n}}])(X)$,
	then the same method shows that $Z$ can be taken so that
	the natural image $(\varphi_{0}, \varphi_{1})(t) = V(t) \in \Z[G^{2^{n + 1}}(Z)]$ is
	contained in $(\artin{h}_{\pre}^{\ast} \Z_{\pre}[G^{2^{n + 1}}])(Z)$.
	
	Next let $F = Q$.
	Let $s_{n} \colon Q_{n} \into Q'_{n}$ be the section to the quotient $Q'_{n} \onto Q_{n}$
	as in Proposition \ref{prop: cubical and Mac Lane}
	(\ref{item: quotient to cubical admits a section}).
	The case of $Q'$ above applied to $s_{n}(t) \in Q'_{n}(L)$ gives a scheme $Z \in k^{\perf'}$ and
	a $k$-morphism $Z \to Y$ satisfying the two conditions of
	Proposition \ref{prop: making profppf covers}
	such that the natural image $T(s_{n}(t)) \in Q'_{n}(G(Z)) = Q'_{\pre, n}(G)(Z)$ is
	contained in $(\artin{h}_{\pre}^{\ast} Q'_{\pre, n}(G))(Z)$.
	Taking the quotient, we know that $T(t) \in Q_{n}(G(Z)) = Q_{\pre, n}(G)(Z)$ is
	contained in $(\artin{h}_{\pre}^{\ast} Q_{\pre, n}(G))(Z)$.
	If $\varphi(t) \in Q_{n}(G(X))$ is contained in $(\artin{h}_{\pre}^{\ast} Q_{\pre, n}(G))(X)$,
	then $s_{n}(\varphi(t)) = \varphi(s_{n}(t)) \in Q'_{n}(G(X))$ is
	contained in $(\artin{h}_{\pre}^{\ast} Q'_{\pre, n}(G))(X)$.
	Hence the case of $Q'$ implies that $Z$ can be taken so that
	the natural image $V(s_{n}(t)) \in Q'_{n + 1}(G(Z))$ is
	contained in $(\artin{h}_{\pre}^{\ast} Q'_{\pre, n + 1}(G))(Z)$.
	Thus $V(t) \in Q_{n + 1}(G(Z))$ is
	contained in $(\artin{h}_{\pre}^{\ast} Q_{\pre, n + 1}(G))(Z)$.
	
	Finally, let $F = M$.
	For each $m \ge 0$, take a basis $J_{m}$ of the free abelian group $\mathcal{B}_{m}$
	(Proposition \ref{prop: cubical and Mac Lane}
	(\ref{item: structure of Mac Lane without differentials}))
	and write $\mathcal{B}_{m} \cong \Z^{\bigoplus J_{m}}$.
	Then $M_{n} \cong \bigoplus_{m} Q_{m}^{\bigoplus J_{n - m}}$
	since $M = Q \tensor_{Z} \mathcal{B}$ as graded abelian groups.
	In this decomposition, $T$ and $V$ on $M_{n}$ are given by
	the term-wise applications of $T$ and $V$ on the $Q_{m}$.
	Hence the case of $F = Q$ implies the case of $F = M$.
\end{proof}

\begin{Prop}
	The complex $\artin{h}^{\ast} M(G)$ in $\Ab(k^{\perf'}_{\pro\fppf})$ is a resolution of $G$.
\end{Prop}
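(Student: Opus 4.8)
The plan is to prove the proposition by establishing that the augmented complex $\artin{h}^{\ast} M(G) \to G$ is exact in $\Ab(k^{\perf'}_{\pro\fppf})$, checking local exactness degree by degree: for each $X \in k^{\perf'}$ and each $n \ge 0$, a local cocycle of $\artin{h}^{\ast} M_{n}(G)$ over $X$ becomes a coboundary after a pro-fppf cover. Since sheafification is exact, it suffices to treat honest presheaf cocycles. Given a section $c$ of $\artin{h}^{\ast} M_{n}(G)$ over $X$ with $\partial c = 0$, I would first pass to a pro-fppf cover so that $c$ lifts to a genuine generic element $c \in (\artin{h}_{\pre}^{\ast} M_{\pre,n}(G))(X) \subseteq M_{n}(G(X))$, the subgroup described in Proposition \ref{prop: perar pull is union of finite sets of points}. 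Because $G$ and each $G^{2^{m}}$ are sheaves, the functorial splittings of Proposition \ref{prop: cubical and Mac Lane} make $M_{\pre,n-1}(G)$ separated, so the relation $\partial c = 0$ may be assumed to hold genuinely in $M_{n-1}(G(X))$. The goal is then to produce a pro-fppf cover $Z \to X$ and a \emph{generic} $b \in (\artin{h}_{\pre}^{\ast} M_{\pre,n+1}(G))(Z)$ with $\partial b = c|_{Z}$ (in degree $0$ the cocycle condition means $c$ lies in the kernel of the augmentation to $G$).

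The next step is to linearize. Since $c$ involves only finitely many elements of $G(X)$ and each $M_{n}$ is a functor built from tuples, I can choose a finitely generated free abelian group $L$, a homomorphism $\varphi \colon L \to G(X)$, and $u \in M_{n}(L)$ with $\varphi_{\ast}(u) = c$. I then pass to $Y = X \times_{k} [L, G]$, which is faithfully flat of profinite presentation over $X$ since $[L, G] \in \Pro'_{\fc} \Alg / k$, and use the universal decomposition $\varphi = \varphi_{0} + \varphi_{1}$ over $Y$ (with $\varphi_{0}$ the evaluation map) together with the splitting homotopy of Definition \ref{def: splitting homotopy}. Applying the homotopy identity $T - \varphi_{\ast} = \partial V + V \partial$ to $u$ gives, over $Y$,
\[
		c|_{Y}
	=
		T(u) - \partial V(u) - V(\partial u).
\]

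Now I would invoke the preceding generification proposition for $F = M$. Part (a) yields a pro-fppf cover on which $T(u)$ is generic; part (b), applied once to $u$ (whose image $\varphi(u) = c$ is generic over $X$) and once to $\partial u$ (whose image $\varphi(\partial u) = \partial c = 0$ is trivially generic), yields a further cover $Z \to Y$ on which both $V(u)$ and $V(\partial u)$ are generic. On $Z$ the term $\partial V(u)$ is then a generic \emph{coboundary}, while, using $\partial V(\partial u) = T(\partial u) = \partial T(u)$, the combination $P := T(u) - V(\partial u)$ is a generic \emph{cocycle}. Thus, modulo the generic coboundary $\partial V(u)$, the section $c$ has been reduced over $Z$ to the residual generic cocycle $P$.

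The main obstacle is to dispose of this residual cocycle $P$, and this is the heart of the argument. I would organize the whole scheme as an induction on the cohomological degree $n$, with the augmentation in degree $0$ as the base case, so that exactness of $\artin{h}^{\ast} M(G)$ in degrees below $n$ is already available. The point of taking $\varphi_{0}$ to be the universal evaluation is precisely that $T(u)$ and $V(\partial u)$ are built from morphisms factoring through the generic points $\xi$ of objects of $\Pro'_{\fc} \Alg / k$, which lie in $k^{\perar}$ by Definition \ref{def: group with finite components and generic point}; hence $P$ is controlled by data over the perfect artinian \'etale site, where Mac Lane's complex is an honest resolution. Carrying this bookkeeping through — showing that $P$ is itself a local generic coboundary, so that $c|_{Z} = \partial b$ for a generic $b$ — is exactly the careful adaptation of the arguments of \cite[Section 3.6]{Suz13} to the present site. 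Once it is complete, the augmented complex $\artin{h}^{\ast} M(G) \to G$ is exact, which proves the proposition.
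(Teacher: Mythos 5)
Your setup (reduction to presheaf sections, linearization through a finitely generated free abelian group $L$, the cover $Y = X \times_{k} [L, G]$, the decomposition $\varphi = \varphi_{0} + \varphi_{1}$ with $\varphi_{0}$ the universal evaluation, and the homotopy identity $T - \varphi = \partial V + V \partial$) matches the paper's, but there is a genuine gap at exactly the step you flag as "the heart of the argument": you apply the homotopy identity to a lift $u \in M_{n}(L)$ of the cocycle $c$ \emph{itself}, which yields $c|_{Z} = \partial V(u) + \bigl(T(u) - V(\partial u)\bigr)$ and leaves the residual generic cocycle $P = T(u) - V(\partial u)$ undigested. Your proposed induction on the degree cannot dispose of $P$: exactness in lower degrees says nothing about a degree-$n$ cocycle, and genericity of $P$ does not by itself yield a generic primitive --- the differential of $M_{\pre,n}(G)$ involves the group law of $G$, and sums of generic points of $G$ need not factor through a single object of $k^{\perar}$ compatibly with the cocycle condition, so you cannot simply invoke the objectwise exactness of Mac Lane's complex on $\Spec k^{\perar}_{\et}$. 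Deferring this to a "careful adaptation" of \cite[Section 3.6]{Suz13} concedes precisely the content of the proposition.

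The paper eliminates the residual cocycle by shifting one degree up \emph{before} linearizing: since $M(G(X))$ is a resolution of the plain abelian group $G(X)$, there exists $u' \in M_{n + 1}(G(X))$ with $\partial u' = u$ (resp.\ with image $u$ in $G(X)$ when $n = -1$); one then chooses $L$, $\varphi \colon L \to G(X)$ and $t' \in M_{n + 1}(L)$ with $\varphi(t') = u'$, so that $\varphi(\partial t') = u$ is the given \emph{generic} section --- which is exactly the hypothesis needed to apply the generification proposition, part (a) to $t'$ (making $T(t')$ generic) and part (b) to $\partial t'$ (making $V(\partial t')$ generic). The homotopy identity in degree $n + 1$ then gives
\[
		u
	=
		\partial \varphi(t')
	=
		\partial \bigl( T(t') - V(\partial t') \bigr)
\]
over a cover $Z / X$ faithfully flat of profinite presentation, with $T(t') - V(\partial t')$ already in $(\artin{h}_{\pre}^{\ast} M_{\pre, n + 1}(G))(Z)$: a generic primitive outright, with no leftover cocycle and no induction. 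Note also that this treats all degrees $n \ge -1$ uniformly, including the augmentation, whereas your base case is likewise left unproved. If you replace your application of the homotopy to $u$ by this one-degree-up application to a primitive $t'$, the rest of your write-up goes through.
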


\begin{proof}
	We know that $\artin{h}^{\ast} M(G)$ is the pro-fppf sheafification of $\artin{h}_{\pre}^{\ast} M_{\pre}(G)$.
	We want to show that the complex
		\[
				\cdots
			\to
				\artin{h}^{\ast} M_{\pre, 1}(G)
			\to
				\artin{h}^{\ast} M_{\pre, 0}(G)
			\to
				G
			\to
				0
		\]
	in $\Ab(k^{\perf'}_{\pro\fppf})$ sheafifies to an acyclic complex.
	Let $n \ge -1$ and $X \in k^{\perf'}$.
	Let $u \in (\artin{h}_{\pre}^{\ast} M_{\pre, n}(G))(X)$ with $\partial u = 0$ if $n \ge 1$;
	$u \in (\artin{h}_{\pre}^{\ast} M_{\pre, 0}(G))(X)$ whose natural image in $G(X)$ is zero if $n = 0$;
	and $u \in G(X)$ if $n = -1$.
	Since $M(G(X))$ is a resolution of $G(X)$,
	there exists an element $u' \in M_{n + 1}(G(X))$ such that:
	$\partial u' = u$ if $n \ge 0$;
	and the natural image of $u'$ in $G(X)$ is $u$ if $n = -1$.
	Take a finitely generated free abelian group $L$, a homomorphism $\varphi \colon L \to G(X)$
	and an element $t' \in M_{n + 1}(L)$
	such that $\varphi(t') = u'$.
	Then $\varphi(\partial t') = u$,
	where $\partial t'$ in the case $n = -1$ is understood to be the natural image of $t'$ in $L$.
	Applying the previous proposition (\ref{item: T generifies}) to $t'$
	and (\ref{item: V generifies}) to $\partial t'$,
	we know that there exist a scheme $Z \in k^{\perf'}$ and
	a $k$-morphism $Z \to Y$ satisfying the two conditions of
	Proposition \ref{prop: making profppf covers} such that
	$T(t') \in M_{n + 1}(G(Z)) = M_{\pre, n + 1}(G)(Z)$ is
	contained in $(\artin{h}_{\pre}^{\ast} M_{\pre, n}(G))(Z)$
	and $V(\partial t') \in M_{n + 1}(G(Z))$ is
	contained in $(\artin{h}_{\pre}^{\ast} M_{\pre, n + 1}(G))(Z)$.
	We have $T(t') - \varphi(t') = \partial(V(t')) + V(\partial(t'))$ in $M_{n + 1}(G(Z))$.
	Hence $u = \partial(T(t') - V(\partial t'))$ if $n \ge 0$
	and $u$ is the image of $T(t') - V(\partial t')$ in $G(Z)$ if $n = -1$.
	Since $T(t') - V(\partial t') \in (\artin{h}_{\pre}^{\ast} M_{\pre, n + 1}(G))(Z)$
	and $Z / X$ faithfully flat of profinite presentation,
	this proves the result.
\end{proof}

Now we prove Theorem \ref{thm: derived pull of proalg group with finite components}.

\begin{proof}[Proof of Theorem \ref{thm: derived pull of proalg group with finite components}]
	Write $G \in \mathcal{E}_{k}$ as an extension
	$0 \to G' \to G \to G'' \to 0$ as in
	Definition \ref{def: extended ind of groups with finite components}.
	Since $G \to G''$ is surjective in the \'etale topology,
	this short exact sequence remains exact in $\Ab(k^{\perar}_{\et})$ (after applying $\artin{h}_{\ast}$).
	Therefore we may assume that $G \in \Ind \Pro_{\fc}' \Alg / k$
	by Proposition \ref{prop: derived pull of etale groups}.
	The functor $L_{n} \artin{h}^{\ast}$ for any $n$ commutes with filtered direct limits
	as we saw in the proof of Proposition \ref{prop: Mac Lane calculates derived pull}.
	Hence we may furthermore assume that $G \in \Pro_{\fc}' \Alg / k$.
	We know that $\artin{h}_{\ast} G$ as a sheaf of sets on $\Spec k^{\perar}_{\et}$ is
	the filtered union of finite sets of points of $G$
	by Proposition \ref{prop: scheme as sheaf on rational etale site}.
	Hence we may apply Proposition \ref{prop: derived pull and pull Mac Lane resolution} to $G$.
	This implies, by the previous proposition,
	that $L \artin{h}^{\ast} G \isomto G$ in $D(k^{\perf'}_{\pro\fppf})$.
	This proves the theorem.
\end{proof}


\section{Consequences of the derived pullback theorem}

Theorem \ref{thm: derived pull of proalg group with finite components}
together with the following finishes the proof of Theorem \ref{thm: main, derived pull}.

\begin{Prop} \label{prop: EIP embeds into sheaves on perar site}
	The functor $\mathcal{E}_{k} \to \Ab(k^{\perar}_{\et})$
	given by restriction of $\artin{h}_{\ast}$ is fully faithful.
\end{Prop}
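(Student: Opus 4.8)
The plan is to obtain the statement formally from the adjunction $\artin{h}^{\ast} \dashv \artin{h}_{\ast}$ together with Theorem \ref{thm: derived pull of proalg group with finite components}, with essentially no computation. Since $\mathcal{E}_{k}$ is a \emph{full} subcategory of $\Ab(k^{\perf'}_{\pro\fppf})$, for $G_{1}, G_{2} \in \mathcal{E}_{k}$ we have $\Hom_{\mathcal{E}_{k}}(G_{1}, G_{2}) = \Hom_{\Spec k^{\perf'}_{\pro\fppf}}(G_{1}, G_{2})$, so the claim is exactly that
\[
		\Hom_{\Spec k^{\perf'}_{\pro\fppf}}(G_{1}, G_{2})
	\to
		\Hom_{\Spec k^{\perar}_{\et}}(\artin{h}_{\ast} G_{1}, \artin{h}_{\ast} G_{2}),
	\quad
		\phi \mapsto \artin{h}_{\ast} \phi,
\]
is bijective.

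First I would rewrite the right-hand side using the adjunction $\artin{h}^{\ast} \dashv \artin{h}_{\ast}$ for sheaves of abelian groups, which gives a natural isomorphism $\Hom_{\Spec k^{\perar}_{\et}}(\artin{h}_{\ast} G_{1}, \artin{h}_{\ast} G_{2}) \cong \Hom_{\Spec k^{\perf'}_{\pro\fppf}}(\artin{h}^{\ast} \artin{h}_{\ast} G_{1}, G_{2})$. Under this isomorphism the displayed map becomes precomposition with the counit $\epsilon_{G_{1}} \colon \artin{h}^{\ast} \artin{h}_{\ast} G_{1} \to G_{1}$: indeed, the inverse adjunction sends $\artin{h}_{\ast} \phi$ to $\epsilon_{G_{2}} \compose \artin{h}^{\ast} \artin{h}_{\ast} \phi$, which equals $\phi \compose \epsilon_{G_{1}}$ by naturality of the counit. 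Thus the problem reduces to showing that $\phi \mapsto \phi \compose \epsilon_{G_{1}}$ is a bijection $\Hom_{\Spec k^{\perf'}_{\pro\fppf}}(G_{1}, G_{2}) \to \Hom_{\Spec k^{\perf'}_{\pro\fppf}}(\artin{h}^{\ast} \artin{h}_{\ast} G_{1}, G_{2})$ for every $G_{2}$, and by the Yoneda lemma this holds for all $G_{2}$ if and only if $\epsilon_{G_{1}}$ is an isomorphism.

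It then remains to check that the counit $\epsilon_{G_{1}} \colon \artin{h}^{\ast} \artin{h}_{\ast} G_{1} \to G_{1}$ is an isomorphism in $\Ab(k^{\perf'}_{\pro\fppf})$, and this is precisely the degree-zero part of Theorem \ref{thm: derived pull of proalg group with finite components}. Recall that, by the construction in Proposition \ref{prop: derived pull of etale groups}, the isomorphism $L \artin{h}^{\ast} G_{1} \isomto G_{1}$ of that theorem factors as $L \artin{h}^{\ast} \artin{h}_{\ast} G_{1} \to \artin{h}^{\ast} \artin{h}_{\ast} G_{1} \overset{\epsilon_{G_{1}}}{\to} G_{1}$; applying $H^{0}$ and using that $\artin{h}^{\ast}$ is right exact (so $H^{0} L \artin{h}^{\ast}(\var) = \artin{h}^{\ast}(\var)$) shows that $\epsilon_{G_{1}}$ is an isomorphism. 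This finishes the proof. I expect no real obstacle here: the only points requiring care are the bookkeeping that identifies the functorial map with precomposition by the counit, and the observation that full faithfulness needs \emph{only} the underived statement $\artin{h}^{\ast} \artin{h}_{\ast} G_{1} \isomto G_{1}$ and not the vanishing $L_{n} \artin{h}^{\ast} G_{1} = 0$ for $n \ge 1$ that the theorem also provides.
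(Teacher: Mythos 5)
Your proof is correct and takes essentially the same approach as the paper: the paper's proof likewise combines the adjunction $\Hom_{k^{\perar}_{\et}}(\artin{h}_{\ast} G, \artin{h}_{\ast} H) \cong \Hom_{k^{\perf'}_{\pro\fppf}}(\artin{h}^{\ast} \artin{h}_{\ast} G, H)$ with the underived isomorphism $\artin{h}^{\ast} \artin{h}_{\ast} G \cong G$ extracted from Theorem \ref{thm: derived pull of proalg group with finite components}. The only difference is that you make explicit the bookkeeping the paper leaves implicit, namely that under the adjunction the map $\phi \mapsto \artin{h}_{\ast} \phi$ corresponds to precomposition with the counit $\epsilon_{G_{1}}$.
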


\begin{proof}
	Let $G, H \in \mathcal{E}_{k}$.
	We have $\artin{h}^{\ast} G \cong G$
	by Theorem \ref{thm: derived pull of proalg group with finite components}.
	As we are denoting $\artin{h}_{\ast} H$ as $H$, we have
		\[
				\Hom_{k^{\perar}_{\et}}(G, H)
			\cong
				\Hom_{k^{\perf'}_{\pro\fppf}}(\artin{h}^{\ast} G, H)
			\cong
				\Hom_{k^{\perf'}_{\pro\fppf}}(G, H).
		\]
\end{proof}

We frequently identify $\mathcal{E}_{k}$
with its image in $\Ab(k^{\perar}_{\et})$.
The functor $L \artin{h}^{\ast}$ behaves well on $\mathcal{E}_{k}$:

\begin{Prop} \label{prop: EIP objects are h compatible}
	Any object of the triangulated subcategory
	$\genby{\mathcal{E}_{k}}_{D(k^{\perar}_{\et})}$
	of $D(k^{\perar}_{\et})$ generated by $\mathcal{E}_{k}$
	(see the notation section in Section \ref{sec: Introduction}) is $\artin{h}$-compatible.
\end{Prop}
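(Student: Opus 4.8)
The plan is to prove the statement by showing that the full subcategory of $\artin{h}$-compatible objects of $D(k^{\perar}_{\et})$ is a triangulated subcategory closed under isomorphism that contains $\mathcal{E}_{k}$; since $\genby{\mathcal{E}_{k}}_{D(k^{\perar}_{\et})}$ is by definition the smallest such subcategory, this will suffice. That the $\artin{h}$-compatible objects form such a subcategory is immediate: the property is closed under isomorphism and shifts (because $L \artin{h}^{\ast}$ and the localization functors commute with shifts and the comparison morphism of Definition \ref{def: f compatible, f acyclic} is natural), and it is closed under cones by the remark following Definition \ref{def: f compatible, f acyclic}. Hence the whole content is to prove that every $G \in \mathcal{E}_{k}$ is $\artin{h}$-compatible.

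So fix $G \in \mathcal{E}_{k}$ and $X \in k^{\perar}$, and write $X = \prod_{i} k'_{i}$ with each $k'_{i}$ a perfect field extension of $k$. We must show that the natural morphism $\mu_{X} \colon L(\artin{h}|_{X})^{\ast}(G|_{X}) \to (L \artin{h}^{\ast} G)|_{\artin{h}^{-1} X}$ is an isomorphism. The clopen immersions $\Spec k'_{i} \to \Spec X$ constitute a covering in $\Spec k^{\perar}_{\et}$, so a morphism in $D((\artin{h}^{-1} X)^{\perf'}_{\pro\fppf})$ is an isomorphism as soon as its restriction to each localization at $k'_{i}$ is. By transitivity of the localization premorphisms (Proposition \ref{prop: composite of derived pullback}) together with the naturality of the comparison morphism, the restriction of $\mu_{X}$ to the $i$-th component is identified with $\mu_{k'_{i}}$. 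This reduces us to the case where $X = k'$ is a single perfect field extension of $k$.

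For $X = k'$, Proposition \ref{prop: morphism h compatible with localization} identifies $\artin{h}|_{k'}$ with the premorphism $\artin{h}_{k'}$ obtained by replacing the base $k$ with $k'$, under the equivalence of Proposition \ref{prop: grational site compatible with localization}; and under this equivalence $G|_{k'}$ is the sheaf represented by the base change $G_{k'} = G \times_{k} k'$. This $G_{k'}$ lies in $\mathcal{E}_{k'}$, since base change along the perfect field extension $k \to k'$ sends perfections of algebraic groups, abelian varieties and affine groups to objects of the same type, preserves finiteness of $\pi_{0}$ and étaleness of $G''$ in Definition \ref{def: extended ind of groups with finite components}, and commutes with the filtered colimits defining $\Ind \Pro'_{\fc} \Alg / k$. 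Applying Theorem \ref{thm: derived pull of proalg group with finite components} over the base field $k'$ to $G_{k'}$ shows that the trivialization $L \artin{h}_{k'}^{\ast}(G_{k'}) \to G_{k'}$ is an isomorphism, while applying the same theorem over $k$ and restricting shows that $(L \artin{h}^{\ast} G \to G)|_{\artin{h}^{-1} k'}$ is an isomorphism.

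It remains to identify $\mu_{k'}$ with the isomorphism interpolating these two trivializations. Both trivializations are instances of the natural composite $L \artin{h}^{\ast}(\var) \to \artin{h}^{\ast}(\var) \to (\var)$ of the canonical map to the non-derived pullback and the counit of adjunction, and $\mu_{k'}$ is precisely the base-change comparison built into Definition \ref{def: f compatible, f acyclic}; these fit into a commutative square
\[
\begin{CD}
L(\artin{h}|_{k'})^{\ast}(G|_{k'}) @>{\mu_{k'}}>> (L \artin{h}^{\ast} G)|_{\artin{h}^{-1} k'} \\
@VVV @VVV \\
G_{k'} @= G|_{\artin{h}^{-1} k'}
\end{CD}
\]
whose left and right vertical arrows are the two trivializations above. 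The commutativity of this square is a formal consequence of the naturality of the morphism $L \artin{h}^{\ast} \to \artin{h}^{\ast}$, of the counit, and of their compatibility with localization of sites; once it is granted, the two vertical arrows being isomorphisms forces $\mu_{k'}$ to be an isomorphism, which completes the field case and hence the proof. I expect the bookkeeping of this last naturality diagram to be the \emph{only} genuinely delicate point, the triangulated-subcategory formalism and the fieldwise reduction being routine.
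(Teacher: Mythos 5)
Your proposal is correct and takes essentially the same route as the paper's own proof: reduce to the case where $k'$ is a single perfect field extension, identify $\artin{h}|_{k'}$ with $\artin{h}_{k'}$ (Proposition \ref{prop: morphism h compatible with localization}) and $G|_{k'}$ with the base change $G \times_{k} k'$, and apply Theorem \ref{thm: derived pull of proalg group with finite components} over both $k$ and $k'$. The only differences are cosmetic: the paper first reduces to $G \in \Pro'_{\fc} \Alg / k$ whereas you keep $G \in \mathcal{E}_{k}$ and verify that base change preserves $\mathcal{E}$, and your final naturality square simply makes explicit what the paper's closing sentence leaves implicit.
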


\begin{proof}
	By Definition \ref{def: f compatible, f acyclic},
	the statement means that for $G \in \mathcal{E}_{k}$, the morphism
	$L(\artin{h}|_{k'})^{\ast}(G|_{k'}) \to (L \artin{h}^{\ast} G)|_{k'}$
	is an isomorphism for any $k' \in k^{\perar}$.
	To show this, we may assume that $G \in \Pro_{\fc}' \Alg / k$ and $k'$ is a field.
	The restriction $G|_{k'} \in \Ab(k'^{\perar})$ is represented by
	the scheme-theoretic fiber product $G \times_{k} k'$.
	The restriction $\artin{h}|_{k'}$ is the premorphism $\artin{h}$ with $k$ replaced by $k'$
	by Proposition \ref{prop: morphism h compatible with localization}.
	Therefore the result follows from
	Theorem \ref{thm: derived pull of proalg group with finite components}
	for $G$ over $k$ and $G \times_{k} k'$ over $k'$.
\end{proof}

Theorem \ref{thm: derived pull of proalg group with finite components}
and Proposition \ref{prop: EIP embeds into sheaves on perar site}
are about a single object $G \in \mathcal{E}_{k}$.
In practice, we need to treat an object of $\genby{\mathcal{E}_{k}}_{D(k^{\perar}_{\et})}$.
Then the corresponding statements are slightly tricky to state
since the expression ``$L \artin{h}^{\ast} G \isomto G$''
for $G \in \genby{\mathcal{E}_{k}}_{D(k^{\perar}_{\et})}$
does not literally make sense
(the both sides live in different categories).
Also, the most practical category is
the full subcategory of $D^{b}(k^{\perar}_{\et})$ of objects with cohomologies in $\mathcal{E}_{k}$,
which is an additive subcategory of $\genby{\mathcal{E}_{k}}_{D(k^{\perar}_{\et})}$
but is not triangulated.
With these in mind,
we have the following version of
Theorem \ref{thm: derived pull of proalg group with finite components}
and Proposition \ref{prop: EIP embeds into sheaves on perar site}
for these categories.

\begin{Prop} \label{prop: L h pull preseves groups with finite components} \mbox{}
	\begin{enumerate}
		\item
			The functor $L \artin{h}^{\ast} \colon D(k^{\perar}_{\et}) \to D(k^{\perf'}_{\pro\fppf})$
			maps the subcategory $\genby{\mathcal{E}_{k}}_{D(k^{\perar}_{\et})}$
			to the subcategory $\genby{\mathcal{E}_{k}}_{D(k^{\perf'}_{\pro\fppf})}$.
		\item
			For any $G \in D^{b}(k^{\perar}_{\et})$ whose cohomologies are in $\mathcal{E}_{k}$,
			the spectral sequence
				\[
						E_{2}^{i j}
					=
						L_{-i} \artin{h}^{\ast} H^{j}(G)
					\converges
						H^{i + j}(L \artin{h}^{\ast} G)
				\]
			has zero terms for all $i \ne 0$,
			and hence induces an isomorphism
				\[
					H^{n}(G) \cong H^{n}(L \artin{h}^{\ast} G)
				\]
			in $\mathcal{E}_{k}$ for all $n$.
	\end{enumerate}
\end{Prop}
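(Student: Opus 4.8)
The plan is to derive both assertions formally from Theorem~\ref{thm: derived pull of proalg group with finite components}, which provides, for each single object $G \in \mathcal{E}_{k}$, the isomorphism $\artin{h}^{\ast} G \cong G$ together with the vanishing $L_{n} \artin{h}^{\ast} G = 0$ for all $n \geq 1$.

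For part~(a), I would use that $L \artin{h}^{\ast}$ is a triangulated functor: being the left derived functor of a pullback along a premorphism of sites, it commutes with shifts and sends distinguished triangles to distinguished triangles. Consider the full subcategory $\mathcal{C}$ of $D(k^{\perar}_{\et})$ consisting of those $X$ for which $L \artin{h}^{\ast} X$ lies in $\genby{\mathcal{E}_{k}}_{D(k^{\perf'}_{\pro\fppf})}$. Since the target is triangulated and $L \artin{h}^{\ast}$ is triangulated, $\mathcal{C}$ is itself a triangulated full subcategory closed under isomorphism. The Theorem shows $L \artin{h}^{\ast} G \cong G \in \mathcal{E}_{k}$ for every $G \in \mathcal{E}_{k}$, so $\mathcal{E}_{k} \subset \mathcal{C}$; as $\genby{\mathcal{E}_{k}}_{D(k^{\perar}_{\et})}$ is by definition the smallest triangulated subcategory containing $\mathcal{E}_{k}$, it is contained in $\mathcal{C}$, which is precisely the claim.

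For part~(b), I would feed the same vanishing into the spectral sequence displayed in the statement, which is the standard hyper-derived functor spectral sequence attached to the canonical filtration of the bounded complex $G$ by its truncations. Because each cohomology $H^{j}(G)$ lies in $\mathcal{E}_{k}$, the Theorem forces $L_{-i} \artin{h}^{\ast} H^{j}(G) = 0$ whenever $-i \geq 1$; the term is also zero for $-i < 0$ since negative-degree left derived functors vanish. Thus $E_{2}^{i j} = 0$ for every $i \neq 0$, while $E_{2}^{0 j} = \artin{h}^{\ast} H^{j}(G) \cong H^{j}(G)$. The $E_{2}$ page is then concentrated on the single column $i = 0$, leaving no room for differentials, so the sequence degenerates and the edge isomorphism gives $H^{n}(L \artin{h}^{\ast} G) \cong E_{2}^{0 n} \cong H^{n}(G)$ in $\mathcal{E}_{k}$, as desired.

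The argument is essentially mechanical once Theorem~\ref{thm: derived pull of proalg group with finite components} is available, so I do not expect a genuine obstacle. The one point meriting care is convergence: a priori $L \artin{h}^{\ast}$ need not have finite cohomological amplitude on arbitrary objects, so I would note that the boundedness of $G$ confines the nonzero $H^{j}(G)$ to finitely many degrees and, combined with the vanishing of $L_{n} \artin{h}^{\ast}$ in negative homological degrees, leaves only finitely many potentially nonzero terms in each total degree $i + j = n$. This makes the spectral sequence genuinely convergent before the collapse, so the degeneration argument above is legitimate.
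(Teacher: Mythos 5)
Your proposal is correct and follows essentially the same route as the paper, whose entire proof is that part (a) follows from part (b) and that (b) follows from Theorem \ref{thm: derived pull of proalg group with finite components}; your triangulated-subcategory argument for (a) and the collapse of the hyper-derived spectral sequence for (b) are exactly the details the paper leaves implicit. Your added remark on convergence is sound (with $G$ bounded the truncation filtration is finite and the $E_{2}$-page is concentrated in the column $i = 0$), so nothing is missing.
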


\begin{proof}
	The first statement follows from the second, which itself follows from
	Theorem \ref{thm: derived pull of proalg group with finite components}.
\end{proof}

The above does not claim that $\genby{\mathcal{E}_{k}}_{D(k^{\perar}_{\et})}$
and $\genby{\mathcal{E}_{k}}_{D(k^{\perf'}_{\pro\fppf})}$ are equivalent via $L \artin{h}^{\ast}$.
The best we can say is the above somewhat clumsy statement.

We recall the ind-rational \'etale site $\Spec k^{\ind\rat}_{\et}$ and
the ind-rational pro-\'etale site $\Spec k^{\ind\rat}_{\pro\et}$
from \cite[Definition 2.1.3]{Suz13} and \cite[Section 2.1]{Suz14}.

\begin{Def} \mbox{}
	\begin{enumerate}
		\item
			Define $k^{\rat}$ to be the category of finite products
			of perfections of finitely generated field extensions of $k$
			with $k$-algebra homomorphisms.
		\item
			Define $k^{\ind\rat}$ to be its ind-category,
			whose objects may be identified with $k$-algebras
			that is a filtered union of rational $k$-subalgebras.
		\item
			An \emph{\'etale covering} of $k' \in k^{\ind\rat}$ is
			a finite family $\{k'_{i}\}$ of \'etale $k'$-algebras
			(necessarily in $k^{\ind\rat}$; \cite[Proposition 2.1.2]{Suz13})
			such that $\prod k'_{i}$ is faithfully flat over $k'$.
		\item
			Define the site $\Spec k^{\ind\rat}_{\et}$
			to be the category $k^{\ind\rat}$ with this class of coverings.
		\item
			A \emph{pro-\'etale covering} of $k' \in k^{\ind\rat}$ is
			a finite family $\{k'_{i}\}$ of $k'$-algebras
			such that each $k'_{i}$ is a filtered direct limit of \'etale $k'$-algebras
			and $\prod k'_{i}$ is faithfully flat over $k'$.
		\item
			Define the site $\Spec k^{\ind\rat}_{\pro\et}$
			to be the category $k^{\ind\rat}$ with this class of coverings.
		\item
			The cohomology functor for $\Spec k^{\ind\rat}_{\pro\et}$
			at the object $k' \in k^{\ind\rat}$ is denoted by $H^{n}(k'_{\pro\et}, \var)$,
			with derived categorical version $R \Gamma(k'_{\pro\et}, \var)$.
	\end{enumerate}
\end{Def}

An object of $k^{\rat}$ is the perfection of the ring of \emph{rational} functions
on a not-necessarily-irreducible variety over $k$,
whence ``$\rat$''.
These sites are related to $\Spec k^{\perar}_{\et}$ and $\Spec k^{\perf'}_{\pro\fppf}$
by premorphisms of sites:

\begin{Def} \label{def: premorphisms between the four sites}
	Define
		\[
				\Spec k^{\perf'}_{\pro\fppf}
			\overset{\Tilde{h}}{\to}
				\Spec k^{\ind\rat}_{\pro\et}
			\overset{\varepsilon}{\to}
				\Spec k^{\ind\rat}_{\et}
			\overset{\alpha}{\to}
				\Spec k^{\perar}_{\et}
		\]
	to be the premorphisms of sites defined by the inclusion functors on the underlying categories.
	Note that their composite is $\artin{h}$.
\end{Def}

The composite $\varepsilon \compose \Tilde{h}$ was denoted by $h$
in \cite[\S 3.5]{Suz13}.
(To be precise, this reference used
the affine version $\Spec k^{\perf}_{\pro\fppf}$ of $\Spec k^{\perf'}_{\pro\fppf}$.)
All the three premorphisms $h$, $\artin{h}$ and $\Tilde{h}$ are
defined on $\Spec k^{\perf'}_{\pro\fppf}$ (or $\Spec k^{\perf}_{\pro\fppf}$),
and their primary usages are roughly the same:
to pin down pro-algebraic groups by perfect-field-valued points.
The target sites are different, though.

The premorphism $\varepsilon$ is a morphism of sites,
which is the change-of-topology morphism on the category $k^{\ind\rat}$
between pro-\'etale and \'etale,
so that $\varepsilon^{\ast}$ is the pro-\'etale sheafification functor.
The premorphism $\alpha$ is a change-of-category premorphism,
whose pushforward (but not pullback) functor is exact.
(The choice of the letter $\varepsilon$ comes from the fact that
some references use $\varepsilon$ to denote a change-of-topology morphism,
such as \cite[paragraph before Theorem (6.7)]{BK86} between \'etale and Zariski.
On the other hand, there might not exist a common convention
for the choice of letters for change-of-category premorphisms such as the above $\alpha$.)

There are analogues of
Theorem \ref{thm: derived pull of proalg group with finite components}
and Proposition \ref{prop: EIP embeds into sheaves on perar site}
for $\Tilde{h} \colon \Spec k^{\perf'}_{\pro\fppf} \to \Spec k^{\ind\rat}_{\pro\et}$,
basically proved in \cite{Suz13} and \cite{Suz14}:

\begin{Prop} \label{prop: EIP is preserved from profppf to indrat proet}
	Let $\Tilde{\mathcal{E}}_{k}$ be the full subcategory of $\Ab(k^{\perf'}_{\pro\fppf})$
	consisting of extensions of \'etale group schemes by objects of $\Ind \Pro' \Alg / k$.
	For any $G \in \Tilde{\mathcal{E}}_{k}$,
	denote its image by $\Tilde{h}_{\ast}$ by the same symbol $G$.
	\begin{enumerate}
		\item \label{item: profppf to indrat proet, push and pull}
			$G \isomto R \Tilde{h}_{\ast} G$ and $L \Tilde{h}^{\ast} G \isomto G$.
		\item \label{item: profppf to indrat proet, abelian fully faithful}
			$\Tilde{h}_{\ast}$ maps $\Tilde{\mathcal{E}}_{k}$ and $\mathcal{E}_{k}$
			fully faithfully onto their essential images.
		\item \label{item: profppf to indrat proet, triangulated fully faithful}
			$R \Tilde{h}_{\ast}$ gives an equivalence from
			$\genby{\Tilde{\mathcal{E}}_{k}}_{D(k^{\perf'}_{\pro\fppf})}$
			to $\genby{\Tilde{\mathcal{E}}_{k}}_{D(k^{\ind\rat}_{\pro\et})}$
			and an equivalence from
			$\genby{\mathcal{E}_{k}}_{D(k^{\perf'}_{\pro\fppf})}$
			to $\genby{\mathcal{E}_{k}}_{D(k^{\ind\rat}_{\pro\et})}$.
	\end{enumerate}
\end{Prop}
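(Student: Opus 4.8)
The plan is to reduce part (\ref{item: profppf to indrat proet, push and pull}) to the comparison results already available for the composite premorphism $h = \varepsilon \compose \Tilde{h} \colon \Spec k^{\perf'}_{\pro\fppf} \to \Spec k^{\ind\rat}_{\et}$ in \cite{Suz13} and \cite{Suz14}, and then to deduce parts (\ref{item: profppf to indrat proet, abelian fully faithful}) and (\ref{item: profppf to indrat proet, triangulated fully faithful}) purely formally by adjunction and d\'evissage, exactly in the way Theorem \ref{thm: derived pull of proalg group with finite components} and Proposition \ref{prop: EIP embeds into sheaves on perar site} are related for $\artin{h}$. Since weak $\Tilde{h}$-acyclicity and the recovery isomorphism both pass to cones and hence to extensions (as recorded after Definition \ref{def: f compatible, f acyclic}), and the defining extension $0 \to G' \to G \to G'' \to 0$ of an object of $\Tilde{\mathcal{E}}_{k}$ stays short exact after $\Tilde{h}_{\ast}$, it suffices to treat the two building blocks $G \in \Ind \Pro' \Alg / k$ and $G$ an \'etale group scheme; the \'etale case is disposed of as in Proposition \ref{prop: derived pull of etale groups} by factoring through the small \'etale site of $k$, which maps to both sites.

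For the pro-algebraic block I would exploit the factorization together with Proposition \ref{prop: composite of derived pullback}, which gives $L h^{\ast} = L\Tilde{h}^{\ast} \compose \varepsilon^{\ast}$ and $R h_{\ast} = R\varepsilon_{\ast} \compose R\Tilde{h}_{\ast}$. Because $\varepsilon$ is a morphism of sites, $\varepsilon^{\ast} = L\varepsilon^{\ast}$ is exact; and because it is a change-of-topology morphism on the single underlying category $k^{\ind\rat}$, the pushforward $\varepsilon_{\ast}$ is fully faithful, so the counit $\varepsilon^{\ast}\varepsilon_{\ast} \isomto \id$ on $\Ab(k^{\ind\rat}_{\pro\et})$. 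The recovery isomorphism then transports at once: by the version of \cite[Proposition 3.7.3]{Suz13} for $h$ we have $L h^{\ast} h_{\ast} G \isomto G$, and $L h^{\ast} h_{\ast} G = L\Tilde{h}^{\ast} \varepsilon^{\ast} \varepsilon_{\ast} \Tilde{h}_{\ast} G \cong L\Tilde{h}^{\ast} \Tilde{h}_{\ast} G$, whence $L\Tilde{h}^{\ast}\Tilde{h}_{\ast} G \isomto G$. For weak acyclicity I would invoke the treatment of the pro-\'etale site in \cite[Section 2.1]{Suz14}, where pro-algebraic groups with possibly profinite component group are shown to be weakly $\Tilde{h}$-acyclic; alternatively one extracts it from the $h$-acyclicity of \cite[Proposition 3.7.3]{Suz13} through the Leray spectral sequence $R^{i}\varepsilon_{\ast} R^{j}\Tilde{h}_{\ast} G \converges R^{i+j} h_{\ast} G$, after checking that $R^{i}\varepsilon_{\ast}$ vanishes in positive degrees on these pushforwards. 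Throughout I rely on the fact that $\Spec k^{\perf'}_{\pro\fppf}$ and the affine version $\Spec k^{\perf}_{\pro\fppf}$ of \cite{Suz13} induce equivalent topoi, so citing the affine statements is harmless.

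Granting part (\ref{item: profppf to indrat proet, push and pull}), part (\ref{item: profppf to indrat proet, abelian fully faithful}) is formal: for $G, H$ in $\Tilde{\mathcal{E}}_{k}$ (or $\mathcal{E}_{k}$), the recovery isomorphism, the adjunction $(L\Tilde{h}^{\ast}, R\Tilde{h}_{\ast})$, and the degree-zero part of weak acyclicity give $\Hom(\Tilde{h}_{\ast} G, \Tilde{h}_{\ast} H) \cong \Hom(L\Tilde{h}^{\ast}\Tilde{h}_{\ast} G, H) \cong \Hom(G, H)$, just as in Proposition \ref{prop: EIP embeds into sheaves on perar site}. For part (\ref{item: profppf to indrat proet, triangulated fully faithful}) I would use the standard criterion that the right adjoint $R\Tilde{h}_{\ast}$ is fully faithful on a subcategory as soon as the counit $L\Tilde{h}^{\ast} R\Tilde{h}_{\ast} \to \id$ is an isomorphism there. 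By part (\ref{item: profppf to indrat proet, push and pull}) this counit is an isomorphism on every object of $\Tilde{\mathcal{E}}_{k}$, hence, the counit being a morphism of triangulated functors, on all of $\genby{\Tilde{\mathcal{E}}_{k}}_{D(k^{\perf'}_{\pro\fppf})}$ by d\'evissage along distinguished triangles. Essential surjectivity onto $\genby{\Tilde{\mathcal{E}}_{k}}_{D(k^{\ind\rat}_{\pro\et})}$ is then immediate, since $R\Tilde{h}_{\ast}$ is triangulated and carries each generator $G \in \Tilde{\mathcal{E}}_{k}$ to $\Tilde{h}_{\ast} G$, which is by definition the corresponding generator on the target. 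The same argument restricted to $\mathcal{E}_{k} \subset \Tilde{\mathcal{E}}_{k}$ yields the second equivalence.

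I expect the main obstacle to be the weak-acyclicity half of part (\ref{item: profppf to indrat proet, push and pull}) for objects with genuinely profinite component group. This is precisely where the pro-\'etale topology on the target, rather than the \'etale topology used for $\artin{h}$ in Theorem \ref{thm: derived pull of proalg group with finite components}, becomes indispensable, so that the careful pro-\'etale computations of \cite{Suz14} are needed and a quick reduction to the \'etale-target statement of \cite{Suz13} does not by itself suffice. Correctly matching the category $\Tilde{\mathcal{E}}_{k}$ used here with the categories treated in those references, and controlling $R\varepsilon_{\ast}$ on the higher pushforwards, is the one point where something beyond pure formalism enters; everything else is adjunction and d\'evissage.
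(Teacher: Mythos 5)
Your proposal is correct and follows essentially the same route as the paper: part (\ref{item: profppf to indrat proet, push and pull}) is proved by d\'evissage along the defining extension, citing \cite[Proposition (2.1.2) (c), (g)]{Suz14} for $G \isomto R \Tilde{h}_{\ast} G$ and \cite[Proposition 3.7.3, Remark 3.8.4]{Suz13} for $L \Tilde{h}^{\ast} G \isomto G$ on $\Pro' \Alg / k$ (your explicit $\varepsilon$-bookkeeping, via $L h^{\ast} = L \Tilde{h}^{\ast} \compose \varepsilon^{\ast}$ and $\varepsilon^{\ast} \varepsilon_{\ast} \isomto \id$, is exactly what that citation implicitly uses), with the \'etale block handled as in Proposition \ref{prop: derived pull of etale groups}, and parts (\ref{item: profppf to indrat proet, abelian fully faithful}) and (\ref{item: profppf to indrat proet, triangulated fully faithful}) deduced formally by adjunction and d\'evissage, which the paper leaves implicit. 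The only point to add is the passage from $\Pro' \Alg / k$ to $\Ind \Pro' \Alg / k$, which the paper handles by noting that $L_{n} \Tilde{h}^{\ast}$ commutes with filtered direct limits (as in the proof of Proposition \ref{prop: Mac Lane calculates derived pull}), a step your sketch glosses over but which fits your framework without change.
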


\begin{proof}
	The isomorphism $G \isomto R \Tilde{h}_{\ast} G$ follows from
	\cite[Proposition (2.1.2) (c) and (g)]{Suz14}.
	We show  $L \Tilde{h}^{\ast} G \isomto G$.
	The case $G \in \Pro' \Alg / k$ follows from
	\cite[Proposition 3.7.3 and Remark 3.8.4]{Suz13} (see also \cite[Appendix A]{Suz14}).
	This implies the case $G \in \Ind \Pro' \Alg / k$
	since $L_{n} \Tilde{h}^{\ast}$ commutes with filtered direct limits
	as we saw in the proof of Proposition \ref{prop: Mac Lane calculates derived pull}.
	The case $G$ is an \'etale group scheme can be proven
	similarly to Proposition \ref{prop: derived pull of etale groups}.
	These imply the general case.
\end{proof}

We bring objects from $D(k^{\perf'}_{\pro\fppf})$ to $D(k^{\ind\rat}_{\pro\et})$.

\begin{Def}
	We define
		$
				\artin{\assoc}
			=
				R \Tilde{h}_{\ast} L \artin{h}^{\ast}
			\colon
				D(k^{\perar}_{\et})
			\to
				D(k^{\ind\rat}_{\pro\et})
		$.
\end{Def}

We think of this as an analogue of the pro-\'etale sheafification functor $\varepsilon^{\ast}$.
We will denote the latter functor $\varepsilon^{\ast}$ by $\assoc$
in Definition \ref{def: proetale sheafification} below,
since it is the usual notation \cite[II, D\'efinition 3.5]{AGV72a}
for the sheafification or ``associated sheaf'' functor.
Sheafification commutes with (derived) tensor products.
For $\artin{\assoc}$, we still have a cup product morphism
under an $\artin{h}$-compatibility assumption:

\begin{Prop} \label{prop: modified associated has cup product}
	For any $F, F' \in D(k^{\perar}_{\et})$ such that $F$ is $\artin{h}$-compatible,
	we have a canonical morphism
		\[
				\artin{\assoc}(F) \tensor^{L} \artin{\assoc}(F')
			\to
				\artin{\assoc}(F \tensor^{L} F')
		\]
	in $D(k^{\ind\rat}_{\pro\et})$ functorial in $F$ and $F'$.
\end{Prop}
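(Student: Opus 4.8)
The plan is to factor $\artin{\assoc} = R \Tilde{h}_{\ast} L \artin{h}^{\ast}$ and to treat the two functors separately: the pushforward $R \Tilde{h}_{\ast}$ carries the lax monoidal structure supplied by the cup product \eqref{eq: cup product}, while the $\artin{h}$-compatibility of $F$ is exactly what turns the comparison map for $L \artin{h}^{\ast}$ (from Proposition \ref{prop: cocup product}) into an isomorphism. Composing the two gives the morphism.

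First I would apply the cup product morphism \eqref{eq: cup product} of Proposition \ref{prop: sheafified functoriality and cup product} to the continuous map of sites $\Tilde{h}$ (a premorphism, hence in particular a continuous map), taking $G' = L \artin{h}^{\ast} F$ and $F' = L \artin{h}^{\ast} F'$. Since $R \Tilde{h}_{\ast} L \artin{h}^{\ast} = \artin{\assoc}$, this produces a canonical morphism
	\[
			\artin{\assoc}(F) \tensor^{L} \artin{\assoc}(F')
		\to
			R \Tilde{h}_{\ast} \bigl(
				L \artin{h}^{\ast} F \tensor^{L} L \artin{h}^{\ast} F'
			\bigr),
	\]
functorial in $F$ and $F'$.

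Next I would invoke Proposition \ref{prop: cocup product} for the premorphism $\artin{h}$, applied to the pair $F, F'$. It provides a canonical morphism $L \artin{h}^{\ast}(F \tensor^{L} F') \to L \artin{h}^{\ast} F \tensor^{L} L \artin{h}^{\ast} F'$, which is an isomorphism because $F$ is assumed $\artin{h}$-compatible. Inverting it and applying $R \Tilde{h}_{\ast}$ yields an isomorphism
	\[
			R \Tilde{h}_{\ast} \bigl(
				L \artin{h}^{\ast} F \tensor^{L} L \artin{h}^{\ast} F'
			\bigr)
		\isomto
			R \Tilde{h}_{\ast} L \artin{h}^{\ast}(F \tensor^{L} F')
		=
			\artin{\assoc}(F \tensor^{L} F').
	\]
The composite of the two displayed maps is the desired morphism.

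There is no serious obstacle here, since all the content resides in the two earlier propositions; the role of the $\artin{h}$-compatibility hypothesis is precisely to upgrade the one-sided comparison for $L \artin{h}^{\ast}$ to an isomorphism, which is what allows the lax monoidal $R \Tilde{h}_{\ast}$ to be combined with the $L \artin{h}^{\ast}$ part. The only point requiring a little care is functoriality in both variables: the cup product \eqref{eq: cup product} is functorial in $G', F'$ by Proposition \ref{prop: sheafified functoriality and cup product}, and the comparison map of Proposition \ref{prop: cocup product} is natural in $F$ and $F'$ by its construction via adjunction and the naturality of the unit and of \eqref{eq: cup product}; hence inverting it and pushing forward preserves naturality, so the composite is functorial in $F$ and $F'$ as required.
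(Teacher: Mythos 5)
Your proposal is correct and matches the paper's own proof essentially verbatim: the paper likewise composes the cup product \eqref{eq: cup product} for $\Tilde{h}$ (applied to $L \artin{h}^{\ast} F$ and $L \artin{h}^{\ast} F'$) with the inverse of the comparison morphism of Proposition \ref{prop: cocup product} for $\artin{h}$, which is an isomorphism precisely because $F$ is $\artin{h}$-compatible. Your additional remarks on functoriality are consistent with the constructions of both cited propositions and raise no issues.
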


\begin{proof}
	By Propositions \ref{prop: sheafified functoriality and cup product}
	and \ref{prop: cocup product},
	we have canonical morphisms
		\[
					R \Tilde{h}_{\ast} L \artin{h}^{\ast} F
				\tensor^{L}
					R \Tilde{h}_{\ast} L \artin{h}^{\ast} F'
			\to
				R \Tilde{h}_{\ast}
				(L \artin{h}^{\ast} F \tensor^{L} L \artin{h}^{\ast} F')
			\gets
				R \Tilde{h}_{\ast} L \artin{h}^{\ast} (F \tensor^{L} F'),
		\]
	the latter of which is an isomorphism since $F$ is $\artin{h}$-compatible.
	This gives the desired morphism.
\end{proof}

We bring Proposition \ref{prop: EIP is preserved from profppf to indrat proet}
to $D(k^{\ind\rat}_{\pro\et})$:

\begin{Prop} \label{prop: modified associated preserves EIP} \mbox{}
	\begin{enumerate}
		\item \label{item: modified associated maps EIP to itself}
			The functor $\artin{\assoc} \colon D(k^{\perar}_{\et}) \to D(k^{\ind\rat}_{\pro\et})$
			maps the subcategory $\genby{\mathcal{E}_{k}}_{D(k^{\perar}_{\et})}$
			to the subcategory $\genby{\mathcal{E}_{k}}_{D(k^{\ind\rat}_{\pro\et})}$.
		\item \label{item: modified associated preseves cohomology}
			For any $G \in D^{b}(k^{\perar}_{\et})$ whose cohomologies are in $\mathcal{E}_{k}$,
			we have a canonical isomorphism
				\[
					H^{n}(G) \cong H^{n}(\artin{\assoc}(G))
				\]
			in $\mathcal{E}_{k}$ for all $n$.
	\end{enumerate}
\end{Prop}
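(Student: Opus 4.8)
The plan is to read off both assertions from the factorization $\artin{\assoc} = R \Tilde{h}_{\ast} \compose L \artin{h}^{\ast}$ by combining Proposition \ref{prop: L h pull preseves groups with finite components} (which governs $L \artin{h}^{\ast}$) with Proposition \ref{prop: EIP is preserved from profppf to indrat proet} (which governs $R \Tilde{h}_{\ast}$), using $D(k^{\perf'}_{\pro\fppf})$ as the intermediate category. No genuinely new computation should be needed; the work lies entirely in tracking the two intermediate steps and the identifications of $\mathcal{E}_{k}$ on the three sites.

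For part (\ref{item: modified associated maps EIP to itself}), I would argue purely at the level of triangulated categories. The functor $L \artin{h}^{\ast}$ is triangulated, and by Proposition \ref{prop: L h pull preseves groups with finite components}(a) it carries the generating objects $\mathcal{E}_{k}$ into $\genby{\mathcal{E}_{k}}_{D(k^{\perf'}_{\pro\fppf})}$; since a triangulated functor sends the triangulated subcategory generated by a collection into the one generated by the image, $L \artin{h}^{\ast}$ maps $\genby{\mathcal{E}_{k}}_{D(k^{\perar}_{\et})}$ into $\genby{\mathcal{E}_{k}}_{D(k^{\perf'}_{\pro\fppf})}$. Then Proposition \ref{prop: EIP is preserved from profppf to indrat proet}(\ref{item: profppf to indrat proet, triangulated fully faithful}) says $R \Tilde{h}_{\ast}$ restricts to an equivalence of $\genby{\mathcal{E}_{k}}_{D(k^{\perf'}_{\pro\fppf})}$ onto $\genby{\mathcal{E}_{k}}_{D(k^{\ind\rat}_{\pro\et})}$, so the composite lands in $\genby{\mathcal{E}_{k}}_{D(k^{\ind\rat}_{\pro\et})}$, which is exactly (a).

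For part (\ref{item: modified associated preseves cohomology}), I would proceed in two stages. First, Proposition \ref{prop: L h pull preseves groups with finite components}(b) gives, for $G \in D^{b}(k^{\perar}_{\et})$ with cohomologies in $\mathcal{E}_{k}$, a canonical isomorphism $H^{n}(G) \cong H^{n}(L \artin{h}^{\ast} G)$ in $\mathcal{E}_{k}$; in particular each $H^{j}(L \artin{h}^{\ast} G)$ lies in $\mathcal{E}_{k} \subset \Ab(k^{\perf'}_{\pro\fppf})$ and $C := L \artin{h}^{\ast} G$ is again bounded. Second, I would feed $C$ into the hypercohomology spectral sequence for $R \Tilde{h}_{\ast}$,
\[
		E_{2}^{i j}
	=
		R^{i} \Tilde{h}_{\ast} H^{j}(C)
	\converges
		H^{i + j}(R \Tilde{h}_{\ast} C)
	=
		H^{i + j}(\artin{\assoc}(G)).
\]
Because $H^{j}(C) \in \mathcal{E}_{k} \subset \Tilde{\mathcal{E}}_{k}$, Proposition \ref{prop: EIP is preserved from profppf to indrat proet}(\ref{item: profppf to indrat proet, push and pull}) gives $H^{j}(C) \isomto R \Tilde{h}_{\ast} H^{j}(C)$, so $R^{i} \Tilde{h}_{\ast} H^{j}(C) = 0$ for $i \ge 1$ while $R^{0} \Tilde{h}_{\ast} H^{j}(C) = \Tilde{h}_{\ast} H^{j}(C)$. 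The spectral sequence therefore degenerates at $E_{2}$, yielding $H^{n}(\artin{\assoc}(G)) \cong \Tilde{h}_{\ast} H^{n}(C)$, which I identify with $H^{n}(C)$ as an object of $\mathcal{E}_{k}$ using the full faithfulness of $\Tilde{h}_{\ast}$ from Proposition \ref{prop: EIP is preserved from profppf to indrat proet}(\ref{item: profppf to indrat proet, abelian fully faithful}). Composing with the isomorphism of the first stage gives the asserted $H^{n}(G) \cong H^{n}(\artin{\assoc}(G))$ in $\mathcal{E}_{k}$.

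The steps are all formal, so the only care needed is in the second stage of (b): one must confirm that the hypercohomology spectral sequence is available for the premorphism $\Tilde{h}$ (it is, since $R \Tilde{h}_{\ast}$ is a right derived functor between derived categories of sheaves) and that its degeneration is driven precisely by the $\Tilde{h}_{\ast}$-acyclicity $H^{j} \isomto R \Tilde{h}_{\ast} H^{j}$ for objects of $\mathcal{E}_{k}$. I expect the main subtlety — more a point of \emph{bookkeeping} than a genuine obstacle — to be the two distinct identifications of $\mathcal{E}_{k}$ inside $\Ab(k^{\perar}_{\et})$ and $\Ab(k^{\ind\rat}_{\pro\et})$, via $\artin{h}_{\ast}$ and $\Tilde{h}_{\ast}$ respectively, so that the final isomorphism is genuinely one of objects of $\mathcal{E}_{k}$ and not merely of abstract sheaves on the two different sites.
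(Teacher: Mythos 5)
Your proof is correct and is essentially the paper's own argument: the paper proves this proposition by simply citing Propositions \ref{prop: L h pull preseves groups with finite components} and \ref{prop: EIP is preserved from profppf to indrat proet}, and your write-up just makes explicit the routine bookkeeping (the triangulated-subcategory generation argument for (a), and the degenerating hypercohomology spectral sequence for $R \Tilde{h}_{\ast}$ driven by the acyclicity $H^{j} \isomto R \Tilde{h}_{\ast} H^{j}$, using $\mathcal{E}_{k} \subset \Tilde{\mathcal{E}}_{k}$, for (b)) that the paper leaves implicit.
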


\begin{proof}
	This follows from
	Propositions \ref{prop: L h pull preseves groups with finite components}
	and \ref{prop: EIP is preserved from profppf to indrat proet}.
\end{proof}

The presence of $L \artin{h}^{\ast}$ in the definition of $\artin{\assoc}$
makes it difficult to calculate derived sections
$R \Gamma(k'_{\pro\et}, \artin{\assoc}(\var))$ of objects $\artin{\assoc}(\var)$
over each $k' \in k^{\ind\rat}$.
The situation is better under an $\artin{h}$-acyclicity assumption;
see the proof of Proposition \ref{prop: derived section of R bold Gamma} below.
Here are criteria of $\artin{h}$-acyclicity:

\begin{Prop} \label{prop: criteison of h acyclicity} \mbox{}
	\begin{enumerate}
		\item \label{item: h acyclicity is preserved by direct limit}
			If $\{F_{\lambda}\}$ is a filtered direct system in $\Ab(k^{\perar}_{\et})$
			consisting of $\artin{h}$-acyclic objects,
			then its direct limit is $\artin{h}$-acyclic.
		\item \label{item: infinite succesive extension of Ga is h acyclic}
			If $G \in \Pro' \Alg / k$ can be written as
			$\invlim_{n} G_{n}$ with $G_{n} \in \Alg / k$
			such that the transition morphisms $G_{n + 1} \to G_{n}$ are surjective
			with connected unipotent kernel,
			then $G$ as an object of $D(k^{\perar}_{\et})$ is $\artin{h}$-acyclic.
	\end{enumerate}
\end{Prop}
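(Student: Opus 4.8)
For part (\ref{item: h acyclicity is preserved by direct limit}) I would argue that the endofunctor $R \artin{h}_{\ast} L \artin{h}^{\ast}$ of $D(k^{\perar}_{\et})$ commutes with filtered direct limits, so that the defining isomorphism of $\artin{h}$-acyclicity is inherited by $\dirlim_{\lambda} F_{\lambda}$. First I would record that $L \artin{h}^{\ast}$ commutes with filtered direct limits, since $\artin{h}^{\ast}$ does so as a left adjoint and each $L_{n} \artin{h}^{\ast}$ does so by the argument recalled in the proof of Proposition \ref{prop: Mac Lane calculates derived pull}; hence the natural morphism attached to $\dirlim_{\lambda} F_{\lambda}$ factors through $\dirlim_{\lambda} R \artin{h}_{\ast} L \artin{h}^{\ast} F_{\lambda}$, each term of which is an isomorphism by hypothesis. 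It then remains to see that $R \artin{h}_{\ast}$ commutes with the resulting filtered direct limit. Restricting to each $k' \in k^{\perar}$ and using the canonical isomorphism $(R \artin{h}_{\ast} \var)|_{k'} \isomto R (\artin{h}|_{k'})_{\ast}(\var|_{\artin{h}^{-1} k'})$ valid for any continuous map of sites, I would reduce this to the fact that the pro-fppf cohomology of the quasi-compact quasi-separated scheme $\Spec k'$ commutes with filtered direct limits of sheaves --- a coherence property of the objects of $k^{\perar}$, which are finite products of perfect fields.

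For part (\ref{item: infinite succesive extension of Ga is h acyclic}) the first observation is that the hypothesis already forces $G$ into the range of the main theorem. Indeed, each surjection $G_{n + 1} \onto G_{n}$ has connected kernel $U_{n}$, so $\pi_{0}(G_{n + 1}) \isomto \pi_{0}(G_{n})$ and thus $\pi_{0}(G) \cong \pi_{0}(G_{0})$ is finite; hence $G \in \Pro'_{\fc} \Alg / k \subset \mathcal{E}_{k}$ and Theorem \ref{thm: derived pull of proalg group with finite components} gives $L \artin{h}^{\ast} G \isomto G$. Consequently $\artin{h}$-acyclicity of $G$ is equivalent to the vanishing $R^{n} \artin{h}_{\ast} G = 0$ for all $n \ge 1$. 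Since $R^{n} \artin{h}_{\ast} G$ is the sheafification of $k' \mapsto H^{n} \bigl( (\Spec k')^{\perf'}_{\pro\fppf}, G \bigr)$, it suffices to check this vanishing on stalks, that is, over an algebraically closed perfect field $\Bar{k}'$, where the stalk is computed by $H^{n} \bigl( (\Spec \Bar{k}')^{\perf'}_{\pro\fppf}, G \bigr)$, the cohomology again commuting with the filtered colimit of fields as in the proof of part (\ref{item: h acyclicity is preserved by direct limit}).

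The heart of the matter, and the step I expect to be the main obstacle, is to pass from the individual groups $G_{m}$ to the inverse limit $G = \invlim_{m} G_{m}$ at the level of cohomology over $\Spec \Bar{k}'$. Each $G_{m}$ is a smooth algebraic group, so its higher pro-fppf cohomology over the algebraically closed field $\Bar{k}'$ vanishes, and the remaining task is to control the derived inverse limit. Here the connected unipotent kernels are decisive: over $\Bar{k}'$ one has $H^{1}(\Bar{k}'_{\et}, U_{m}) = 0$, so the transition maps $G_{m + 1}(\Bar{k}') \onto G_{m}(\Bar{k}')$ are surjective, the tower $\{ G_{m}(\Bar{k}') \}$ is Mittag--Leffler, and its $R^{1} \invlim$ vanishes. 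Feeding this into the Milnor-type short exact sequence relating $H^{n}(\Bar{k}', \invlim_{m} G_{m})$ to $\invlim_{m}$ and $R^{1} \invlim_{m}$ of the groups $H^{\bullet}(\Bar{k}', G_{m})$ then yields $H^{n}(\Bar{k}', G) = 0$ for $n \ge 1$, hence $R^{n} \artin{h}_{\ast} G = 0$ and the asserted $\artin{h}$-acyclicity. The genuine difficulty I anticipate is justifying this interchange of pro-fppf cohomology with the countable inverse limit in the non-noetherian pro-fppf setting; the affineness of the transition morphisms together with the unipotence hypothesis is exactly what should make the requisite continuity argument and the vanishing of the $\invlim^{1}$ term go through.
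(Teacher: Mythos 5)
Your part (\ref{item: h acyclicity is preserved by direct limit}) is essentially the paper's argument: the paper also combines the commutation of $L_{n} \artin{h}^{\ast}$ with filtered direct limits (from the proof of Proposition \ref{prop: Mac Lane calculates derived pull}) with the commutation of $R^{n} \artin{h}_{\ast}$ with filtered direct limits, citing \cite[Proposition (2.2.4) (b)]{Suz14} for the latter rather than re-deriving the coherence property of the pro-fppf site as you sketch; that fact is indeed about filtered colimits \emph{in the coefficient variable}, and your use of it there is fine.

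Part (\ref{item: infinite succesive extension of Ga is h acyclic}) opens correctly ($\pi_{0}(G)$ finite, so $G \in \Pro'_{\fc} \Alg / k$ and Theorem \ref{thm: derived pull of proalg group with finite components} gives $L \artin{h}^{\ast} G \isomto G$, reducing acyclicity to $R^{n} \artin{h}_{\ast} G = 0$ for $n \ge 1$), but the remainder has a genuine gap, in fact three compounding ones, all stemming from your decision to work directly in the pro-fppf site. First, the reduction to stalks at algebraically closed fields needs the continuity statement $\dirlim_{k''} H^{n}(k''^{\perf'}_{\pro\fppf}, G) \isomto H^{n}(\bar{k}'^{\perf'}_{\pro\fppf}, G)$ along the filtered system of finite subextensions $k''$ of $\bar{k}' / k'$; this is a filtered limit \emph{in the base}, not in the coefficients, so part (\ref{item: h acyclicity is preserved by direct limit}) does not supply it, and in the pro-fppf site it is precisely the kind of exotic approximation claim this paper is engineered to avoid: coverings of $\Spec \bar{k}'$ are pro-objects (inverse limits of fppf morphisms) that need not descend compatibly to finite levels, and the introduction explicitly warns that such sheafified cohomology functors can fail to commute with filtered direct limits when the representing object is pro-algebraic. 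Second, the assertion that a smooth algebraic group $G_{m}$ has vanishing \emph{higher pro-fppf} cohomology over $\bar{k}'$ is not classical: it is true for fppf cohomology, but the pro-fppf topology is strictly finer, and the vanishing is a consequence of the nontrivial comparison $G_{m} \isomto R \Tilde{h}_{\ast} G_{m}$ (Proposition \ref{prop: EIP is preserved from profppf to indrat proet}, resting on \cite[Proposition (2.1.2)]{Suz14}) together with w-contractibility of algebraically closed fields in the pro-\'etale site --- machinery you never invoke. Third, your Milnor sequence for $H^{n}(\bar{k}', \invlim_{m} G_{m})$ presupposes control of $R \invlim$ in the pro-fppf topos (a repleteness-type property), which is established nowhere in the paper; you yourself flag this as the anticipated obstacle but offer only the heuristic that affineness plus unipotence ``should'' make it work. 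The paper sidesteps all three difficulties by a different route: it factors $R \artin{h}_{\ast} = \alpha_{\ast} R \varepsilon_{\ast} R \Tilde{h}_{\ast}$ and applies the already-proved $G \isomto R \Tilde{h}_{\ast} G$ \emph{directly to the pro-algebraic group $G = \invlim_{n} G_{n}$ itself}, so that no limit interchange in the pro-fppf world is ever needed; this lands the problem in the ind-rational pro-\'etale site, where the only remaining point is $R \Gamma(k'_{\et}, G) \isomto R \Gamma(k'_{\pro\et}, G)$ for perfect field extensions $k'$, proved as in \cite[Proposition (2.4.2) (b)]{Suz14} --- and it is only there, in the well-controlled pro-\'etale setting, that your (correct) key mechanism appears: $H^{1}(k''_{\et}, U) = 0$ for the connected unipotent kernels forces surjectivity of $G_{n + 1}(k'') \to G_{n}(k'')$, hence Mittag--Leffler towers and vanishing $R^{1} \invlim$. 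So you identified the right engine but installed it in a site where the supporting chassis (stalk reduction, vanishing for $G_{m}$, Milnor sequence) is unavailable.
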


\begin{proof}
	The first statement follows from the fact that
	$L_{n} \artin{h}^{\ast}$ commutes with filtered direct limits
	and $R^{n} \artin{h}_{\ast}$ also commutes with filtered direct limits
	by \cite[Proposition (2.2.4) (b)]{Suz14}.
	For the second statement,
	the assumption implies that $G \in \Pro'_{\fc} \Alg / k$,
	hence $L \artin{h}^{\ast} G \cong G$ by
	Theorem \ref{thm: derived pull of proalg group with finite components}.
	By Proposition \ref{prop: EIP is preserved from profppf to indrat proet}
	(\ref{item: profppf to indrat proet, push and pull}),
	we have $R \Tilde{h}_{\ast} G \cong G$.
	Since $R \artin{h}_{\ast} = \alpha_{\ast} R \varepsilon_{\ast} R \Tilde{h}_{\ast}$,
	we have $R \artin{h}_{\ast} L \artin{h}^{\ast} G \cong \alpha_{\ast} R \varepsilon_{\ast} G$.
	Hence it is enough to show that
	$R \Gamma(k'_{\et}, G) \isomto R \Gamma(k'_{\pro\et}, G)$
	for any perfect field extension $k'$ over $k$.
	This can be proven in the same way as \cite[Proposition (2.4.2) (b)]{Suz14}
	(or is reduced to it).
\end{proof}


\section{A duality formalism for local fields}
\label{sec: A duality formalism for local fields}
Let $K$ be a complete discrete valuation field with perfect residue field $k$ of characteristic $p > 0$.
Denote its ring of integers by $\Order_{K}$ and maximal ideal by $\ideal{p}_{K}$.
If $K$ has mixed characteristic, then $\Order_{K}$ is a finite free $W(k)$-algebra.
If $K$ has equal characteristic, then $\Order_{K}$ is a pro-finite-length $k$-algebra,
and hence a pro-finite-length $W(k)$-algebra via the reduction map $W(k) \onto k$.
As in \cite[\S 2.3]{Suz13}, we make the following definition.

\begin{Def} \label{def: O K an K over k}
	For $k' \in k^{\perar}$, we define
		\begin{gather*}
					\alg{O}_{K}(k')
				=
					W(k') \ctensor_{W(k)} \Order_{K}
				=
					\invlim_{n} \bigl(
						W_{n}(k') \ctensor_{W_{n}(k)} \Order_{K} / \ideal{p}_{K}^{n}
					\bigr),
			\\
					\alg{K}(k')
				=
					\alg{O}_{K}(k') \tensor_{\Order_{K}} K.
		\end{gather*}
\end{Def}

The functors $k' \mapsto \alg{O}_{K}(k')$ and $\alg{K}(k')$ commute with finite products,
taking values in the categories of $\Order_{K}$-algebras and of $K$-algebras, respectively.
If $k'$ has only one direct factor (hence a perfect field extension of $k$),
then $\alg{O}_{K}(k')$ is a complete discrete valuation ring
with maximal ideal $\ideal{p}_{K} \alg{O}_{K}(k')$
and residue field $k'$,
and $\alg{K}(k')$ is its fraction field.

We consider the fppf sites of $\Order_{K}$ and $K$.
To be precise:

\begin{Def} \mbox{}
	\begin{enumerate}
		\item
			Define $\Spec \Order_{K, \fppf}$ (resp.\ $\Spec K_{\fppf}$) to be
			the category of $\Order_{K}$-algebras (resp.\ $K$-algebras) endowed with the fppf topology.
		\item
			The sheaf-Hom functor for $\Spec \Order_{K, \fppf}$ (resp.\ $\Spec K_{\fppf}$)
			is denoted by $\sheafhom_{\Order_{K}}$ (resp.\ $\sheafhom_{K}$).
	\end{enumerate}
\end{Def}

We have the following ``structure morphisms of $\Order_{K}$ and $K$ over $k$'':

\begin{Prop} \mbox{}
	\begin{enumerate}
		\item \label{item: structure morphisms, definition}
			The functors $\alg{O}_{K}$, $\alg{K}$ define premorphisms of sites
				\[
						\artin{\pi}_{\Order_{K}}
					\colon
						\Spec \Order_{K, \fppf}
					\to
						\Spec k^{\perar}_{\et},
					\quad
						\artin{\pi}_{K}
					\colon
						\Spec K_{\fppf}
					\to
						\Spec k^{\perar}_{\et},
				\]
			respectively.
		\item \label{item: structure morphisms, compatibility}
			We have $\artin{\pi}_{K} = \artin{\pi}_{\Order_{K}} \compose j$,
			where $j \colon \Spec K_{\fppf} \into \Spec \Order_{K, \fppf}$
			is the morphism induced by the inclusion $j \colon \Spec K \into \Spec \Order_{K}$.
	\end{enumerate}
\end{Prop}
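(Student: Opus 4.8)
The plan is to unwind the definition of a premorphism of sites. For (\ref{item: structure morphisms, definition}), the underlying functor of $\artin{\pi}_{\Order_{K}}$ (resp.\ $\artin{\pi}_{K}$) from the underlying category of $\Spec k^{\perar}_{\et}$ to that of $\Spec \Order_{K, \fppf}$ (resp.\ $\Spec K_{\fppf}$) is $\Spec \alg{O}_{K}$ (resp.\ $\Spec \alg{K}$), sending $\Spec k' \mapsto \Spec \alg{O}_{K}(k')$ (resp.\ $\Spec \alg{K}(k')$); functoriality in $k'$ is immediate from the functoriality of $W(\var)$, of the completed tensor product $\ctensor$, and of inverting $\ideal{p}_{K}$. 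By the definition of a premorphism, two conditions remain: (i) the functor sends \'etale coverings to fppf coverings; and (ii) for an \'etale map $k' \to k'_{i}$ appearing in a covering family and any $k' \to k'''$ in $k^{\perar}$, the natural map $\alg{O}_{K}(k'_{i}) \tensor_{\alg{O}_{K}(k')} \alg{O}_{K}(k''') \to \alg{O}_{K}(k'_{i} \tensor_{k'} k''')$ is an isomorphism (and likewise for $\alg{K}$). Note that $k'_{i} \tensor_{k'} k''' \in k^{\perar}$ because $k'_{i}$ is \'etale over $k'$, so $\alg{O}_{K}$ is indeed defined on it.

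For (i) I would first show that an \'etale $k'$-algebra $k'_{i}$ lifts uniquely to an \'etale $\alg{O}_{K}(k')$-algebra, and that this lift is canonically $\alg{O}_{K}(k'_{i})$. Over each field factor of $k'$, the ring $\alg{O}_{K}(k')$ is a complete discrete valuation ring, hence henselian, so its \'etale algebras are equivalent to those of its residue field $k'$; to identify the lift with $\alg{O}_{K}(k'_{i})$ one checks at each truncation $W_{n}(k'_{i})$ (the unique \'etale lift of $k'_{i}$ along the nilpotent thickening $W_{n}(k') \to k'$), applies $\var \ctensor_{W_{n}(k)} \Order_{K} / \ideal{p}_{K}^{n}$, and passes to the inverse limit. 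Faithful flatness is then automatic: since $\{k'_{i}\}$ jointly covers $\Spec k'$, the special fibres of $\{\Spec \alg{O}_{K}(k'_{i})\}$ jointly cover the closed points of $\Spec \alg{O}_{K}(k')$, and flatness over the local ring $\alg{O}_{K}(k')$ upgrades this to faithful flatness; thus $\{\alg{O}_{K}(k'_{i})\}$ is an \'etale, in particular fppf, covering. Inverting $\ideal{p}_{K}$ gives the statement for $\alg{K}$, the generic points now being the ones covered.

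The heart of the argument is condition (ii), the base-change compatibility, and this is where I expect the main obstacle. I would again descend to finite level and use that $W_{n}(k'_{i})$ is the unique \'etale lift of $k'_{i}$: base-changing the \'etale $W_{n}(k')$-algebra $W_{n}(k'_{i})$ along $W_{n}(k') \to W_{n}(k''')$ produces an \'etale $W_{n}(k''')$-algebra that reduces to $k'_{i} \tensor_{k'} k'''$, so by uniqueness it is canonically $W_{n}(k'_{i} \tensor_{k'} k''')$; applying $\var \ctensor_{W_{n}(k)} \Order_{K} / \ideal{p}_{K}^{n}$ and passing to the inverse limit yields the isomorphism for $\alg{O}_{K}$, and $\var \tensor_{\Order_{K}} K$ gives it for $\alg{K}$. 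The delicate point is the bookkeeping of the completed tensor product and the inverse limit defining $\alg{O}_{K}$: one must verify that the finite-level base-change isomorphisms are compatible with the transition maps and that no $\invlim^{1}$ obstruction or completion defect intervenes. This is controlled because at each level the algebras in play are \'etale, hence flat of finite presentation, so the base changes are exact and commute with the (countable, Mittag--Leffler) inverse limit. These are the same computations performed for the ind-rational site in \cite[\S 2.3]{Suz13}, now read over $k^{\perar}$.

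Finally, (\ref{item: structure morphisms, compatibility}) is formal. The inclusion $j \colon \Spec K \into \Spec \Order_{K}$ induces the morphism $j$ of sites whose underlying functor sends an $\Order_{K}$-algebra $R$ to $R \tensor_{\Order_{K}} K$. Composing underlying functors, $\artin{\pi}_{\Order_{K}} \compose j$ has underlying functor $k' \mapsto \alg{O}_{K}(k') \tensor_{\Order_{K}} K = \alg{K}(k')$, which is exactly the underlying functor of $\artin{\pi}_{K}$. Since a premorphism of sites is determined by its underlying functor, this gives $\artin{\pi}_{K} = \artin{\pi}_{\Order_{K}} \compose j$.
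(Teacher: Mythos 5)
Your proposal is correct, and its overall skeleton matches the paper's: reduce \'etale coverings in $k^{\perar}$ to finite extensions of perfect field extensions, establish \'etaleness and base-change compatibility at the level of Witt vectors, apply the completed tensor with $\Order_{K}$ and pass to the inverse limit, and dispose of part (\ref{item: structure morphisms, compatibility}) formally. The genuine difference is how the key Witt-vector lemma is discharged. You argue deformation-theoretically: henselianness of $\alg{O}_{K}(k')$ plus uniqueness of \'etale lifts along the nilpotent thickenings $W_{n}(k') \to k'$ identify the lifts and their base changes with $W_{n}(k'_{i})$ and $W_{n}(k'_{i} \tensor_{k'} k''')$, after which you must explicitly verify the Mittag--Leffler/limit bookkeeping. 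The paper instead quotes Serre's explicit presentation $W(k'') \cong W(k')[x] / (\Tilde{f}(x))$, with $f$ the minimal polynomial of a generator of the finite extension $k'' / k'$, which yields in one stroke that $W(k'')$ is \emph{finite free} \'etale over $W(k')$ and that $W(k'') \tensor_{W(k')} W(k''') \cong W(k'' \tensor_{k'} k''')$; finite freeness then makes the completed-tensor and inverse-limit compatibilities you carefully flag entirely silent, since tensoring with a finite free module commutes with all limits. Your route is conceptually robust but note that your phrase ``$W_{n}(k'_{i})$, the unique \'etale lift'' quietly presupposes that $W_{n}(k'_{i})$ \emph{is} \'etale over $W_{n}(k')$ -- a true and standard fact (\'etale base change for Witt vectors of perfect rings, or Serre's presentation again), but it is precisely the input the paper's citation supplies, so in a complete write-up you would need to cite or prove it. Lastly, the paper obtains that $\artin{\pi}_{K}$ is a premorphism for free from the identity $\artin{\pi}_{K} = \artin{\pi}_{\Order_{K}} \compose j$ (your part (\ref{item: structure morphisms, compatibility}), which it calls obvious), whereas you re-run conditions (i) and (ii) after inverting $\ideal{p}_{K}$; both are valid, the paper's being slightly more economical.
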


\begin{proof}
	Coverings in $\Spec k^{\perar}$ are finite extensions of perfect field extensions of $k$
	up to finite products.
	Let $k'' / k'$ be a finite extension of perfect field extensions of $k$.
	Let $f(x)$ be the minimal polynomial of a generator of $k'' / k'$.
	Then $W(k'') \cong W(k')[x] / (\Tilde{f}(x))$ by
	\cite[I, \S 6, Corollaries to Proposition 15; II, \S 5, Theorem 3]{Ser79},
	which is finite free \'etale over $W(k')$,
	where $\Tilde{f}(x)$ is any lift of $f(x)$.
	Taking the completed tensor product with $\Order_{K}$,
	we know that $\alg{O}_{K}(k'') / \alg{O}_{K}(k')$ is a finite free \'etale covering
	and hence an fppf covering.
	Therefore $\alg{O}_{K}$ preserves covering families.
	For any other perfect field extension $k'''$ of $k'$,
	the tensor product $k'' \tensor_{k'} k'''$ is a finite product of finite extensions of $k'''$.
	Hence $W(k'') \tensor_{W(k')} W(k''')$ is isomorphic to $W(k'' \tensor_{k'} k''')$.
	This implies that $\alg{O}_{K}(k'') \tensor_{\alg{O}_{K}(k')} \alg{O}_{K}(k''')$
	is isomorphic to $\alg{O}(k'' \tensor_{k'} k''')$.
	This shows that $\artin{\pi}_{\Order_{K}}$ is a premorphism of sites.
	We have $\artin{\pi}_{K} = \artin{\pi}_{\Order_{K}} \compose j$ obviously.
	Hence $\artin{\pi}_{K}$ is also a premorphism of sites.
\end{proof}

We will define ``cohomology of $\Order_{K}$ and $K$
with an additional structure as a complex of sheaves on $\Spec k^{\ind\rat}_{\pro\et}$''
using $\Spec k^{\perar}_{\et}$.
We will use the very general theorem \cite[Theorem 14.3.1 (vi)]{KS06}
on existence of derived functors in Grothendieck categories.

\begin{Def} \mbox{}
	\begin{enumerate}
		\item
			Define
				\begin{gather*}
								R \artin{\alg{\Gamma}}(\Order_{K}, \var)
							:=
								\artin{\assoc} \compose R \artin{\pi}_{\Order_{K}, \ast}
						\colon
								D(\Order_{K, \fppf})
							\to
								D(k^{\ind\rat}_{\pro\et}),
					\\
								R \artin{\alg{\Gamma}}(K, \var)
							:=
								\artin{\assoc} \compose R \artin{\pi}_{K, \ast}
						\colon
								D(K_{\fppf})
							\to
								D(k^{\ind\rat}_{\pro\et}),
				\end{gather*}
			where $\artin{\pi}_{\Order_{K}, \ast} = (\artin{\pi}_{\Order_{K}})_{\ast}$
			and $\artin{\pi}_{K, \ast} = (\artin{\pi}_{K})_{\ast}$.
		\item
			Define
				\[
						\artin{\pi}_{x, \ast}
					=
						[
								\artin{\pi}_{\Order_{K}, \ast}
							\to
								\artin{\pi}_{K, \ast} j^{\ast}
						][-1]
					\colon
						\Ch(\Order_{K, \fppf})
					\to
						\Ch(k^{\perar}_{\et}),
				\]
			where $[\var]$ denotes the mapping cone.
			We have its right derived functor
				\[
						R \artin{\pi}_{x, \ast}
					\colon
						D(\Order_{K, \fppf})
					\to
						D(k^{\perar}_{\et}),
				\]
			Define
				\[
							R \artin{\alg{\Gamma}}_{x}(\Order_{K}, \var)
						:=
							\artin{\assoc} \compose R \artin{\pi}_{x, \ast}
					\colon
							D(\Order_{K, \fppf})
						\to
							D(k^{\ind\rat}_{\pro\et}).
				\]
		\item
			We denote $\artin{\alg{H}}^{n}(\Order_{K}, \var) = H^{n} R \artin{\alg{\Gamma}}(\Order_{K}, \var)$
			and use the similar notation
			$\artin{\alg{H}}^{n}_{x}(\Order_{K}, \var)$, $\artin{\alg{H}}^{n}(K, \var)$.
	\end{enumerate}
\end{Def}

The subscript $x$ is meant to be the closed subscheme $\Spec k \subset \Spec \Order_{K}$,
so $R \artin{\alg{\Gamma}}_{x}(\Order_{K}, \var)$ is the ``cohomology of $\Spec \Order_{K}$ with support on $x$''.
The restriction functor $j^{\ast}$ as above will be frequently omitted by abuse of notation.
By definition, we have a canonical distinguished triangle
	\begin{equation} \label{eq: localization triangle}
			R \artin{\alg{\Gamma}}_{x}(\Order_{K}, F)
		\to
			R \artin{\alg{\Gamma}}(\Order_{K}, F)
		\to
			R \artin{\alg{\Gamma}}(K, F)
	\end{equation}
in $D(k^{\ind\rat}_{\pro\et})$ functorial in $F \in D(\Order_{K, \fppf})$,
which we call the \emph{localization triangle}.

To understand these cohomology functors,
we need to know their (derived) sections.
Under suitable $\artin{h}$-acyclicity assumptions,
this is given as follows.

\begin{Prop} \label{prop: derived section of R bold Gamma} \mbox{}
	\begin{enumerate}
		\item \label{item: Leray for cohomology of local ring}
			Let $G \in D(\Order_{K, \fppf})$.
			Assume that $R \artin{\pi}_{\Order_{K}, \ast} G$ is $\artin{h}$-acyclic.
			Then there exists a canonical isomorphism
				\[
						R \Gamma \bigl(
							k'_{\pro\et}, R \artin{\alg{\Gamma}}(\Order_{K}, G)
						\bigr)
					\cong
						R \Gamma(\alg{O}_{K}(k')_{\fppf}, G)
				\]
			in $D(\Ab)$ for any $k' \in k^{\perar}$.
			In particular, if $G$ is bounded below, then we have a spectral sequence
				\[
						E_{2}^{i j}
					=
						H^{i} \bigl(
							k'_{\pro\et}, \artin{\alg{H}}^{j}(\Order_{K}, G)
						\bigr)
					\converges
						H^{i + j}(\alg{O}_{K}(k')_{\fppf}, G),
				\]
			and if moreover $k'$ is an algebraically closed field, then we have an isomorphism
				\[
						\artin{\alg{H}}^{n}(\Order_{K}, G)(k')
					\cong
						H^{n}(\alg{O}_{K}(k')_{\fppf}, G)
				\]
			for any $n$.
		\item \label{item: Leray for cohomology of local field}
			A similar statement to (\ref{item: Leray for cohomology of local ring}) holds
			with $\Order_{K}$ and $\alg{O}_{K}$ replaced by $K$ and $\alg{K}$, respectively.
		\item \label{item: Leray for cohomology with support of local ring}
			Let $G \in D(\Order_{K, \fppf})$.
			Assume that $R \artin{\pi}_{\Order_{K}, \ast} G$
			and $R \artin{\pi}_{K, \ast} j^{\ast} G$ are both $\artin{h}$-acyclic.
			Then there exists a canonical isomorphism
				\[
						R \Gamma \bigl(
							k'_{\pro\et}, R \artin{\alg{\Gamma}}_{x}(\Order_{K}, G)
						\bigr)
					\cong
						R \Gamma_{x}(\alg{O}_{K}(k')_{\fppf}, G)
				\]
			in $D(\Ab)$ for any $k' \in k^{\perar}$
			(where the right-hand side is the usual fppf cohomology with support on
			the closed subscheme $\Spec k' \subset \Spec \alg{O}_{K}(k')$).
			In particular, if $G$ is bounded below, then we have a spectral sequence
				\[
						E_{2}^{i j}
					=
						H^{i} \bigl(
							k'_{\pro\et}, \artin{\alg{H}}^{j}_{x}(\Order_{K}, G)
						\bigr)
					\converges
						H_{x}^{i + j}(\alg{O}_{K}(k')_{\fppf}, G),
				\]
			and if moreover $k'$ is an algebraically closed field, then we have an isomorphism
				\[
						\artin{\alg{H}}^{n}_{x}(\Order_{K}, G)(k')
					\cong
						H^{n}_{x}(\alg{O}_{K}(k')_{\fppf}, G)
				\]
			for any $n$.
	\end{enumerate}
\end{Prop}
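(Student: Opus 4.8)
The plan is to reduce all three parts to a single general principle for a premorphism (more generally, continuous map) of sites $f \colon S' \to S$: for any $X \in S$ and $F' \in D(S')$ there is a canonical isomorphism $R\Gamma(X, R f_{\ast} F') \cong R\Gamma(f^{-1} X, F')$. This is the derived avatar of the tautology $(f_{\ast} F')(X) = F'(f^{-1} X)$; since $f$ need not be a morphism of sites, $f_{\ast}$ need not preserve injectives, so I would prove it via the K-limp formalism: taking a K-injective resolution $F' \to I'$, the complex $f_{\ast} I'$ is K-limp and hence acyclic for sections over every object by \cite[Section 2.4]{Suz18a}, whence $R\Gamma(X, R f_{\ast} F') = \Gamma(X, f_{\ast} I') = \Gamma(f^{-1} X, I') = R\Gamma(f^{-1} X, F')$. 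I would then record the three instances needed: for $\Tilde{h}$ and $\artin{h}$ with $\Tilde{h}^{-1} k' = \artin{h}^{-1} k' = \Spec k'$ (using $\artin{h} = \alpha \compose \varepsilon \compose \Tilde{h}$), and for $\artin{\pi}_{\Order_{K}}$ (resp.\ $\artin{\pi}_{K}$) with $\artin{\pi}_{\Order_{K}}^{-1} k' = \alg{O}_{K}(k')$ (resp.\ $\artin{\pi}_{K}^{-1} k' = \alg{K}(k')$).

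Granting this, for part (\ref{item: Leray for cohomology of local ring}) I set $F = R \artin{\pi}_{\Order_{K}, \ast} G$, so that $R \artin{\alg{\Gamma}}(\Order_{K}, G) = \artin{\assoc}(F) = R \Tilde{h}_{\ast} L \artin{h}^{\ast} F$, and compute
\begin{align*}
	R\Gamma\bigl(k'_{\pro\et}, R\artin{\alg{\Gamma}}(\Order_{K}, G)\bigr)
  &= R\Gamma\bigl(k'_{\pro\et}, R\Tilde{h}_{\ast} L\artin{h}^{\ast} F\bigr)
   = R\Gamma\bigl((\Spec k')_{\pro\fppf}, L\artin{h}^{\ast} F\bigr) \\
  &= R\Gamma\bigl(k'_{\et}, R\artin{h}_{\ast} L\artin{h}^{\ast} F\bigr)
   \cong R\Gamma(k'_{\et}, F).
\end{align*}
Here the first equality unfolds the definitions, the second is the $\Tilde{h}$-instance of the principle, the third is the $\artin{h}$-instance (read backwards), and the final isomorphism is \emph{precisely} the hypothesis that $F$ is $\artin{h}$-acyclic, i.e.\ $F \isomto R\artin{h}_{\ast} L\artin{h}^{\ast} F$. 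Since $R\Gamma(k'_{\et}, F) = R\Gamma(\alg{O}_{K}(k')_{\fppf}, G)$ by the $\artin{\pi}_{\Order_{K}}$-instance, this yields the claimed isomorphism. The spectral sequence is then the standard hypercohomology spectral sequence $E_{2}^{ij} = H^{i}(k'_{\pro\et}, \artin{\alg{H}}^{j}(\Order_{K}, G)) \converges H^{i+j}(k'_{\pro\et}, R\artin{\alg{\Gamma}}(\Order_{K}, G))$ whose abutment I rewrite by the isomorphism just proved, convergence being guaranteed under the boundedness hypothesis on $G$. For the point statement I would use that over an algebraically closed $k'$ every pro-\'etale covering of $k'$ splits (a nonzero ind-\'etale algebra over an algebraically closed field has a $k'$-point), so sections at $k'$ are exact and $H^{i}(k'_{\pro\et}, \var) = 0$ for $i \ge 1$; the spectral sequence then degenerates to $\artin{\alg{H}}^{n}(\Order_{K}, G)(k') \cong H^{n}(\alg{O}_{K}(k')_{\fppf}, G)$. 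Part (\ref{item: Leray for cohomology of local field}) is the verbatim argument with $(\artin{\pi}_{\Order_{K}}, \alg{O}_{K})$ replaced by $(\artin{\pi}_{K}, \alg{K})$.

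For part (\ref{item: Leray for cohomology with support of local ring}) I first note that $\artin{h}$-acyclicity is stable under distinguished triangles (as recorded after Definition \ref{def: f compatible, f acyclic}), so under the stated hypotheses the fibre $R\artin{\pi}_{x, \ast} G = [R\artin{\pi}_{\Order_{K}, \ast} G \to R\artin{\pi}_{K, \ast} j^{\ast} G][-1]$ is $\artin{h}$-acyclic. Running the computation of part (\ref{item: Leray for cohomology of local ring}) for this object gives $R\Gamma(k'_{\pro\et}, R\artin{\alg{\Gamma}}_{x}(\Order_{K}, G)) \cong R\Gamma(k'_{\et}, R\artin{\pi}_{x, \ast} G)$. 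Since $R\Gamma(k'_{\et}, \var)$ is triangulated I may pass it through the mapping cone and apply parts (\ref{item: Leray for cohomology of local ring}) and (\ref{item: Leray for cohomology of local field}) termwise, identifying this with $[R\Gamma(\alg{O}_{K}(k')_{\fppf}, G) \to R\Gamma(\alg{K}(k')_{\fppf}, G)][-1]$, which is exactly $R\Gamma_{x}(\alg{O}_{K}(k')_{\fppf}, G)$ for the local ring $\alg{O}_{K}(k')$ with closed point $\Spec k'$ and generic fibre $\Spec \alg{K}(k')$. The two refinements follow as before.

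The argument is mostly bookkeeping in the (pre)morphisms $\Tilde{h}$, $\artin{h}$, $\artin{\pi}_{\Order_{K}}$; the one genuinely load-bearing input is the general sections-of-pushforward isomorphism. Its only subtlety, and the main obstacle, is that these are premorphisms rather than morphisms of sites, so $f_{\ast}$ need not be exact or preserve injectives — which is exactly what the K-limp formalism of \cite{Suz18a} repairs, and what allows the $\artin{h}$-acyclicity hypothesis to do its essential job of trading cohomology over $k'_{\et}$ for cohomology over $(\Spec k')_{\pro\fppf}$, the latter being what $\artin{\assoc}$ computes through $\Tilde{h}$.
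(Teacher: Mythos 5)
Your proposal is correct and is essentially the paper's own proof: the paper likewise uses the $\artin{h}$-acyclicity hypothesis to get $R \artin{\pi}_{\Order_{K}, \ast} G \isomto \alpha_{\ast} R \varepsilon_{\ast} \artin{\assoc}(R \artin{\pi}_{\Order_{K}, \ast} G)$ --- which is your chain of sections-of-pushforward identities through $\Spec k^{\perf'}_{\pro\fppf}$, grounded in the same K-limp formalism of \cite{Suz18a} --- then applies $R \Gamma(k'_{\et}, \var)$, and deduces part (c) from parts (a) and (b) via the mapping cone exactly as you do. Your observation that every pro-\'etale covering of an algebraically closed field splits is precisely the w-contractibility the paper cites from \cite{BS15} for the degeneration at such $k'$.
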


\begin{proof}
	For (\ref{item: Leray for cohomology of local ring}),
	since $R \artin{\pi}_{\Order_{K}, \ast} G$ is $\artin{h}$-acyclic, we have
		\[
				R \artin{\pi}_{\Order_{K}, \ast} G
			\isomto
				R \artin{h}_{\ast} L \artin{h}^{\ast} R \artin{\pi}_{\Order_{K}, \ast} G
			\cong
				\alpha_{\ast} R \varepsilon_{\ast}
				\artin{\assoc}(R \artin{\pi}_{\Order_{K}, \ast} G)
			=
				\alpha_{\ast} R \varepsilon_{\ast}
				R \artin{\alg{\Gamma}}(\Order_{K}, G).
		\]
	Taking $R \Gamma(k'_{\et}, \var)$, we get the result,
	noting that an algebraically closed field is w-contractible \cite[Definition 2.4.1]{BS15}
	(see also the proof of \cite[Proposition 2.5.2]{Suz18a}).
	Assertion (\ref{item: Leray for cohomology of local field}) can be proven similarly.
	Assertion (\ref{item: Leray for cohomology with support of local ring}) follows from
	(\ref{item: Leray for cohomology of local ring}) and
	(\ref{item: Leray for cohomology of local field}).
\end{proof}

These cohomology functors support a cup product formalism:

\begin{Prop} \label{prop: cup product for local fields} \mbox{}
	\begin{enumerate}
		\item \label{item: cup product for K}
			There exists a canonical morphism
				\[
						R \artin{\alg{\Gamma}}(K, G)
					\tensor^{L}
						R \artin{\alg{\Gamma}}(K, G')
					\to
						R \artin{\alg{\Gamma}}(K, G \tensor^{L} G')
				\]
			in $D(k^{\ind\rat}_{\pro\et})$ functorial in $G, G' \in D(K_{\fppf})$ such that
			$R \artin{\pi}_{K, \ast} G$ is $\artin{h}$-compatible.
		\item \label{item: cup product for O K}
			There exists a canonical morphism
				\[
						R \artin{\alg{\Gamma}}(\Order_{K}, G)
					\tensor^{L}
						R \artin{\alg{\Gamma}}_{x}(\Order_{K}, G')
					\to
						R \artin{\alg{\Gamma}}_{x}(\Order_{K}, G \tensor^{L} G')
				\]
			in $D(k^{\ind\rat}_{\pro\et})$ functorial in $G, G' \in D(\Order_{K, \fppf})$ such that
			$R \artin{\pi}_{\Order_{K}, \ast} G$ is $\artin{h}$-compatible.
	\end{enumerate}
\end{Prop}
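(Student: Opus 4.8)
The plan is to deduce both statements from the pushforward-level cup product \eqref{eq: cup product} of Proposition \ref{prop: sheafified functoriality and cup product}, together with the cup product for $\artin{\assoc}$ established in Proposition \ref{prop: modified associated has cup product}. The common mechanism is: first produce a pairing at the level of the pushforward functors landing in $D(k^{\perar}_{\et})$, and then transport it through $\artin{\assoc} = R \Tilde{h}_{\ast} L \artin{h}^{\ast}$, where the $\artin{h}$-compatibility hypothesis is precisely what Proposition \ref{prop: modified associated has cup product} demands of the first factor.

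For (\ref{item: cup product for K}), I would take $f = \artin{\pi}_{K}$ in \eqref{eq: cup product}, obtaining a canonical morphism
\[
		R \artin{\pi}_{K, \ast} G \tensor^{L} R \artin{\pi}_{K, \ast} G'
	\to
		R \artin{\pi}_{K, \ast}(G \tensor^{L} G')
\]
in $D(k^{\perar}_{\et})$, functorial in $G, G'$. Since $R \artin{\pi}_{K, \ast} G$ is $\artin{h}$-compatible by hypothesis, Proposition \ref{prop: modified associated has cup product} supplies a canonical morphism $\artin{\assoc}(R \artin{\pi}_{K, \ast} G) \tensor^{L} \artin{\assoc}(R \artin{\pi}_{K, \ast} G') \to \artin{\assoc}(R \artin{\pi}_{K, \ast} G \tensor^{L} R \artin{\pi}_{K, \ast} G')$. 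Composing this with $\artin{\assoc}$ applied to the previous morphism, and recalling $R \artin{\alg{\Gamma}}(K, \var) = \artin{\assoc} \compose R \artin{\pi}_{K, \ast}$, yields the asserted morphism; functoriality is inherited from that of \eqref{eq: cup product} and of Proposition \ref{prop: modified associated has cup product}.

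For (\ref{item: cup product for O K}), the new ingredient is a relative cup product with support at the pushforward level,
\[
		R \artin{\pi}_{\Order_{K}, \ast} G \tensor^{L} R \artin{\pi}_{x, \ast} G'
	\to
		R \artin{\pi}_{x, \ast}(G \tensor^{L} G'),
\]
which I would build from the defining triangle $R \artin{\pi}_{x, \ast} \to R \artin{\pi}_{\Order_{K}, \ast} \to R \artin{\pi}_{K, \ast} j^{\ast}$ in $D(k^{\perar}_{\et})$ whose image under $\artin{\assoc}$ is the localization triangle \eqref{eq: localization triangle}. Writing $M = R \artin{\pi}_{\Order_{K}, \ast} G$ and $\rho_{H} \colon R \artin{\pi}_{\Order_{K}, \ast} H \to R \artin{\pi}_{K, \ast} j^{\ast} H$ for the restriction morphism (the unit of $(j^{\ast}, R j_{\ast})$ pushed forward), tensoring the triangle for $G'$ with $M$ presents $M \tensor^{L} R \artin{\pi}_{x, \ast} G'$ as the fiber of $\id_{M} \tensor \rho_{G'}$. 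On the middle and right terms I have the cup products $\cup_{\Order_{K}}$ of \eqref{eq: cup product} for $\artin{\pi}_{\Order_{K}}$ and $\cup_{K} \compose (\rho_{G} \tensor \id)$ for $\artin{\pi}_{K}$, mapping respectively to $R \artin{\pi}_{\Order_{K}, \ast}(G \tensor^{L} G')$ and $R \artin{\pi}_{K, \ast} j^{\ast}(G \tensor^{L} G')$. Checking that these commute with the maps $\rho$ in the two triangles produces the induced pairing on the fibers $R \artin{\pi}_{x, \ast}$. One then applies $\artin{\assoc}$ exactly as in (\ref{item: cup product for K}), using that $M$ is $\artin{h}$-compatible, to obtain the asserted morphism $R \artin{\alg{\Gamma}}(\Order_{K}, G) \tensor^{L} R \artin{\alg{\Gamma}}_{x}(\Order_{K}, G') \to R \artin{\alg{\Gamma}}_{x}(\Order_{K}, G \tensor^{L} G')$.

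The main obstacle is precisely the construction of this relative pairing on the fibers $R \artin{\pi}_{x, \ast}$. The required commutativity reduces to the identity $\cup_{K} \compose (\rho_{G} \tensor \rho_{G'}) = \rho_{G \tensor^{L} G'} \compose \cup_{\Order_{K}}$, which is the lax-monoidal compatibility of pushforward with the composition $\artin{\pi}_{K} = \artin{\pi}_{\Order_{K}} \compose j$ and with the unit of the adjunction $(j^{\ast}, R j_{\ast})$; this is standard but must be checked carefully from the construction of \eqref{eq: cup product}. A further subtlety is that the induced map on cones is not canonical in a triangulated category, so to obtain a genuinely functorial morphism I would realize the relevant cup products at the level of complexes via suitable resolutions (as \eqref{eq: derived push and RHom} is built in Proposition \ref{prop: sheafified functoriality and cup product}), where $\artin{\pi}_{x, \ast}$ is already an honest mapping cone, and invoke the functoriality of the mapping-cone pairing there.
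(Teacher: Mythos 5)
Your proposal is correct, and for part (\ref{item: cup product for K}) it coincides with the paper's proof exactly: apply \eqref{eq: cup product} to $f = \artin{\pi}_{K}$ and then Proposition \ref{prop: modified associated has cup product}, whose hypothesis on the first tensor factor is precisely the assumed $\artin{h}$-compatibility of $R \artin{\pi}_{K, \ast} G$. For part (\ref{item: cup product for O K}) you and the paper agree on the reduction --- it suffices to produce a pairing $R \artin{\pi}_{\Order_{K}, \ast} G \tensor^{L} R \artin{\pi}_{x, \ast} G' \to R \artin{\pi}_{x, \ast}(G \tensor^{L} G')$ in $D(k^{\perar}_{\et})$, functorial in \emph{arbitrary} $G, G'$, with $\artin{h}$-compatibility entering only when $\artin{\assoc}$ is applied --- but you differ on where to perform the mapping-cone step. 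You tensor the localization triangle with $M = R \artin{\pi}_{\Order_{K}, \ast} G$ and induce a map on fibers, then, rightly flagging the non-canonicity of cone maps, propose to rigidify at the chain level; taken literally, rigidifying on the tensor side would force you to build chain-level models of $\tensor^{L}$ and of the tensor-side cup product (K-flat resolutions and an explicit chain pairing), machinery the paper never touches. The paper instead performs the cone step entirely on the sheaf-Hom side: it constructs, in $\Ch(k^{\perar}_{\et})$, a morphism $\artin{\pi}_{x, \ast} \sheafhom_{\Order_{K}}(G, G'') \to \sheafhom_{k^{\perar}_{\et}}(\artin{\pi}_{\Order_{K}, \ast} G, \artin{\pi}_{x, \ast} G'')$ by applying mapping-fiber functoriality to the strictly commutative square of chain-level functoriality morphisms (your compatibility $\cup_{K} \compose (\rho_{G} \tensor \rho_{G'}) = \rho_{G \tensor^{L} G'} \compose \cup_{\Order_{K}}$ appears there in its Hom-side incarnation, where it holds on the nose, and $\sheafhom_{k^{\perar}_{\et}}(A, \var)$ preserves mapping fibers strictly), and only afterwards converts to the tensor form by the two steps already used in Proposition \ref{prop: sheafified functoriality and cup product}: K-injective resolutions as in \eqref{eq: derived push and RHom}, then derived tensor-Hom adjunction plus evaluation as in \eqref{eq: cup product}. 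So your plan does go through, but routing the fiber-inducement through sheaf-Hom, as the paper does, is what makes the chain-level functoriality you ask for essentially free.
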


\begin{proof}
	(\ref{item: cup product for K})
	This follows from
	Proposition \ref{prop: sheafified functoriality and cup product} applied to $\artin{\pi}_{K}$
	and Proposition \ref{prop: modified associated has cup product}.
	
	(\ref{item: cup product for O K})
	It is enough to construct a canonical morphism
		\[
					R \artin{\pi}_{\Order_{K}, \ast} G
				\tensor^{L}
					R \artin{\pi}_{x, \ast} G'
			\to
				R \artin{\pi}_{x, \ast}(G \tensor^{L} G')
		\]
	in $D(k^{\perar}_{\et})$ functorial in (arbitrary) $G, G' \in D(\Order_{K, \fppf})$.
	By the same method as the construction of the morphism \eqref{eq: cup product} of
	Proposition \ref{prop: sheafified functoriality and cup product},
	it is enough to construct a canonical morphism
		\[
				R \artin{\pi}_{x, \ast}
				R \sheafhom_{\Order_{K}}(G, G'')
			\to
				R \sheafhom_{k^{\perar}_{\et}} \bigl(
					R \artin{\pi}_{\Order_{K}, \ast} G,
					R \artin{\pi}_{x, \ast} G''
				\bigr)
		\]
	in $D(k^{\perar}_{\et})$ functorial in $G, G'' \in D(\Order_{K, \fppf})$.
	By the same method as the construction of the morphism \eqref{eq: derived push and RHom} of
	Proposition \ref{prop: sheafified functoriality and cup product},
	it is enough to construct a canonical morphism
		\[
				\artin{\pi}_{x, \ast}
				\sheafhom_{\Order_{K}}(G, G'')
			\to
				\sheafhom_{k^{\perar}_{\et}} \bigl(
					\artin{\pi}_{\Order_{K}, \ast} G,
					\artin{\pi}_{x, \ast} G''
				\bigr)
		\]
	in $\Ch(k^{\perar}_{\et})$ functorial in $G, G'' \in \Ch(\Order_{K, \fppf})$.
	The construction is given by applying the functoriality of mapping fibers
	to the commutative diagram
		\[
			\begin{CD}
					\artin{\pi}_{\Order_{K}, \ast}
					\sheafhom_{\Order_{K}}(G, G'')
				@>>>
					\sheafhom_{k^{\perar}_{\et}} \bigl(
						\artin{\pi}_{\Order_{K}, \ast} G,
						\artin{\pi}_{\Order_{K}, \ast} G''
					\bigr)
				\\
				@VVV
				@VVV
				\\
					\artin{\pi}_{K, \ast} j^{\ast}
					\sheafhom_{\Order_{K}}(G, G'')
				@>>>
					\sheafhom_{k^{\perar}_{\et}} \bigl(
						\artin{\pi}_{\Order_{K}, \ast} G,
						\artin{\pi}_{K, \ast} j^{\ast} G''
					\bigr)
			\end{CD}
		\]
	in $\Ch(k^{\perar}_{\et})$.
\end{proof}

The next proposition shows how the above cup product morphisms
for $\Order_{K}$ and $K$ are compatible to each other.
It is a version of \cite[Proposition (3.3.7)]{Suz14} for $R \artin{\alg{\Gamma}}$.
This type of compatibility is important in applications
in order to deduce a duality result for $K$
from that of $\Order_{K}$ (\cite[Proposition (5.2.2.2)]{Suz14} for example)
and, conversely in some cases, a duality result for $\Order_{K}$ from that of $K$
(\cite[Proposition 2.5.4]{Suz18a} for example).

\begin{Prop} \label{prop: cup product and localization}
	Let $G, F \in D(\Order_{K, \fppf})$.
	To simplify the notation,
	we denote
		\[
					[\var, \var]_{\Order_{K}}
				=
					R \sheafhom_{\Order_{K}},
			\quad
					[\var, \var]_{K}
				=
					R \sheafhom_{K},
			\quad
					[\var, \var]_{k}
				=
					R \sheafhom_{k^{\ind\rat}_{\pro\et}},
		\]
		\[
					R \artin{\alg{\Gamma}}_{x}
				=
					R \artin{\alg{\Gamma}}_{x}(\Order_{K}, \var),
			\quad
					R \artin{\alg{\Gamma}}_{\Order_{K}}
				=
					R \artin{\alg{\Gamma}}(\Order_{K}, \var),
			\quad
					R \artin{\alg{\Gamma}}_{K}
				=
					R \artin{\alg{\Gamma}}(K, \var).
		\]
	Then we have a morphism of distinguished triangles
		\[
			\begin{CD}
					R \artin{\alg{\Gamma}}_{x} [G, F]_{\Order_{K}}
				@>>>
					R \artin{\alg{\Gamma}}_{\Order_{K}} [G, F]_{\Order_{K}}
				@>>>
					R \artin{\alg{\Gamma}}_{K} [G, F]_{K}
				\\
				@VVV
				@VVV
				@VVV
				\\
					[R \artin{\alg{\Gamma}}_{\Order_{K}} G,
					R \artin{\alg{\Gamma}}_{x} F]_{k}
				@>>>
					[R \artin{\alg{\Gamma}}_{x} G,
					R \artin{\alg{\Gamma}}_{x} F]_{k}
				@>>>
					[R \artin{\alg{\Gamma}}_{K} G,
					R \artin{\alg{\Gamma}}_{x} F]_{k}[1]
			\end{CD}
		\]
	in $D(k^{\ind\rat}_{\pro\et})$,
	where the horizontal triangles are the localization triangles
	\eqref{eq: localization triangle},
	the left two vertical morphisms are
	the morphism in Proposition \ref{prop: cup product for local fields} (\ref{item: cup product for O K})
	translated by the derived tensor-Hom adjunction,
	and the right vertical morphism is
	the morphism in Proposition \ref{prop: cup product for local fields} (\ref{item: cup product for K})
	translated similarly
	composed with the connecting morphism
	$R \artin{\alg{\Gamma}}_{K} F \to R \artin{\alg{\Gamma}}_{x} F[1]$
	of the localization triangle.
\end{Prop}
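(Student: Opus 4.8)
The plan is to realize the entire diagram as the image under the triangulated functor $\artin{\assoc}$ of a morphism of distinguished triangles living over $\Spec k^{\perar}_{\et}$, and to reduce the two required commutativities to the strict functoriality of the relative sheaf-Hom morphism \eqref{eq: derived push and RHom} with respect to the continuous maps $\artin{\pi}_{\Order_{K}}$ and $\artin{\pi}_{K}$. First I would identify the two rows. Restriction to the generic fibre commutes with sheaf-Hom, so $j^{\ast}[G, F]_{\Order_{K}} \cong [G, F]_{K}$ and the top row is exactly the localization triangle \eqref{eq: localization triangle} for $M := [G, F]_{\Order_{K}}$. The bottom row is the rotation of the image of the localization triangle $R \artin{\alg{\Gamma}}_{x} G \to R \artin{\alg{\Gamma}}_{\Order_{K}} G \to R \artin{\alg{\Gamma}}_{K} G$ under the contravariant triangulated functor $[\var, R \artin{\alg{\Gamma}}_{x} F]_{k}$; hence both rows are distinguished for formal reasons.

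Next I would exhibit all three vertical maps as transposes of a single bifunctorial pairing. Recall that $R \artin{\pi}_{x, \ast}$ is by definition the mapping fibre of $\theta \colon R \artin{\pi}_{\Order_{K}, \ast} \to R \artin{\pi}_{K, \ast} j^{\ast}$, and that the pairings of Proposition \ref{prop: cup product for local fields} (\ref{item: cup product for O K}) and (\ref{item: cup product for K}) were both produced from the morphism \eqref{eq: derived push and RHom} for $\artin{\pi}_{\Order_{K}}$ and for $\artin{\pi}_{K}$, the with-support pairing being obtained by applying the functoriality of mapping fibres to the square displayed in the proof of that proposition. Transposing the evaluation pairing $M \tensor^{L} G \to F$ through the derived tensor-Hom adjunction, I would check that the left and middle vertical maps are precisely the adjoints of (\ref{item: cup product for O K}) for the pairs $(G, M)$ and $(M, G)$, and that the right vertical map is the adjoint of (\ref{item: cup product for K}) for $(j^{\ast} M, j^{\ast} G)$ composed with the connecting morphism $R \artin{\alg{\Gamma}}_{K} F \to R \artin{\alg{\Gamma}}_{x} F[1]$, matching the statement.

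For the commutativity I would work one level down, in $D(k^{\perar}_{\et})$, before applying $\artin{\assoc}$ and the adjunctions, transporting the result to $D(k^{\ind\rat}_{\pro\et})$ at the end by means of $\artin{\assoc}$ together with the comparison morphisms of Propositions \ref{prop: sheafified functoriality and cup product}, \ref{prop: cocup product} and \ref{prop: modified associated has cup product}. The decisive structural input is that \eqref{eq: derived push and RHom} is built in the homotopy category of K-injective complexes, where it is strictly functorial in the underlying continuous map of sites; consequently $\theta$ induces a genuine morphism from the $\Order_{K}$-pairing to the $K$-pairing, and taking mapping fibres in both the source and the second variable of the target yields a morphism of distinguished triangles over $k^{\perar}_{\et}$. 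The left square then commutes because the with-support pairing restricts along $R \artin{\pi}_{x, \ast} \to R \artin{\pi}_{\Order_{K}, \ast}$ to the $\Order_{K}$-pairing, which is immediate from the mapping-fibre construction.

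The right square is the main obstacle, since it couples the $K$-pairing with the degree shift and the connecting morphism of the localization triangle. Its content is exactly that the boundary $R \artin{\alg{\Gamma}}_{K} F \to R \artin{\alg{\Gamma}}_{x} F[1]$ is compatible with the cup pairing, equivalently that the mapping-cone boundary of $\theta$ pairs correctly with the evaluation $M \tensor^{L} G \to F$. I expect this to follow, up to the standard sign, from the naturality of \eqref{eq: derived push and RHom} applied to the defining triangle of the mapping fibre, carried out on K-injective representatives where cones and $\sheafhom$ are strictly functorial and the tensor-Hom adjunction is realized by an explicit homotopy; this is where the sign and shift bookkeeping must be checked with care. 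Once the morphism of triangles is established over $k^{\perar}_{\et}$, applying $\artin{\assoc}$ completes the proof. The whole argument parallels \cite[Proposition (3.3.7)]{Suz14}.
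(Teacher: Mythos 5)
Your proposal is correct and takes essentially the same route as the paper's proof: the paper likewise replaces $G, F$ by K-injective complexes $I, J$, writes down the chain-level diagram of total sheaf-Hom complexes and pushforwards over $\Spec k^{\perar}_{\et}$ built from the mapping-fibre definition of $\artin{\pi}_{x, \ast}$, and checks it commutes up to homotopy (including the connecting-map square you single out for sign and shift bookkeeping). It then passes to derived sheaf-Hom via the localization morphism and applies $\artin{\assoc}$ with the comparison morphism of Proposition \ref{prop: modified associated has cup product}, exactly as in your final step.
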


Note that there is a hidden square next to the right square in the diagram
since we are hiding the shifted terms of distinguished triangles from the notation.

\begin{proof}
	Denote the total complex of the sheaf-Hom double complex functor $\sheafhom_{\Order_{K}}$
	by $[\var, \var]_{\Order_{K}}^{c}$.
	Use the notation $[\var, \var]_{K}^{c}$ similarly.
	Denote the total complex of the sheaf-Hom double complex functor $\sheafhom_{k^{\perar}_{\et}}$
	for $\Spec k^{\perar}_{\et}$ by $[\var, \var]_{k}'^{c}$.
	Let $G \isomto I$ and $F \isomto J$ be quasi-isomorphisms to K-injective complexes.
	We can check that the natural diagram
		\[
			\begin{CD}
					\artin{\pi}_{x, \ast} [I, J]_{\Order_{K}}^{c}
				@>>>
					\artin{\pi}_{\Order_{K}, \ast} [I, J]_{\Order_{K}}^{c}
				@>>>
					\artin{\pi}_{K, \ast} [I, J]_{K}^{c}
				\\
				@VVV
				@VVV
				@VVV
				\\
					[\artin{\pi}_{\Order_{K}, \ast} I,
					\artin{\pi}_{x, \ast} J]_{k}'^{c}
				@>>>
					[\artin{\pi}_{x, \ast} I,
					\artin{\pi}_{x, \ast} J]_{k}'^{c}
				@>>>
					[\artin{\pi}_{K, \ast} I,
					\artin{\pi}_{x, \ast} J]_{k}'^{c}[1]
			\end{CD}
		\]
	in $\Ch(k^{\perar}_{\et})$ is commutative up to homotopy
	(where again there is a hidden square next to the right one).
	Applying the localization morphism $[\var, \var]_{k}'^{c} \to [\var, \var]_{k}'$
	(where $[\var, \var]_{k}' = R \sheafhom_{k^{\perar}_{\et}}$)
	to the lower triangle,
	we have a morphism of distinguished triangles
		\[
			\begin{CD}
					R \artin{\pi}_{x, \ast} [G, F]_{\Order_{K}}
				@>>>
					R \artin{\pi}_{\Order_{K}, \ast} [G, F]_{\Order_{K}}
				@>>>
					R \artin{\pi}_{K, \ast} [G, F]_{K}
				\\
				@VVV
				@VVV
				@VVV
				\\
					[R \artin{\pi}_{\Order_{K}, \ast} G,
					R \artin{\pi}_{x, \ast} F]_{k}'
				@>>>
					[R \artin{\pi}_{x, \ast} G,
					R \artin{\pi}_{x, \ast} F]_{k}'
				@>>>
					[R \artin{\pi}_{K, \ast} G,
					R \artin{\pi}_{x, \ast} F]_{k}'[1]
			\end{CD}
		\]
	in $D(k^{\perar}_{\et})$.
	Applying $\artin{\assoc}$ and using the morphism
		\[
				\artin{\assoc}([G', F']_{k}')
			\to
				[\artin{\assoc}(G'), \artin{\assoc}(F')]_{k}
		\]
	for $G', F' \in D(k^{\perar}_{\et})$ coming from
	Proposition \ref{prop: modified associated has cup product},
	we get the result.
\end{proof}


\section{Trace morphisms and a finiteness property of cohomology}
\label{sec: Trace morphisms and a finiteness property of cohomology}

In this section, we prove two statements
that are keys in order to apply the formalism in the previous section and obtain duality results.
The corresponding statements in \cite{Suz13} and \cite{Suz14} in the older formalism are proved
using some exotic approximation arguments.
The proofs in this section are self-contained and much more standard.

The first statement is the existence of a trace (iso)morphism in this formalism.
In the older formalism, it is \cite[Proposition 2.4.4]{Suz13} and \cite[(5.2.1.1)]{Suz14}.

\begin{Prop} \label{prop: trace morphism}
	There exists a canonical isomorphism
		\[
				R \artin{\alg{\Gamma}}_{x}(\Order_{K}, \Gm)
			\cong
				\Z[-1],
		\]
	which we call the \emph{trace isomorphism}.
	The composite
		\[
				R \artin{\alg{\Gamma}}(K, \Gm)
			\to
				R \artin{\alg{\Gamma}}_{x}(\Order_{K}, \Gm)[1]
			\cong
				\Z
		\]
	is called the \emph{trace morphism}.
\end{Prop}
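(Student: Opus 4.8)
The plan is to reduce everything to a computation of the cohomology sheaves of $R \artin{\pi}_{x, \ast} \Gm \in D(k^{\perar}_{\et})$ and then transport the answer to $D(k^{\ind\rat}_{\pro\et})$ by applying $\artin{\assoc}$. Since $\Gm$-cohomology is insensitive to the passage from the fppf to the \'etale topology (Grothendieck--Hilbert 90), each $R^{n} \artin{\pi}_{\Order_{K}, \ast} \Gm$ (resp.\ $R^{n} \artin{\pi}_{K, \ast} \Gm$) is the \'etale sheafification on $\Spec k^{\perar}_{\et}$ of the presheaf $k' \mapsto H^{n}_{\et}(\alg{O}_{K}(k'), \Gm)$ (resp.\ $k' \mapsto H^{n}_{\et}(\alg{K}(k'), \Gm)$). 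I will exploit the fact that a sheaf on $\Spec k^{\perar}_{\et}$ vanishes as soon as its sections over every algebraically closed perfect field extension $\bar{k}$ of $k$ vanish, since such fields furnish a conservative family of points of the topos and admit no nontrivial \'etale covers. This reduces the vanishing of cohomology sheaves to the vanishing of the corresponding cohomology of the strictly local ring $\alg{O}_{K}(\bar{k})$ and the strictly local field $\alg{K}(\bar{k})$, where $\bar{k}$ is algebraically closed.

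First I would treat the low degrees using the defining fiber triangle $R \artin{\pi}_{x, \ast} \Gm \to R \artin{\pi}_{\Order_{K}, \ast} \Gm \to R \artin{\pi}_{K, \ast} \Gm$ in $D(k^{\perar}_{\et})$, together with standard facts on the complete discrete valuation ring $\alg{O}_{K}(k')$: its unit group injects into $\alg{K}(k')^{\times}$, so $R^{0} \artin{\pi}_{x, \ast} \Gm = 0$; and $\Pic(\alg{O}_{K}(k')) = 0$ forces $R^{1} \artin{\pi}_{x, \ast} \Gm$ to be the sheafification of $k' \mapsto \alg{K}(k')^{\times} / \alg{O}_{K}(k')^{\times}$. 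The normalized valuation identifies this cokernel with $\Z$, compatibly in $k'$ because the extensions $\alg{O}_{K}(k'') / \alg{O}_{K}(k')$ attached to \'etale $k'' / k'$ are unramified; this yields a canonical isomorphism $R^{1} \artin{\pi}_{x, \ast} \Gm \cong \Z$ with the constant sheaf and is the source of the canonicity of the trace isomorphism.

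The main work, and the principal obstacle, is to show $R^{n} \artin{\pi}_{x, \ast} \Gm = 0$ for $n \ge 2$. By the long exact sequence of the above triangle it suffices to prove that $k' \mapsto H^{n}_{\et}(\alg{O}_{K}(k'), \Gm)$ and $k' \mapsto H^{n}_{\et}(\alg{K}(k'), \Gm)$ both sheafify to zero for $n \ge 1$. By the reduction to algebraically closed points, this amounts to the assertions that $H^{n}_{\et}(\alg{O}_{K}(\bar{k}), \Gm) = 0$ and $H^{n}_{\et}(\alg{K}(\bar{k}), \Gm) = 0$ for all $n \ge 1$ and all algebraically closed $\bar{k}$. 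For the ring $\alg{O}_{K}(\bar{k})$, a complete discrete valuation ring with algebraically closed residue field, this follows from $\Pic = 0$, from $\mathrm{Br}(\alg{O}_{K}(\bar{k})) \cong \mathrm{Br}(\bar{k}) = 0$, and from the specialization isomorphism with the trivial positive-degree $\Gm$-cohomology of $\bar{k}$. For the field $\alg{K}(\bar{k})$, the point is that a complete discretely valued field with algebraically closed residue field has $\Gm$-cohomological dimension at most $1$: its absolute Galois group is the inertia group, whose $\ell$-cohomological dimension is $1$ for $\ell \ne p$ and whose $p$-cohomological dimension is also $\le 1$ once the unramified part has been removed by passing to the algebraically closed residue field. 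Hence $\Pic(\alg{K}(\bar{k})) = 0$, $\mathrm{Br}(\alg{K}(\bar{k})) = 0$ and all higher terms vanish. The delicate case is the wild $p$-torsion in mixed characteristic, which is exactly where the older formalism required its exotic approximation arguments; here it is subsumed into the single clean statement $\mathrm{cd}_{p}(\alg{K}(\bar{k})) \le 1$.

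Finally, the above computations give $R \artin{\pi}_{x, \ast} \Gm \cong \Z[-1]$ in $D(k^{\perar}_{\et})$, with $\Z$ the constant sheaf. Since $\Z \in \mathcal{E}_{k}$, Proposition \ref{prop: modified associated preserves EIP} together with Proposition \ref{prop: EIP is preserved from profppf to indrat proet} gives $\artin{\assoc}(\Z[-1]) \cong \Z[-1]$ in $D(k^{\ind\rat}_{\pro\et})$, which is the asserted trace isomorphism $R \artin{\alg{\Gamma}}_{x}(\Order_{K}, \Gm) \cong \Z[-1]$. The trace morphism then requires no further argument: it is by definition the shift of the connecting morphism $R \artin{\alg{\Gamma}}(K, \Gm) \to R \artin{\alg{\Gamma}}_{x}(\Order_{K}, \Gm)[1]$ of the localization triangle \eqref{eq: localization triangle}, composed with the trace isomorphism just constructed.
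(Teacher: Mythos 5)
There is a genuine gap at the pivotal step of your argument: the claim that a sheaf on $\Spec k^{\perar}_{\et}$ vanishes as soon as its sections over every \emph{algebraically closed} perfect field extension vanish. Coverings in this site are \emph{finite} \'etale families, so the stalk of (the sheafification of) a presheaf $P$ at a geometric point over $k'$ is the filtered colimit $\dirlim_{k''/k'} P(k'')$ over finite subextensions of $\bar{k'}/k'$, \emph{not} the value $P(\bar{k'})$. The two agree only when $P$ commutes with this filtered colimit, and that commutation is exactly the kind of statement this formalism treats as substantive rather than formal: the paper proves it for $R^{n} \artin{\pi}_{K, \ast} G$ with $G$ smooth as a separate proposition via \cite[Proposition 2.5.3]{GGMB14}, and the introduction stresses that it can genuinely fail (e.g.\ for $W(k')[[t]][1/p]$-coefficients). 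Concretely, for $\Gm$ the stalk is $\dirlim_{k''/k'} H^{n}(\alg{K}(k''), \Gm) \cong H^{n}(\alg{K}(k')^{\ur}, \Gm)$, the cohomology of the \emph{henselian, non-complete} maximal unramified extension, whereas your vanishing statements concern the complete field $\alg{K}(\bar{k})$; identifying the two is precisely the henselian-versus-complete comparison that your reduction silently assumes. So ``it suffices that $H^{n}(\alg{K}(\bar{k}), \Gm) = 0$ for all algebraically closed $\bar{k}$'' does not follow from anything you establish, and the announced ``single clean statement $\mathrm{cd}_{p}(\alg{K}(\bar{k})) \le 1$'' is aimed at the wrong ring.

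The repair is to argue stalkwise, which is what the paper does: for $\Order_{K}$ one has $H^{n}(\alg{O}_{K}(k'), \Gm) \isomto H^{n}(k', \Gm)$ for \emph{every} $k' \in k^{\perar}$, and this sheafifies to zero directly; for $K$ one shows $\dirlim_{k''/k'} H^{n}(\alg{K}(k''), \Gm) = 0$ for every perfect field extension $k'$, the colimit ring being the excellent henselian discrete valuation field $\alg{K}(k')^{\ur}$ with algebraically closed residue field, whose positive-degree $\Gm$-cohomology vanishes classically \cite{Ser79} (alternatively, compare with its completion via \cite{GGMB14}, since $\Gm$ is smooth, after which your Lang/$C_{1}$ computation does apply). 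With that step repaired, the remainder of your proposal is correct and coincides with the paper's proof: the split valuation sequence $0 \to \alg{O}_{K}^{\times} \to \alg{K}^{\times} \to \Z \to 0$ identifies $R \artin{\pi}_{x, \ast} \Gm$ with $\Z[-1]$ canonically, and the transport along $\artin{\assoc}$ via Propositions \ref{prop: modified associated preserves EIP} and \ref{prop: EIP is preserved from profppf to indrat proet} (the paper additionally records $\alg{O}_{K}^{\times} \in \Pro'_{\fc} \Alg/k$ and its $\artin{h}$-acyclicity, which you do not need for the trace isomorphism itself) yields the statement, the trace morphism being by definition the connecting map of the localization triangle composed with the trace isomorphism.
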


\begin{proof}
	We have $\artin{\pi}_{K, \ast} \Gm = \alg{K}^{\times}$ in $\Ab(k^{\perar}_{\et})$.
	For any perfect field extension $k'$ of $k$,
	the normalized valuation for the discrete valuation field $\alg{K}(k')$
	defines a split surjection $\alg{K}(k')^{\times} \onto \Z$
	functorial in $k'$.
	This uniquely extends to a split surjection $\alg{K}(k')^{\times} \onto \Z(k')$
	functorial in arbitrary $k' \in k^{\perar}$ that commutes with finite products.
	Hence we obtain a split surjection $\alg{K}^{\times} \onto \Z$ in $\Ab(k^{\perar}_{\et})$.
	Its kernel is $\artin{\pi}_{\Order_{K}, \ast} \Gm = \alg{O}_{K}^{\times}$.
	
	For $n \ge 1$, let $1 + \alg{p}_{K}^{n} \subset \alg{O}_{K}^{\times}$ be the subsheaf that assigns
	$k' \mapsto 1 + \ideal{p}_{K}^{n} \alg{O}_{K}(k')$.
	Then $\alg{O}_{K}^{\times} / (1 + \alg{p}_{K}^{1}) \cong \Gm$
	and $(1 + \alg{p}_{K}^{n}) / (1 + \alg{p}_{K}^{n + 1}) \cong \Ga$,
	and $\alg{O}_{K}^{\times}$ is the inverse limit of
	$\alg{O}_{K}^{\times} / (1 + \alg{p}_{K}^{n})$ over $n \ge 1$.
	Hence $\alg{O}_{K}^{\times} \in \Pro'_{\fc} \Alg / k$ and
	it satisfies the condition of
	Proposition \ref{prop: criteison of h acyclicity}
	(\ref{item: infinite succesive extension of Ga is h acyclic}).
	Hence $\alg{O}_{K}^{\times}$ is $\artin{h}$-acyclic.
	So is $\alg{K}^{\times} \cong \alg{O}_{K}^{\times} \times \Z \in \mathcal{E}_{k}$.
	
	We show that $R^{n} \artin{\pi}_{\Order_{K}, \ast} \Gm$ and $R^{n} \artin{\pi}_{K, \ast} \Gm$ are zero
	for any $n \ge 1$.
	They are \'etale sheafifications of the presheaves
		\[
				k' \in k^{\perar}
			\mapsto
				H^{n}(\alg{O}_{K}(k'), \Gm),
				H^{n}(\alg{K}(k'), \Gm).
		\]
	Since $\alg{O}_{K}(k')$ is a finite product of complete discrete valuation rings,
	we have $H^{n}(\alg{O}_{K}(k'), \Gm) \isomto H^{n}(k', \Gm)$,
	which sheafifies to zero.
	To show that the second presheaf sheafifies to zero,
	it is enough to show that for any perfect field extension $k'$ over $k$,
	we have
		\[
				\dirlim_{k'' / k'}
					H^{n}(\alg{K}(k''), \Gm)
			=
				0,
		\]
	where the direct limit is over finite extensions of $k'$ in a fixed algebraic closure of $k'$.
	The direct limit of $\alg{K}(k'')$ over $k'' / k'$ is
	the maximal unramified extension $\alg{K}(k')^{\ur}$ of $\alg{K}(k')$,
	which is an excellent henselian discrete valuation field with algebraically closed residue field.
	Since the direct limit commutes with cohomology,
	the left-hand side is isomorphic to $H^{n}(\alg{K}(k')^{\ur}, \Gm)$.
	The vanishing of this cohomology is classical
	(\cite[Chapter V, Section 4, Proposition 7 and Chapter X, Section 7, Proposition 11]{Ser79}).
	
	Therefore $R \artin{\pi}_{\Order_{K}, \ast} \Gm \cong \alg{O}_{K}^{\times}$ and
	$R \artin{\pi}_{K, \ast} \Gm \cong \alg{K}^{\times}$.
	We apply $\artin{\assoc}$ to them.
	By Proposition \ref{prop: modified associated preserves EIP}
	(\ref{item: modified associated preseves cohomology}),
	we have
		\[
					R \alg{\Gamma}(\Order_{K}, \Gm)
				\cong
					\artin{\assoc}(\alg{O}^{\times})
				\cong
					\alg{O}^{\times},
			\quad
					R \alg{\Gamma}(K, \Gm)
				\cong
					\artin{\assoc}(\alg{K}^{\times})
				\cong
					\alg{K}^{\times}
		\]
	and hence $R \alg{\Gamma}_{x}(\Order_{K}, \Gm) \cong \Z[-1]$.
\end{proof}

The next one states that $R^{n} \artin{\pi}_{K, \ast} G$
is locally of finite presentation for $n \ge 1$
whenever it is representable and $G$ is a smooth group scheme over $K$.
In the older formulation, it is \cite[Proposition (3.4.3) (a)]{Suz14}.
As in \cite[Proposition (3.4.3) (d)]{Suz14},
this is a key step to prove that $\artin{\alg{H}}^{1}(K, A) \in \Ind \Alg / k$
(without a pro-algebraic part) for an abelian variety $A$ over $K$,
though we do not explain the proof of this fact in this paper.

\begin{Prop}
	Let $G$ be a smooth group scheme over $K$ and $n \ge 1$.
	Then $R^{n} \artin{\pi}_{K, \ast} G$ is torsion
	and commutes with filtered direct limits as a functor
	$k^{\perar} \to \Ab$.
\end{Prop}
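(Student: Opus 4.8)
The plan is to reduce everything to classical statements about Galois cohomology of complete and Henselian discrete valuation fields, exactly as the proof of Proposition~\ref{prop: trace morphism} did for $\Gm$. First I would use that $G$ is smooth to pass from the fppf to the \'etale topology: for any $k' \in k^{\perar}$ one has $H^{n}(\alg{K}(k')_{\fppf}, G) \cong H^{n}(\alg{K}(k')_{\et}, G)$ by Grothendieck's comparison theorem, and the latter is the Galois cohomology $H^{n}(\Gal(\alg{K}(k')^{\sep} / \alg{K}(k')), G(\alg{K}(k')^{\sep}))$. Since every object of $k^{\perar}$ is a finite product of perfect fields and all functors in sight respect finite products, it suffices to treat a single perfect field $k'$. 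The torsion assertion is then immediate: the cohomology of a profinite group in degrees $n \ge 1$ is torsion, and the plus construction computing the sheafification $R^{n} \artin{\pi}_{K, \ast} G$ from the presheaf $k' \mapsto H^{n}(\alg{K}(k')_{\et}, G)$ only involves subgroups of products and filtered direct limits, both of which preserve torsion-ness.

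For the direct-limit assertion, let $k' = \dirlim_{\lambda} k'_{\lambda}$ be a filtered direct limit in $k^{\perar}$, reduced to a filtered system of perfect fields as above (note that such a limit of perfect fields is again a perfect field). Set $A = \dirlim_{\lambda} \alg{O}_{K}(k'_{\lambda})$. Each transition map $\alg{O}_{K}(k'_{\lambda}) \to \alg{O}_{K}(k'_{\mu})$ is a local homomorphism of complete discrete valuation rings that is unramified (hence \'etale) and preserves the uniformizer of $\Order_{K}$, so $A$ is a Henselian discrete valuation ring with residue field $k'$ whose $\ideal{p}_{K}$-adic completion is $\alg{O}_{K}(k')$ (using that the truncations $W_{n}$ commute with filtered direct limits, so that $A / \ideal{p}_{K}^{n} \cong \alg{O}_{K}(k') / \ideal{p}_{K}^{n}$, while the completion itself does not). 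Writing $F = \mathrm{Frac}(A) = \dirlim_{\lambda} \alg{K}(k'_{\lambda})$ and $\hat F = \alg{K}(k') = \mathrm{Frac}(\alg{O}_{K}(k'))$, continuity of \'etale cohomology along the filtered colimit of rings (all coefficients being pulled back from the finitely presented $G$ over $K$) gives $\dirlim_{\lambda} H^{n}(\alg{K}(k'_{\lambda})_{\et}, G) \isomto H^{n}(F_{\et}, G)$, and the natural comparison map of the presheaf is thereby identified with the restriction $H^{n}(F_{\et}, G) \to H^{n}(\hat F_{\et}, G)$.

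The crux is to show this restriction is an isomorphism for $n \ge 1$; this is precisely where the hypothesis $n \ge 1$ enters, since for $n = 0$ the map $G(F) \to G(\hat F)$ is far from surjective. I would deduce it from the classical fact that a Henselian valued field and its completion have canonically isomorphic absolute Galois groups, so that $H^{n}(F_{\et}, G)$ and $H^{n}(\hat F_{\et}, G)$ are computed by the same profinite group; applying the Hochschild--Serre spectral sequence for the maximal unramified extension reduces the comparison to the Galois cohomology of the common residue field $k'$ (identical on the two sides) and the positive-degree cohomology of the strictly Henselian, respectively strictly complete, local field, which agree by the same vanishing arguments used in the proof of Proposition~\ref{prop: trace morphism}. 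The main obstacle is exactly this completion-invariance of positive-degree cohomology for a general smooth --- hence non-torsion --- group scheme $G$; I expect to handle it by exploiting that these groups are torsion for $n \ge 1$ and reducing the coefficients to the finite group schemes $G[m]$, for which completion invariance of torsion \'etale cohomology of a Henselian discrete valuation field is standard.

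Finally I would propagate the presheaf statement through sheafification. The presheaf $P^{n} \colon k' \mapsto H^{n}(\alg{K}(k')_{\et}, G)$ now commutes with filtered direct limits for $n \ge 1$. Because every \'etale covering of $k' = \dirlim_{\lambda} k'_{\lambda}$ is the base change of a finite \'etale covering already defined over some $k'_{\lambda}$, and base change of such coverings commutes with the colimit, the \v{C}ech--plus construction $P \mapsto P^{+}$ commutes with filtered direct limits in $k^{\perar}$; iterating gives the same for the sheafification $R^{n} \artin{\pi}_{K, \ast} G = a P^{n}$. This yields that $R^{n} \artin{\pi}_{K, \ast} G$ commutes with filtered direct limits for $n \ge 1$, completing the argument.
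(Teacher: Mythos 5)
Your overall skeleton is the same as the paper's: sheafify the presheaf $k' \mapsto H^{n}(\alg{K}(k'), G)$, get torsionness from Galois cohomology in positive degrees, reduce the limit statement to single perfect fields, identify $F := \dirlim_{\lambda} \alg{K}(k'_{\lambda})$ as a henselian discrete valuation field with completion $\hat F = \alg{K}(k')$, and invoke continuity of \'etale cohomology. The crux, however --- that $H^{n}(F, G) \to H^{n}(\hat F, G)$ is bijective for $n \ge 1$ and $G$ smooth --- is exactly where your proposal has a genuine gap. The paper does not prove this itself: it records that $F$ is an \emph{excellent} henselian discrete valuation field and quotes \cite[Proposition 2.5.3 (2) (3)]{GGMB14}, whose proof is a substantial piece of work. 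Both of your proposed substitutes fail. The Hochschild--Serre route is wrong because, although $F$ and $\hat F$ have isomorphic absolute Galois groups, the coefficient modules $G(F^{\sep})$ and $G(\hat F^{\sep})$ differ, and the positive-degree cohomology of the strictly henselian (resp.\ complete, algebraically closed residue field) local field does \emph{not} vanish for general smooth $G$: for an abelian variety $A$, the group $H^{1}(\alg{K}(k'), A)$ with $k'$ algebraically closed is typically nonzero --- its nontrivial pro-algebraic structure is precisely what the surrounding duality theory studies --- so the ``$\Gm$-style vanishing'' of Proposition \ref{prop: trace morphism} is unavailable here.

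The fallback reduction to $G[m]$ also does not work in the stated generality. Torsionness of $H^{n}$ for $n \ge 1$ does not by itself let you change coefficients: you need exactness of $0 \to G[m] \to G \overset{m}{\to} G \to 0$, and for smooth commutative groups over $K$ this fails at $p$. In equal characteristic $p$ a smooth wound unipotent group can satisfy $[p] = 0$ identically, so $H^{1}(K, G)$ is nonzero $p$-torsion while there is no multiplication sequence and no finite subgroup scheme to reduce to; such $G$ are within the hypotheses of the proposition. Even for $m$ invertible, the five-lemma comparison at $H^{1}$ requires $G(F)/m G(F) \isomto G(\hat F)/m G(\hat F)$, which is a nontrivial approximation statement (density of $G(F)$ in $G(\hat F)$ via smoothness, openness of $m G(\hat F)$, and a torsor argument for injectivity); these assertions are essentially the content of \cite{GGMB14} rather than standard inputs. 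The repair is simply to do what the paper does: note that $F$ is excellent henselian with completion $\alg{K}(k')$ and cite \cite[Proposition 2.5.3 (2) (3)]{GGMB14} for the completion invariance. Your remaining steps (descent of \'etale coverings of $\dirlim k'_{\lambda}$ to a finite stage, so that sheafification preserves commutation with filtered limits, and preservation of torsionness under the plus construction) are fine and are treated equally briefly in the paper.
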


\begin{proof}
	The sheaf $R^{n} \artin{\pi}_{K, \ast} G$ is
	the \'etale sheafification of the presheaf
		\[
				k' \in k^{\perar}
			\mapsto
				H^{n}(\alg{K}(k'), G).
		\]
	This is torsion since $\alg{K}(k')$ is a finite direct product of fields
	and Galois cohomology in positive degrees is torsion.
	It is enough to show that
		\[
				\dirlim_{\lambda}
					H^{n}(\alg{K}(k_{\lambda}), G)
			\isomto
				H^{n}(\alg{K}(k'), G)
		\]
	for any $k' \in k^{\perar}$ that can be written as a direct limit
	of a filtered direct system $\{k_{\lambda}\}$ in $k^{\perar}$.
	We may assume that the $k_{\lambda}$ and $k'$ are fields.
	The ring $\dirlim_{\lambda} \alg{K}(k_{\lambda})$ is an (excellent) henselian discrete valuation field
	with completion $\alg{K}(k')$.
	Hence they have isomorphic cohomology in positive degrees with coefficients in a smooth group scheme
	by \cite[Proposition 2.5.3 (2) (3)]{GGMB14}.
	This gives the result.
\end{proof}


\section{Comparison with the older formulation}

Recall the morphism of sites
$\varepsilon \colon \Spec k^{\ind\rat}_{\pro\et} \to \Spec k^{\ind\rat}_{\et}$
from Definition \ref{def: premorphisms between the four sites}.

\begin{Def} \label{def: proetale sheafification}
	Define $\assoc = \varepsilon^{\ast}$
	(as $\Ab(k^{\ind\rat}_{\et}) \to \Ab(k^{\ind\rat}_{\pro\et})$
	or $D(k^{\ind\rat}_{\et}) \to D(k^{\ind\rat}_{\pro\et})$),
	which is the pro-\'etale sheafification functor.
\end{Def}

We compare $\artin{\assoc}$ and $\assoc$ applied to objects of $\mathcal{E}_{k}$.

\begin{Prop} \label{prop: comparison of two formulations for groups with finite components}
	For any $G \in \genby{\mathcal{E}_{k}}_{D(k^{\ind\rat}_{\et})}$,
	there exists a canonical isomorphism
		\begin{equation} \label{eq: two associated constructions agree on groups with finite components}
				\artin{\assoc}(\alpha_{\ast} G)
			\cong
				\assoc(G)
		\end{equation}
	in $\genby{\mathcal{E}_{k}}_{k^{\ind\rat}_{\pro\et}}$.
	More precisely, the morphism
		\begin{equation} \label{eq: morphism between the two pulls to pro fppf}
				L \artin{h}^{\ast}
				\alpha_{\ast} G
			\cong
				L \Tilde{h}^{\ast}
				\varepsilon^{\ast}
				L \alpha^{\ast}
				\alpha_{\ast} G
			\to
				L \Tilde{h}^{\ast}
				\varepsilon^{\ast} G
		\end{equation}
	defined by the counit for $\alpha$ is an isomorphism,
	the morphism
		\begin{equation} \label{eq: pull to pro fppf and then push back to ind rat}
				\varepsilon^{\ast} G
			\to
				R \Tilde{h}_{\ast}
				L \Tilde{h}^{\ast}
				\varepsilon^{\ast} G
		\end{equation}
	defined by the unit for $\Tilde{h}$ is an isomorphism,
	and the isomorphism \eqref{eq: two associated constructions agree on groups with finite components}
	is obtained by applying $R \Tilde{h}_{\ast}$ to \eqref{eq: morphism between the two pulls to pro fppf}
	and using \eqref{eq: pull to pro fppf and then push back to ind rat} on the right-hand side.
\end{Prop}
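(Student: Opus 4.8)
The plan is to establish the two displayed morphisms \eqref{eq: morphism between the two pulls to pro fppf} and \eqref{eq: pull to pro fppf and then push back to ind rat} are isomorphisms; the asserted isomorphism \eqref{eq: two associated constructions agree on groups with finite components} then drops out by applying $R \Tilde{h}_{\ast}$ to the former and composing with the latter, exactly as prescribed. Since both morphisms are natural transformations between triangulated functors on $D(k^{\ind\rat}_{\et})$, and the locus where such a transformation is an isomorphism is a triangulated subcategory, I would first reduce to $G \in \mathcal{E}_{k}$, which generates $\genby{\mathcal{E}_{k}}_{D(k^{\ind\rat}_{\et})}$. For such $G$ I write $G = \varepsilon_{\ast} \Tilde{h}_{\ast} G_{0}$ for the underlying pro-algebraic object $G_{0} \in \mathcal{E}_{k} \subset \Ab(k^{\perf'}_{\pro\fppf})$, and record two identifications to be used throughout: $\alpha_{\ast} G = \artin{h}_{\ast} G_{0}$ (because $\artin{h}_{\ast} = \alpha_{\ast} \varepsilon_{\ast} \Tilde{h}_{\ast}$) and $\varepsilon^{\ast} G = \Tilde{h}_{\ast} G_{0}$ (a representable pro-algebraic sheaf is already a pro-\'etale sheaf, so $\varepsilon^{\ast} \varepsilon_{\ast}$ acts as the identity on it).

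For \eqref{eq: pull to pro fppf and then push back to ind rat}, the content is precisely that $\varepsilon^{\ast} G = \Tilde{h}_{\ast} G_{0}$ is $\Tilde{h}$-acyclic in the sense of Definition \ref{def: f compatible, f acyclic}. This I would read off directly from Proposition \ref{prop: EIP is preserved from profppf to indrat proet} (\ref{item: profppf to indrat proet, push and pull}): there the counit $L \Tilde{h}^{\ast} \Tilde{h}_{\ast} G_{0} \isomto G_{0}$ and the unit $\Tilde{h}_{\ast} G_{0} \isomto R \Tilde{h}_{\ast} G_{0}$ are both isomorphisms, and by the triangle identity their composite is exactly \eqref{eq: pull to pro fppf and then push back to ind rat}.

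The crux, which I expect to be the main obstacle, is \eqref{eq: morphism between the two pulls to pro fppf}: a priori it is only a map between two objects that each \emph{happen} to be isomorphic to $G_{0}$ (its source by Theorem \ref{thm: derived pull of proalg group with finite components}, via $L \artin{h}^{\ast} \artin{h}_{\ast} G_{0} \isomto G_{0}$, and its target by Proposition \ref{prop: EIP is preserved from profppf to indrat proet}, via $L \Tilde{h}^{\ast} \Tilde{h}_{\ast} G_{0} \isomto G_{0}$), so the real work is to pin down that the map itself is an isomorphism rather than some other endomorphism of $G_{0}$. The device I would use is the factorization of the counit of a composite adjunction. Writing $\artin{h} = \alpha \compose \varepsilon \compose \Tilde{h}$ and using the composition isomorphism $L \artin{h}^{\ast} \cong L \Tilde{h}^{\ast} \varepsilon^{\ast} L \alpha^{\ast}$ of Proposition \ref{prop: composite of derived pullback} (with $L\varepsilon^{\ast} = \varepsilon^{\ast}$ since $\varepsilon$ is a morphism of sites), the counit $L \artin{h}^{\ast} \artin{h}_{\ast} G_{0} \to G_{0}$ factors as \eqref{eq: morphism between the two pulls to pro fppf} followed by the counit $L \Tilde{h}^{\ast} \varepsilon^{\ast} G \to G_{0}$ of the inner composite adjunction $\varepsilon \compose \Tilde{h}$. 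The first counit is an isomorphism by Theorem \ref{thm: derived pull of proalg group with finite components}; the second is the composite of the isomorphism $L \Tilde{h}^{\ast} \Tilde{h}_{\ast} G_{0} \isomto G_{0}$ of Proposition \ref{prop: EIP is preserved from profppf to indrat proet} with $L \Tilde{h}^{\ast}$ applied to the counit $\varepsilon^{\ast} R\varepsilon_{\ast} \Tilde{h}_{\ast} G_{0} \to \Tilde{h}_{\ast} G_{0}$, which is an isomorphism because a pro-algebraic group is $\varepsilon$-acyclic (\'etale and pro-\'etale cohomology agree over perfect fields, the input already used in the proof of Proposition \ref{prop: criteison of h acyclicity} (\ref{item: infinite succesive extension of Ga is h acyclic})). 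Hence the second counit is an isomorphism, and therefore so is \eqref{eq: morphism between the two pulls to pro fppf}.

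Finally, applying $R \Tilde{h}_{\ast}$ to \eqref{eq: morphism between the two pulls to pro fppf} yields $\artin{\assoc}(\alpha_{\ast} G) = R \Tilde{h}_{\ast} L \artin{h}^{\ast} \alpha_{\ast} G \isomto R \Tilde{h}_{\ast} L \Tilde{h}^{\ast} \varepsilon^{\ast} G$, and \eqref{eq: pull to pro fppf and then push back to ind rat} identifies the target with $\varepsilon^{\ast} G = \assoc(G)$, giving \eqref{eq: two associated constructions agree on groups with finite components}; membership in $\genby{\mathcal{E}_{k}}_{k^{\ind\rat}_{\pro\et}}$ follows since $\varepsilon^{\ast}$ carries $\mathcal{E}_{k}$ into $\mathcal{E}_{k} \subset \Ab(k^{\ind\rat}_{\pro\et})$ (Proposition \ref{prop: EIP is preserved from profppf to indrat proet} (\ref{item: profppf to indrat proet, abelian fully faithful})) and is triangulated. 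The remaining points are bookkeeping rather than genuine difficulties: I would still need to verify carefully the two identifications $\alpha_{\ast} G = \artin{h}_{\ast} G_{0}$ and $\varepsilon^{\ast} G = \Tilde{h}_{\ast} G_{0}$, i.e.\ that passing a pro-algebraic group between these sites and topologies leaves it unchanged, but these are elementary statements about representable sheaves.
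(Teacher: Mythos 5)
Your proof is correct and is, at bottom, the same argument as the paper's: reduce to $G \in \mathcal{E}_{k}$ (the isomorphism locus of a natural transformation of triangulated functors is a triangulated subcategory closed under isomorphism), observe that $G$ is already a sheaf for the pro-fppf topology and hence for every coarser one, so that \eqref{eq: morphism between the two pulls to pro fppf} becomes a morphism $L \artin{h}^{\ast} G \to L \Tilde{h}^{\ast} G$ whose source and target are identified with $G$ by Theorem \ref{thm: derived pull of proalg group with finite components} and Proposition \ref{prop: EIP is preserved from profppf to indrat proet}, and similarly for \eqref{eq: pull to pro fppf and then push back to ind rat}. What you add beyond the paper's rather terse proof is the explicit factorization of the counit of the composite adjunction $\artin{h} = \alpha \compose \varepsilon \compose \Tilde{h}$, which is precisely the bookkeeping needed to see that the morphism in question is \emph{compatible} with those two identifications rather than being some unspecified endomorphism of $G$; this is a legitimate sharpening of the same route, not a different one.

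One justification you give is wrong, although the step it supports is trivially true. In the factorization of the inner counit you route it through ``$\varepsilon^{\ast} R \varepsilon_{\ast} \Tilde{h}_{\ast} G_{0} \to \Tilde{h}_{\ast} G_{0}$'' and justify it by $\varepsilon$-acyclicity of pro-algebraic groups. No $R\varepsilon_{\ast}$ occurs here: $G = \varepsilon_{\ast} \Tilde{h}_{\ast} G_{0}$ is a plain sheaf, $\varepsilon^{\ast}$ is exact, and the relevant counit is the underived $\varepsilon^{\ast} \varepsilon_{\ast} (\Tilde{h}_{\ast} G_{0}) \to \Tilde{h}_{\ast} G_{0}$, which is an isomorphism simply because $\Tilde{h}_{\ast} G_{0}$ is already a pro-\'etale sheaf (sheafification of a sheaf is that sheaf) --- this is exactly the content of the paper's one-line remark that $G$ is a sheaf for every coarser topology. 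The claim you appeal to instead, that \'etale and pro-\'etale cohomology agree for an arbitrary $G_{0} \in \mathcal{E}_{k}$ (i.e.\ vanishing of $R^{n}\varepsilon_{\ast}$ on it), is not established by the paper in that generality: Proposition \ref{prop: criteison of h acyclicity} proves it only for inverse limits with connected unipotent kernels, and P-acyclicity is elsewhere imposed as a hypothesis (Proposition \ref{prop: P acyclic implies derived h acyclic}) and verified case by case, not proven for all of $\mathcal{E}_{k}$. Replace that appeal by the trivial sheaf-theoretic remark and your argument is complete as written.
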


\begin{proof}
	We may assume that $G \in \mathcal{E}_{k}$.
	Since $G$ is a sheaf for the pro-fppf topology and hence for any coarser topology,
	the morphism \eqref{eq: morphism between the two pulls to pro fppf} is of the form
	$L \artin{h}^{\ast} G \to L \Tilde{h}^{\ast} G$.
	But we have $L \artin{h}^{\ast} G \isomto G$ by
	Theorem \ref{thm: derived pull of proalg group with finite components}
	and $L \Tilde{h}^{\ast} G \isomto G$ by
	Proposition \ref{prop: EIP is preserved from profppf to indrat proet}.
	Therefore \eqref{eq: morphism between the two pulls to pro fppf} is an isomorphism.
	The same proposition shows that
	\eqref{eq: pull to pro fppf and then push back to ind rat} is an isomorphism.
\end{proof}

Recall the following definition from \cite[Section 2.4]{Suz14}.

\begin{Def} \mbox{}
	\begin{enumerate}
		\item
			A sheaf $F \in \Ab(k^{\ind\rat}_{\et})$ is said to be \emph{P-acyclic}
			if $F \isomto R \varepsilon_{\ast} \varepsilon^{\ast} F$.
		\item
			An object $F \in D^{+}(k^{\ind\rat}_{\et})$ is said to be \emph{P-acyclic}
			if each cohomology object of $F$ is P-acyclic.
			This implies that $F \isomto R \varepsilon_{\ast} \varepsilon^{\ast} F$.
	\end{enumerate}
\end{Def}

The letter ``'P' means ``pro''; see \cite[Footnote 8]{Suz14} for more details.
Here is the relation to $\artin{h}$-acyclicity:

\begin{Prop} \label{prop: P acyclic implies derived h acyclic}
	If $G \in \genby{\mathcal{E}_{k}}_{k^{\ind\rat}_{\et}}$ is P-acyclic,
	then $\alpha_{\ast} G$ is $\artin{h}$-acyclic.
\end{Prop}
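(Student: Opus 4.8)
The plan is to exploit the factorization $\artin{h} = \alpha \compose \varepsilon \compose \Tilde{h}$ from Definition \ref{def: premorphisms between the four sites} in order to reduce the $\artin{h}$-acyclicity of $\alpha_{\ast} G$ to the P-acyclicity of $G$, feeding in the comparison already established in Proposition \ref{prop: comparison of two formulations for groups with finite components}. First I would rewrite the target of the $\artin{h}$-acyclicity morphism. By Proposition \ref{prop: composite of derived pullback} we have $R \artin{h}_{\ast} \cong R \alpha_{\ast} \compose R \varepsilon_{\ast} \compose R \Tilde{h}_{\ast}$, and since $\alpha_{\ast}$ is exact (so $R \alpha_{\ast} = \alpha_{\ast}$, as it preserves quasi-isomorphisms) this yields
\[
		R \artin{h}_{\ast} L \artin{h}^{\ast} \alpha_{\ast} G
	\cong
		\alpha_{\ast} R \varepsilon_{\ast} \bigl(
			R \Tilde{h}_{\ast} L \artin{h}^{\ast} \alpha_{\ast} G
		\bigr)
	=
		\alpha_{\ast} R \varepsilon_{\ast} \artin{\assoc}(\alpha_{\ast} G),
\]
using the definition $\artin{\assoc} = R \Tilde{h}_{\ast} L \artin{h}^{\ast}$.

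Next I would feed in the two inputs. Since $G \in \genby{\mathcal{E}_{k}}_{D(k^{\ind\rat}_{\et})}$, Proposition \ref{prop: comparison of two formulations for groups with finite components} supplies a canonical isomorphism $c \colon \artin{\assoc}(\alpha_{\ast} G) \isomto \assoc(G) = \varepsilon^{\ast} G$. Applying $\alpha_{\ast} R \varepsilon_{\ast}$ turns the displayed identification into $R \artin{h}_{\ast} L \artin{h}^{\ast} \alpha_{\ast} G \cong \alpha_{\ast} R \varepsilon_{\ast} \varepsilon^{\ast} G$. Now P-acyclicity of $G$ says precisely that the unit $G \to R \varepsilon_{\ast} \varepsilon^{\ast} G$ is an isomorphism, so $\alpha_{\ast} R \varepsilon_{\ast} \varepsilon^{\ast} G \cong \alpha_{\ast} G$. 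Concatenating, the two sides of the $\artin{h}$-acyclicity morphism are abstractly isomorphic.

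The crux, and the step I expect to be the main obstacle, is to verify that the \emph{natural} morphism $\alpha_{\ast} G \to R \artin{h}_{\ast} L \artin{h}^{\ast} \alpha_{\ast} G$ is carried, under the identifications above, to $\alpha_{\ast}$ of the P-acyclicity unit $\alpha_{\ast}(G \to R \varepsilon_{\ast} \varepsilon^{\ast} G)$; only then may one conclude it is itself an isomorphism. This is a compatibility of adjunction units along the composite premorphism. Concretely, the unit for $\artin{h}$ factors as the unit for $\alpha$, followed by $\alpha_{\ast}$ of the unit for $\varepsilon$, followed by $\alpha_{\ast} R \varepsilon_{\ast}$ of the unit for $\Tilde{h}$; whereas the comparison isomorphism $c$ of Proposition \ref{prop: comparison of two formulations for groups with finite components} is built from the \emph{counit} for $\alpha$ together with the unit for $\Tilde{h}$. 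I would carry out the diagram chase so that the unit-then-counit for $\alpha$ collapses by the triangle identity and the two occurrences of the $\Tilde{h}$-unit cancel likewise, leaving exactly $\alpha_{\ast}$ of the $\varepsilon$-unit. Granting this compatibility, P-acyclicity of $G$ makes the resulting morphism an isomorphism, which is the assertion; the essentially formal but careful bookkeeping of units and counits is the only genuine difficulty.
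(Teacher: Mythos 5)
Your proposal is correct and follows essentially the same route as the paper: the paper's proof is exactly the chain $R \artin{h}_{\ast} L \artin{h}^{\ast} \alpha_{\ast} G \cong \alpha_{\ast} R \varepsilon_{\ast} \artin{\assoc}(\alpha_{\ast} G) \cong \alpha_{\ast} R \varepsilon_{\ast} \varepsilon^{\ast} G \cong \alpha_{\ast} G$, invoking Proposition \ref{prop: comparison of two formulations for groups with finite components} and P-acyclicity. The unit/counit compatibility you single out as the crux (collapsing the $\alpha$-unit against the $\alpha$-counit by the triangle identity, cancelling the $\Tilde{h}$-units, and leaving $\alpha_{\ast}$ of the $\varepsilon$-unit) is indeed the correct justification that the composite is the natural morphism of Definition \ref{def: f compatible, f acyclic}; the paper leaves this bookkeeping implicit, so your more explicit verification is sound and, if anything, more complete.
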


\begin{proof}
	By Proposition \ref{prop: comparison of two formulations for groups with finite components}, we have
		\begin{align*}
		&		R \artin{h}_{\ast} L \artin{h}^{\ast} \alpha_{\ast} G
			\cong
				\alpha_{\ast} R \varepsilon_{\ast} R \Tilde{h}^{\ast} L \artin{h}^{\ast} \alpha_{\ast} G
			\cong
				\alpha_{\ast} R \varepsilon_{\ast} \artin{\assoc}(\alpha_{\ast} G)
		\\
		& \quad
			\cong
				\alpha_{\ast} R \varepsilon_{\ast} \varepsilon^{\ast} G
			\cong
				\alpha_{\ast} G.
		\end{align*}
\end{proof}

We can compare cup products for $\artin{\assoc}$ and $\assoc$ on $\mathcal{E}_{k}$:

\begin{Prop} \label{prop: compatibility the associated constructions}
	Let $\varphi \colon G \tensor^{L} G' \to G''$ be a morphism in $D(k^{\ind\rat}_{\et})$ with
	$G, G', G'' \in \genby{\mathcal{E}_{k}}_{k^{\ind\rat}_{\et}}$.
	Consider the composite of the morphisms
		\[
				\artin{\assoc}(\alpha_{\ast} G) \tensor^{L} \artin{\assoc}(\alpha_{\ast} G')
			\to
				\artin{\assoc}(\alpha_{\ast} G \tensor^{L} \alpha_{\ast} G')
			\to
				\artin{\assoc} \bigl(
					\alpha_{\ast}(G \tensor^{L} G')
				\bigr)
			\to
				\artin{\assoc}(\alpha_{\ast} G''),
		\]
	where the first morphism is given by
	Propositions \ref{prop: modified associated has cup product}
	and \ref{prop: EIP objects are h compatible},
	the second by Proposition \ref{prop: sheafified functoriality and cup product}
	and the third by $\varphi$.
	Also consider the composite of the morphisms
		\[
				\assoc(G) \tensor^{L} \assoc(G')
			\cong
				\assoc(G \tensor^{L} G')
			\to
				\assoc(G''),
		\]
	where the first isomorphism is the obvious isomorphism about sheafification
	and the second $\varphi$.
	These two composite morphisms are compatible under the isomorphism
	\eqref{eq: two associated constructions agree on groups with finite components}.
\end{Prop}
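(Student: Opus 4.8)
The plan is to transport both cup products down to the pro-fppf site $\Spec k^{\perf'}_{\pro\fppf}$, where the tensor product is the honest derived tensor product of sheaves, and to check that there they become $R \Tilde{h}_{\ast}$ of one and the same morphism. First I would reduce to $G, G', G'' \in \mathcal{E}_{k}$: both composites are functorial in each variable and compatible with distinguished triangles, and $\mathcal{E}_{k}$ generates $\genby{\mathcal{E}_{k}}_{k^{\ind\rat}_{\et}}$, so it suffices to treat the generators. For such objects, Proposition \ref{prop: composite of derived pullback} applied to $\artin{h} = \alpha \compose \varepsilon \compose \Tilde{h}$ gives $L \artin{h}^{\ast} \alpha_{\ast} G \cong L \Tilde{h}^{\ast} \varepsilon^{\ast} L \alpha^{\ast} \alpha_{\ast} G$, and the counit $L \alpha^{\ast} \alpha_{\ast} G \to G$ realizes \eqref{eq: morphism between the two pulls to pro fppf} as the isomorphism $L \artin{h}^{\ast} \alpha_{\ast} G \isomto L \Tilde{h}^{\ast} \varepsilon^{\ast} G$ of Proposition \ref{prop: comparison of two formulations for groups with finite components}. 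Thus the comparison isomorphism \eqref{eq: two associated constructions agree on groups with finite components} identifies both $\artin{\assoc}(\alpha_{\ast} G)$ and $\assoc(G)$ with $R \Tilde{h}_{\ast}$ of the common pro-fppf incarnation $L \Tilde{h}^{\ast} \varepsilon^{\ast} G$, the latter using that \eqref{eq: pull to pro fppf and then push back to ind rat} is an isomorphism.

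Next I would unwind the two cup products under this identification. Steps two through four of the $\artin{\assoc}$ composite --- the Proposition \ref{prop: cocup product} isomorphism (available since $\alpha_{\ast} G$ is $\artin{h}$-compatible by Proposition \ref{prop: EIP objects are h compatible}), the lax monoidal map of $\alpha_{\ast}$ from \eqref{eq: cup product}, and $\alpha_{\ast} \varphi$ --- all take place inside the functor $L \artin{h}^{\ast}$ and, after transporting along the counit for $\alpha$, collapse to the single pro-fppf morphism $L \Tilde{h}^{\ast} \varepsilon^{\ast} G \tensor^{L} L \Tilde{h}^{\ast} \varepsilon^{\ast} G' \to L \Tilde{h}^{\ast} \varepsilon^{\ast} G''$ obtained from the strong monoidal structure of $\varepsilon^{\ast}$ and from $\varphi$. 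Since $\varepsilon$ is a morphism of sites, this is exactly the pro-fppf incarnation of the $\assoc$ composite $\varepsilon^{\ast} G \tensor^{L} \varepsilon^{\ast} G' \isomto \varepsilon^{\ast}(G \tensor^{L} G') \to \varepsilon^{\ast} G''$. Hence both composites agree once we know that step one of the $\artin{\assoc}$ composite --- the cup product \eqref{eq: cup product} for $\Tilde{h}$ --- is intertwined by the isomorphism \eqref{eq: pull to pro fppf and then push back to ind rat} with the monoidal structure used to define the $\assoc$ composite.

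The heart of the argument, and the main obstacle, is therefore the monoidality of the adjunction unit $\id \to R \Tilde{h}_{\ast} L \Tilde{h}^{\ast}$ on the objects $\varepsilon^{\ast} G$ with $G \in \mathcal{E}_{k}$: I must check that \eqref{eq: cup product} for $\Tilde{h}$, followed by the unit, agrees with applying the unit to each factor and then multiplying. I would not chase the full coherence diagram by hand but assemble it from the naturality statements already available. The construction of \eqref{eq: cup product} in Proposition \ref{prop: sheafified functoriality and cup product} routes it through \eqref{eq: derived push and RHom} and the tensor-Hom adjunction, and the two propositions immediately following that one identify \eqref{eq: derived push and RHom} with the corresponding adjunction morphisms under an $f$-compatibility hypothesis; on the objects at hand this hypothesis is supplied by Propositions \ref{prop: EIP objects are h compatible} and \ref{prop: EIP is preserved from profppf to indrat proet}, while Proposition \ref{prop: cocup product} furnishes the compatibility of $L \Tilde{h}^{\ast}$ with $\tensor^{L}$. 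Since moreover $\varepsilon^{\ast} G$ is $\Tilde{h}$-acyclic, so that $R \Tilde{h}_{\ast} L \Tilde{h}^{\ast}$ is canonically the identity on it (Proposition \ref{prop: EIP is preserved from profppf to indrat proet} (\ref{item: profppf to indrat proet, push and pull})), the required coherence reduces to these functorialities, and applying $R \Tilde{h}_{\ast}$ to the common pro-fppf morphism finishes the proof.
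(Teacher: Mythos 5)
Your overall route is the paper's route: identify $\artin{\assoc}(\alpha_{\ast} G)$ and $\assoc(G)$ with $R \Tilde{h}_{\ast} L \Tilde{h}^{\ast} \assoc(G)$ via Proposition \ref{prop: comparison of two formulations for groups with finite components}, use Proposition \ref{prop: cocup product} to commute $L \Tilde{h}^{\ast}$ past $\tensor^{L}$, and reduce everything to the coherence of the unit $\id \to R \Tilde{h}_{\ast} L \Tilde{h}^{\ast}$ with the cup product for $\Tilde{h}$; the paper's proof is exactly the two-square diagram whose middle row is $R \Tilde{h}_{\ast} L \Tilde{h}^{\ast}$ of the $\assoc$-composite, and its final step is also left at the level of ``one checks that the diagram is commutative,'' so your refusal to chase the full coherence diagram is on par with the published argument. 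One citation is off: neither Proposition \ref{prop: EIP objects are h compatible} (which concerns $\artin{h}$) nor Proposition \ref{prop: EIP is preserved from profppf to indrat proet} states that $\assoc(G)$ is $\Tilde{h}$-compatible; the paper obtains this by rerunning the proof of Proposition \ref{prop: EIP objects are h compatible} with Proposition \ref{prop: EIP is preserved from profppf to indrat proet} substituted for Theorem \ref{thm: derived pull of proalg group with finite components}.

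The genuine gap is your opening reduction to $G, G', G'' \in \mathcal{E}_{k}$. The statement to be proved is an equality of two morphisms (compatibility under a fixed isomorphism), and equality of morphisms does \emph{not} d\'evisse along distinguished triangles: cones are not functorial, and if two natural maps $\eta, \theta \colon F \to F'$ between triangulated functors agree on $G_{1}$ and $G_{2}$ in a triangle $G_{1} \to G_{2} \to G_{3} \overset{h}{\to} G_{1}[1]$, then $(\eta - \theta)_{G_{3}}$ is only forced to factor as $\psi \compose F(h)$ for some $\psi \colon F(G_{1}[1]) \to F'(G_{3})$, which need not vanish. What \emph{does} d\'evisse are the triangle-stable hypotheses ($\artin{h}$- and $\Tilde{h}$-compatibility and acyclicity; see the remark after Definition \ref{def: f compatible, f acyclic}), and that is precisely how the paper proceeds: it establishes $\Tilde{h}$-compatibility of $\assoc(G)$ for all $G \in \genby{\mathcal{E}_{k}}_{D(k^{\ind\rat}_{\et})}$ by such d\'evissage and then proves the diagram commutes for arbitrary such $G, G', G''$ directly. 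Your reduction is moreover unnecessary: Proposition \ref{prop: comparison of two formulations for groups with finite components} already furnishes the comparison isomorphisms, and Proposition \ref{prop: EIP objects are h compatible} the $\artin{h}$-compatibility, on the whole of $\genby{\mathcal{E}_{k}}_{D(k^{\ind\rat}_{\et})}$ resp.\ $\genby{\mathcal{E}_{k}}_{D(k^{\perar}_{\et})}$. Deleting the first step and applying d\'evissage only to the properties, not to the morphism identity, repairs your proof and makes it coincide with the paper's.
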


\begin{proof}
	Arguing similarly to the proof of Proposition \ref{prop: EIP objects are h compatible}
	using Proposition \ref{prop: EIP is preserved from profppf to indrat proet}
	instead of Theorem \ref{thm: derived pull of proalg group with finite components},
	we know that $\assoc(G)$ is $\Tilde{h}$-compatible.
	The same is true for $\assoc(G')$ and $\assoc(G'')$.
	Hence we have a canonical isomorphism
		\[
				L \Tilde{h}^{\ast} \bigl(
					\assoc(G) \tensor^{L} \assoc(G')
				\bigr)
			\isomto
				L \Tilde{h}^{\ast} \assoc(G) \tensor^{L} L \Tilde{h}^{\ast} \assoc(G')
		\]
	by Proposition \ref{prop: cocup product}.
	Therefore we have a composite morphism
		\begin{align*}
						R \Tilde{h}_{\ast} L \Tilde{h}^{\ast} \assoc(G)
					\tensor^{L}
						R \Tilde{h}_{\ast} L \Tilde{h}^{\ast} \assoc(G')
			&	\to
					R \Tilde{h}_{\ast} \bigl(
						L \Tilde{h}^{\ast} \assoc(G) \tensor^{L} L \Tilde{h}^{\ast} \assoc(G')
					\bigr)
			\\
			&	\isomfrom
					R \Tilde{h}_{\ast} L \Tilde{h}^{\ast} \assoc(G \tensor^{L} G')
			\\
			&	\to
					R \Tilde{h}_{\ast} L \Tilde{h}^{\ast} \assoc(G'').
		\end{align*}
	Now one checks that the diagram
		\[
			\begin{CD}
					\artin{\assoc}(\alpha_{\ast} G) \tensor^{L} \artin{\assoc}(\alpha_{\ast} G')
				@>>>
					\artin{\assoc}(\alpha_{\ast} G'')
				\\
				@VV \wr V
				@V \wr VV
				\\
						R \Tilde{h}_{\ast} L \Tilde{h}^{\ast} \assoc(G)
					\tensor^{L}
						R \Tilde{h}_{\ast} L \Tilde{h}^{\ast} \assoc(G')
				@>>>
					R \Tilde{h}_{\ast} L \Tilde{h}^{\ast} \assoc(G'')
				\\
				@AA \wr A
				@A \wr AA
				\\
					\assoc(G) \tensor^{L} \assoc(G')
				@>>>
					\assoc(G'')
			\end{CD}
		\]
	is commutative.
	(Be careful that both the upper and middle horizontal morphisms are actually defined as
	zigzags of the form $\bullet \to \bullet \isomfrom \bullet \to \bullet$.)
	This gives the result.
\end{proof}

We recall some of the constructions in \cite[Section 2.5]{Suz18a}.

\begin{Def} \mbox{}
	\begin{enumerate}
		\item
			Define $\alg{O}_{K}(k')$ and $\alg{K}(k')$ for $k' \in k^{\ind\rat}$
			by the same formulas as Definition \ref{def: O K an K over k}.
		\item
			Define premorphisms of sites
				\[
						\pi_{\Order_{K}}
					\colon
						\Spec \Order_{K, \fppf}
					\to
						\Spec k^{\ind\rat}_{\et},
					\quad
						\pi_{K}
					\colon
						\Spec K_{\fppf}
					\to
						\Spec k^{\ind\rat}_{\et}
				\]
			by the functors $\alg{O}_{K}$, $\alg{K}$, respectively
			(which are indeed premorphisms by \cite[Proposition 2.5.1]{Suz18a}).
		\item
			Define
				\begin{gather*}
							\alg{\Gamma}(\Order_{K}, \var)
						=
							\assoc \compose \pi_{\Order_{K}, \ast}
						\colon
							\Ab(\Order_{K, \fppf})
						\to
							\Ab(k^{\ind\rat}_{\pro\et}),
					\\
							\alg{\Gamma}(K, \var)
						=
							\assoc \compose \pi_{K, \ast}
						\colon
							\Ab(K_{\fppf})
						\to
							\Ab(k^{\ind\rat}_{\pro\et}),
				\end{gather*}
			where $\pi_{\Order_{K}, \ast} = (\pi_{\Order_{K}})_{\ast}$
			and $\pi_{K, \ast} = (\pi_{K})_{\ast}$.
			They naturally extend to the categories of complexes.
			Define
				\[
						\alg{\Gamma}_{x}(\Order_{K}, \var)
					=
						\bigl[
								\alg{\Gamma}(\Order_{K}, \var)
							\to
								\alg{\Gamma}(K, \var)
						\bigr][-1]
					\colon
							\Ch(\Order_{K, \fppf})
						\to
							\Ch(k^{\ind\rat}_{\pro\et}).
				\]
		\item
			We have their right derived functors
				\begin{gather*}
							R \alg{\Gamma}(\Order_{K}, \var),
							R \alg{\Gamma}_{x}(\Order_{K}, \var)
						\colon
								D(\Order_{K, \fppf})
							\to
								D(k^{\ind\rat}_{\pro\et}),
					\\
							R \alg{\Gamma}(K, \var)
						\colon
								D(K_{\fppf})
							\to
								D(k^{\ind\rat}_{\pro\et}).
				\end{gather*}
		\item
			We denote $\alg{H}^{n}(\Order_{K}, \var) = H^{n} R \alg{\Gamma}(\Order_{K}, \var)$
			and use the similar notation
			$\alg{H}^{n}_{x}(\Order_{K}, \var)$, $\alg{H}^{n}(K, \var)$.
	\end{enumerate}
\end{Def}

In \cite[Section 3.3]{Suz14},
the functor $\pi_{\Order_{K}, \ast}$ was denoted by $\alg{\Gamma}(\Order_{K}, \var)$
and the functor $\alg{\Gamma}(\Order_{K}, \var)$ was
denoted by $\Tilde{\alg{\Gamma}}(\Order_{K}, \var)$.
Similar for their derived versions and for $K$ instead of $\Order_{K}$.

The relation between these $\pi_{\Order_{K}}, \pi_{K}$
and the previous $\artin{\pi}_{\Order_{K}}, \artin{\pi}_{K}$ is the following.

\begin{Prop} \label{prop: comparison of structure morphisms}
	The composite of
		\[
				\Spec \Order_{K, \fppf}
			\overset{\pi_{\Order_{K}}}{\to}
				\Spec k^{\ind\rat}_{\et}
			\overset{\alpha}{\to}
				\Spec k^{\perar}_{\et}
		\]
	is $\artin{\pi}_{\Order_{K}}$.
	The same relation holds with $\Order_{K}$ replaced by $K$.
\end{Prop}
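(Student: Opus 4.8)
The plan is to prove this statement purely by unwinding definitions. Both $\artin{\pi}_{\Order_{K}}$ and the composite $\alpha \compose \pi_{\Order_{K}}$ are premorphisms of sites, and a premorphism of sites is by definition nothing more than its underlying functor on the underlying categories (together with the properties of that functor); hence two premorphisms are equal precisely when their underlying functors agree. So it suffices to compare underlying functors. First I would record that, by the definition of composition of (pre)morphisms of sites used in Proposition \ref{prop: composite of derived pullback}, the underlying functor of $\alpha \compose \pi_{\Order_{K}}$ is the composite $\pi_{\Order_{K}}^{-1} \compose \alpha^{-1}$, a functor from $k^{\perar}$ to the category of $\Order_{K}$-algebras.

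Next I would identify the two pieces. By Definition \ref{def: premorphisms between the four sites}, $\alpha^{-1}$ is the inclusion $k^{\perar} \into k^{\ind\rat}$; here one uses that every perfect field extension of $k$ is the filtered union of the perfections of its finitely generated subextensions and is therefore ind-rational, and that $k^{\perar}$ is closed under finite products, so that $k^{\perar}$ really is a full subcategory of $k^{\ind\rat}$. The functor $\pi_{\Order_{K}}^{-1}$ sends $k' \in k^{\ind\rat}$ to $\alg{O}_{K}(k')$, computed by the formula of Definition \ref{def: O K an K over k} as extended to $k^{\ind\rat}$. Consequently $\pi_{\Order_{K}}^{-1} \compose \alpha^{-1}$ sends $k' \in k^{\perar}$ to $\alg{O}_{K}(k')$.

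Finally I would compare with $\artin{\pi}_{\Order_{K}}^{-1}$, which by its very definition sends $k' \in k^{\perar}$ to $\alg{O}_{K}(k')$ as in Definition \ref{def: O K an K over k}. Since $\alg{O}_{K}$ on $k^{\ind\rat}$ is defined by the identical formula, the two values coincide for every $k' \in k^{\perar}$, so the underlying functors agree and the premorphisms are equal. The case with $\Order_{K}$ replaced by $K$ runs identically, with $\alg{K}$ in place of $\alg{O}_{K}$. I do not anticipate any genuine obstacle: the only point meriting care is that the two definitions of $\alg{O}_{K}$ (resp.\ $\alg{K}$), one on $k^{\perar}$ and one on $k^{\ind\rat}$, agree over the overlap $k^{\perar} \subseteq k^{\ind\rat}$, and this is immediate because they are given by the same formula.
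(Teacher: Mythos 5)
Your proposal is correct and matches the paper's argument, which simply records the proof as ``Obvious'': both $\artin{\pi}_{\Order_{K}}$ and $\alpha \compose \pi_{\Order_{K}}$ are premorphisms determined by their underlying functors, and these functors agree because $\alpha^{-1}$ is the inclusion $k^{\perar} \into k^{\ind\rat}$ and $\alg{O}_{K}$ (resp.\ $\alg{K}$) is defined on both categories by the same formula. Your explicit unwinding, including the checks that $k^{\perar}$ is a full subcategory of $k^{\ind\rat}$ and that the underlying functor of the composite is $\pi_{\Order_{K}}^{-1} \compose \alpha^{-1}$, is exactly the verification the paper leaves implicit.
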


\begin{proof}
	Obvious.
\end{proof}

We compare $R \alg{\Gamma}$ and $R \artin{\alg{\Gamma}}$.

\begin{Prop} \label{prop: compatibility of cohomology functors} \mbox{}
	\begin{enumerate}
		\item \label{item: comparison of cohomology for O K}
			Let $G \in D(\Order_{K, \fppf})$.
			Assume that
			$R \pi_{\Order_{K}, \ast} G \in \genby{\mathcal{E}_{k}}_{D(k^{\ind\rat}_{\et})}$.
			Then there exists a canonical isomorphism
			$R \artin{\alg{\Gamma}}(\Order_{K}, G) \cong R \alg{\Gamma}(\Order_{K}, G)$
			in $\genby{\mathcal{E}_{k}}_{k^{\ind\rat}_{\pro\et}}$.
		\item \label{item: comparison of cohomology for K}
			Let $G \in D(K_{\fppf})$.
			Assume that
			$R \pi_{K, \ast} G \in \genby{\mathcal{E}_{k}}_{D(k^{\ind\rat}_{\et})}$.
			Then there exists a canonical isomorphism
			$R \artin{\alg{\Gamma}}(K, G) \cong R \alg{\Gamma}(K, G)$
			in $\genby{\mathcal{E}_{k}}_{D(k^{\ind\rat}_{\pro\et})}$.
		\item \label{item: comparison of cohomology with support for O K}
			Let $G \in D(\Order_{K, \fppf})$.
			Assume that $G$ satisfies the assumption of (\ref{item: comparison of cohomology for O K})
			and that $j^{\ast} G$ satisfies the assumption of
			(\ref{item: comparison of cohomology for K}).
			Then there exists a canonical isomorphism
			$R \artin{\alg{\Gamma}}_{x}(\Order_{K}, G) \cong R \alg{\Gamma}_{x}(\Order_{K}, G)$
			in $\genby{\mathcal{E}_{k}}_{D(k^{\ind\rat}_{\pro\et})}$.
	\end{enumerate}
\end{Prop}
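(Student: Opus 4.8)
The three parts all rest on Proposition \ref{prop: comparison of two formulations for groups with finite components}, which provides a canonical isomorphism $\artin{\assoc}(\alpha_{\ast} G') \cong \assoc(G')$, natural in $G' \in \genby{\mathcal{E}_{k}}_{D(k^{\ind\rat}_{\et})}$. The preliminary observation I would record is that, as derived pushforwards, $R \artin{\pi}_{\Order_{K}, \ast} \cong \alpha_{\ast} \compose R \pi_{\Order_{K}, \ast}$ and likewise $R \artin{\pi}_{K, \ast} \cong \alpha_{\ast} \compose R \pi_{K, \ast}$. This follows from Proposition \ref{prop: comparison of structure morphisms} (which gives $\artin{\pi}_{\Order_{K}} = \alpha \compose \pi_{\Order_{K}}$ and similarly for $K$) together with the pushforward half of Proposition \ref{prop: composite of derived pullback}, using that $\alpha_{\ast}$ is exact so that $R \alpha_{\ast} = \alpha_{\ast}$.

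For part (\ref{item: comparison of cohomology for O K}), unwinding the definitions gives $R \artin{\alg{\Gamma}}(\Order_{K}, G) = \artin{\assoc}(R \artin{\pi}_{\Order_{K}, \ast} G) \cong \artin{\assoc}(\alpha_{\ast} R \pi_{\Order_{K}, \ast} G)$. Under the hypothesis $R \pi_{\Order_{K}, \ast} G \in \genby{\mathcal{E}_{k}}_{D(k^{\ind\rat}_{\et})}$, I would apply Proposition \ref{prop: comparison of two formulations for groups with finite components} with $G' = R \pi_{\Order_{K}, \ast} G$ to obtain $\artin{\assoc}(\alpha_{\ast} R \pi_{\Order_{K}, \ast} G) \cong \assoc(R \pi_{\Order_{K}, \ast} G) = R \alg{\Gamma}(\Order_{K}, G)$. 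Part (\ref{item: comparison of cohomology for K}) is verbatim the same with $K$ and $j^{\ast} G$ in place of $\Order_{K}$ and $G$.

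For part (\ref{item: comparison of cohomology with support for O K}), I would work with the two localization triangles: the accented one is \eqref{eq: localization triangle}, and the non-accented one arises from the definition $\alg{\Gamma}_{x}(\Order_{K}, \var) = [\alg{\Gamma}(\Order_{K}, \var) \to \alg{\Gamma}(K, \var)][-1]$ together with the exactness of $\assoc = \varepsilon^{\ast}$, which makes $R \alg{\Gamma}_{x}(\Order_{K}, G)$ the shifted cone of $R \alg{\Gamma}(\Order_{K}, G) \to R \alg{\Gamma}(K, G)$. Parts (\ref{item: comparison of cohomology for O K}) and (\ref{item: comparison of cohomology for K}) supply isomorphisms on the middle and right terms of these triangles. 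The essential point is that these isomorphisms are compatible with the restriction morphisms, so that they assemble into a morphism of distinguished triangles; this is exactly the naturality of the isomorphism in Proposition \ref{prop: comparison of two formulations for groups with finite components}, since the restriction $R \artin{\pi}_{\Order_{K}, \ast} G \to R \artin{\pi}_{K, \ast} j^{\ast} G$ is precisely $\alpha_{\ast}$ applied to $R \pi_{\Order_{K}, \ast} G \to R \pi_{K, \ast} j^{\ast} G$. The induced isomorphism on the third terms then gives $R \artin{\alg{\Gamma}}_{x}(\Order_{K}, G) \cong R \alg{\Gamma}_{x}(\Order_{K}, G)$.

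The main obstacle is keeping the isomorphism in part (\ref{item: comparison of cohomology with support for O K}) genuinely \emph{canonical} rather than a mere abstract identification of cones, since completing a commuting square to a morphism of triangles is not functorial in general. To circumvent this, I would construct the comparison before passing to the cone, at the level of the shifted mapping-cone functors defining $R \artin{\pi}_{x, \ast}$ and $\pi_{x, \ast}$, applying the functoriality of mapping fibers to the naturality square of \eqref{eq: two associated constructions agree on groups with finite components} along the morphism $\artin{\pi}_{\Order_{K}, \ast} \to \artin{\pi}_{K, \ast} j^{\ast}$. This mirrors the construction in Proposition \ref{prop: cup product and localization}, and the cup-product compatibility recorded in Proposition \ref{prop: compatibility the associated constructions} guarantees that the resulting identification respects the relevant structure.
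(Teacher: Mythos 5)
Your proposal is correct and follows essentially the same route as the paper, whose proof is simply to combine Proposition \ref{prop: comparison of two formulations for groups with finite components} with the factorizations $\artin{\pi}_{\Order_{K}} = \alpha \compose \pi_{\Order_{K}}$ and $\artin{\pi}_{K} = \alpha \compose \pi_{K}$ of Proposition \ref{prop: comparison of structure morphisms} (using exactness of $\alpha_{\ast}$ and of $\assoc = \varepsilon^{\ast}$, as you do). Your additional care in part (\ref{item: comparison of cohomology with support for O K}) about canonicity of the induced map on cones, resolved at the level of mapping fibers of the underlying functors on complexes, is exactly the point the paper leaves implicit (and is handled in the same spirit as the proof of Proposition \ref{prop: cup product and localization}), though your appeal to Proposition \ref{prop: compatibility the associated constructions} there is not actually needed.
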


\begin{proof}
	This follows from
	Propositions \ref{prop: comparison of two formulations for groups with finite components}
	and \ref{prop: comparison of structure morphisms}.
\end{proof}

The sheaves $R \alg{\Gamma}(\Order_{K}, G)$, $R \alg{\Gamma}(K, G)$,
$R \artin{\alg{\Gamma}}(\Order_{K}, G)$, $R \artin{\alg{\Gamma}}(K, G)$
for most of the groups of interest $G$
satisfy appropriate acyclicity properties
and have cohomologies in $\mathcal{E}_{k}$
by the following proposition:

\begin{Prop} \label{prop: groups of interest satisfy the required conditions} \mbox{}
	\begin{enumerate}
		\item \label{item: cohomology of O K is EIP}
			If $G$ is a finite flat group scheme or a smooth group scheme over $\Order_{K}$,
			then $R^{n} \pi_{\Order_{K}, \ast} G$ is P-acyclic and in $\mathcal{E}_{k}$ for any $n$
			(in particular, $R \pi_{\Order_{K}, \ast} G \in \genby{\mathcal{E}_{k}}_{D(k^{\perar}_{\et})}$).
			The object $R \artin{\pi}_{\Order_{K}, \ast} G$ is $\artin{h}$-acyclic
			with cohomologies in $\mathcal{E}_{k}$.
		\item \label{item: cohomology of K is EIP}
			If $G$ is a finite flat group scheme,
			a lattice, an abelian variety or a torus over $K$,
			then $R^{n} \pi_{K, \ast} G$ is P-acyclic and in $\mathcal{E}_{k}$ for any $n$
			(in particular, $R \pi_{K, \ast} G \in \genby{\mathcal{E}_{k}}_{D(k^{\perar}_{\et})}$).
			The object $R \artin{\pi}_{K, \ast} G$ is $\artin{h}$-acyclic
			with cohomologies in $\mathcal{E}_{k}$.
	\end{enumerate}
\end{Prop}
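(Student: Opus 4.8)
The plan is to import the representability and P-acyclicity of the higher direct images from the older formalism of \cite{Suz14}, and then to transport these facts to the perfect artinian site by means of Propositions \ref{prop: comparison of structure morphisms} and \ref{prop: P acyclic implies derived h acyclic}. The two halves of each item are of a different character: the assertions about $R^{n} \pi_{\Order_{K}, \ast} G$ and $R^{n} \pi_{K, \ast} G$ merely restate cohomology computations of \cite{Suz14}, whereas the assertions about $R \artin{\pi}_{\Order_{K}, \ast} G$ and $R \artin{\pi}_{K, \ast} G$ are formal consequences of the first half together with the comparison machinery of this paper.

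First I would treat the ind-rational \'etale site. For each listed coefficient group the representability of $R^{n} \pi_{\Order_{K}, \ast} G$, resp.\ $R^{n} \pi_{K, \ast} G$, by an object of $\mathcal{E}_{k}$ is the content of the cohomology computations in \cite[Section 3]{Suz14}; in each case one identifies the higher direct image as an extension of an \'etale group scheme by an object of $\Ind \Pro'_{\fc} \Alg / k$, as demanded by Definition \ref{def: extended ind of groups with finite components} (for an abelian variety $A / K$ this places $R^{0} \pi_{K, \ast} A$ in $\Pro'_{\fc} \Alg / k$ and $R^{1} \pi_{K, \ast} A$ in $\Ind \Alg / k$, as in \cite[Proposition (3.4.3)]{Suz14}). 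The P-acyclicity of these sheaves is likewise part of \cite[Sections 2.4 and 3]{Suz14}; for the ind-objects arising in positive degree it follows from the P-acyclicity of algebraic groups together with the fact that $R \varepsilon_{\ast}$ commutes with filtered direct limits (\cite[Proposition (2.2.4)]{Suz14}). Finally, by the same computations these higher direct images vanish for $n$ large, so the complexes $R \pi_{\Order_{K}, \ast} G$ and $R \pi_{K, \ast} G$ are bounded and hence lie in $\genby{\mathcal{E}_{k}}_{D(k^{\ind\rat}_{\et})}$.

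Next I would transport everything to the perfect artinian site. By Proposition \ref{prop: comparison of structure morphisms} we have $\artin{\pi}_{\Order_{K}} = \alpha \compose \pi_{\Order_{K}}$, and since $\alpha_{\ast}$ is exact this yields $R \artin{\pi}_{\Order_{K}, \ast} G \cong \alpha_{\ast} R \pi_{\Order_{K}, \ast} G$; in particular each cohomology $R^{n} \artin{\pi}_{\Order_{K}, \ast} G \cong \alpha_{\ast}(R^{n} \pi_{\Order_{K}, \ast} G)$ lies in $\mathcal{E}_{k}$, now viewed on $k^{\perar}_{\et}$ (recall $k^{\perar} \subset k^{\ind\rat}$, so $\alpha_{\ast}$ carries the avatar of an $\mathcal{E}_{k}$-object on $k^{\ind\rat}_{\et}$ to its avatar on $k^{\perar}_{\et}$). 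Each such cohomology object is $\artin{h}$-acyclic by Proposition \ref{prop: P acyclic implies derived h acyclic}, applied to the P-acyclic object $R^{n} \pi_{\Order_{K}, \ast} G$ of the previous step. Since $R \artin{\pi}_{\Order_{K}, \ast} G$ is bounded, I would then climb its truncation tower, whose successive cones are the shifted cohomology objects $R^{n} \artin{\pi}_{\Order_{K}, \ast} G[-n]$, and invoke the stability of $\artin{h}$-acyclicity under distinguished triangles (noted after Definition \ref{def: f compatible, f acyclic}) to conclude that $R \artin{\pi}_{\Order_{K}, \ast} G$ itself is $\artin{h}$-acyclic. The argument for $K$ in place of $\Order_{K}$ is verbatim the same.

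The genuinely hard work --- the representability of the higher direct images by pro-algebraic groups with finite component group, together with their P-acyclicity --- is entirely imported from \cite{Suz14}, so I expect no new geometric obstacle here. Within the present paper the only points requiring care are the bookkeeping of the truncation argument (for which boundedness, i.e.\ the vanishing of the higher direct images in large degree, is the key) and the verification that the ind-objects appearing in positive degree remain P-acyclic, which rests on the commutation of $R \varepsilon_{\ast}$ with filtered direct limits.
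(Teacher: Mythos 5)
Your overall route coincides with the paper's own proof: quote the representability and P-acyclicity results of \cite{Suz14} on the ind-rational site, then transport them to the perfect artinian site via $\artin{\pi}_{\Order_{K}} = \alpha \compose \pi_{\Order_{K}}$ (with $\alpha_{\ast}$ exact) and Proposition \ref{prop: P acyclic implies derived h acyclic}. One small simplification: your truncation-tower devissage is harmless but unnecessary, since P-acyclicity of a bounded-below complex is \emph{defined} cohomology-wise, so once each $R^{n} \pi_{\Order_{K}, \ast} G$ (resp.\ $R^{n} \pi_{K, \ast} G$) is P-acyclic and in $\mathcal{E}_{k}$, Proposition \ref{prop: P acyclic implies derived h acyclic} applies to the whole complex in one stroke.

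There is, however, one genuine gap, and it is exactly the point the paper flags as the only non-trivial one. Your blanket claim that ``in each case one identifies the higher direct image as an extension of an \'etale group scheme by an object of $\Ind \Pro'_{\fc} \Alg / k$'' as ``the content of the cohomology computations in \cite[Section 3]{Suz14}'' fails for a finite flat group scheme $N$ over $K$: the \emph{statements} of \cite[Propositions (3.4.2), (3.4.3)]{Suz14} do not assert that $R^{1} \pi_{K, \ast} N$ lies in $\Ind \Pro'_{\fc} \Alg / k$, i.e.\ that the component groups of the representing pro-algebraic objects are finite. This finiteness of $\pi_{0}$ is precisely what Definition \ref{def: extended ind of groups with finite components} demands and what Theorem \ref{thm: derived pull of proalg group with finite components} hinges on; representability in $\Ind \Pro' \Alg / k$ alone would only place the sheaf in $\Tilde{\mathcal{E}}_{k}$, not in $\mathcal{E}_{k}$, and $\artin{h}$-acyclicity would then be out of reach by the methods of this paper. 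The paper's proof resolves this by going into the \emph{proof} of \cite[Proposition (3.4.3) (b)]{Suz14}, from which the finiteness of the component groups of $R^{1} \pi_{K, \ast} N$ can be extracted. Your parenthetical only addresses the abelian variety case, which is indeed unproblematic; add the supplementary check for finite flat $N$ over $K$ and your argument is complete.
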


\begin{proof}
	These follow from Proposition \ref{prop: P acyclic implies derived h acyclic}
	and \cite[Proposition (3.4.2), (3.4.3)]{Suz14}
	except for a finite flat group scheme $G = N$ over $K$.
	For this case, the only non-trivial part is to check that
	$R^{1} \pi_{K, \ast} N$ is in $\Ind \Pro'_{\fc} \Alg / k$.
	But this follows from the proof of \cite[Proposition (3.4.3) (b)]{Suz14}.
\end{proof}

The above two propositions give some information about
the structure of $R \artin{\alg{\Gamma}}(\Order_{K}, G)$,
$R \artin{\alg{\Gamma}}(K, G)$ and
$R \artin{\alg{\Gamma}}_{x}(\Order_{K}, G)$.
For more detailed information,
see \cite[Propositions (3.4.1), (3.4.2), (3.4.3), (3.4.6); Section 9]{Suz14},
\cite[Proposition 2.5.3]{Suz18a},
\cite[Proposition 6.2 and its proof]{Suz18b}.

We can compare the cup product morphisms for
$R \alg{\Gamma}$ and $R \artin{\alg{\Gamma}}$:

\begin{Prop} \label{prop: comparison of cup product of local field} \mbox{}
	\begin{enumerate}
		\item \label{item: cup comparison for K}
			Let $\varphi \colon G \tensor^{L} G' \to G''$ be a morphism in $D(K_{\fppf})$
			such that all of $G, G', G''$ satisfy the assumption of
			Proposition \ref{prop: compatibility of cohomology functors}
			(\ref{item: comparison of cohomology for K}).
			Consider the composite of the morphisms
				\[
						R \alg{\Gamma}(K, G) \tensor^{L} R \alg{\Gamma}(K, G')
					\to
						R \alg{\Gamma}(K, G \tensor^{L} G')
					\overset{\varphi}{\to}
						R \alg{\Gamma}(K, G''),
				\]
			where the first morphism is given by \cite[(2.5.6)]{Suz18a}
			(translated into a morphism involving $\tensor^{L}$
			by the same method as the proof of
			Proposition \ref{prop: sheafified functoriality and cup product}).
			Also consider the composite of the morphisms
				\[
						R \artin{\alg{\Gamma}}(K, G) \tensor^{L} R \artin{\alg{\Gamma}}(K, G')
					\to
						R \artin{\alg{\Gamma}}(K, G \tensor G')
					\overset{\varphi}{\to}
						R \artin{\alg{\Gamma}}(K, G''),
				\]
			where the first morphism is given by
			Proposition \ref{prop: cup product for local fields}
			(\ref{item: cup product for K}).
			These composite morphisms are compatible under the isomorphism in
			Proposition \ref{prop: compatibility of cohomology functors}
			(\ref{item: comparison of cohomology for K}).
		\item \label{item: cup comparison for O K}
			Let $\varphi \colon G \tensor^{L} G' \to G''$ be a morphism in $D(\Order_{K, \fppf})$
			such that all of $G, G', G''$ satisfy the assumption of
			Proposition \ref{prop: compatibility of cohomology functors}
			(\ref{item: comparison of cohomology with support for O K}).
			Consider the composite of the morphisms
				\[
							R \alg{\Gamma}(\Order_{K}, G)
						\tensor^{L}
							R \alg{\Gamma}_{x}(\Order_{K}, G')
					\to
						R \alg{\Gamma}_{x}(\Order_{K}, G \tensor^{L} G')
					\overset{\varphi}{\to}
						R \alg{\Gamma}_{x}(\Order_{K}, G''),
				\]
			where the first morphism is given by \cite[(2.5.4)]{Suz18a}
			(translated as above).
			Also consider the composite of the morphisms
				\[
							R \artin{\alg{\Gamma}}(\Order_{K}, G)
						\tensor^{L}
							R \artin{\alg{\Gamma}}_{x}(\Order_{K}, G')
					\to
						R \artin{\alg{\Gamma}}_{x}(\Order_{K}, G \tensor G')
					\overset{\varphi}{\to}
						R \artin{\alg{\Gamma}}_{x}(\Order_{K}, G''),
				\]
			where the first morphism is given by
			Proposition \ref{prop: cup product for local fields}
			(\ref{item: cup product for O K}).
			These composite morphisms are compatible under the isomorphism in
			Proposition \ref{prop: compatibility of cohomology functors}
			(\ref{item: comparison of cohomology for O K})
			and (\ref{item: comparison of cohomology with support for O K}).
	\end{enumerate}
\end{Prop}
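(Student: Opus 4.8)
The plan is to reduce both cup products to the single comparison already carried out in Proposition~\ref{prop: compatibility the associated constructions}, which relates the two ``associated'' constructions $\artin{\assoc}$ and $\assoc$. For part~(\ref{item: cup comparison for K}) I would first set $F = R \pi_{K, \ast} G$, $F' = R \pi_{K, \ast} G'$ and $F'' = R \pi_{K, \ast} G''$, which lie in $\genby{\mathcal{E}_{k}}_{D(k^{\ind\rat}_{\et})}$ by hypothesis, and define a morphism $\psi \colon F \tensor^{L} F' \to F''$ in $D(k^{\ind\rat}_{\et})$ as the composite of the cup product morphism \eqref{eq: cup product} of Proposition~\ref{prop: sheafified functoriality and cup product} for $\pi_{K}$ with $R \pi_{K, \ast}(\varphi)$. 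Since the cup product of \cite[(2.5.6)]{Suz18a} is, by construction, $\assoc = \varepsilon^{\ast}$ applied to this $\pi_{K}$-level cup product, the old composite morphism is identified with the $\assoc$-composite appearing in Proposition~\ref{prop: compatibility the associated constructions} for $\psi$ (here one uses that $\assoc$ commutes with $\tensor^{L}$).

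Next I would identify the new composite morphism with the $\artin{\assoc}$-composite of Proposition~\ref{prop: compatibility the associated constructions} for the same $\psi$. Because $\artin{\pi}_{K} = \alpha \compose \pi_{K}$ (Proposition~\ref{prop: comparison of structure morphisms}) and $\alpha_{\ast}$ is exact, we have $R \artin{\pi}_{K, \ast} = \alpha_{\ast} R \pi_{K, \ast}$. The key technical input is that the cup product \eqref{eq: cup product} is compatible with composition of premorphisms: the $\artin{\pi}_{K}$-level cup product factors as the cup product for $\alpha$ followed by $\alpha_{\ast}$ of the cup product for $\pi_{K}$. Granting this, a direct tracing shows that the new composite of Proposition~\ref{prop: cup product for local fields} (\ref{item: cup product for K})---assembled from Proposition~\ref{prop: sheafified functoriality and cup product} for $\artin{\pi}_{K}$ and Proposition~\ref{prop: modified associated has cup product}---coincides with the $\artin{\assoc}$-composite of Proposition~\ref{prop: compatibility the associated constructions} for $\psi$, the cup product for $\alpha$ supplying precisely the middle arrow there and the $\artin{h}$-compatibility required for the first arrow being furnished by Proposition~\ref{prop: EIP objects are h compatible}.

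With these two identifications in hand, Proposition~\ref{prop: compatibility the associated constructions} applied to $\psi$ (whose three arguments lie in $\genby{\mathcal{E}_{k}}$) says exactly that the old and new composites correspond under the isomorphism \eqref{eq: two associated constructions agree on groups with finite components}; combined with the comparison isomorphism of Proposition~\ref{prop: compatibility of cohomology functors} (\ref{item: comparison of cohomology for K}) this gives part~(\ref{item: cup comparison for K}). For part~(\ref{item: cup comparison for O K}) I would run the same argument with $\pi_{\Order_{K}}$ and the mapping-cone functor $\artin{\pi}_{x, \ast}$ in place of $\pi_{K}$; since $\alpha_{\ast}$ is exact it commutes with the mapping cone defining $\artin{\pi}_{x, \ast}$, so the two sheaf-Hom diagrams building the cup products in Proposition~\ref{prop: cup product for local fields} (\ref{item: cup product for O K}) and \cite[(2.5.4)]{Suz18a} match up by functoriality of mapping cones, and Proposition~\ref{prop: compatibility the associated constructions} again concludes. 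The main obstacle is the composition-compatibility of \eqref{eq: cup product} used in the second paragraph: this is a routine but delicate diagram chase through the construction of \eqref{eq: derived push and RHom} and the tensor-Hom adjunction, resting on the compatibility $R(\alpha \compose \pi_{K})_{\ast} \cong \alpha_{\ast} \compose R \pi_{K, \ast}$ from Proposition~\ref{prop: composite of derived pullback}.
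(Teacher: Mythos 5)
Your proposal is correct and takes essentially the same route as the paper's own proof: the paper likewise observes that the cup product for $\artin{\pi}_{K}$ (via $\artin{\pi}_{K} = \alpha \compose \pi_{K}$, Proposition \ref{prop: comparison of structure morphisms}) is compatible with the composite of the cup product for $\alpha$ and $\alpha_{\ast}$ applied to the cup product for $\pi_{K}$, and then concludes by Proposition \ref{prop: compatibility the associated constructions}, treating part (\ref{item: cup comparison for O K}) by the same argument with mapping cones. The only difference is one of exposition: you spell out the composition-compatibility diagram chase and the exactness of $\assoc = \varepsilon^{\ast}$ and $\alpha_{\ast}$ that the paper's two-line proof leaves implicit.
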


\begin{proof}
	(\ref{item: cup comparison for K})
	The composite morphism
		\[
					\alpha_{\ast} R \pi_{K, \ast} G
				\tensor^{L}
					\alpha_{\ast} R \pi_{K, \ast} G'
			\to
				\alpha_{\ast} \bigl(
					R \pi_{K, \ast} G \tensor^{L} R \pi_{K, \ast} G'
				\bigr)
			\to
				\alpha_{\ast} R \pi_{K, \ast}(G \tensor^{L} G')
		\]
	and the morphism
		\[
					R \artin{\pi}_{K, \ast} G
				\tensor^{L}
					R \artin{\pi}_{K, \ast} G'
			\to
				R \artin{\pi}_{K, \ast}(G \tensor^{L} G')
		\]
	are compatible by Proposition \ref{prop: comparison of structure morphisms}.
	The rest follows from Proposition \ref{prop: compatibility the associated constructions}.
	
	(\ref{item: cup comparison for O K})
	This can be proven similarly.
\end{proof}


\section{A duality statement in the new formulation}

As in \cite[Section 2.4]{Suz14},
we define the Serre dual functor as follows.

\begin{Def}
	Define $(\var)^{\SDual} = R \sheafhom_{k^{\ind\rat}_{\pro\et}}(\var, \Z)$.
\end{Def}

See \cite[Section 2.4, Footnote 4]{Suz14} for why it is called the Serre dual.

Now we state the duality for abelian varieties over $K$.
In \cite{Suz14}, this duality is stated using $R \alg{\Gamma}$
(i.e.\ using the functor $k' \in k^{\ind\rat} \mapsto H^{n}(\alg{K}(k'), \var)$).
Here we state it using $R \artin{\alg{\Gamma}}$
(i.e.\ using the functor $k' \in k^{\perar} \mapsto H^{n}(\alg{K}(k'), \var)$).
We here deduce the statement from the result in \cite{Suz14}
using the comparison statements in the previous section.
But note that we have developed a duality formalism with $\Spec k^{\perar}_{\et}$ in this paper well enough
so that a direct, simpler proof (using only $R \artin{\alg{\Gamma}}$) is possible.

\begin{Thm} \label{thm: local duality} \mbox{}
	Let $A$ and $B$ be abelian varieties dual to each other over $K$.
	Let $\mathcal{A}$ and $\mathcal{B}$ be their N\'eron models over $\Order_{K}$
	and $\mathcal{B}^{0}$ the open subgroup scheme of $\mathcal{B}$ with connected fibers.
	Consider the morphisms $A \tensor^{L} B \to \Gm[1]$ in $D(K_{\fppf})$
	and $\mathcal{A} \tensor^{L} \mathcal{B}^{0} \to \Gm[1]$ in $D(\Order_{K, \fppf})$
	given by the Poincar\'e bi-extension and its canonical extension to $\Order_{K}$
	(\cite[IX, 1.4.3]{Gro72}).
	Consider the morphism
		\begin{gather*}
					R \artin{\alg{\Gamma}}(K, A)
				\otimes^{L}
					R \artin{\alg{\Gamma}}(K, B)
				\to
					R \artin{\alg{\Gamma}}(K, \Gm)[1]
				\overset{\mathrm{trace}}{\to}
					\Z[1],
			\\
					R \artin{\alg{\Gamma}}(\Order_{K}, \mathcal{A})
				\otimes^{L}
					R \artin{\alg{\Gamma}}_{x}(\Order_{K}, \mathcal{B}^{0})
				\to
					R \artin{\alg{\Gamma}}_{x}(\Order_{K}, \Gm)[1]
				\overset{\mathrm{trace}}{\to}
					\Z
			\\
					R \artin{\alg{\Gamma}}_{x}(\Order_{K}, \mathcal{A})
				\otimes^{L}
					R \artin{\alg{\Gamma}}(\Order_{K}, \mathcal{B}^{0})
				\to
					R \artin{\alg{\Gamma}}_{x}(\Order_{K}, \Gm)[1]
				\overset{\mathrm{trace}}{\to}
					\Z
		\end{gather*}
	induced by Propositions \ref{prop: cup product for local fields}
	and \ref{prop: trace morphism}.
	\begin{enumerate}
		\item \label{item: duality isomorphisms}
			The resulting five morphisms
				\begin{gather*}
							R \artin{\alg{\Gamma}}(K, B)^{\SDual \SDual}
						\to
							R \artin{\alg{\Gamma}}(K, A)^{\SDual}[1],
					\\
							R \artin{\alg{\Gamma}}_{x}(\Order_{K}, \mathcal{B}^{0})
						\to
							R \artin{\alg{\Gamma}}_{x}(\Order_{K}, \mathcal{B}^{0})^{\SDual \SDual}
						\to
							R \artin{\alg{\Gamma}}(\Order_{K}, \mathcal{A})^{\SDual}
					\\
							R \artin{\alg{\Gamma}}_{x}(\Order_{K}, \mathcal{A})
						\to
							R \artin{\alg{\Gamma}}_{x}(\Order_{K}, \mathcal{A})^{\SDual \SDual}
						\to
							R \artin{\alg{\Gamma}}(\Order_{K}, \mathcal{B}^{0})^{\SDual}
				\end{gather*}
			are all isomorphisms.
		\item \label{item: duality and localization triangles}
			They form an isomorphism of distinguished triangles
				\[
					\begin{CD}
							R \artin{\alg{\Gamma}}(\Order_{K}, \mathcal{B}^{0})^{\SDual \SDual}
						@>>>
							R \artin{\alg{\Gamma}}(K, B)^{\SDual \SDual}
						@>>>
							R \artin{\alg{\Gamma}}_{x}(\Order_{K}, \mathcal{B}^{0})[1]
						\\
						@VV \wr V
						@VV \wr V
						@VV \wr V
						\\
							R \artin{\alg{\Gamma}}_{x}(\Order_{K}, \mathcal{A})^{\SDual}
						@>>>
							R \artin{\alg{\Gamma}}(K, A)^{\SDual}[1]
						@>>>
							R \artin{\alg{\Gamma}}(\Order_{K}, \mathcal{A})^{\SDual}[1]
					\end{CD}
				\]
			between the localization triangles \eqref{eq: localization triangle}.
	\end{enumerate}
\end{Thm}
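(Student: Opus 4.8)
The plan is to deduce the statement entirely from the corresponding duality theorem \cite[Theorem 4.1.2]{Suz14} in the older formalism, transporting that result along the comparison isomorphisms established above. First I would record that every coefficient object in play qualifies for the comparison machinery: by Proposition \ref{prop: groups of interest satisfy the required conditions}, the sheaves $R \pi_{K, \ast} A$, $R \pi_{K, \ast} B$, $R \pi_{\Order_{K}, \ast} \mathcal{A}$, $R \pi_{\Order_{K}, \ast} \mathcal{B}^{0}$ and their counterparts for $\Gm$ all lie in $\genby{\mathcal{E}_{k}}_{D(k^{\ind\rat}_{\et})}$ and are P-acyclic. Hence Proposition \ref{prop: compatibility of cohomology functors} supplies canonical isomorphisms $R \artin{\alg{\Gamma}}(K, \var) \cong R \alg{\Gamma}(K, \var)$, $R \artin{\alg{\Gamma}}(\Order_{K}, \var) \cong R \alg{\Gamma}(\Order_{K}, \var)$ and $R \artin{\alg{\Gamma}}_{x}(\Order_{K}, \var) \cong R \alg{\Gamma}_{x}(\Order_{K}, \var)$ for each of $A$, $B$, $\mathcal{A}$, $\mathcal{B}^{0}$ and $\Gm$.

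Next I would match the three input pairings with their older-formalism analogues. Since the Serre dual $(\var)^{\SDual}$ lives on $D(k^{\ind\rat}_{\pro\et})$ and is literally the same functor in both formalisms, it suffices to compare the pairings before dualizing. The cup product parts are exactly the content of Proposition \ref{prop: comparison of cup product of local field}, whose two items assert that the $R \artin{\alg{\Gamma}}$-cup products of Proposition \ref{prop: cup product for local fields} agree with the older $R \alg{\Gamma}$-cup products under the comparison isomorphisms. The Poincar\'e-biextension maps $A \tensor^{L} B \to \Gm[1]$ and $\mathcal{A} \tensor^{L} \mathcal{B}^{0} \to \Gm[1]$ are intrinsic to $K$ and $\Order_{K}$, so they induce compatible morphisms in both formalisms. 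The remaining ingredient is the trace: I would check that the trace isomorphism of Proposition \ref{prop: trace morphism} corresponds, under $R \artin{\alg{\Gamma}}_{x}(\Order_{K}, \Gm) \cong R \alg{\Gamma}_{x}(\Order_{K}, \Gm)$, to the older trace \cite[(5.2.1.1)]{Suz14}. Both are built from the \emph{same} normalized-valuation surjection $\alg{K}^{\times} \onto \Z$, so this compatibility is forced by the naturality of the comparison in Proposition \ref{prop: comparison of two formulations for groups with finite components}.

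With these identifications in hand, each of the five morphisms in part (\ref{item: duality isomorphisms}) is carried to the corresponding morphism in \cite[Theorem 4.1.2]{Suz14}, which is an isomorphism; hence all five are isomorphisms. For part (\ref{item: duality and localization triangles}), the localization triangle \eqref{eq: localization triangle} for $R \artin{\alg{\Gamma}}$ is by construction the shifted mapping cone of $R \artin{\pi}_{\Order_{K}, \ast} \to R \artin{\pi}_{K, \ast}$ pushed through $\artin{\assoc}$, and the analogous triangle for $R \alg{\Gamma}$ arises identically through $\assoc$. Because $\artin{\pi}_{\Order_{K}} = \alpha \compose \pi_{\Order_{K}}$ (Proposition \ref{prop: comparison of structure morphisms}), $\alpha_{\ast}$ is exact, and both $\artin{\assoc}$ and $\assoc$ are triangulated, the comparison isomorphisms respect these mapping-cone triangles; transporting the isomorphism of distinguished triangles from \cite[Theorem 4.1.2]{Suz14} then yields the claim.

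The part I expect to be most delicate is the simultaneous coherence: one must verify that the \emph{same} comparison isomorphisms intertwine the cup products, the trace, and the connecting morphisms of the two localization triangles all at once, so that the entire diagram of part (\ref{item: duality and localization triangles}) --- and not merely each arrow separately --- is matched to its counterpart in \cite{Suz14}. This is a diagram chase combining Propositions \ref{prop: comparison of cup product of local field}, \ref{prop: compatibility of cohomology functors} and \ref{prop: cup product and localization}; once the trace compatibility is pinned down, the remaining verifications are formal.
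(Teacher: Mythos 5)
Your proposal is correct and takes essentially the same route as the paper: both parts are deduced from the older-formalism duality by transporting along the comparison machinery of Propositions \ref{prop: groups of interest satisfy the required conditions}, \ref{prop: compatibility of cohomology functors} and \ref{prop: comparison of cup product of local field}, with the trace compatibility reduced, exactly as you say, to the fact that both traces are the same valuation map $\alg{K}^{\times} \onto \Z$. One bibliographic correction: for the isomorphism of localization triangles in part (\ref{item: duality and localization triangles}), the older-formalism input is \cite[Proposition 2.5.4]{Suz18a} (which contains the full triangle statement) rather than \cite[Theorem 4.1.2]{Suz14}, which gives only the duality over $K$; the paper also notes that this diagram admits a direct proof via Proposition \ref{prop: cup product and localization} without citing the older results, which is close to the coherence check you flag as delicate.
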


\begin{proof}
	(\ref{item: duality isomorphisms})
	The morphisms
		\begin{gather*}
					R \artin{\alg{\Gamma}}(K, A) \otimes^{L} R \artin{\alg{\Gamma}}(K, B)
				\to
					R \artin{\alg{\Gamma}}(K, \Gm),
			\\
					R \alg{\Gamma}(K, A) \otimes^{L} R \alg{\Gamma}(K, B)
				\to
					R \alg{\Gamma}(K, \Gm)
		\end{gather*}
	are compatible under the isomorphisms of the terms
	by Propositions \ref{prop: groups of interest satisfy the required conditions}
	(\ref{item: cohomology of K is EIP})
	and \ref{prop: comparison of cup product of local field}
	(\ref{item: cup comparison for K}).
	The trace morphism $R \artin{\alg{\Gamma}}(K, \Gm) \to \Z$
	in Proposition \ref{prop: trace morphism}
	and the trace morphism $R \alg{\Gamma}(K, \Gm) \to \Z$
	in \cite[Proposition 2.4.4]{Suz13} are compatible
	since they both are the valuation morphism $\alg{K}^{\times} \onto \Z$.
	Hence the morphisms
		\[
					R \artin{\alg{\Gamma}}(K, B)^{\SDual \SDual}
				\to
					R \artin{\alg{\Gamma}}(K, A)^{\SDual}[1],
			\quad
					R \alg{\Gamma}(K, B)^{\SDual \SDual}
				\to
					R \alg{\Gamma}(K, A)^{\SDual}[1]
		\]
	are compatible.
	The latter is an isomorphism by \cite[Theorem (4.1.2)]{Suz14}.
	Therefore so is the former.
	The statements for $R \artin{\alg{\Gamma}}(\Order_{K}, \var)$,
	$R \artin{\alg{\Gamma}}_{x}(\Order_{K}, \var)$
	can be similarly proven.
	
	(\ref{item: duality and localization triangles})
	The stated diagram can be identified with the isomorphism of distinguished triangles
		\[
			\begin{CD}
					R \alg{\Gamma}(\Order_{K}, \mathcal{B}^{0})^{\SDual \SDual}
				@>>>
					R \alg{\Gamma}(K, B)^{\SDual \SDual}
				@>>>
					R \alg{\Gamma}_{x}(\Order_{K}, \mathcal{B}^{0})[1]
				\\
				@VV \wr V
				@VV \wr V
				@VV \wr V
				\\
					R \alg{\Gamma}_{x}(\Order_{K}, \mathcal{A})^{\SDual}
				@>>>
					R \alg{\Gamma}(K, A)^{\SDual}[1]
				@>>>
					R \alg{\Gamma}(\Order_{K}, \mathcal{A})^{\SDual}[1]
			\end{CD}
		\]
	of \cite[Proposition 2.5.4]{Suz18a}
	by Proposition \ref{prop: compatibility of cohomology functors}.
\end{proof}

The diagram in (\ref{item: duality and localization triangles}) can also be obtained directly
without referring to \cite{Suz14}.
By Proposition \ref{prop: cup product and localization},
we obtain a morphism of distinguished triangles
	\[
		\begin{CD}
				R \artin{\alg{\Gamma}}_{\Order_{K}} [\mathcal{A}, \Gm]_{\Order_{K}}[1]
			@>>>
				R \artin{\alg{\Gamma}}_{K} [A, \Gm]_{K}[1]
			@>>>
				R \artin{\alg{\Gamma}}_{x} [\mathcal{A}, \Gm]_{\Order_{K}}[2]
			\\
			@VVV
			@VVV
			@VVV
			\\
				[R \artin{\alg{\Gamma}}_{x} \mathcal{A},
				R \artin{\alg{\Gamma}}_{x} \Gm]_{k}[1]
			@>>>
				[R \artin{\alg{\Gamma}}_{K} A,
				R \artin{\alg{\Gamma}}_{x} \Gm]_{k}[2]
			@>>>
				[R \artin{\alg{\Gamma}}_{\Order_{K}} \mathcal{A},
				R \artin{\alg{\Gamma}}_{x} \Gm]_{k}[2].
		\end{CD}
	\]
Applying the morphisms $\mathcal{B}^{0} \to [\mathcal{A}, \Gm]_{\Order_{K}}[1]$ and
$B \to [A, \Gm]_{K}[1]$ to the upper triangle
and the trace isomorphism $R \artin{\alg{\Gamma}}_{x} \Gm \isomto \Z[-1]$ to the lower triangle,
we obtain a morphism of distinguished triangles
	\[
		\begin{CD}
				R \artin{\alg{\Gamma}}(\Order_{K}, \mathcal{B}^{0})
			@>>>
				R \artin{\alg{\Gamma}}(K, B)
			@>>>
				R \artin{\alg{\Gamma}}_{x}(\Order_{K}, \mathcal{B}^{0})[1]
			\\
			@VVV
			@VVV
			@VVV
			\\
				R \artin{\alg{\Gamma}}_{x}(\Order_{K}, \mathcal{A})^{\SDual}
			@>>>
				R \artin{\alg{\Gamma}}(K, A)^{\SDual}[1]
			@>>>
				R \artin{\alg{\Gamma}}(\Order_{K}, \mathcal{A})^{\SDual}[1].
		\end{CD}
	\]
Applying $\SDual \SDual$ and using (\ref{item: duality isomorphisms}),
we obtain the desired diagram.


\end{document}